\def\address#1#2{\begingroup
\noindent\parbox[t]{7.8cm}{%
\small{\scshape\ignorespaces#1}\par\vskip1ex
\noindent\small{\itshape E-mail address}%
\/: #2\par\vskip4ex}\hfill%
\endgroup}%
\newtheorem{theorem}{Theorem}[section]
\newtheorem{lemma}[theorem]{Lemma}
\newtheorem{proposition}[theorem]{Proposition}
\theoremstyle{definition}
\newtheorem{definition}[theorem]{\rm Definition}
\newtheorem{remark}[theorem]{\rm Remark}
\newtheorem{hypothesis}[theorem]{\rm Hypothesis}
\renewenvironment{proof}[1][\proofname]{\par
  \pushQED{\qed}%
  \normalfont \topsep6\p@\@plus6\p@\relax
  \trivlist
  \item[\hskip\labelsep
        \upshape
    #1\@addpunct{.}]\ignorespaces
}{%
  \popQED\endtrivlist\@endpefalse
}
\newcommand{\lref}[2][]
	{\ifx#1""{Lemma~\textup{\ref{#2}}}%
	\else{Lemmas~\textup{\ref{#1}} and \textup{\ref{#2}}}\fi}
\newcommand{\lrefs}[3][]
	{\ifx#1""{Lemmas~\textup{\ref{#2}} and \textup{\ref{#3}}}%
	\else{Lemmas~\textup{\ref{#1}}, \textup{\ref{#2}} and \textup{\ref{#3}}}\fi}
\newcommand{\tref}[2][]
	{\ifx#1""{Theorem~\textup{\ref{#2}}}%
	\else{Theorems~\textup{\ref{#1}} and \textup{\ref{#2}}}\fi}
\newcommand{\trefs}[4][]
	{\ifx#1""{Theorems~\textup{\ref{#2}}, \textup{\ref{#3}} and \textup{\ref{#4}}}%
	\else{Theorems~\textup{\ref{#1}}, \textup{\ref{#2}}, \textup{\ref{#3}} and \textup{\ref{#4}}}\fi}
\newcommand{\pref}[1]{Proposition~\textup{\ref{#1}}}
\newcommand{\prefs}[3][]
	{\ifx#1""{Propositions~\textup{\ref{#2}} and \textup{\ref{#3}}}%
	\else{Propositions~\textup{\ref{#1}}, \textup{\ref{#2}} and \textup{\ref{#3}}}\fi}
\newcommand{\rref}[1]{Remark~\textup{\ref{#1}}}
\newcommand{\hyporef}[1]{Hypothesis~\textup{\ref{#1}}}
\newcommand{\secref}[1]{Section~\textup{\ref{#1}}}
\newcommand{\RealNum}{\mathbf{R}}
\newcommand{\NaturalNum}{\mathbf{N}}
\newcommand{\SymmetricGroup}[1]{\mathfrak{S}_{#1}}
\newcommand{\indicator}[1]{\mathbf{1}_{#1}}
\newcommand{\hermitePoly}[1]{H_{#1}}
\newcommand{\probSp}{\Omega}
\newcommand{\sigmaField}{\mathcal{F}}
\newcommand{\prob}{\boldsymbol P}
\newcommand{\expect}{\boldsymbol E}
\newcommand{\Hurst}{H}
\newcommand{\CM}{\mathfrak{H}}
\newcommand{\WienerInt}[1]{I_{#1}}
\newcommand{\WienerChaos}[1]{\mathcal{H}_{#1}}
\newcommand{\WienerHermitePoly}[1]{H_{#1}}
\newcommand{\CN}{\Omega^{{\rm CN}(m)}}
\newcommand{\innerProdLarge}[3][]{\left\langle{#2},{#3}\right\rangle_{#1}}
\newcommand{\innerProd}[3][]{\langle{#2},{#3}\rangle_{#1}}
\newcommand{\App}{\bar{X}^{(m)}}
\newcommand{\TR}[2][]{\ifx#1""{#2}^{\textup{TR}}\else{#2}^{\textup{TR}(#1)}\fi}
\newcommand{\SmoothFunc}[4][]{C_{\textup{#1}}^{#2}(#3;#4)}
\newcommand{\HolFunc}[4][]{\mathscr{C}_{\textup{#1}}^{#2}(#3;#4)}
\newcommand{\HolExp}{\lambda}
\newcommand{\LebSp}[3]{\mathcal{L}^{#1}(#2;#3)}
\newcommand{\SobSp}[4]{\mathcal{D}^{#1,#2}(#3;#4)}
\newcommand{\hermiteVar}[2]{V_{#1}^{(#2)}}
\newcommand{\wHerVar}[2]{U_{#1}^{(#2)}}
\newcommand{\TRVar}[1]{\tilde{V}^{(#1)}}
\newcommand{\wTRVar}[2]{\tilde{U}_{#1}^{(#2)}}
\newcommand{\const}[1][]{C_{#1}}
\newcommand{\intPart}[1]{\lfloor{#1}\rfloor}
\newcommand{\dyadicPart}[2][]{{\ifx#1""\tau_{#2}\else\tau^{#1}_{#2}\fi}}
\newcommand{\lowerPart}[2][]{\eta_{-}^{#1}({#2})}
\newcommand{\Wronskian}[1][]{\ifx#1""{\mathsf{w}}\else{\mathsf{w}_{#1}}\fi}
\newcommand{\vecField}[1]{\mathcal{#1}}
\title{Error analysis for approximations to one-dimensional SDEs via the
perturbation method
\footnote
{
Accepted for publication in Osaka Journal of Mathematics.
}
	\footnote
	{
		Keywords:
		Fractional Brownian motion,
	 	Stochastic differential equation,
	 	The Euler scheme,
	 	The Milstein type scheme,
	 	The Crank-Nicolson scheme,
	 	Exact rate of convergence.
	}
	\footnote
	{
	 	MSC2010 subject classifications: 60F05, 60H35, 60G15.
	}
}
\author{Shigeki Aida and Nobuaki Naganuma}
\date{\empty}
\begin{document}

\maketitle
\begin{abstract}
	We study asymptotic error distributions
	associated with standard approximation scheme
	for one-dimensional stochastic differential equations
	driven by fractional Brownian motions.
	This problem was studied by,
	for instance, Gradinaru-Nourdin \cite{GradinaruNourdin2009},
	Neuenkirch and Nourdin~\cite{NeuenkirchNourdin2007}
	and the second named author~\cite{Naganuma2015}.
	The aim of this paper is to extend their results to the case where
	the equations contain drift terms and simplify the proof of
	estimates of the remainder terms in \cite{Naganuma2015}.
	To this end, we represent the approximation solution
	as the solution of the equation which is obtained
	by replacing the fractional Brownian path with a perturbed path.
	We obtain the asymptotic error distribution
	as a directional derivative of the solution by using this expression.
\end{abstract}


\section{Introduction}

For a one-dimensional fractional Brownian motion (fBm)
$B$ with the Hurst $1/3<\Hurst<1$,
we consider a one-dimensional stochastic differential equation (SDE)
\begin{align}\label{eq_20141122070455}
	X_t
	=
		\xi
		+
		\int_0^t
			b(X_s)\,
			ds
		+
		\int_0^t
			\sigma(X_s)\,
			d^\circ B_s,
	\quad
	t\in[0,1],
\end{align}
where $\xi\in\RealNum$ is a deterministic initial value
and $d^\circ B$ stands for the symmetric integral in the sense of
Russo-Vallois.
We may write $X_t(\xi,B)$, $X_t(B)$ to indicate the
dependence of the initial value and the driving path.
We consider three schemes to
approximate the solution to \eqref{eq_20141122070455}
and study asymptotic error distributions of them.
We treat the Euler scheme, the Milstein type scheme and the Crank-Nicolson scheme
as real-valued stochastic processes on the interval $[0,1]$.

There are several frameworks to treat SDEs driven by fBm.
For multidimensional case,
the Young integration theory and the rough path analysis are powerful tools \cite{Lyons1994,Lyons1998}.
We can however deal with SDEs in dimension one more easily
by using the theory of the symmetric integral \cite{Nourdin2004}.
The symmetric integral was proposed by Russo-Vallois \cite{RussoVallois1993a}
with a motivation to establish non-causal stochastic integration theory.
Recently, Nourdin and his coauthors developed
a theory of integration with respect to general integrators including fBm
\cite{Nourdin2004,GradinaruNourdinRussoVallois2005} with a spirit of
\cite{RussoVallois1993a}.
In the present article, we adopt the symmetric integral and
give a meaning to \eqref{eq_20141122070455}.

The Euler scheme, the Milstein type scheme and the Crank-Nicolson scheme
for SDEs driven by fBm are considered by many researchers.
In the consideration of approximation schemes,
they are interested in the sharp error bounds (convergence rates)
and the limits of errors normalized by the convergence rates
(asymptotic error distribution).
In multidimensional case, Mishura-Shevchenko \cite{MishuraShevchenko2008},
Friz-Riedel \cite{FrizRiedel2014} and Bayer et al.\ \cite{BayerFrizRiedelSchoenmakers2016} obtain
an almost sharp convergence rate of the Euler scheme and the Milstein type scheme, respectively.
Hu-Liu-Nualart \cite{HuLiuNualart2016} consider asymptotic error distributions
of the Euler scheme for SDEs driven by fBm with $1/2<\Hurst<1$.
Liu-Tindel \cite{LiuTindel2017arXiv} treat the same problem in the case $1/3<\Hurst<1/2$.
There are a lot of results on asymptotic error
distributions of schemes for one-dimensional SDEs.
For example, Neuenkirch-Nourdin \cite{NeuenkirchNourdin2007} show
the convergence of the normalized error of the Euler scheme
for an SDE with a drift term driven by fBm with $1/2<\Hurst<1$.
Gradinaru-Nourdin \cite{GradinaruNourdin2009} deal with
the Milstein type scheme for an SDE without a drift term, namely $b\equiv 0$ in \eqref{eq_20141122070455},
and prove that the normalized error of it converges to some random
variable.

We next explain preceding results on the Crank-Nicolson scheme for one
dimensional SDE.
The first result on the error of it is obtained in
\cite{NeuenkirchNourdin2007};
the authors obtain an almost sharp convergence rate.
In \cite{GradinaruNourdin2009},
the authors treat the error of the Crank-Nicolson scheme
for an SDE without a drift term driven by a standard Brownian motion
and obtain the convergence of the normalized error.
The second named author~\cite{Naganuma2015} in the present paper
shows the convergence of the normalized error for fBm with $1/3<\Hurst<1/2$.
It is crucial to these studies that
the solution is given by a function of
$B_t$ as
$X_t(\xi,B)=\phi(\xi, B_t)$, where $\phi$ is a certain
smooth increasing function depending only on $\sigma$.
This is a Doss-Sussmann type representation of
the solution.
Let denote the approximation solution by $\bar{X}^{(m)}_t(\xi, B)$,
where $m$ is a positive integer.
Let $B^{(m)}_t$
be the dyadic polygonal approximation of
the fBm $B$ such that $B^{(m)}_{\dyadicPart[m]{k}}=B_{\dyadicPart[m]{k}}$ for every $k=0,\dots,2^m$,
where $\dyadicPart[m]{k}=k2^{-m}$.
For the Wong-Zakai approximation,
$\bar{X}^{(m)}_t(\xi,B)=\phi(\xi,B^{(m)}_t)$ holds.
 Hence the analysis of the error $\bar{X}^{(m)}_t-X_t$
is almost similar to that of $B-B^{(m)}$ itself.
Clearly, this simple relation does not hold any more for other
approximation schemes such as the Euler, Milstein and Crank-Nicolson
schemes.
However, if the dispersion coefficient $\sigma$ is strictly
positive, there exists unique $B$-dependent
random variable $h^{(m)}$ such that
$\bar{X}^{(m)}_{\tau^m_k}(\xi,B)=\phi(\xi, B+h^{(m)})$
for all $k$.
After obtaining this formula, it is clear that the analysis
of $\{h^{(m)}\}$ is important to the study of the error
$X_t(\xi,B)-\bar{X}^{(m)}_t(\xi,B)$.
This is one of main ideas of the proof in \cite{NeuenkirchNourdin2007,
Naganuma2015}.

Even if the equations contain the drift terms,
the Doss-Sussmann representation still holds and the solution mapping
$B\mapsto X(\xi,B)$ is Lipschitz continuous in the uniform convergence
topology in one dimensional cases.
Further, under the nondegeneracy assumption of $\sigma$,
we can show that there exists a unique
piecewise linear $h^{(m)}$ such that
$\bar{X}^{(m)}_{\dyadicPart[m]{k}}(\xi,B)=X_{\dyadicPart[m]{k}}(\xi,B+h^{(m)})$
~$(0\le k\le 2^m)$ hold.
By this perturbation representation of
the approximate solutions and the analysis of
$h^{(m)}$, we can show the convergence of
the normalized error distribution.
Hence, the present paper is a natural extension
of the preceding studies.
We use central limit theorem for the Hermite variation process
to see the asymptotic behavior of
the normalized error
similarly to
\cite{NeuenkirchNourdin2007, Naganuma2015}.
The proof that the remainder term is negligible in \cite{Naganuma2015}
was done by a long calculation.
In this paper, we give simpler and shorter argument for estimates
of remainder terms.

The organization of this paper is as follows.
In \secref{sec_201708102140},
we explain three approximation schemes, that is,
Euler, Milstein and Crank-Nicolson scheme.
We next state our main theorems which determine the asymptotic error
distributions in Theorem~\ref{thm_20141204013919},
Theorem~\ref{thm_20141204015019} and
Theorem~\ref{thm_20141204015044}.
The next two sections are preliminaries for the proofs of
these theorems.
In \secref{sec_20150623023608},
we recall the definition of Russo-Vallois symmetric integral.
We consider the solutions to
SDEs driven by fractional Brownian motions with the Hurst parameter
$1/3<\Hurst\le 1/2$.
In this case, the solution has a Doss-Sussmann representation
and the Russo-Vallois integral is the same as
the symmetric Riemman-Stieltjes integral as Stratonovich integral.
By using this, we obtain estimates of iterated integrals.
Also we prepare lemmas for directional derivative of the solution
with respect to the driving path.
In \secref{sec_20150519021523},
we collect necessary results for convergence of
variation functionals.
These are essential for the proof of our main theorems.
We give the proof of these results in Appendixes~\ref{sec_20150107074711} and \ref{sec_20170519021523}.
In \secref{sec_201707212021},
we consider the Crank-Nicolson scheme and
prove Theorem~\ref{thm_20141204015044}.
For the reader's convenience, we give a skecth of
the proof by using the perturbation path $h^{(m)}$
in \rref{rem_201708102135}.
The proof of other two theorems are essentially similar to
that of this theorem. We give the sketch of the proof
for other two schemes, Euler scheme and Milstein type scheme
in \secref{sec_201707212051}.
In Appendix~\ref{sec_20150623014415},
we prepare the Gaussian analysis and Malliavin calculus.
In Appendixes~\ref{sec_20150107074711} and \ref{sec_20170519021523}, we prove the results stated in \secref{sec_20150519021523}.

Throughout this paper, we use the following notaion.
For $m\in\NaturalNum$, we denote by $\{\dyadicPart[m]{k}\}_{k=0}^{2^m}$
the $m$-th dyadic rationals,
that is, $\dyadicPart[m]{k}=k2^{-m}$ for $k=0,\dots,2^m$.
For $n\in\{0\}\cup\NaturalNum\cup\{\infty\}$,
$\SmoothFunc{n}{\RealNum^d}{\RealNum}$ denotes
the set of all $n$-times continuously differentiable
$\RealNum$-valued functions defined on $\RealNum^d$.
For $n\in\{0\}\cup\NaturalNum\cup\{\infty\}$, $\SmoothFunc[bdd]{n}{\RealNum^d}{\RealNum}$ (resp. $\SmoothFunc[poly]{n}{\RealNum^d}{\RealNum}$)
stands for the set of all functions $f\in\SmoothFunc{n}{\RealNum^d}{\RealNum}$
which are bounded (resp. polynomial growth) together with all their derivatives.
For $k,l\in\{0\}\cup\NaturalNum$, $\SmoothFunc{k,l}{\RealNum^2}{\RealNum}$
denotes the set of all functions $f:\RealNum^2\to\RealNum$
which is $k$-times (resp. $l$-times) continuously differentiable with respect to the first (resp. second) variable.
We denote the set of right continuous paths on $\RealNum^d$ whose left limit exist by $D([0,1];\RealNum^d)$.
For $\HolExp\in(0,1]$,
$\HolFunc{\HolExp}{[0,1]}{\RealNum}$ stands for
the set of all $\HolExp$-H\"{o}lder continuous functions from $[0,1]$ to $\RealNum$.
The space $\HolFunc[0]{\HolExp}{[0,1]}{\RealNum}$ is the set of all functions
$g\in\HolFunc{\HolExp}{[0,1]}{\RealNum}$ starting from zero.
For $g\in\HolFunc{\HolExp}{[0,1]}{\RealNum}$ and $0\leq t\leq 1$,
we define the uniform norm by
$
	\|g\|_{\infty,[0,t]}
	=
		\sup_{0\leq s\leq t}
			|g_s|
$.
We simply write $\|g\|_{\infty}=\|g\|_{\infty,[0,1]}$.
For fixed $0<s<1$, we define the shift operator $\theta_s$ by
$(\theta_s g)(t)=g_{t+s}-g_s$ for $0\leq t\leq 1-s$.

\section{Main results}\label{sec_201708102140}

We state our main result.
For $b,\sigma\in\SmoothFunc[bdd]{\infty}{\RealNum}{\RealNum}$,
we consider an SDE \eqref{eq_20141122070455}.
Throughout this paper, we consider a solution $X$ to \eqref{eq_20141122070455} given by \eqref{eq_1545722900}.
We refer the meaning of SDEs driven by
fBm to \secref{sec_20150623023608}.
To state our main results, we recall the definitions
of three approximation schemes.
%

\begin{definition}[The Euler scheme]
 For every $m\in\NaturalNum$, the Euler scheme
 $\bar{X}^{(m)}:[0,1]\to\RealNum$ is defined by
	\begin{align*}
		\left\{
			\begin{aligned}
				\bar{X}^{(m)}_0
				&=
					\xi,\\
								\bar{X}^{(m)}_t
				&=
					\bar{X}^{(m)}_{\dyadicPart[m]{k-1}}
					+
					b(\bar{X}^{(m)}_{\dyadicPart[m]{k-1}})
					(t-\dyadicPart[m]{k-1})
					+
			 		\sigma(\bar{X}^{(m)}_{\dyadicPart[m]{k-1}})
					(B_t-B_{\dyadicPart[m]{k-1}})
					\quad
				\text{for $\dyadicPart[m]{k-1}<t\leq\dyadicPart[m]{k}$.}
			\end{aligned}
		\right.
	\end{align*}
\end{definition}

\begin{definition}[The Milstein type scheme]
 For every $m\in\NaturalNum$, the Milstein type scheme
 $\bar{X}^{(m)}:[0,1]\to\RealNum$ is defined by
	\begin{align*}
		\left\{
			\begin{aligned}
				\bar{X}^{(m)}_0
				&=
					\xi,\\
				\bar{X}^{(m)}_t
				&=
					\App_{\dyadicPart[m]{k-1}}
					+
					b(\App_{\dyadicPart[m]{k-1}})
					(t-\dyadicPart[m]{k-1})
					+
					\frac{1}{2}
					bb'(\App_{\dyadicPart[m]{k-1}})
					(t-\dyadicPart[m]{k-1})^2\\
				&\phantom{=}\quad
					+
					\frac{1}{2}
					[\sigma b'+\sigma'b]
					(\App_{\dyadicPart[m]{k-1}})
					(t-\dyadicPart[m]{k-1})
					(B_t-B_{\dyadicPart[m]{k-1}})\\
				&\phantom{=}\quad
					+
					\sigma(\App_{\dyadicPart[m]{k-1}})
					(B_t-B_{\dyadicPart[m]{k-1}})
					+
					\frac{1}{2}
					\sigma\sigma'(\App_{\dyadicPart[m]{k-1}})
					(B_t-B_{\dyadicPart[m]{k-1}})^2
				\quad
				\text{for $\dyadicPart[m]{k-1}<t\leq\dyadicPart[m]{k}$}.
			\end{aligned}
		\right.
	\end{align*}
\end{definition}

\begin{definition}[The Crank-Nicolson scheme]
	For every $m\in\NaturalNum$, the Crank-Nicolson scheme $\App :[0,1]\to\RealNum$ is defined by
	a solution to an equation
	\begin{align*}
		\left\{
			\begin{aligned}
				\App_0
				&=
					\xi,\\
				\App_t
				&=
					\App_{\dyadicPart[m]{k-1}}
					+
					\frac{1}{2}
					\left\{
						b(\App_{\dyadicPart[m]{k-1}})
						+
						b(\App_t)
					\right\}
					(t-\dyadicPart[m]{k-1})\\
				&\phantom{=}\quad\qquad
					+
					\frac{1}{2}
					\left\{
						\sigma(\App_{\dyadicPart[m]{k-1}})
						+
						\sigma(\App_t)
					\right\}
					(B_t-B_{\dyadicPart[m]{k-1}})
				\quad
				\text{for $\dyadicPart[m]{k-1}<t\leq\dyadicPart[m]{k}$.}
			\end{aligned}
		\right.
	\end{align*}
\end{definition}
Since the Crank-Nicolson scheme is an implicit scheme,
we need to restrict the domain of it
and assure an existence of a solution to the equation above.
Roughly speaking, the existence of the solution is ensured for large $m$.

In order to state our main results concisely,
we set $\Wronskian=\sigma b'-\sigma'b$ and
\begin{align}\label{eq_20141226020239}
	J_t
	=
		\exp
			\left(
				\int_0^t
					b'(X_u)\,
					du
				+
				\int_0^t
					\sigma'(X_u)\,
					d^\circ B_u
			\right).
\end{align}
We assume the following hypothesis
in order to obtain an expression of the error of the scheme;
\begin{hypothesis}\label{hypo_ellipticity}
	$\inf\sigma>0$.
\end{hypothesis}

The following are our main results.

\begin{theorem}[Euler scheme]\label{thm_20141204013919}
 We consider the Euler scheme.
 Assume that \hyporef{hypo_ellipticity} is satisfied.
	For $1/2<\Hurst<1$, we have
	\begin{align*}
		\lim_{m\to\infty}
			2^{m(2\Hurst-1)}
			\{\bar{X}^{(m)}-X\}
		=
			\sigma(X)
			U
			+
			J
			\int_0^\cdot
				J_s^{-1}
				\Wronskian(X_s)
				U_s\,
				ds
	\end{align*}
	in probability with respect to the uniform norm.
	Here $U$ is defined by
	\begin{align*}
		U_t
		=
			\int_0^t
				f_2(X_u)\,
				du,
	\end{align*}
	where $f_2=-\sigma'/2$.
\end{theorem}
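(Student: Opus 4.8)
The plan is to exploit the Doss--Sussmann--type representation of both the true solution and the Euler scheme, together with the perturbation path $h^{(m)}$ announced in the introduction. First I would invoke \hyporef{hypo_ellipticity} to construct the unique piecewise linear path $h^{(m)}$ with $h^{(m)}_0=0$ satisfying $\bar{X}^{(m)}_{\dyadicPart[m]{k}}(\xi,B)=X_{\dyadicPart[m]{k}}(\xi,B+h^{(m)})$ for $0\le k\le 2^m$. Writing the Euler update on one step $[\dyadicPart[m]{k-1},\dyadicPart[m]{k}]$ and comparing with the exact flow started at $\bar{X}^{(m)}_{\dyadicPart[m]{k-1}}$, one identifies the increment $h^{(m)}_{\dyadicPart[m]{k}}-h^{(m)}_{\dyadicPart[m]{k-1}}$ up to higher order: the leading term is $f_2(\bar{X}^{(m)}_{\dyadicPart[m]{k-1}})(\Delta_k B)^2$ plus cross terms of the form $(\Delta_k B)(\dyadicPart[m]{k}-\dyadicPart[m]{k-1})$ and $(\dyadicPart[m]{k}-\dyadicPart[m]{k-1})^2$, coming from Taylor-expanding $\sigma$ and $b$ along the one-step flow. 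Here $f_2=-\sigma'/2$ arises exactly as the coefficient measuring the discrepancy between the Euler increment $\sigma(x)\,\Delta B$ and the true symmetric increment $\phi(\cdot)$ at second order in $\Delta B$.

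Next I would sum these increments. The $(\Delta_k B)(\dyadicPart[m]{k}-\dyadicPart[m]{k-1})$ and $(\dyadicPart[m]{k}-\dyadicPart[m]{k-1})^2$ contributions are $O(2^{-m})$ times bounded quantities after normalization by $2^{m(2\Hurst-1)}$ (since $2\Hurst-1<1$), hence negligible in the uniform topology. The essential term is $\sum_{k} f_2(\bar{X}^{(m)}_{\dyadicPart[m]{k-1}})(\Delta_k B)^2$. Using that $2^{2m\Hurst}\expect[(\Delta_k B)^2]$ is constant and that the true Hermite variation functional converges — this is where I would cite the results collected in \secref{sec_20150519021523} — one gets $2^{m(2\Hurst-1)}\sum_{k\le \intPart{2^m t}} f_2(X_{\dyadicPart[m]{k-1}})(\Delta_k B)^2 \to \int_0^t f_2(X_u)\,du = U_t$ in probability, uniformly in $t\in[0,1]$. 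One also must check that replacing $\bar{X}^{(m)}_{\dyadicPart[m]{k-1}}$ by $X_{\dyadicPart[m]{k-1}}$ inside $f_2$ is harmless, which follows from the (already small) uniform bound on $\bar{X}^{(m)}-X$ and Lipschitz continuity of $f_2$. Thus $2^{m(2\Hurst-1)}h^{(m)} \to U$ in probability in the uniform norm, with the remainder in the one-step expansion controlled using the iterated-integral estimates from \secref{sec_20150623023608}.

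Finally I would transfer this convergence to the error $\bar{X}^{(m)}-X$ via the directional derivative of the solution map $g\mapsto X(\xi,B+g)$. By \hyporef{hypo_ellipticity} the solution map is Fr\'echet differentiable in the driving path with derivative given by the variational equation; differentiating \eqref{eq_20141122070455} in the direction $U$ produces exactly $\sigma(X)U + J\int_0^\cdot J_s^{-1}\Wronskian(X_s)U_s\,ds$, where $J$ is the Jacobian \eqref{eq_20141226020239} and $\Wronskian=\sigma b'-\sigma'b$ appears as the commutator between the drift and diffusion vector fields. A first-order Taylor expansion $X_{\dyadicPart[m]{k}}(\xi,B+h^{(m)})-X_{\dyadicPart[m]{k}}(\xi,B) = DX_{\dyadicPart[m]{k}}(B)[h^{(m)}] + R^{(m)}$ then gives the claim on the dyadic grid, and the uniform-in-$t$ statement follows by interpolating with the H\"older bounds, using continuity of $t\mapsto X_t$ and of the limiting process.

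The main obstacle I expect is controlling the remainder term $R^{(m)}$ in the Taylor expansion uniformly: one must show $2^{m(2\Hurst-1)}\|R^{(m)}\|_\infty \to 0$, which requires quantitative second-order estimates on the solution map in the direction $h^{(m)}$, together with a good a priori H\"older bound on $h^{(m)}$ itself (not merely its piecewise-linear interpolation values) so that the second derivative term is genuinely of smaller order than $2^{-m(2\Hurst-1)}$. This is precisely the place where the perturbation-method approach is supposed to replace the long direct calculation of \cite{Naganuma2015}: the work is in the derivative estimates for the solution map prepared in \secref{sec_20150623023608}, applied to the explicit structure of $h^{(m)}$.
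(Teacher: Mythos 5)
Your proposal follows essentially the same route as the paper: construct the piecewise linear perturbation $h^{(m)}$ with $\bar{X}^{(m)}_{\dyadicPart[m]{k}}(\xi,B)=X_{\dyadicPart[m]{k}}(\xi,B+h^{(m)})$, expand the one-step error to identify $f_2(\cdot)(\Delta B_k)^2$ as the leading increment, pass to the limit via the weighted power-variation theorem (\tref{thm_20141203094309}), and transfer the convergence to $\bar{X}^{(m)}-X$ through the first directional derivative of \pref{prop_20140929105014}, with the Taylor remainder controlled by the second-derivative bounds. This matches the paper's \lrefs[lem_1506481850]{lem_1506481781}{lem_1506482751} and the $I_1+I_2+I_3$ decomposition of \rref{rem_201708102135}, so no substantive difference.
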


In this theorem, the limit is a continuous stochastic process
indexed by the elements of the interval $[0,1]$.
When we emphasize the time parameter $t$, we express the limit process as
$
	\sigma(X_t)
	U_t
	+
	J_t
	\int_0^t
		J_s^{-1}
		\Wronskian(X_s)
		U_s\,
		ds
$.

\begin{theorem}[Milstein type scheme]\label{thm_20141204015019}
 Assume that \hyporef{hypo_ellipticity} is satisfied.
 We consider Milsten type scheme.
	For $1/3<\Hurst<1/2$ (resp. $\Hurst=1/2$), we have
	\begin{align*}
		\lim_{m\to\infty}
			2^{m(4\Hurst-1)}
			\{\App-X\}
		=
			\sigma(X)
			U
			+
			J
			\int_0^\cdot
				J_s^{-1}
				\Wronskian(X_s)
				U_s\,
				ds
	\end{align*}
	in probability (resp. weakly) with respect to the uniform norm.
	Here $U$ is a stochastic process defined as follows;
	we set
	\begin{gather*}
		\begin{aligned}
			\psi
			&=
				-
				\frac{1}{4}
				\left[
					\frac{\sigma'(\sigma b'+\sigma'b)+\sigma(\sigma''b+\sigma b'')}{\sigma}
				\right],
			&
			f_3
			&=
				-
				\frac{1}{3!}
				[(\sigma')^2+\sigma\sigma''],
		\end{aligned}\\
		\begin{aligned}
			f^\dagger_4
			&=
				\frac{1}{24}
				[
					\sigma^2\sigma'''
					+6\sigma\sigma'\sigma''
					+3(\sigma')^3
				],
			&
			g_1
			&=
				\frac{\Wronskian}{\sigma}.
		\end{aligned}
	\end{gather*}
	\begin{enumerate}
		\item	For $1/3<\Hurst<1/2$, we set
				\begin{align*}
					U_t
					&=
						3
						\int_0^t
							f^\dagger_4(X_u)\,
							du.
				\end{align*}
		\item	For $\Hurst=1/2$, we set
				\begin{multline*}
					U_t
					=
						\int_0^t
							\psi(X_u)\,
							du
						+
						\sqrt{6}
						\int_0^t
							f_3(X_u)\,
							dW_u
						+
						3
						\int_0^t
							f_3(X_u)
							\circ
							dB_u\\
						+
						3
						\int_0^t
							f^\dagger_4(X_u)\,
							du
						+
						\frac{1}{\sqrt{12}}
						\int_0^t
							g_1(X_u)\,
							d\tilde{W}_u,
				\end{multline*}
				where $W$ and $\tilde{W}$ are standard
    			Brownian motions and $B$, $W$ and $\tilde{W}$ are independent.
    			Also $dW_u, d\tilde{W}_u$ and ${}\circ dB_u$ stand
                for the It\^o integral and the Stratonovich integral, respectively.
	\end{enumerate}
\end{theorem}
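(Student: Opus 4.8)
The plan is to follow the perturbation-path strategy outlined in the introduction. The first step is to produce, for each $m$, a piecewise-linear random path $h^{(m)}$ (adapted to the partition $\{\dyadicPart[m]{k}\}$) such that $\App_{\dyadicPart[m]{k}}(\xi,B)=X_{\dyadicPart[m]{k}}(\xi,B+h^{(m)})$ for all $k$. Here $\App$ denotes the Milstein type scheme. Its existence and uniqueness should follow from \hyporef{hypo_ellipticity}, since under $\inf\sigma>0$ the Doss--Sussmann map $y\mapsto X_{\dyadicPart[m]{k}}(\xi,\cdot)$ is, on each small interval, a strictly monotone function of the increment of the driving path. Concretely, on $[\dyadicPart[m]{k-1},\dyadicPart[m]{k}]$ one expands $X_{\dyadicPart[m]{k}}(\xi,B+h^{(m)})$ in the increment of $h^{(m)}$ via the estimates on iterated integrals and directional derivatives of the solution prepared in \secref{sec_20150623023608}, matches the expansion against the explicit Milstein increment in the definition above, and solves for the increment $h^{(m)}_{\dyadicPart[m]{k}}-h^{(m)}_{\dyadicPart[m]{k-1}}$. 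The Milstein scheme is built precisely so that its local increment agrees with the Taylor expansion of the true flow up to the order that produces a leading error term governed by $f_3^\dagger$ (for $1/3<\Hurst<1/2$) and, at $\Hurst=1/2$, additional terms involving $\psi$, $f_3$ and $g_1$.

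The second step is the asymptotic analysis of $h^{(m)}$. Writing the increment of $h^{(m)}$ on the $k$-th subinterval as a function of the increments of $B$ on that subinterval, one obtains (after cancellation of lower-order terms by the scheme design) a leading term that is, up to smooth coefficients evaluated at $X_{\dyadicPart[m]{k-1}}$, a weighted Hermite-type variation of $B$: for $1/3<\Hurst<1/2$ the dominant contribution is of Hermite rank giving the deterministic limit $3\int_0^\cdot f_4^\dagger(X_u)\,du$ after multiplying by $2^{m(4\Hurst-1)}$, while for $\Hurst=1/2$ the central-limit behaviour of the relevant Hermite variations contributes the Itô and Stratonovich integrals against the independent Brownian motions $W$, $\tilde W$ together with the drift terms $\psi$ and $3\int_0^\cdot f_3\circ dB$. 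This is where the results collected in \secref{sec_20150519021523} on convergence of variation functionals are invoked; the upshot is $2^{m(4\Hurst-1)}h^{(m)}\to U$ in probability (resp. weakly) in the uniform topology, with $U$ as defined in the statement. One must also control the remainder terms in the local expansions and show that their sum over $k$, multiplied by $2^{m(4\Hurst-1)}$, vanishes; this is exactly the step the introduction promises to simplify relative to \cite{Naganuma2015}, and it should reduce to summing $L^p$ bounds on the iterated-integral remainders that are uniform in $k$.

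The third step converts the limit for $h^{(m)}$ into the limit for $\App-X$. Since $X(\xi,\cdot)$ is Lipschitz in the uniform norm and, more precisely, Fréchet/directional differentiable along piecewise-linear perturbations with derivative computable from \eqref{eq_20141226020239} and the variation-of-constants formula, one writes
\begin{align*}
	\App_t-X_t
	=
		X_t(\xi,B+h^{(m)})-X_t(\xi,B)
	=
		DX_t(\xi,B)[h^{(m)}]
		+
		(\text{remainder}),
\end{align*}
valid at $t=\dyadicPart[m]{k}$ and extended to all $t$ by interpolating both sides and bounding the interpolation error. The directional derivative $DX_t(\xi,B)[h]$ is $\sigma(X_t)h_t+J_t\int_0^t J_s^{-1}\Wronskian(X_s)\,dh_s$, and integrating by parts (using $h^{(m)}_0=0$) turns $\int_0^t J_s^{-1}\Wronskian(X_s)\,dh^{(m)}_s$ into a form that, upon substituting $h^{(m)}\to U$, yields $J_t\int_0^t J_s^{-1}\Wronskian(X_s)U_s\,ds$ plus the boundary term absorbed into $\sigma(X_t)U_t$; combining with the second-step limit gives the claimed right-hand side. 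The main obstacle I anticipate is the second step: isolating the precise leading variation functional after the algebraic cancellations built into the Milstein increment, and in particular, at $\Hurst=1/2$, correctly tracking the independent Brownian motions $W$ and $\tilde W$ and the Stratonovich correction so that the limit matches the stated $U$ — this requires a careful stable-convergence argument together with the identification of limit covariances from \secref{sec_20150519021523}. The uniform (in $k$) remainder estimates are routine by comparison, but bookkeeping-heavy.
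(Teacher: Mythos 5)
Your proposal follows essentially the same route as the paper: construct the piecewise-linear perturbation $h^{(m)}$ via the monotonicity of $\kappa\mapsto X_t(\xi,B+\kappa\ell)$ under \hyporef{hypo_ellipticity}, identify the main term of $h^{(m)}$ as a weighted variation functional (with the $f_4^\dagger=f_4-\sigma f_3'/2$ cancellation producing the $3\int_0^\cdot f_4^\dagger(X_u)\,du$ limit via \tref{thm_20141203094309}, and the Hermite/trapezoidal variations of \secref{sec_20150519021523} producing the $\Hurst=1/2$ limit), and transfer the limit to $\App-X$ through the decomposition $I_1+I_2+I_3$ based on the directional derivative of \pref{prop_20140929105014}. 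This is exactly the argument the paper carries out in detail for the Crank--Nicolson scheme in \secref{sec_201707212021} and sketches for the Milstein scheme in \secref{sec_201707212051}.
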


\begin{theorem}[Crank-Nicolson scheme]\label{thm_20141204015044}
	Assume that \hyporef{hypo_ellipticity} is satisfied.
	For $1/3<\Hurst\leq 1/2$, we have
	\begin{align*}
		\lim_{m\to\infty}
			2^{m(3\Hurst-1/2)}
			\{\App-X\}
		=
			\sigma(X)
			U
			+
			J
			\int_0^\cdot
				J_s^{-1}
				\Wronskian(X_s)
				U_s\,
				ds
	\end{align*}
	weakly with respect to the uniform norm.
	Here $U$ is a stochastic process defined as follows;
	we set
	\begin{align*}
		\psi
		&=
			\frac{1}{4}
			[\sigma'b'+\sigma''b],
		&
		f_3
		&=
			\frac{1}{12}
			[(\sigma')^2+\sigma\sigma''],
		&
		g_1
		&=
			\frac{\Wronskian}{\sigma}.
	\end{align*}
	\begin{enumerate}
		\item	For $1/3<\Hurst<1/2$, we set
				\begin{align*}
					U_t
					=
						\sigma_{3,\Hurst}
						\int_0^t
							f_3(X_u)\,
							dW_u,
				\end{align*}
				where $\sigma_{3,\Hurst}$ is a positive constant defined by \eqref{eq_const_sigma_q_Hurst}
				and $W$ is a standard Brownian motion independent of $B$.
		\item	For $\Hurst=1/2$, we set
				\begin{align*}
					U_t
					=
						\int_0^t
							\psi(X_u)\,
							du
						+
						\sqrt{6}
						\int_0^t
							f_3(X_u)\,
							dW_u
						+
						3
						\int_0^t
							f_3(X_u)
							\circ
							dB_u
						+
						\frac{1}{\sqrt{12}}
						\int_0^t
							g_1(X_u)\,
							d\tilde{W}_u,
				\end{align*}
				where $W$ and $\tilde{W}$ are standard Brownian motions
				and $B$, $W$ and $\tilde{W}$ are independent.
	\end{enumerate}
\end{theorem}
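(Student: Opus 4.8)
The plan is to use the perturbation method announced in the introduction: realize the scheme as the exact solution of the SDE driven by a perturbed path $B+h^{(m)}$, and then identify the limit as a directional derivative of the solution map $B\mapsto X(\xi,B)$. I would carry this out in four stages.

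\emph{Stage 1 (the perturbation path).} Using \hyporef{hypo_ellipticity} and the Doss--Sussmann representation recalled in \secref{sec_20150623023608}, I would show by induction on $k$ that, for $m$ large, there is a unique continuous path $h^{(m)}$, affine on each $[\dyadicPart[m]{k-1},\dyadicPart[m]{k}]$ with $h^{(m)}_0=0$, such that $\App_{\dyadicPart[m]{k}}(\xi,B)=X_{\dyadicPart[m]{k}}(\xi,B+h^{(m)})$ for every $k$. The inductive step is a scalar equation for the slope of $h^{(m)}$ on the last cell; it is uniquely solvable because $\inf\sigma>0$ makes the corresponding endpoint value a strictly monotone function of that slope, and because the implicit Crank--Nicolson step is itself solvable for $m$ large. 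The iterated-integral estimates of \secref{sec_20150623023608} then yield the a priori bound $\|h^{(m)}\|_\infty=O(2^{-m(3\Hurst-1/2-\varepsilon)})$ for every $\varepsilon>0$; in particular $\|h^{(m)}\|_\infty\to0$.

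\emph{Stage 2 (one-step expansion).} On $[\dyadicPart[m]{k-1},\dyadicPart[m]{k}]$ both $\App$ and $X(\xi,B+h^{(m)})$ start from $\App_{\dyadicPart[m]{k-1}}$ and coincide at $\dyadicPart[m]{k}$. Comparing the Crank--Nicolson update with the exact one-step map of the equation driven by $B+h^{(m)}$ and Taylor-expanding in $\Delta B_k:=B_{\dyadicPart[m]{k}}-B_{\dyadicPart[m]{k-1}}$, $\Delta_k:=2^{-m}$ and $\delta^{(m)}_k:=h^{(m)}_{\dyadicPart[m]{k}}-h^{(m)}_{\dyadicPart[m]{k-1}}$, I would use that a path perturbation $\delta^{(m)}_k$ displaces the endpoint by $\sigma(\App_{\dyadicPart[m]{k-1}})\delta^{(m)}_k$ to leading order, while the Crank--Nicolson local error equals $\sigma(\App_{\dyadicPart[m]{k-1}})f_3(\App_{\dyadicPart[m]{k-1}})(\Delta B_k)^3$ plus drift, $\Delta_k$- and higher-order terms, with $f_3=\tfrac1{12}[(\sigma')^2+\sigma\sigma'']$ (only odd powers of $\Delta B_k$ survive at leading order, by the reversibility of the scheme). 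This yields
\[
	\delta^{(m)}_k
	=
		f_3(\App_{\dyadicPart[m]{k-1}})(\Delta B_k)^3
		+(\text{terms carrying a factor }\Delta_k)
		+(\text{remainder}).
\]
Writing $(\Delta B_k)^3=2^{-3m\Hurst}H_3(2^{m\Hurst}\Delta B_k)+3\cdot2^{-2m\Hurst}\Delta B_k$ isolates the Hermite-$3$ part. After summation and rescaling by $2^{m(3\Hurst-1/2)}$, the linear part and the $\Delta_k$-terms are $O(2^{m(\Hurst-1/2)})$ and hence vanish for $\Hurst<1/2$; for $\Hurst=1/2$ they survive and, together with the drift contributions and the trapezoidal signed-area error $\int_{\dyadicPart[m]{k-1}}^{\dyadicPart[m]{k}}(s-\tfrac{\dyadicPart[m]{k-1}+\dyadicPart[m]{k}}{2})\,dB_s$ (variance $\Delta_k^3/12$, coefficient $g_1=\Wronskian/\sigma$ after dividing by $\sigma$), produce the extra terms $\int_0^\cdot\psi(X_u)\,du$, $3\int_0^\cdot f_3(X_u)\circ dB_u$ and $\tfrac1{\sqrt{12}}\int_0^\cdot g_1(X_u)\,d\tilde{W}_u$.

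\emph{Stage 3 (passage to the limit).} Since $\|h^{(m)}\|_\infty\to0$, the Taylor expansion of the solution map from \secref{sec_20150623023608} gives $X(\xi,B+h^{(m)})=X(\xi,B)+Y^{(m)}+R^{(m)}$, where $\|R^{(m)}\|_\infty\lesssim\|h^{(m)}\|_\infty^2$ and $Y^{(m)}:=DX(\xi,B)[h^{(m)}]$ is the solution of the linear equation $dY^{(m)}_t=b'(X_t)Y^{(m)}_t\,dt+\sigma'(X_t)Y^{(m)}_t\,d^\circ B_t+\sigma(X_t)\,dh^{(m)}_t$. By variation of constants $Y^{(m)}_t=J_t\int_0^tJ_s^{-1}\sigma(X_s)\,dh^{(m)}_s$, and since $h^{(m)}$ is piecewise linear this equals $J_t\sum_{\dyadicPart[m]{k}\le t}J_{\dyadicPart[m]{k-1}}^{-1}\sigma(X_{\dyadicPart[m]{k-1}})\delta^{(m)}_k$ up to negligible terms. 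Inserting Stage 2 and invoking the convergence of weighted Hermite variations and power sums from \secref{sec_20150519021523} (proved in Appendixes~\ref{sec_20150107074711} and \ref{sec_20170519021523}), I would obtain, jointly with $B$ and weakly in the uniform topology, that $2^{m(3\Hurst-1/2)}h^{(m)}\to U$ with $U$ as in the statement: for $1/3<\Hurst<1/2$ only the Hermite-$3$ contribution remains, giving $U_t=\sigma_{3,\Hurst}\int_0^tf_3(X_u)\,dW_u$ with $W$ independent of $B$ and $\sigma_{3,\Hurst}$ as in \eqref{eq_const_sigma_q_Hurst}, while for $\Hurst=1/2$ one gets the four-term formula. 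Since $\App-X(\xi,B+h^{(m)})$ vanishes at dyadic times and both it and $R^{(m)}$ are $o(2^{-m(3\Hurst-1/2)})$ uniformly, $2^{m(3\Hurst-1/2)}\{\App-X\}$ converges (weakly, uniformly) to the solution $Y$ of $dY_t=b'(X_t)Y_t\,dt+\sigma'(X_t)Y_t\,d^\circ B_t+\sigma(X_t)\,dU_t$; differentiating $\sigma(X)U+J\int_0^\cdot J_s^{-1}\Wronskian(X_s)U_s\,ds$ and using $\Wronskian=\sigma b'-\sigma'b$ one checks that the $d^\circ B$ and $dt$ corrections cancel, so $Y$ equals this expression, which is the asserted limit.

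\emph{The main obstacle} is the remainder control in Stage 2 (the part that shortens the argument of \cite{Naganuma2015}): one must show that every term produced by the Taylor expansions --- higher powers $(\Delta B_k)^\ell$, products with $\Delta_k$ and with $\delta^{(m)}_k$, the error of freezing coefficients at $\App_{\dyadicPart[m]{k-1}}$ rather than $X_{\dyadicPart[m]{k-1}}$, and the within-cell part --- is $o(2^{-m(3\Hurst-1/2)})$ uniformly in probability. The efficient route is to absorb the $h^{(m)}$-dependent remainders into higher order via the Stage 1 bound and to estimate the $B$-only remainders by the iterated-integral moment bounds of \secref{sec_20150623023608}; the genuinely delicate case is the borderline regime $\Hurst\uparrow1/2$, where the linear, $\Delta_k$- and cross terms are no longer negligible and must be tracked exactly to produce the $\Hurst=1/2$ formula. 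A secondary technical point is the joint weak convergence with the Brownian motions $W,\tilde{W}$ independent of $B$, which relies on the asymptotic independence of higher-order Hermite variations of fractional Brownian motion from the driving path, encoded in the results of \secref{sec_20150519021523}.
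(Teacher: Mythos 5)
Your strategy is the paper's own perturbation method: construct the piecewise linear $h^{(m)}$ with $\App_{\dyadicPart[m]{k}}(\xi,B)=X_{\dyadicPart[m]{k}}(\xi,B+h^{(m)})$, expand the one-step error, and push the limit of $2^{m(3\Hurst-1/2)}h^{(m)}$ through the directional derivative $\nabla_h X$. Two of your steps, however, would fail as written.

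First, in Stage 2 the even powers of $\Delta B_k$ do not drop out ``by reversibility,'' and your claim that the linear ($H_1$) part of $(\Delta B_k)^3$ is $O(2^{m(\Hurst-1/2)})$ is false for $1/3<\Hurst<1/2$. The left-point Riemann sums $\sum_k f_3(X_{\dyadicPart[m]{k-1}})\Delta B_k$ diverge: they differ from the convergent symmetric sums by $\tfrac12\sum_k[f_3'\sigma](X_{\dyadicPart[m]{k-1}})(\Delta B_k)^2=O(2^{m(1-2\Hurst)})$, so after multiplying by $3\cdot2^{-2m\Hurst}$ and rescaling by $2^{m(3\Hurst-1/2)}$ this contribution is of order $2^{m(1/2-\Hurst)}\to\infty$. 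The one-step error in fact contains a quartic term $f_4(X_{\dyadicPart[m]{k-1}})(\Delta B_k)^4$ with $f_4=\sigma f_3'/2$, whose rescaled sum diverges at exactly the same rate with the opposite sign; only the combination $f_3(X_{\dyadicPart[m]{k-1}})+f_4(X_{\dyadicPart[m]{k-1}})\Delta B_k=\tfrac12\{f_3(X_{\dyadicPart[m]{k-1}})+f_3(X_{\dyadicPart[m]{k}})\}+O(\Delta^{2(\Hurst-\epsilon)})+O(\Delta)$ symmetrizes the weight, cancels the divergence, and puts the sum into the form $F_{st}(X)$ to which \tref[thm_20141203094309]{thm_20141117071529} apply (this is the role of the identity $f_4=\sigma f_3'/2$ in \lref{lem_201708091528}). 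Discarding the quartic term destroys this cancellation, so the $1/3<\Hurst<1/2$ case of your argument breaks down.

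Second, the a priori bound of Stage 1 is not obtainable from pathwise iterated-integral estimates: H\"older continuity only gives $\kappa_k=O(\Delta^{3(\Hurst-\epsilon)})$, hence $\|h^{(m)}\|_\infty=O(2^{-m(3\Hurst-1-3\epsilon)})$, strictly weaker than your claimed $O(2^{-m(3\Hurst-1/2-\varepsilon)})$ when $\Hurst<1/2$. This is not cosmetic. The error of freezing coefficients at $\App_{\dyadicPart[m]{k-1}}$ instead of $X_{\dyadicPart[m]{k-1}}$ is bounded by $O(\Delta^{3(\Hurst-\epsilon)-1})\max_k|\App_{\dyadicPart[m]{k}}-X_{\dyadicPart[m]{k}}|$, and under the rough bound this is only $O(\Delta^{6(\Hurst-\epsilon)-2})$, which is not $o(\Delta^{3\Hurst-1/2})$ for $\Hurst\le1/2$; likewise your quadratic Taylor remainder $R^{(m)}$ is not controlled. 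The paper closes this circle with a bootstrap in the proof of \lref{lem_201708091518}: it first deduces $\|h^{(m)}_M\|_\infty=O(\Delta^{3\Hurst-1/2-\delta})\Psi_{m,\delta}$ with $\Psi_{m,\delta}\to0$ in probability from the weak convergence of the main term (\lref{lem_1506322555}), then iterates the self-referential inequality $\epsilon_m\le\tilde K\epsilon_m+\hat K$ with $\tilde K=O(\Delta^{3(\Hurst-\epsilon)-1})$ to upgrade the estimate of $\epsilon_m=\max_k|\App_{\dyadicPart[m]{k}}-X_{\dyadicPart[m]{k}}|$, and only then bounds $\sum_k|R_k|$. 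Without this self-improvement step your remainder control is circular: you assume the sharp rate for $h^{(m)}$ in order to prove it.
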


We make remarks on our main results.
\begin{enumerate}
	\item	We explain how we derive $f_i, g_1, \varphi_{\boldsymbol{i}}, \psi, f_4^{\dagger}$
	 		$(i=2,3,4, \boldsymbol{i}=011,101,110)$.
			Since \trefs{thm_20141204013919}{thm_20141204015019}{thm_20141204015044}
			are proved by the same method, we explain the case of the Crank-Nicolson scheme
			(\tref{thm_20141204015044}) as an example.
			In the first step of our proof, we need to calculate
			one-step error $\hat{\kappa}_{k}$ of each approximation scheme as in \eqref{eq_1545356684}.
			In that calculation, the functions
			$\hat{f}_i, \hat{g}_1, \hat{\varphi}, \hat{\varphi}_{\boldsymbol{i}}$,
			which are defined by $\sigma$ and $b$, appear as the coefficients
			of the monomials of the increments of $\Delta B_k=B_{\dyadicPart[m]{k}}-B_{\dyadicPart[m]{k-1}}$
			and $\Delta=2^{-m}$ and iterated integrals of $B_t$ and $t$
			(\lref{lem_201707121721}).
			We define the functions $f_i, g_1, \varphi, \varphi_{\boldsymbol{i}}$
			by using $\hat{f}_i, \hat{g}_1, \hat{\varphi}, \hat{\varphi}_{\boldsymbol{i}}$
			and express main part of the piecewise linear function $h^{(m)}$
			in terms of $f_i, g_1, \varphi, \varphi_{\boldsymbol{i}}$ (\lref{lem_201708091518}).
			Finally, we study asymptotic of $h^{(m)}$ and then define $\psi=\phi+(\varphi_{011}+\varphi_{110})/4$
			(\lref{lem_201708091528}).
			In the case of the Euler and Milstein scheme,
			we show lemmas corresponding to \lrefs[lem_201707121721]{lem_201707121721}{lem_201708091528}.
			The function $f_4^\dagger$ in the Milstein scheme appears
			in studying in asymptotic of $h^{(m)}$.
	\item	\tref{thm_20141204013919} is an extension of \cite{NeuenkirchNourdin2007},
			but the proof is completely different and comparatively more simple.
	\item	In \cite{GradinaruNourdin2009}, the authors consider higher order schemes for SDEs without drift terms.
			\tref{thm_20141204015019} coresponds to the second order scheme for an SDE containing a drift term.
	\item	\tref{thm_20141204015044} is an extension of \cite{GradinaruNourdin2009,Naganuma2015}.
			To our knowledge, the convergence of the
			approximation solution itself is not unknown for
			$1/6<\Hurst\le 1/3$ (\cite{Nourdin2008a}).
			When $\sigma(x)^2$ is a quadratic function of $x$,
			\tref{thm_20141204015044}
			is proved in \cite{NeuenkirchNourdin2007} for
			$1/6<\Hurst<1/2$.
			In the case where $\Hurst>1/3$,
			the convergence of the approximation solution
			is a pathwise result, that is, the result holds
			for SDEs driven by H\"older continuous paths with
			H\"older exponent which is greater than $1/3$.
			However, the proof of \cite{NeuenkirchNourdin2007}
			is due to a central limit theorem and it is not clear
			that this is also a pathwise result.
\end{enumerate}

\section{ODEs driven by H\"{o}lder continuous functions and SDEs}
\label{sec_20150623023608}

In this section, we define the symmetric integral in the sense of Russo-Vallois
and discuss a unique existence and properties of a solution to
an ordinary differential equation (ODE).

Let $1/3<\HolExp<1$.
For a $\HolExp$-H\"older continuous function $g:[0,1]\to\RealNum$,
we consider an ODE
\begin{align}\label{eq_20140928070616}
	x_t
	=
		\xi
		+
		\int_0^t
			b(x_u)\,
			du
		+
		\int_0^t
			\sigma(x_u)\,
			d^\circ
			g_u,
	\quad
	t\in[0,1],
\end{align}
where $\xi\in\RealNum$ and $d^\circ g$ denotes the symmetric integral.
We shall also write $x_t(\xi,g)$, $x(\xi)$, or $x(g)$
for the solution $x$
to emphasize dependence on the initial value $\xi$ and/or the driver $g$.
Since fBm with the Hurst $1/3<\Hurst<1$ is
$(\Hurst-\epsilon)$-H\"{o}lder continuous with probability one,
we can deal with SDE \eqref{eq_20141122070455} in pathwise sense
by using the theory of ODEs \eqref{eq_20140928070616}.
We have $\HolExp=\Hurst-\epsilon$ in mind.
See \secref{sec_201708051641}.

We prepare notation.
For $g\in\HolFunc{\HolExp}{[0,1]}{\RealNum}$,
we use the symbol $\const[g]$, which may change line by line,
to denote a constant which has a bound
\begin{align*}
	\const[1]
	\left\{
		1
		+
		\sup_{0\leq s<t\leq 1}
			\frac{|g_t-g_s|}{(t-s)^\HolExp}
	\right\}^{\const[2]}
\end{align*}
for some constants $\const[1]$ and $\const[2]$,
which may depend on the H\"{o}lder exponent $\HolExp$ but not on $g$.

\subsection{Existence and uniqueness}
We collect facts on the symmetric integral
and a solution to an ODE \eqref{eq_20140928070616}.
In what follows, we assume $1/3<\HolExp<1$.
\begin{definition}
	For continuous functions $f,g:[0,1]\to\RealNum$,
	we define the symmetric integral in the sense of Russo-Vallois by
	\begin{align*}
		\int_0^t
			f_u\,
			d^\circ g_u
		=
		\lim_{\epsilon\downarrow 0}
			\int_0^t
				\frac{f_{(u+\epsilon)\wedge t}+f_u}{2}
				\frac{g_{(u+\epsilon)\wedge t}-g_u}{\epsilon}\,
				du
	\end{align*}
	if the limit of the right-hand side exists.
\end{definition}

\begin{proposition}[{\cite[Theorem~4.1.7]{Nourdin2004}}]\label{prop_20140928075311}
	Let $a\in\HolFunc{1}{[0,1]}{\RealNum}$
	and $g\in\HolFunc[0]{\HolExp}{[0,1]}{\RealNum}$.
	Then, for any $f\in\SmoothFunc{1,3}{\RealNum^2}{\RealNum}$,
	$
		\int_0^t
			\partial_2 f(a_u,g_u)\,
			d^\circ g_u
	$
	exists and it holds that
	\begin{align*}
		f(a_t,g_t)
		=
			f(a_0,g_0)
			+
			\int_0^t
				\partial_1 f(a_u,g_u)\,
				da_u
			+
			\int_0^t
				\partial_2 f(a_u,g_u)\,
				d^\circ g_u.
	\end{align*}
\end{proposition}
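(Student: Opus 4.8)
The plan is to argue straight from the $\epsilon$-regularization in the definition of $d^\circ g$, reducing the identity to a third-order Taylor expansion of $f$ in its second argument together with the cancellation of second-order terms that symmetrization produces. Write $F_u=f(a_u,g_u)$ and, for $\epsilon\in(0,t)$ and $u\in[0,t]$, put $v=v(u,\epsilon)=(u+\epsilon)\wedge t$. Since $a$ is Lipschitz and $g$ is $\HolExp$-H\"older on $[0,1]$, the pair $(a_u,g_u)$ stays in a fixed compact set $K\subset\RealNum^2$, so all the derivatives of $f$ entering the argument --- $\partial_1f$, $\partial_2f$, $\partial_2^2f$, $\partial_2^3f$, $\partial_1\partial_2f$, continuous by $f\in\SmoothFunc{1,3}{\RealNum^2}{\RealNum}$ --- are bounded on $K$, and $F$ is continuous. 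A short computation using the substitution $w=u+\epsilon$ gives
\[
	\frac1\epsilon\int_0^t(F_v-F_u)\,du
	=F_t-\frac1\epsilon\int_0^\epsilon F_w\,dw,
\]
which tends to $f(a_t,g_t)-f(a_0,g_0)$ as $\epsilon\downarrow0$.

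The core step is a pointwise-in-$u$ expansion of $F_v-F_u$. First split $F_v-F_u=[f(a_v,g_v)-f(a_v,g_u)]+[f(a_v,g_u)-f(a_u,g_u)]$; the second bracket equals $\int_u^v\partial_1f(a_r,g_u)\,da_r$. For the first bracket I would Taylor-expand $f(a_v,\cdot)$ about $g_u$ to third order, expand $\partial_2f(a_v,g_v)$ about $g_u$ to second order, and use $\partial_2f(a_u,g_u)-\partial_2f(a_v,g_u)=O(|a_v-a_u|)=O(\epsilon)$; with the base point $(a_v,g_u)$ chosen this way, the two occurrences of $\tfrac12\partial_2^2f(a_v,g_u)(g_v-g_u)^2$ cancel, and one is left with
\[
	f(a_v,g_v)-f(a_v,g_u)
	=\frac{\partial_2f(a_v,g_v)+\partial_2f(a_u,g_u)}{2}(g_v-g_u)
	+O(|g_v-g_u|^3)+O(\epsilon\,|g_v-g_u|),
\]
the implied constants depending only on the suprema over $K$ of $|\partial_1\partial_2f|$, $|\partial_2^2f|$, $|\partial_2^3f|$ and on the Lipschitz constant of $a$. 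This is the heart of the matter: the symmetric (trapezoidal) quadrature is accurate to third order, and this is exactly what makes $\HolExp>1/3$ the right hypothesis.

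Adding the two brackets, integrating over $u\in[0,t]$ and dividing by $\epsilon$, the left-hand side tends to $f(a_t,g_t)-f(a_0,g_0)$ by the preliminary step. The leading term on the right is, by definition, the $\epsilon$-regularized symmetric Riemann expression whose limit (if it exists) is $\int_0^t\partial_2f(a_u,g_u)\,d^\circ g_u$. The drift term $\frac1\epsilon\int_0^t\!\int_u^v\partial_1f(a_r,g_u)\,da_r\,du$ converges to $\int_0^t\partial_1f(a_r,g_r)\,da_r$: replacing $g_u$ by $g_r$ inside costs $O(\epsilon^{\HolExp})$ uniformly (boundedness of $\partial_1\partial_2f$ on $K$ and H\"older continuity of $g$), and the remaining double integral is evaluated by Fubini's theorem, the effective weight of $r$ being $\min(r,\epsilon)$. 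Finally, the error terms contribute at most $\frac{C}{\epsilon}\int_0^t(|g_v-g_u|^3+\epsilon|g_v-g_u|)\,du\le Ct(\epsilon^{3\HolExp-1}+\epsilon^{\HolExp})$, which vanishes as $\epsilon\downarrow0$ precisely because $3\HolExp>1$. Passing to the limit and rearranging the resulting identity shows that the defining limit of $\int_0^t\partial_2f(a_u,g_u)\,d^\circ g_u$ exists and equals $f(a_t,g_t)-f(a_0,g_0)-\int_0^t\partial_1f(a_u,g_u)\,da_u$, which is the claim.

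The single genuinely delicate point is the algebra in the core step: the base point of the Taylor expansion must be matched to the point produced by symmetrizing $\partial_2f$, so that the second-order terms actually cancel instead of merely being of the same order --- this is what upgrades the accuracy from second to third order and, with the summation, pins down the threshold $1/3$. The endpoint truncation $v=(u+\epsilon)\wedge t$ and the boundary strip $u\in[t-\epsilon,t]$ of length $\epsilon$ require a short line of bookkeeping but no real work. The remaining ingredients --- the H\"older bounds on $g$, the compactness localization for the derivatives of $f$, and Fubini's theorem for the drift term --- are routine.
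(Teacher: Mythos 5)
The paper offers no proof of this proposition --- it is imported verbatim from \cite[Theorem~4.1.7]{Nourdin2004} --- so there is nothing internal to compare against; your argument has to stand on its own, and it does. It is a correct, self-contained reconstruction, and structurally it is the mirror image of the paper's proof of \pref{prop_20150424100658}: there the authors \emph{assume} the change-of-variable formula and deduce convergence of the trapezoidal sums by a third-order Taylor cancellation, whereas you run the same cancellation in the opposite direction, starting from the $\epsilon$-regularization in the definition of $d^\circ g$, and thereby obtain existence of the symmetric integral and the formula simultaneously. The individual steps all check out: the telescoping identity $\frac1\epsilon\int_0^t(F_v-F_u)\,du=F_t-\frac1\epsilon\int_0^\epsilon F_w\,dw$; the matching of the two occurrences of $\tfrac12\partial_2^2f(a_v,g_u)(g_v-g_u)^2$, which is indeed the step that upgrades the quadrature to third order; the Fubini computation giving the weight $\min(r,\epsilon)/\epsilon\to 1$ for the drift; and the final error bound $Ct(\epsilon^{3\HolExp-1}+\epsilon^{\HolExp})$, which is exactly where $\HolExp>1/3$ is used. (For the drift term you invoke $\partial_1\partial_2 f$, but uniform continuity of $\partial_1 f$ on $K$ already suffices there, since that error is only divided by $\epsilon$ after being multiplied by $|a_v-a_u|=O(\epsilon)$.)

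One hypothesis point should be made explicit. The estimate $\partial_2f(a_u,g_u)-\partial_2f(a_v,g_u)=O(|a_v-a_u|)=O(\epsilon)$ is genuinely load-bearing: mere uniform continuity of $\partial_2 f$ in its first argument would give only $o(1)$, and after multiplying by $|g_v-g_u|=O(\epsilon^{\HolExp})$ and dividing by $\epsilon$ the resulting contribution $o(1)\cdot\epsilon^{\HolExp-1}$ need not vanish when $\HolExp<1$. So you need the mixed partial $\partial_1\partial_2 f$ (or at least a H\"older bound on $a\mapsto\partial_2f(a,y)$ of exponent exceeding $1-\HolExp$) to be locally bounded. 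Under the reading of $\SmoothFunc{1,3}{\RealNum^2}{\RealNum}$ in which all partials $\partial_1^i\partial_2^jf$ with $i\le1$, $j\le3$ are continuous this is automatic, and that is the convention in \cite{Nourdin2004}; but the paper's own \rref{rem_20150304061109} takes the intersection with $\SmoothFunc{1}{\RealNum^2}{\RealNum}$ precisely because the bare notation $\SmoothFunc{k,l}{\RealNum^2}{\RealNum}$ does not obviously control mixed derivatives. State which reading you are using; with that clarified, the proof is sound.
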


\begin{remark}\label{rem_20150304061109}
	Let $a\in\HolFunc{1}{[0,1]}{\RealNum}$ and $g\in\HolFunc[0]{\HolExp}{[0,1]}{\RealNum}$.
	Let $f\in\SmoothFunc{1,2}{\RealNum^2}{\RealNum}\cap \SmoothFunc{1}{\RealNum^2}{\RealNum}$.
	Then, we can choose a primitive function
	$F\in\SmoothFunc{1,3}{\RealNum^2}{\RealNum}\cap\SmoothFunc{1}{\RealNum^2}{\RealNum}$
	with respect to the second variable,
	that is, $f(x,y)=\partial_2 F(x,y)$ for any $x,y\in\RealNum$.
	Indeed, $F(x,y)=\int_0^y f(x,\eta)\,d\eta$ is a primitive function and the continuity of $\partial_1 f$ implies
	$
		\partial_1 F(x,y)
		=
			\int_0^y \partial_1 f(x,\eta)\,d\eta
	$.
	Hence, from \pref{prop_20140928075311}, we see
	$
	\int_0^t
		f(a_u,g_u)\,
		d^\circ g_u
	$
	exists
	and it holds that
	\begin{align*}
		\int_0^t
			f(a_u,g_u)\,
			d^\circ g_u
		=
			F(a_t,g_t)
			-
			F(a_0,g_0)
			-
			\int_0^t
				\partial_1 F(a_u,g_u)\,
				da_u.
	\end{align*}
\end{remark}

The next proposition asserts that a symmetric integral is a limit of a modified Riemann sum.
\begin{proposition}\label{prop_20150424100658}
	Let $a\in\HolFunc{1}{[0,1]}{\RealNum}$ and $g\in\HolFunc[0]{\HolExp}{[0,1]}{\RealNum}$.
	Let $0=t_0<\dots<t_n=t$ be a partition of $[0,t]$.
	For any $f\in\SmoothFunc{1,2}{\RealNum^2}{\RealNum}$, we see that
	\begin{align*}
		\sum_{k=1}^n
			\frac{f(a_{t_{k-1}},g_{t_{k-1}})+f(a_{t_k},g_{t_k})}{2}
			(g_{t_k}-g_{t_{k-1}})
	\end{align*}
	converges to
	$
		\int_0^t
			f(a_u,g_u)\,
			d^\circ g_u
	$
	as $\max\{t_k-t_{k-1};k=1,\dots,n\}$ tends to $0$.
\end{proposition}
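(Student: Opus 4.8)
The plan is to reduce the statement to Proposition \ref{prop_20140928075311} by introducing a primitive in the second variable, exactly as in Remark \ref{rem_20150304061109}. First I would pick $F\in\SmoothFunc{1,3}{\RealNum^2}{\RealNum}$ with $\partial_2 F=f$, as constructed there, so that by Proposition \ref{prop_20140928075311} the integral $\int_0^t f(a_u,g_u)\,d^\circ g_u$ exists and equals $F(a_t,g_t)-F(a_0,g_0)-\int_0^t \partial_1 F(a_u,g_u)\,da_u$. The idea is then to show that the modified Riemann sum
\[
	S_n
	=
	\sum_{k=1}^n
		\frac{f(a_{t_{k-1}},g_{t_{k-1}})+f(a_{t_k},g_{t_k})}{2}
		(g_{t_k}-g_{t_{k-1}})
\]
converges to the same limit along any partition with mesh tending to $0$. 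Writing $\delta_k = \max\{t_k-t_{k-1}\}$ for the mesh, I would expand each summand by Taylor's theorem: since $\partial_2 F = f$, the midpoint value $\tfrac12\{f(a_{t_{k-1}},g_{t_{k-1}})+f(a_{t_k},g_{t_k})\}(g_{t_k}-g_{t_{k-1}})$ should be compared with the exact increment $F(a_{t_k},g_{t_k})-F(a_{t_{k-1}},g_{t_{k-1}})$ minus the ``$a$-part'' $\partial_1 F(a_{t_{k-1}},g_{t_{k-1}})(a_{t_k}-a_{t_{k-1}})$.

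The core estimate is a one-step bound. Fix a subinterval $[s,t]=[t_{k-1},t_k]$ and consider
\[
	R
	=
	F(a_t,g_t)-F(a_s,g_s)
	-
	\frac{f(a_s,g_s)+f(a_t,g_t)}{2}(g_t-g_s)
	-
	\partial_1 F(a_s,g_s)(a_t-a_s).
\]
Using a second-order Taylor expansion of $F$ in the $g$-variable at the two endpoints (the trapezoidal-rule remainder) together with a first-order expansion in the $a$-variable, and the regularities $a\in\HolFunc{1}{[0,1]}{\RealNum}$, $g\in\HolFunc{\HolExp}{[0,1]}{\RealNum}$, I expect a bound of the form $|R|\le \const[g]\{(t-s)^{3\HolExp} + (t-s)^{1+\HolExp} + (t-s)^{1+1}\}$, where the first term comes from the $|g_t-g_s|^3$-type trapezoid error (controlled using $\partial_2^3 F$, which exists since $F\in\SmoothFunc{1,3}{}{}$) and the remaining terms from mixed $a$–$g$ contributions. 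Since $3\HolExp>1$ by the hypothesis $\HolExp>1/3$, every exponent here is strictly greater than $1$. Summing over $k$ gives $|S_n - (F(a_t,g_t)-F(a_0,g_0)-\sum_k \partial_1 F(a_{t_{k-1}},g_{t_{k-1}})(a_{t_k}-a_{t_{k-1}}))| \le \const[g]\, \delta_n^{\min\{3\HolExp,1+\HolExp\}-1}\to 0$, and separately the Riemann sum $\sum_k \partial_1 F(a_{t_{k-1}},g_{t_{k-1}})(a_{t_k}-a_{t_{k-1}})$ converges to the genuine integral $\int_0^t \partial_1 F(a_u,g_u)\,da_u$ because $a\in\HolFunc{1}{}{}$ and $u\mapsto\partial_1 F(a_u,g_u)$ is continuous. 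Combining these two convergences with the formula from Remark \ref{rem_20150304061109} identifies $\lim_n S_n$ with $\int_0^t f(a_u,g_u)\,d^\circ g_u$.

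The main obstacle is bookkeeping in the one-step estimate: one has to expand $F$ carefully around \emph{both} endpoints to exhibit the cancellation that produces the exponent $3\HolExp$ rather than merely $2\HolExp$ — i.e. it is genuinely the trapezoidal (not the endpoint) rule that is being used, so the linear-in-$g$ term must cancel and one needs the bound $|g_t - g_s|^3 \le \const[g] (t-s)^{3\HolExp}$ together with control on $\partial_2^2 F$ over the segment joining $(a_s,g_s)$ and $(a_t,g_t)$. A secondary technical point is that $F$ and its relevant derivatives need only be locally bounded, so one should first note that $\{(a_u,g_u):u\in[0,1]\}$ lies in a fixed compact set and replace $F$ by a compactly supported modification, making all the constants finite and of the claimed $\const[g]$ form. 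Everything else is routine summation of a convergent geometric-type tail in the mesh.
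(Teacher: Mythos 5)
Your proposal is correct and follows essentially the same route as the paper: introduce the primitive $F$ with $\partial_2F=f$ as in \rref{rem_20150304061109}, establish the one-step trapezoidal error bound of order $(t-s)^{(1+\HolExp)\wedge 3\HolExp}$ (which exceeds $1$ precisely because $\HolExp>1/3$), and sum over the partition. The only cosmetic difference is that the paper telescopes the symmetric integral itself over the subintervals via its additivity, whereas you telescope $F(a_\cdot,g_\cdot)$ and treat the $\int_0^t\partial_1F(a_u,g_u)\,da_u$ term as a separately convergent Riemann--Stieltjes sum; the key cancellation and estimates are identical.
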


\begin{proof}
	We use the formula in \rref{rem_20150304061109}.
	We have
	\begin{align*}
		\int_s^t
			f(a_u,g_u)\,
			d^{\circ}g_u
		&=
			F(a_t,g_t)-F(a_s,g_s)-\int_s^t\partial_1F(a_u,g_u)da_u\\
		&=
			\left\{
				F(a_t,g_t)-F(a_s,g_t)
				-
				\int_s^t
					\partial_1F(a_u,g_u)\,
					da_u
			\right\}
			+\{F(a_s,g_t)-F(a_s,g_s)\}\\
		&=
			\int_s^t
				\{\partial_1F(a_u,g_t)-\partial_1F(a_u,g_u)\}\,
				da_u\\
		&\phantom{=}\quad
			+f(a_s,g_s)(g_t-g_s)+\partial_2f(a_s,g_s)\frac{1}{2}(g_t-g_s)^2
			+\partial_2^2f(a_s,g_s+\theta(g_t-g_s))\frac{(g_t-g_s)^3}{3!}\\
		&=
			f(a_s,g_s)(g_t-g_s)
			+\partial_2f(a_s,g_s)\frac{1}{2}(g_t-g_s)^2
			+O(|t-s|^{1+\lambda})
			+O(|t-s|^{3\lambda}),
	\end{align*}
	where we used the Taylor formula and the H\"older continuity of $g$.
	On the other hand, by using the Taylor formula again, we have
	\begin{align*}
		\frac{f(a_s,g_s)+f(a_t,g_t)}{2}(g_t-g_s)
		&=
			f(a_s,g_s)(g_t-g_s)
			+
			\frac{1}{2}
			\left\{f(a_s,g_t)-f(a_s,g_s)\right\}
			(g_t-g_s)\\
		&\phantom{=}\quad
			+
			\frac{1}{2}
			\left\{f(a_t,g_t)-f(a_s,g_t)\right\}
			(g_t-g_s)\\
		&=
			f(a_s,g_s)(g_t-g_s)
			+\frac{1}{2}\partial_2f(a_s,g_s)(g_t-g_s)^2\\
		&\phantom{=}\quad
			+
			\frac{1}{4}
			\partial_2^2f(a_s,g_s+\theta(g_t-g_s))
			(g_t-g_s)^3\\
		&\phantom{=}\quad
			+
			\frac{1}{2}
			\partial_1f(a_s+\theta'(g_t-g_s),g_t)
			(a_t-a_s)(g_t-g_s).
	\end{align*}
	Therefore, we obtain
	\begin{align*}
		\int_s^tf(a_u,g_u)\,d^{\circ}g_u
		=
			\frac{f(a_t,g_t)+f(a_s,g_s)}{2}(g_t-g_s)
			+R(s,t),
	\end{align*}
	where $|R(s,t)|\le \const[g]|t-s|^{(1+\lambda)\wedge (3\lambda)}$.
	By the additivity property of the integral,
	$\int_s^tf(a_u,g_u)\,d^{\circ}g_u+\int_t^vf(a_u,g_u)\,d^{\circ}g_u=
	\int_s^vf(a_u,g_u)\,d^{\circ}g_u$~$(s<t<v)$ and a limiting argument,
	we obtain the desired result.
 \end{proof}

Next we consider properties of \eqref{eq_20140928070616}.
Let us start our discussion with properties of the flow $\phi$ associated to $\sigma$,
that is, $\phi$ is a unique solution $\phi$ to an ODE
\begin{align}\label{eq_20140928162150}
	\phi(\alpha,\beta)
	=
		\alpha
		+
		\int_0^\beta
			\sigma(\phi(\alpha,\eta))\,
			d\eta,
	\quad
	\beta
	\in
		\RealNum.
\end{align}
\begin{proposition}[{\cite[Lemma~2]{Doss1977}}]\label{prop_20140929101727}
	Let $n\geq 1$.
	For any $\sigma\in\SmoothFunc[bdd]{n}{\RealNum}{\RealNum}$ and
	an initial point $\alpha\in\RealNum$,
	there exists a unique solution to \eqref{eq_20140928162150}.
	The unique solution $\phi$ satisfies the following:
	\begin{enumerate}
		\item\label{item_1545964854}
				$\phi\in\SmoothFunc{n,n+1}{\RealNum^2}{\RealNum}\cap\SmoothFunc{n}{\RealNum^2}{\RealNum}$,
		\item\label{item_1545380688}
				$
					\phi(\alpha,\beta)
					=
						\phi(\phi(\alpha,\beta'),\beta-\beta')
				$,
		\item\label{item_1545964864}
				$
					\partial_1\phi(\alpha,\beta)
					=
						\exp
							\left(
								\int_0^\beta
									\sigma'(\phi(\alpha,\eta))\,
									d\eta
							\right)
				$.
	\end{enumerate}
\end{proposition}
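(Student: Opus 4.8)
\medskip
\noindent\textbf{Proof strategy.}
Equation \eqref{eq_20140928162150} is the integral form of the scalar initial value problem $\partial_\beta\phi(\alpha,\beta)=\sigma(\phi(\alpha,\beta))$ with $\phi(\alpha,0)=\alpha$, so $\phi(\alpha,\cdot)$ is the flow of the one-dimensional vector field $x\mapsto\sigma(x)$. The plan is to obtain global well-posedness from the classical Picard--Lindel\"of theory and then read off the three properties from uniqueness and from differentiating the equation; the regularity claim in item~(1) is the only part that needs a little care.

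First I would establish unique global solvability. Since $\sigma\in\SmoothFunc[bdd]{n}{\RealNum}{\RealNum}$ with $n\ge 1$, the map $\sigma$ is bounded and globally Lipschitz, so Picard--Lindel\"of gives a unique local solution for each initial point $\alpha$, and $\|\sigma\|_\infty<\infty$ rules out finite-time blow-up, so the solution is defined on all of $\RealNum$. This produces a well-defined map $\phi\colon\RealNum^2\to\RealNum$.

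For item~(1) I would invoke the standard theorem on smooth dependence of ODE solutions on initial data: Gronwall-type estimates on differences and on difference quotients of solutions yield continuity and then $C^1$-dependence in $\alpha$, and iterating (using $\sigma\in C^n$) gives joint $C^n$-regularity, so $\phi\in\SmoothFunc{n}{\RealNum^2}{\RealNum}$. To see that $\phi(\alpha,\cdot)$ is in fact $C^{n+1}$, I would bootstrap inside the equation itself: $\partial_\beta\phi=\sigma(\phi)$ shows that the $\beta$-derivative of $\phi(\alpha,\cdot)$ is as regular in $\beta$ as $\sigma\circ\phi(\alpha,\cdot)$, which raises the $\beta$-regularity one step at a time from $C^1$ up to $C^{n+1}$, while $\phi(\cdot,\beta)$ stays $C^n$ (limited by the smoothness of $\sigma$). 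Hence $\phi\in\SmoothFunc{n,n+1}{\RealNum^2}{\RealNum}\cap\SmoothFunc{n}{\RealNum^2}{\RealNum}$.

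Item~(2) follows from uniqueness: fixing $\alpha$ and $\beta'$, the maps $\eta\mapsto\phi(\alpha,\beta'+\eta)$ and $\eta\mapsto\phi(\phi(\alpha,\beta'),\eta)$ both solve $\partial_\eta\psi=\sigma(\psi)$ with value $\phi(\alpha,\beta')$ at $\eta=0$, so they coincide, and evaluating at $\eta=\beta-\beta'$ gives the cocycle identity. For item~(3), item~(1) lets me differentiate \eqref{eq_20140928162150} in $\alpha$ under the integral sign; writing $v(\beta)=\partial_1\phi(\alpha,\beta)$ this gives the linear scalar equation $v(\beta)=1+\int_0^\beta\sigma'(\phi(\alpha,\eta))v(\eta)\,d\eta$, whose unique solution is $v(\beta)=\exp\bigl(\int_0^\beta\sigma'(\phi(\alpha,\eta))\,d\eta\bigr)$. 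The one slightly delicate point I expect is the bookkeeping in item~(1), namely checking that exactly one extra order of $\beta$-differentiability is gained from the ODE beyond the $n$ orders of $\alpha$-differentiability available from $\sigma$; everything else is a direct appeal to classical ODE theory.
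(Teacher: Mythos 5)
The paper offers no proof of this proposition: it is imported verbatim from Doss \cite{Doss1977} as a known result, so there is nothing in the text to compare your argument against. Your proof is correct and is the standard classical one. Global well-posedness follows from $\sigma$ bounded and globally Lipschitz (which is exactly what $\sigma\in\SmoothFunc[bdd]{n}{\RealNum}{\RealNum}$, $n\ge 1$, provides); joint $C^n$-regularity is the usual smooth-dependence theorem; the extra order of $\beta$-regularity comes from bootstrapping $\partial_\beta\phi=\sigma(\phi)$, and your bookkeeping is right — the $k$-th $\beta$-derivative of $\sigma(\phi(\alpha,\cdot))$ only consumes derivatives of $\sigma$ up to order $k\le n$, so $\phi(\alpha,\cdot)\in C^{n+1}$ while $\phi(\cdot,\beta)$ remains $C^n$, matching the class $\SmoothFunc{n,n+1}{\RealNum^2}{\RealNum}\cap\SmoothFunc{n}{\RealNum^2}{\RealNum}$. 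The cocycle identity in item (2) is indeed immediate from uniqueness for the autonomous equation, and item (3) is the scalar variational equation, whose explicit exponential solution is justified once item (1) licenses differentiation under the integral sign. No gaps.
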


To state assertion about uniqueness of solutions to \eqref{eq_20140928070616},
we introduce a class $\mathfrak{C}$ of the solutions by
\begin{align*}
	\mathfrak{C}
	=
		\left\{
			x\in\HolFunc{\HolExp}{[0,1]}{\RealNum};\,
			\begin{minipage}{20em}
				there exist $f\in\SmoothFunc{1,3}{\RealNum^2}{\RealNum}$ and $k\in\HolFunc{1}{[0,1]}{\RealNum}$\\
				such that $x_t=f(k_t,g_t)$ for all $t\in[0,1]$
			\end{minipage}
		\right\}.
\end{align*}
Note that $\mathfrak{C}$ depends on $g\in\HolFunc[0]{\HolExp}{[0,1]}{\RealNum}$.

\begin{proposition}[{\cite[Theorem~4.3.1]{Nourdin2004}}, {\cite[Section~3]{NourdinSimon2007}}]\label{prop_20140928080940}
	Let $g\in\HolFunc[0]{\HolExp}{[0,1]}{\RealNum}$.
	Assume that $b\in\SmoothFunc[bdd]{1}{\RealNum}{\RealNum}$
	and $\sigma\in\SmoothFunc[bdd]{2}{\RealNum}{\RealNum}$.
	Then, a unique solution to \eqref{eq_20140928070616} in the class $\mathfrak{C}$ exists
	and it is given by
	\begin{align}\label{eq_1545722900}
		x_t=\phi(a_t,g_t),
	\end{align}
	where $\phi$ and $a\equiv a(\xi,g)$ are given by solutions to
	\eqref{eq_20140928162150} and
	\begin{align*}
		a_t
		=
			\xi
			+
			\int_0^t
				f_{\sigma,b}(a_u,g_u)\,
				du,
		\quad
		t\in[0,1],
	\end{align*}
	respectively. Here $f_{\sigma,b}=f_1f_2$ with
	\begin{align*}
		f_1(x,y)
		&=
			\exp
				\left(
					-
					\int_0^y
						\sigma'(\phi(x,\eta))\,
						d\eta
				\right),
		&
		f_2(x,y)
		&=
			b(\phi(x,y)).
	\end{align*}
\end{proposition}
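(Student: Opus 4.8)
The plan is to prove existence by exhibiting the explicit candidate $x_t=\phi(a_t,g_t)$ and verifying, via the change-of-variables formula \pref{prop_20140928075311}, that it solves \eqref{eq_20140928070616}, and then to prove uniqueness by a Doss--Sussmann inversion: for any solution $x=f(k,g)\in\mathfrak{C}$ of \eqref{eq_20140928070616} I would show that $\Psi(x_t,g_t)$, where $\Psi(\,\cdot\,,\beta)$ denotes the inverse of $\phi(\,\cdot\,,\beta)$, coincides with the very $a$ from the existence step, whence $x_t=\phi(a_t,g_t)$.

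For the existence part I would first check that the auxiliary ODE $a_t=\xi+\int_0^t f_{\sigma,b}(a_u,g_u)\,du$ has a unique solution $a\in\HolFunc{1}{[0,1]}{\RealNum}$. Since $g([0,1])$ is compact and $\sigma\in\SmoothFunc[bdd]{2}{\RealNum}{\RealNum}$, $b\in\SmoothFunc[bdd]{1}{\RealNum}{\RealNum}$, \pref{prop_20140929101727} shows that $\partial_1\phi$, and hence $f_1$, $f_2$ and their first partial derivatives in the first variable, are bounded on $\RealNum\times g([0,1])$, so $f_{\sigma,b}(\,\cdot\,,g_u)$ is bounded and Lipschitz uniformly in $u\in[0,1]$ and Picard--Lindel\"{o}f applies; the resulting $a$ is Lipschitz. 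Setting $x_t=\phi(a_t,g_t)$ and using $\phi\in\SmoothFunc{2,3}{\RealNum^2}{\RealNum}\subset\SmoothFunc{1,3}{\RealNum^2}{\RealNum}$ (\pref{prop_20140929101727}) together with $a\in\HolFunc{1}{[0,1]}{\RealNum}$, \pref{prop_20140928075311} gives $x_t=\phi(a_0,g_0)+\int_0^t\partial_1\phi(a_u,g_u)\,da_u+\int_0^t\partial_2\phi(a_u,g_u)\,d^\circ g_u$. Now $\phi(a_0,g_0)=\phi(\xi,0)=\xi$; the relation $\partial_2\phi(\alpha,\beta)=\sigma(\phi(\alpha,\beta))$ from \eqref{eq_20140928162150} turns the last term into $\int_0^t\sigma(x_u)\,d^\circ g_u$; and $\partial_1\phi(a_u,g_u)=f_1(a_u,g_u)^{-1}$ by \pref{prop_20140929101727}, so $\partial_1\phi(a_u,g_u)\,da_u=f_1(a_u,g_u)^{-1}f_{\sigma,b}(a_u,g_u)\,du=f_2(a_u,g_u)\,du=b(x_u)\,du$. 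Hence $x$ solves \eqref{eq_20140928070616}; being of the form $\phi(a_t,g_t)$ with $\phi\in\SmoothFunc{1,3}{\RealNum^2}{\RealNum}$ and $a\in\HolFunc{1}{[0,1]}{\RealNum}$, it lies in $\mathfrak{C}$, and this is formula \eqref{eq_1545722900}.

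For uniqueness, let $x=f(k,g)\in\mathfrak{C}$ solve \eqref{eq_20140928070616}. Because $\partial_1\phi>0$ everywhere (\pref{prop_20140929101727}), the map $\alpha\mapsto\phi(\alpha,\beta)$ is a diffeomorphism of $\RealNum$ for each fixed $\beta$; let $\Psi(\,\cdot\,,\beta)$ be its inverse. The implicit function theorem makes $\Psi$ regular enough to lie in $\SmoothFunc{1,3}{\RealNum^2}{\RealNum}$ and, differentiating $\phi(\Psi(\alpha,\beta),\beta)=\alpha$ and using $\partial_2\phi=\sigma\circ\phi$, gives $\partial_1\Psi(\alpha,\beta)=\partial_1\phi(\Psi(\alpha,\beta),\beta)^{-1}$ and $\partial_2\Psi(\alpha,\beta)=-\sigma(\alpha)\,\partial_1\phi(\Psi(\alpha,\beta),\beta)^{-1}$. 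Put $a_t=\Psi(x_t,g_t)=G(k_t,g_t)$ with $G(\alpha,\beta)=\Psi(f(\alpha,\beta),\beta)\in\SmoothFunc{1,3}{\RealNum^2}{\RealNum}$. Applying \pref{prop_20140928075311} to $x_t=f(k_t,g_t)$ and comparing with \eqref{eq_20140928070616}, the uniqueness of the splitting of the path into its absolutely continuous part and its $d^\circ g$-part yields $f(k_0,g_0)=\xi$, $\partial_1 f(k_u,g_u)\,dk_u=b(x_u)\,du$ and $\partial_2 f(k_u,g_u)=\sigma(x_u)$. Applying \pref{prop_20140928075311} to $a_t=G(k_t,g_t)$, the coefficient of $d^\circ g_u$ is $\partial_1\Psi(x_u,g_u)\,\partial_2 f(k_u,g_u)+\partial_2\Psi(x_u,g_u)=\partial_1\Psi(x_u,g_u)\sigma(x_u)+\partial_2\Psi(x_u,g_u)=0$, so $a\in\HolFunc{1}{[0,1]}{\RealNum}$, $a_0=\Psi(\xi,0)=\xi$, and $da_u=\partial_1\Psi(x_u,g_u)\,\partial_1 f(k_u,g_u)\,dk_u=\partial_1\phi(a_u,g_u)^{-1}b(x_u)\,du=f_1(a_u,g_u)f_2(a_u,g_u)\,du$. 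Thus $a$ solves the auxiliary ODE of the existence step; by its uniqueness $a$ is that solution, so $x_t=\phi(\Psi(x_t,g_t),g_t)=\phi(a_t,g_t)$, establishing both uniqueness in $\mathfrak{C}$ and the representation.

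The hard part will be the splitting-uniqueness step, that is, deducing from the two representations of $x_t$ that the absolutely continuous part and the $d^\circ g$-part agree separately and, in particular, that $\partial_2 f(k_u,g_u)=\sigma(x_u)$ holds pointwise; this fails for smooth $g$ and relies on the genuine roughness of $g$, which holds for fBm with $\Hurst<1$. I would either invoke this from \cite[Theorem~4.3.1]{Nourdin2004} and \cite[Section~3]{NourdinSimon2007} or reproduce their argument. Everything else---the change-of-variables formula \pref{prop_20140928075311}, the flow identities of \pref{prop_20140929101727}, the implicit function theorem and Picard--Lindel\"{o}f---is routine.
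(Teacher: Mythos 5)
Your existence argument is essentially the paper's: both verify that $x_t=\phi(a_t,g_t)$ lies in $\mathfrak{C}$ and solves \eqref{eq_20140928070616} by combining \pref{prop_20140928075311} with the flow identities of \pref{prop_20140929101727}; you merely spell out the Picard--Lindel\"of step for $a$ and the cancellation $\partial_1\phi(a_u,g_u)\,f_{\sigma,b}(a_u,g_u)=b(x_u)$, which the paper leaves implicit. The divergence is in the uniqueness half, and there your route has a genuine gap which you have in fact located yourself. The ``splitting-uniqueness'' step --- deducing from the two representations of $x$ that the $du$-part and the $d^\circ g$-part agree separately, so that $\partial_2 f(k_u,g_u)=\sigma(x_u)$ pointwise --- is false in the generality of the proposition: $g$ is an arbitrary element of $\HolFunc[0]{\HolExp}{[0,1]}{\RealNum}$ and may perfectly well be smooth (take $g_t=t$), in which case $d^\circ g_u=\dot g_u\,du$ and the two parts merge into a single absolutely continuous term, so no such identification is available. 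Even for genuinely rough $g$ the identification is a Norris/Doob--Meyer-type statement that is not what the cited references prove, so ``invoke it from \cite{Nourdin2004,NourdinSimon2007}'' does not close the gap; at best you would need to prove the splitting for truly rough $g$ and handle smooth $g$ by classical ODE uniqueness in a separate case.

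The paper's uniqueness argument avoids separating the two parts altogether. Given a solution $x=f(k,g)\in\mathfrak{C}$ of \eqref{eq_20140928070616}, it forms the two-parameter quantity $A_{st}=\int_s^t x_u\,d^\circ g_u-\tfrac{1}{2}(x_t+x_s)(g_t-g_s)$ (well defined by \rref{rem_20150304061109}), checks via \cite[Lemma~3.4 and Proposition~3.5]{NourdinSimon2007} that the pair $(x,A)$ is a solution in the sense of \cite[Definition~3.1]{NourdinSimon2007}, and then applies the uniqueness theorem \cite[Corollary~3.7]{NourdinSimon2007}, which rests on a sewing/Gronwall-type argument valid uniformly for every $\HolExp$-H\"older $g$ with $\HolExp>1/3$, smooth or not, and directly outputs the representation $x_t=\phi(a_t,g_t)$. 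Your inversion $a_t=\Psi(x_t,g_t)$ is a good heuristic for why the representation should hold, but as a proof of uniqueness it does not go through without replacing the splitting step by an argument of the Nourdin--Simon type.
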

\begin{proof}
	It is easily shown that $x$ given by \eqref{eq_1545722900} belongs to $\mathfrak{C}$
	and satisfy \eqref{eq_20140928070616}.
	Indeed, \pref{prop_20140929101727}~(\ref{item_1545964854}) implies
	$
		\phi
		\in
			\SmoothFunc{2,3}{\RealNum^2}{\RealNum}
		\subset
			\SmoothFunc{1,3}{\RealNum^2}{\RealNum}
	$
	and $a\in\HolFunc{1}{[0,1]}{\RealNum}$.
	From \pref{prop_20140928075311} and \pref{prop_20140929101727}~(\ref{item_1545964864}),
	we see that $x$ satisfies \eqref{eq_20140928070616}.
	To prove the uniqueness, we borrow results from \cite[Section~3]{NourdinSimon2007}.
	Let $x$ be a solution in the class $\mathfrak{C}$ and given by
	$x=f(k,g)$ for $f\in\SmoothFunc{1,3}{\RealNum^2}{\RealNum}$	and $k\in\HolFunc{1}{[0,1]}{\RealNum}$.
	Since $\int_s^t x_u\,d^\circ g_u=\int_s^t f(k_u,g_u)\,d^\circ g_u$ is well-defined from \rref{rem_20150304061109},
	set
	$
		A_{st}
		=
			\int_s^t x_u\,d^\circ g_u
			-
			\frac{1}{2}
			(x_t+s_s)
			(g_t-g_s)
	$.
	Then, we deduce that $(x,A)$ is a solution to \eqref{eq_20140928070616}
	in the sense of \cite[Definition~3.1]{NourdinSimon2007}
	from \cite[Lemma~3.4 and Proposition~3.5]{NourdinSimon2007}.
	Finally, \cite[Corollary~3.7]{NourdinSimon2007} implies $x_t=\phi(a_t,g_t)$.
\end{proof}

\begin{proposition}\label{prop_20140928154144}
	Let $x$ be the solution to \eqref{eq_20140928070616} given by \eqref{eq_1545722900}.
	For fixed $0<s<1$, we have $x_{s+t}(\xi,g)=
	x_t(x_s(\xi,g),\theta_sg)$ for any $0\leq t\leq 1-s$.
\end{proposition}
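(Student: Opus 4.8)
The plan is to deduce the semigroup property of $x$ from the composition rule for the flow $\phi$ of $\sigma$ (\pref{prop_20140929101727}~(\ref{item_1545380688})), using the Doss--Sussmann representation $x_t(\xi,g)=\phi(a_t(\xi,g),g_t)$ of \pref{prop_20140928080940}. The point of passing through $a$ is that $a(\xi,g)$ solves a classical ODE with the fixed smooth coefficient $f_{\sigma,b}$ in which $g$ enters only through ordinary Lebesgue integration, so no discussion of how the Russo--Vallois integral transforms under a time shift is needed. Fix $0<s<1$.

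The core of the argument is the identity
\[
   a_t\bigl(x_s(\xi,g),\theta_s g\bigr)=\phi\bigl(a_{s+t}(\xi,g),g_s\bigr),\qquad 0\le t\le 1-s,
\]
which I would prove by checking that both sides, as functions of $t$, solve the same initial value problem for the ODE defining $a(\,\cdot\,,\theta_s g)$ and then invoking uniqueness for that (locally Lipschitz) ODE. At $t=0$ the right-hand side is $\phi(a_s(\xi,g),g_s)=x_s(\xi,g)$, the prescribed initial value. Differentiating the right-hand side and using $\dot a_{s+t}(\xi,g)=f_{\sigma,b}(a_{s+t},g_{s+t})$ gives $\partial_1\phi(a_{s+t},g_s)\,f_{\sigma,b}(a_{s+t},g_{s+t})$, so it remains to check this equals $f_{\sigma,b}\bigl(\phi(a_{s+t},g_s),(\theta_s g)_t\bigr)$. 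For the factor $f_2=b\circ\phi$ this is immediate from $\phi(a_{s+t},g_{s+t})=\phi\bigl(\phi(a_{s+t},g_s),g_{s+t}-g_s\bigr)$ together with $g_{s+t}-g_s=(\theta_s g)_t$. For the factor $f_1$ one inserts $\partial_1\phi(\alpha,\beta)=\exp\bigl(\int_0^\beta\sigma'(\phi(\alpha,\eta))\,d\eta\bigr)$ from \pref{prop_20140929101727}~(\ref{item_1545964864}), rewrites $\partial_1\phi(a_{s+t},g_s)\,f_1(a_{s+t},g_{s+t})=\exp\bigl(-\int_{g_s}^{g_{s+t}}\sigma'(\phi(a_{s+t},\eta))\,d\eta\bigr)$, substitutes $\eta=g_s+\zeta$, and applies the composition rule again in the form $\phi(a_{s+t},g_s+\zeta)=\phi(\phi(a_{s+t},g_s),\zeta)$ to recognize the result as $f_1\bigl(\phi(a_{s+t},g_s),(\theta_s g)_t\bigr)$. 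This short computation --- where the exponential structure of $\partial_1\phi$ and the semigroup property of $\phi$ match up --- is the only nontrivial step; everything else is bookkeeping.

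Granting the identity, the conclusion is one more application of \pref{prop_20140929101727}~(\ref{item_1545380688}):
\[
   x_t\bigl(x_s(\xi,g),\theta_s g\bigr)
   =\phi\bigl(a_t(x_s(\xi,g),\theta_s g),(\theta_s g)_t\bigr)
   =\phi\bigl(\phi(a_{s+t}(\xi,g),g_s),\,g_{s+t}-g_s\bigr)
   =\phi\bigl(a_{s+t}(\xi,g),g_{s+t}\bigr)
   =x_{s+t}(\xi,g).
\]
An alternative would be to verify directly that $y_t:=x_{s+t}(\xi,g)$ solves \eqref{eq_20140928070616} on $[0,1-s]$ with initial value $x_s(\xi,g)$ and driver $\theta_s g$, and that it lies in the class $\mathfrak{C}$ (write $y_t=\phi\bigl(a_{s+t},(\theta_s g)_t+g_s\bigr)$ and absorb the constant $g_s$ into a relabeled $\SmoothFunc{1,3}{\RealNum^2}{\RealNum}$ function), and then to quote the uniqueness part of \pref{prop_20140928080940}. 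This is equally short but needs the invariance of the Russo--Vallois integral under the time translation $u\mapsto s+u$ combined with the constant shift $g\mapsto g-g_s$; that is routine from the modified Riemann sum characterization (\pref{prop_20150424100658}), but it is precisely what the first route sidesteps, so I would present the first route.
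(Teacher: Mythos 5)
Your proof is correct and follows essentially the same route as the paper: establish $a_t(x_s(\xi,g),\theta_s g)=\phi(a_{s+t}(\xi,g),g_s)$ by checking the initial value and the ODE via the composition and exponential identities of \pref{prop_20140929101727}, then conclude with \pref{prop_20140928080940}. The only difference is cosmetic: you spell out the verification of the transformation identities for $f_1$ and $f_2$ that the paper simply records as consequences of \pref{prop_20140929101727}.
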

\begin{proof}
	We first prove
	$a_t\left(x_s(\xi,g),\theta_s g\right)
		=
			\tilde{a}_t
		:=
			\phi(a_{s+t}(\xi,g),g_s)
	$.
	From \pref{prop_20140929101727}, we see
	\begin{align*}
		\frac{1}{f_1(x,y')}
		\cdot
		f_1(x,y)
		&=
			f_1(\phi(x,y'),y-y'),
		&
		f_2(x,y)
		&=
			f_2(\phi(x,y'),y-y').
	\end{align*}
	Hence, it holds that
	\begin{align*}
		\frac{d}{dt}
			\tilde{a}_t
		&=
			\partial_1\phi(a_{s+t}(\xi,g),g_s)
			\frac{d}{dt}a_{s+t}(\xi,g)\\
		&=
			\frac{1}{f_1(a_{s+t}(\xi,g),g_s)}
			\cdot
			f_1(a_{s+t}(\xi,g),g_{s+t})
			f_2(a_{s+t}(\xi,g),g_{s+t})\\
		&=
			[f_1f_2](\phi(a_{s+t}(\xi,g), g_s),g_{s+t}-g_s)\\
		&=
			f_{\sigma,b}(\tilde{a}_t,(\theta_s g)_t).
	\end{align*}
	By the definition of $\tilde{a}$ and \pref{prop_20140928080940},
	we have
	$
		\tilde{a}_0
		=
			\phi(a_s(\xi,g),g_s)
		=
			x_s(\xi,g)
	$.
	It follows from the uniquness of a solution that
	$
		a_t\left(x_s(\xi,g),\theta_s g\right)
		=
 		\tilde{a}_t
 	$.

	Combining \pref{prop_20140929101727} (\ref{item_1545380688}),
	\pref{prop_20140928080940} and this equality, we obtain
	\begin{align*}
		x_{s+t}(\xi,g)
		&=
			\phi(a_{s+t}(\xi,g),g_{s+t})
		=
			\phi(\phi(a_{s+t}(\xi,g_s),g_{s+t}-g_s)\\
		&=
			\phi(a_t(x_s(\xi,g),\theta_s g),
			(\theta_s g)_t)
		=
			x_t\left(x_s(\xi,g),\theta_sg\right),
	\end{align*}
	which completes the proof.
\end{proof}

\begin{remark}\label{rem_201708111136}
	We assume the same assumption as in \pref{prop_20140928080940}
	and consider the solution $x$ to \eqref{eq_20140928070616} given by \eqref{eq_1545722900}.
	In the proposition, we consider H\"older continuous paths.
	However it is easy to check that
	the mapping $g\mapsto x(g)$ can be extended to a continuous mapping
	on $C([0,1];\RealNum)$ with the uniform convergence
	norm $\|\cdot\|_{\infty}$.
	Further, by \rref{rem_20150304061109},
	for any $f\in\SmoothFunc{1,2}{\RealNum^2}{\RealNum}\cap\SmoothFunc{1}{\RealNum^2}{\RealNum}$,
	we have the continuity of
	the mapping in the uniform convergence topology :
	\begin{align*}
		C([0,1];\RealNum)\ni g
		\mapsto
			\int_0^{\cdot}f(a_s(g),x_s(g))\,d^{\circ}g_s
		\in C([0,1];\RealNum).
	\end{align*}
\end{remark}

\subsection{The Taylor expansion and its remainder estimates}
\label{sec_20141211055401}
For notational convenience,
we set $g^0_t=t$, $g^1_t=g_t$ for $0\leq t\leq 1$.
Let $x$ be the solution to \eqref{eq_20140928070616} given by \eqref{eq_1545722900}.
Assume that $b\in\SmoothFunc[bdd]{1}{\RealNum}{\RealNum}$
and $\sigma\in\SmoothFunc[bdd]{2}{\RealNum}{\RealNum}$.
For
$0\leq s\leq t\leq 1$ and $f\in\SmoothFunc[bdd]{2}{\RealNum}{\RealNum}$,
we can define
    \begin{align*}
        I^0_{st}
        &=
            \int_s^t
                f(x_u)\,
                dg^0_u,
        &
        I^1_{st}(f)
        =
            \int_s^t
                f(x_u)\,
                d^{\circ}g^1_u.
\end{align*}
Here, $I^0_{st}(f)$ is a usual Riemann integral.
As for $I^1_{st}(f)$, the reasoning is as follows.
By using functions $\phi$ and $a$ given in \pref{prop_20140928080940},
we have $f(x_u)=[f\circ\phi](a_u,g_u)$
and $f\circ\phi\in\SmoothFunc{1,2}{\RealNum^2}{\RealNum}\cap \SmoothFunc{1}{\RealNum^2}{\RealNum}$.
From \rref{rem_20150304061109}, we see $F(x,y)=\int_0^y f(x,\eta)\,d\eta$ belongs to $\SmoothFunc{1,3}{\RealNum^2}{\RealNum}\cap \SmoothFunc{1}{\RealNum^2}{\RealNum}$
and it holds that
\begin{align*}
	I^{1}_{st}(f)
	=
		\int_s^t
			[f\circ\phi](a_u,g_u)\,
			d^\circ g_u
	=
		F(a_t,g_t)
			-
			F(a_s,g_s)
			-
			\int_s^t
				\partial_1 F(a_u,g_u)\,
				da_u.
\end{align*}
Hence we see $I^{1}_{st}(f)$ is well-defined.
Further, for any $\alpha_1,\dots,\alpha_n\in\{0,1\}$, we can define the iterated integral
\begin{align*}
	I^{\alpha_1\cdots\alpha_n}_{st}(f)
	=
		\int_s^t
		I^{\alpha_1\cdots\alpha_{n-1}}_{su}(f)\,
			d^\circ g^{\alpha_n}_{u}
\end{align*}
inductively in the same way.
For $f\equiv 1$, we set
$
	g^{\alpha_1\cdots\alpha_n}_{st}
	=
		I^{\alpha_1\cdots\alpha_n}_{st}(f)
$.
We set $V_0=b$, $V_1=\sigma$
and define a vector field by $\vecField{V}_\alpha f=V_\alpha f'$.

From \rref{rem_20150304061109}, we see the following estimate.
\begin{lemma}\label{lem_201708052140}
	Assume that $b\in\SmoothFunc[bdd]{1}{\RealNum}{\RealNum}$
	and $\sigma\in\SmoothFunc[bdd]{2}{\RealNum}{\RealNum}$.
	Let $f\in\SmoothFunc[bdd]{2}{\RealNum}{\RealNum}$.
	Let $\alpha_1,\ldots,\alpha_n\in \{0,1\}$ and set $r_i=\sharp\{k=1,\dots,n;\alpha_k=i\}$.
	Then,
	there exists a constant $C=\const[f,g,\alpha_1,\ldots,\alpha_n]$
	which depends only on $f$, the H\"older constant of $g$ and
	$\alpha_1,\ldots,\alpha_n$
	such that, for any $0\leq s<t\leq 1$,
	\begin{align*}
		|I^{\alpha_1\cdots\alpha_n}_{st}(f)|
		\leq
			C
			(t-s)^{r_0+r_1\HolExp}.
	\end{align*}
\end{lemma}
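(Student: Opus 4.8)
The plan is to argue by induction on the length $n$ of the word $\alpha_1\cdots\alpha_n$, writing $r_0,r_1$ for the counts belonging to whatever word is under discussion. For $n=1$: if $\alpha_1=0$ then $I^0_{st}(f)=\int_s^tf(x_u)\,du$ satisfies $|I^0_{st}(f)|\le\|f\|_\infty(t-s)$, which is the claim with $(r_0,r_1)=(1,0)$; if $\alpha_1=1$ then, writing $f(x_u)=[f\circ\phi](a_u,g_u)$ and taking $F(x,y)=\int_0^y[f\circ\phi](x,\eta)\,d\eta$, \rref{rem_20150304061109} gives $I^1_{st}(f)=F(a_t,g_t)-F(a_s,g_s)-\int_s^t\partial_1F(a_u,g_u)\,da_u$, and bounding $F(a_t,g_t)-F(a_s,g_s)$ by $\|f\|_\infty|g_t-g_s|$ plus a term that is $C^1$ in $t$, the last (Riemann) integral by $C(t-s)$, and invoking the H\"older continuity of $g$, yields $|I^1_{st}(f)|\le C(t-s)^\lambda$. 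Throughout, every constant is controlled by $f$, by the fixed data $b,\sigma,\xi$, and by the H\"older constant of $g$, since $x$—hence $a$ and every iterated integrand below—stays in a ball of radius bounded by these quantities.

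For the inductive step put $\beta=\alpha_1\cdots\alpha_{n-1}$ and denote by $\bar\beta$ the word $\beta$ with its last letter removed. If $\alpha_n=0$ then $I^{\beta0}_{st}(f)=\int_s^tI^\beta_{su}(f)\,du$ and integrating the inductive bound on $|I^\beta_{su}(f)|$ gives the claim. The work is in the case $\alpha_n=1$, i.e.\ estimating $\int_s^tI^\beta_{su}(f)\,d^\circ g_u$. Here I would exploit three exact identities for the symmetric integral on H\"older paths, all correction-free for $1/3<\lambda<1$: integration by parts $\int_s^tY\,d^\circ Z+\int_s^tZ\,d^\circ Y=Y_tZ_t-Y_sZ_s$, the Leibniz rule $d^\circ(YZ)=Y\,d^\circ Z+Z\,d^\circ Y$, and the chain rule of \pref{prop_20140928075311}. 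All integrations by parts are performed against $\tilde g_u:=g_u-g_s$ (legitimate since $d^\circ\tilde g=d^\circ g$), so that $\int_s^t\tilde g_u^k\,d^\circ g_u=\tilde g_t^{k+1}/(k+1)=O((t-s)^{(k+1)\lambda})$ and boundary contributions at $u=s$ vanish. Since $I^\beta_{ss}(f)=0$, a first integration by parts gives
\[
	\int_s^tI^\beta_{su}(f)\,d^\circ g_u
	=I^\beta_{st}(f)\,\tilde g_t-\int_s^t\tilde g_u\,d^\circ I^\beta_{su}(f),
\]
whose boundary term is $O((t-s)^{r_0+r_1\lambda})$; now $d^\circ I^\beta_{su}(f)$ equals $I^{\bar\beta}_{su}(f)\,du$ if $\beta=\bar\beta0$—turning the remaining term into a Riemann integral, bounded correctly by induction—and $I^{\bar\beta}_{su}(f)\,d^\circ g_u$ if $\beta=\bar\beta1$. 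In the latter case one iterates on $\int_s^t\tilde g_uI^{\bar\beta}_{su}(f)\,d^\circ g_u$ using integration by parts and the Leibniz rule: each pass strips a letter and raises the power of $\tilde g$, and the recursion ends either at a Riemann integral (when the surviving word ends in $0$) or, when the word is exhausted, at a ``weighted'' integral $\int_s^t\tilde g_u^k f(x_u)\,d^\circ g_u$; at every step the boundary and Riemann terms that appear are visibly of order $\ge r_0+r_1\lambda$, by the inductive bounds and H\"older continuity.

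It therefore remains to prove $|\int_s^t\tilde g_u^k f(x_u)\,d^\circ g_u|\le C(t-s)^{(k+1)\lambda}$ for $f\in\SmoothFunc[bdd]{2}{\RealNum}{\RealNum}$ and $k\ge0$—and this is the one point that needs an idea rather than bookkeeping. Writing $f(x_u)=[f\circ\phi](a_u,g_u)$ I would split off the value at the frozen second argument $g_s$,
\[
	\tilde g_u^k f(x_u)
	=\tilde g_u^k[f\circ\phi](a_u,g_s)
	+\tilde g_u^k\big\{[f\circ\phi](a_u,g_u)-[f\circ\phi](a_u,g_s)\big\}.
\]
The first summand is $\tilde g_u^k$ times a function that is $C^1$ in $u$, so one more integration by parts (against $\tilde g$) turns it into $[f\circ\phi](a_t,g_s)\,\tilde g_t^{k+1}/(k+1)$ minus a Riemann integral, both $O((t-s)^{(k+1)\lambda})$. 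For the second summand, the function $(y-g_s)^k\{[f\circ\phi](x,y)-[f\circ\phi](x,g_s)\}$ lies in $\SmoothFunc{1,2}{\RealNum^2}{\RealNum}$ and vanishes at $y=g_s$; applying the estimate established in the proof of \pref{prop_20150424100658}, its symmetric integral equals $\tfrac12\tilde g_t^{k+1}\{[f\circ\phi](a_t,g_t)-[f\circ\phi](a_t,g_s)\}$ plus a remainder which—because of the vanishing at $g_s$, the factor $\tilde g_u^k$, and the bound $|g_t-g_s|\le C(t-s)^\lambda$—is $O((t-s)^{(k+2)\lambda})$; hence this summand too is $O((t-s)^{(k+1)\lambda})$. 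Combining the two parts gives the weighted estimate, and with it the induction closes.

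The main obstacle is exactly this weighted estimate together with the recursive accounting that feeds into it: a one-shot application of the trapezoidal estimate of \pref{prop_20150424100658} to $\int_s^tI^\beta_{su}(f)\,d^\circ g_u$ leaves an error $O(|t-s|^{(1+\lambda)\wedge(3\lambda)})$, which is too coarse once the target exponent $r_0+r_1\lambda$ exceeds $3\lambda$, and it is the device of freezing the second argument of $f\circ\phi$ at $g_s$ that recovers the sharp exponent. The regime $\lambda>1/2$ is of course much easier and may be dispatched by the classical Young estimate.
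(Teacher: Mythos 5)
Your argument is essentially correct and the exponent bookkeeping closes, but it is organized quite differently from what the paper intends: the paper offers no proof beyond the sentence referring to \rref{rem_20150304061109}, and the route that remark points at is to iterate the representation $\int_s^t f(a_u,g_u)\,d^\circ g_u=F(a_t,g_t)-F(a_s,g_s)-\int_s^t\partial_1F(a_u,g_u)\,da_u$ directly, choosing at each stage a primitive \emph{anchored at} $g_s$. Concretely, your central weighted estimate follows in one line this way: with $G(x,y)=\int_{g_s}^y(\eta-g_s)^k f(\phi(x,\eta))\,d\eta$ one has $G(a_s,g_s)=0$, $|G(a_t,g_t)|\le C|g_t-g_s|^{k+1}$ and $|\partial_1G(a_u,g_u)|\le C|g_u-g_s|^{k+1}$, so
\begin{align*}
	\left|\int_s^t \tilde g_u^{\,k} f(x_u)\,d^\circ g_u\right|
	=\left|G(a_t,g_t)-\int_s^t\partial_1G(a_u,g_u)\,da_u\right|
	\le C(t-s)^{(k+1)\HolExp},
\end{align*}
with no need for the split at the frozen second argument nor for re-deriving the remainder in the proof of \pref{prop_20150424100658}. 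The same device, applied inductively (each $I^{\beta}_{su}(f)$ is, via \rref{rem_20150304061109}, a finite sum of terms $c_u\,(g_u-g_s)^q$ with $c\in\HolFunc{1}{[0,1]}{\RealNum}$, $|c_u|\le C(u-s)^p$ and $p+q\HolExp\ge r_0+r_1\HolExp$), yields the lemma while staying entirely inside the calculus the paper has actually established. Your version instead leans on integration by parts, the Leibniz rule and, crucially, the substitution rule $\int Y\,d^\circ\bigl(\int Z\,d^\circ g\bigr)=\int YZ\,d^\circ g$ for the Russo--Vallois integral. The first two are harmless (the symmetric Riemann sums for $\int Y\,d^\circ Z$ and $\int Z\,d^\circ Y$ telescope exactly, so integration by parts holds as soon as one side exists), but the substitution rule is \emph{not} automatic in this framework: it requires its own Riemann-sum argument using the trapezoidal expansion and $3\HolExp>1$, and it is nowhere proved in the paper. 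Since you invoke it at every step of your peeling recursion, you should either prove it once (it is true here) or switch to the anchored-primitive formulation, which avoids it altogether. With that supplied, your proof is complete; the estimates themselves, including the improved $O((t-s)^{(k+2)\HolExp})$ remainder for the integrand vanishing at $y=g_s$, all check out.
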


We use the above Taylor expansion and the estimate of iterated integrals in the calculation below.
Using \pref{prop_20140928075311}, we can prove the following by induction on $n$;
\begin{proposition}\label{prop_20141107094620}
	Let $n\geq 0$.
	Assume that $b,\sigma\in\SmoothFunc[bdd]{n+2}{\RealNum}{\RealNum}$.
	Then, for any $0\leq s<t\leq 1$, we have
	\begin{align*}
		x_t-x_s
		&=
			\sum_{k=1}^n
			\sum_{\alpha_1,\dots,\alpha_k\in\{0,1\}}
				\left[
					\vecField{V}_{\alpha_1}
					\cdots
					\vecField{V}_{\alpha_{k-1}}
					V_{\alpha_k}
				\right]
					(x_s)
				g^{\alpha_1\cdots\alpha_k}_{st}\\
		&\phantom{=}\quad\qquad
			+
			\sum_{\alpha_1,\dots,\alpha_n,\alpha_{n+1}\in\{0,1\}}
				I^{\alpha_{1}\alpha_2\cdots\alpha_{n+1}}_{st}
					\left(
						\vecField{V}_{\alpha_{1}}
						\vecField{V}_{\alpha_2}
						\cdots
						\vecField{V}_{\alpha_n}
						V_{\alpha_{n+1}}
					\right).
	\end{align*}
\end{proposition}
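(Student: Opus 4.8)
The plan is to prove Proposition~\ref{prop_20141107094620} by induction on $n$, using \pref{prop_20140928075311} (the change-of-variable formula for the symmetric integral) as the single analytic input. The statement for general $n$ is a purely algebraic consequence of the case $n=0$ together with the definition of the iterated integrals $I^{\alpha_1\cdots\alpha_{n+1}}_{st}$, so the only place where analysis enters is in establishing the base case and in checking that all the integrals appearing are well-defined.

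First I would treat the base case $n=0$. Here the claimed identity reads
\begin{align*}
	x_t-x_s
	=
		I^0_{st}(V_0)
		+
		I^1_{st}(V_1)
	=
		\int_s^t b(x_u)\,du
		+
		\int_s^t \sigma(x_u)\,d^\circ g_u,
\end{align*}
which is exactly the ODE \eqref{eq_20140928070616} written between the times $s$ and $t$ (using the additivity of both integrals). Since $x$ is the solution given by \eqref{eq_1545722900} and we have already observed (in the discussion preceding \lref{lem_201708052140}) that $I^1_{st}(V_1)$ is well-defined via \rref{rem_20150304061109}, this step is immediate.

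Next comes the inductive step. Assume the formula holds for some $n-1$ (with $b,\sigma\in\SmoothFunc[bdd]{n+1}{\RealNum}{\RealNum}$). The key observation is that for each multi-index $\alpha_1\cdots\alpha_n\in\{0,1\}^n$ the process $u\mapsto \left[\vecField{V}_{\alpha_1}\cdots\vecField{V}_{\alpha_{n-1}}V_{\alpha_n}\right](x_u)$ is again of the form $f(x_u)$ for an $f\in\SmoothFunc[bdd]{2}{\RealNum}{\RealNum}$ (because $b,\sigma\in\SmoothFunc[bdd]{n+1}{\RealNum}{\RealNum}$ guarantees enough derivatives), and hence each such $f$ satisfies the case $n=0$: writing $F(u):=f(x_u)$ we have $F(t)-F(s)=\int_s^t [\vecField{V}_0 f](x_v)\,dv + \int_s^t [\vecField{V}_1 f](x_v)\,d^\circ g_v$ — this is the change-of-variable formula \pref{prop_20140928075311} applied to the composition, exactly as in the base case. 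Now I apply the inductive hypothesis to $x_u-x_s$ for $s\le u\le t$, then replace each coefficient $\left[\vecField{V}_{\alpha_1}\cdots V_{\alpha_k}\right](x_u)$ in the main sum by its own first-order expansion $\left[\vecField{V}_{\alpha_1}\cdots V_{\alpha_k}\right](x_s)$ plus the remainder $I^0_{su}(\vecField{V}_0\vecField{V}_{\alpha_1}\cdots V_{\alpha_k}) + I^1_{su}(\vecField{V}_1\vecField{V}_{\alpha_1}\cdots V_{\alpha_k})$, and integrate $d^\circ g^{\alpha_k}_u$ over $[s,t]$. The constant term produces $\left[\vecField{V}_{\alpha_1}\cdots V_{\alpha_k}\right](x_s)\,g^{\alpha_1\cdots\alpha_k}_{st}$ (one more term of the main sum), and the remainder terms, once integrated, become precisely the iterated integrals $I^{\beta\alpha_1\cdots\alpha_k}_{st}(\vecField{V}_\beta\vecField{V}_{\alpha_1}\cdots V_{\alpha_k})$ for $\beta\in\{0,1\}$, by the inductive definition of these integrals. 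Likewise, the old remainder term $I^{\alpha_1\cdots\alpha_n}_{st}$ from stage $n-1$ stays untouched and merges into the new remainder sum. Collecting terms and re-indexing yields the formula at level $n$. Throughout, well-definedness of every iterated integral follows from the recursive construction together with the observation above that the integrand at each stage is $C^{1,2}$ in $(a,g)$-variables, so \rref{rem_20150304061109} applies; one also notes $I^0$ is an ordinary Riemann integral.

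The main obstacle — though it is more bookkeeping than genuine difficulty — is the re-indexing in the inductive step: one must check that applying a first-order expansion to each coefficient in a sum over $\{0,1\}^{\le n}$ and then integrating $d^\circ g^{\alpha_k}$ produces exactly the sum over $\{0,1\}^{\le n+1}$ in the stated form, with the multi-index conventions (new letters prepended on the left inside the $\vecField{V}$-chain, appended on the right among the $g$-superscripts) matching up. A clean way to organize this is to prove the slightly more symmetric statement "for every $f\in\SmoothFunc[bdd]{2}{\RealNum}{\RealNum}$ and every multi-index $\gamma$, $I^\gamma_{st}(f) = \sum_{\text{extensions}} [\vecField{V}\cdots f](x_s)\,g^{\cdots}_{st} + (\text{iterated remainder})$" by induction on $|\gamma|$, of which Proposition~\ref{prop_20141107094620} is the special case $\gamma=\varnothing$, $f=1$ rewritten using the ODE for the first step; this makes the recursion transparent and the index matching automatic.
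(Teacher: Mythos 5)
Your proposal is correct and coincides with the paper's own (one-line) argument: the paper proves this proposition by induction on $n$ using \pref{prop_20140928075311}, exactly as you do, with the base case being the ODE itself and the inductive step expanding the innermost integrand of each remainder integral by the change-of-variable formula, the constant term producing the next main term and the increment producing the next-order iterated remainder. The only correction is to your bookkeeping remark: since the expansion $f(x_u)=f(x_s)+\sum_{\beta}I^{\beta}_{su}(\vecField{V}_{\beta}f)$ is substituted into the \emph{innermost} slot of the iterated integral, the new index $\beta$ is prepended on the left of the superscript string as well as of the $\vecField{V}$-chain (cf.\ the definition $I^{\alpha_1\cdots\alpha_n}_{st}(f)=\int_s^t I^{\alpha_1\cdots\alpha_{n-1}}_{su}(f)\,d^\circ g^{\alpha_n}_u$, in which the rightmost index is the outermost integration), not appended on the right.
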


We calculate each terms in \pref{prop_20141107094620}.
We first note that the $p$-th iterated integral $g^{\alpha\cdots\alpha}_{st}$
is equal to $(g^\alpha_t-g^\alpha_s)^p/p!$.
This can be checked by a direct calculation.
\begin{proposition}\label{prop_20141010094511}
	Assume that $b,\sigma\in\SmoothFunc[bdd]{6}{\RealNum}{\RealNum}$.
	Then, for any $0\leq s<t\leq 1$, we have
	\begin{align*}
		x_t-x_s
		&=
			b(x_s)(t-s)
			+\sigma(x_s)(g_t-g_s)
			+\frac{1}{2}\left[\sigma\sigma'\right](x_s)(g_t-g_s)^2\\
		&\phantom{=}\quad
			+
			\frac{1}{3!}\left[\sigma(\sigma\sigma')'\right](x_s)(g_t-g_s)^3
			+
			\frac{1}{4!}\left[\sigma(\sigma(\sigma\sigma')')'\right](x_s)(g_t-g_s)^4\\
		&\phantom{=}\quad
			+[b\sigma'](x_s)(g_t-g_s)(t-s)
			+[\sigma b'-b\sigma'](x_s)g^{10}_{st}
			+\frac{1}{2}[b'b](x_s)(t-s)^2\\
		&\phantom{=}\quad
			+
			[b\left(\sigma\sigma'\right)'](x_s)
			g^{011}_{st}
			+
			[\sigma\left(b\sigma'\right)'](x_s)
			g^{101}_{st}
			+
			[\sigma\left(\sigma b'\right)'](x_s)
			g^{110}_{st}
			+
			r_{st},
	\end{align*}
 where $|r_{st}|\leq C_g (t-s)^{\min\{2+\lambda,1+3\lambda,5\lambda\}}$.
\end{proposition}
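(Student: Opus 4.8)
The plan is to read off the expansion directly from \pref{prop_20141107094620} applied with $n=4$ (legitimate because $b,\sigma\in\SmoothFunc[bdd]{6}{\RealNum}{\RealNum}=\SmoothFunc[bdd]{n+2}{\RealNum}{\RealNum}$) and then sort the resulting terms by their order of vanishing as $t\downarrow s$. Concretely, \pref{prop_20141107094620} gives
\begin{align*}
	x_t-x_s
	=
		\sum_{k=1}^{4}
		\sum_{\alpha_1,\dots,\alpha_k\in\{0,1\}}
			\left[\vecField{V}_{\alpha_1}\cdots\vecField{V}_{\alpha_{k-1}}V_{\alpha_k}\right](x_s)\,
			g^{\alpha_1\cdots\alpha_k}_{st}
		+
		\sum_{\alpha_1,\dots,\alpha_5\in\{0,1\}}
			I^{\alpha_1\cdots\alpha_5}_{st}\!\left(\vecField{V}_{\alpha_1}\cdots\vecField{V}_{\alpha_4}V_{\alpha_5}\right),
\end{align*}
and to each multi-index $\alpha_1\cdots\alpha_k$ I attach the weight $w=r_0+r_1\HolExp$, where $r_i=\sharp\{j;\alpha_j=i\}$. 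By \lref{lem_201708052140} (with $f\equiv1$ for the first sum, and with $f=\vecField{V}_{\alpha_1}\cdots\vecField{V}_{\alpha_4}V_{\alpha_5}\in\SmoothFunc[bdd]{2}{\RealNum}{\RealNum}$ for the second sum — this is exactly where the six bounded derivatives are needed, since four are consumed by the vector fields), together with the boundedness of the coefficients $[\vecField{V}_{\alpha_1}\cdots\vecField{V}_{\alpha_{k-1}}V_{\alpha_k}](x_s)$, each such term is $O((t-s)^{w})$ uniformly, with a constant of the $\const[g]$ type.

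First I would single out the multi-indices whose $(r_0,r_1)$ lies in $\{(1,0),(0,1),(2,0),(0,2),(1,1),(1,2),(0,3),(0,4)\}$ (namely $0$; $1$; $00$; $11$; $01,10$; $011,101,110$; $111$; $1111$) and put everything else into $r_{st}$. For the discarded level-$3$ and level-$4$ main terms the pairs $(r_0,r_1)$ run over $(3,0),(2,1),(4,0),(3,1),(2,2),(1,3)$, with weights $3,\ 2+\HolExp,\ 4,\ 3+\HolExp,\ 2+2\HolExp,\ 1+3\HolExp$ respectively; since $2+\HolExp$ and $1+3\HolExp$ are among the three quantities whose minimum is $\mu:=\min\{2+\HolExp,1+3\HolExp,5\HolExp\}$, and $3,4,3+\HolExp,2+2\HolExp$ are all $\ge 2+\HolExp\ge\mu$ (using $0<\HolExp\le1$), every discarded main term is $O((t-s)^{\mu})$. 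Each level-$5$ iterated integral has weight $r_0+(5-r_0)\HolExp=5\HolExp+r_0(1-\HolExp)\ge5\HolExp\ge\mu$, so it is $O((t-s)^{\mu})$ too. Hence $|r_{st}|\le\const[g](t-s)^{\mu}$, which is the claimed bound. (Note that $(2,0)$ and $(0,4)$ are kept explicitly even though for some $\HolExp$ their weights already exceed $\mu$; this is harmless.)

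Finally I would evaluate the retained terms. Using $\vecField{V}_\alpha f=V_\alpha f'$ with $V_0=b$, $V_1=\sigma$, one computes $\vecField{V}_0V_0=bb'$, $\vecField{V}_0V_1=b\sigma'$, $\vecField{V}_1V_0=\sigma b'$, $\vecField{V}_1V_1=\sigma\sigma'$, $\vecField{V}_1\vecField{V}_1V_1=\sigma(\sigma\sigma')'$, $\vecField{V}_0\vecField{V}_1V_1=b(\sigma\sigma')'$, $\vecField{V}_1\vecField{V}_0V_1=\sigma(b\sigma')'$, $\vecField{V}_1\vecField{V}_1V_0=\sigma(\sigma b')'$ and $\vecField{V}_1\vecField{V}_1\vecField{V}_1V_1=\sigma(\sigma(\sigma\sigma')')'$. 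Invoking the identity $g^{\alpha\cdots\alpha}_{st}=(g^\alpha_t-g^\alpha_s)^p/p!$ (recorded just before the statement) for the single-driver multi-indices $0,00$ and $1,11,111,1111$, and the integration-by-parts relation $g^{01}_{st}+g^{10}_{st}=(t-s)(g_t-g_s)$ (a special case of \pref{prop_20140928075311}, since $g^0$ is $C^1$) to rewrite $[b\sigma']g^{01}_{st}+[\sigma b']g^{10}_{st}$ as $[b\sigma'](g_t-g_s)(t-s)+[\sigma b'-b\sigma']g^{10}_{st}$, one matches the displayed expansion term by term; the level-$3$ contributions $011,101,110$ are kept as $g^{011}_{st},g^{101}_{st},g^{110}_{st}$ with the coefficients above. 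This completes the argument.

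There is no genuinely deep step: the whole statement is a consequence of \pref{prop_20141107094620} and the iterated-integral estimate \lref{lem_201708052140}. The only points that demand care are the exhaustive weight bookkeeping — checking, uniformly over $\HolExp\in(1/3,1)$, that each discarded multi-index (including all thirty-two level-$5$ ones) has weight at least $\mu$ — and verifying that the iterated compositions of the vector fields $\vecField{V}_0,\vecField{V}_1$ reproduce exactly the coefficients appearing in the statement.
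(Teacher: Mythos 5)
Your proposal is correct and follows essentially the same route as the paper: apply \pref{prop_20141107094620} with $n=4$, bound the discarded level-$3$, level-$4$ and level-$5$ contributions via \lref{lem_201708052140} by $(t-s)^{2+\HolExp}$, $(t-s)^{1+3\HolExp}$ and $(t-s)^{5\HolExp}$ respectively, evaluate the retained vector-field compositions, and use $g^{\alpha\cdots\alpha}_{st}=(g^\alpha_t-g^\alpha_s)^p/p!$ together with the rewriting $[\sigma b']g^{10}_{st}+[b\sigma']g^{01}_{st}=[b\sigma'](g_t-g_s)(t-s)+[\sigma b'-b\sigma']g^{10}_{st}$. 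The weight bookkeeping and the identification of the coefficients match the paper's proof.
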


\begin{proof}
	Set
	\begin{align*}
		J^k_{st}
		&=
			\sum_{\alpha_1,\dots,\alpha_k\in\{0,1\}}
				\left[
					\vecField{V}_{\alpha_1}
					\cdots
					\vecField{V}_{\alpha_{k-1}}
					V_{\alpha_k}
				\right]
					(x_s)
				g^{\alpha_1\cdots\alpha_k}_{st},\\
		\tilde{J}^k_{st}
		&=
			\sum_{\alpha_1,\dots,\alpha_k\in\{0,1\}}
				I^{\alpha_1\cdots\alpha_k}_{st}
					\left(
						\vecField{V}_{\alpha_1}
						\cdots
						\vecField{V}_{\alpha_{k-1}}
						V_{\alpha_k}
					\right).
	\end{align*}
	Then we see
	$
		x_t-x_s
		=
			J^1_{st}
			+\dots
			+J^4_{st}
			+\tilde{J}^5_{st}
	$
	and
	\begin{align*}
		J^1_{st}
		&=
			b(x_s)
			g^0_{st}
			+
			\sigma(x_s)
			g^1_{st},\\
		J^2_{st}
		&=
			[bb'](x_s)
			g^{00}_{st}
			+
			[\sigma b'](x_s)
			g^{10}_{st}
			+
			[b\sigma'](x_s)
			g^{01}_{st}
			+
			[\sigma\sigma'](x_s)
			g^{11}_{st},\\
		J^3_{st}
		&=
			[\sigma(\sigma b')'](x_s)
			g^{110}_{st}
			+
			[\sigma(b\sigma')'](x_s)
			g^{101}_{st}
			+
			[b(\sigma\sigma')'](x_s)
			g^{011}_{st}
			+
			[\sigma(\sigma\sigma')'](x_s)
			g^{111}_{st}
			+
			r^{(3)}_{st},\\
		J^4_{st}
		&=
			[\sigma(\sigma(\sigma\sigma')')'](x_s)
			g^{1111}_{st}
			+
			r^{(4)}_{st},
	\end{align*}
	where $r^{(3)}_{st}$ and $r^{(4)}_{st}$
	satisfy
	$
		|r^{(3)}_{st}|
		\leq
			\const[g]
			(t-s)^{2+\HolExp}
	$
	and
	$
		|r^{(4)}_{st}|
		\leq
			\const[g]
			(t-s)^{1+3\HolExp}
	$, respectively.
	In addition, we have
	$
		|\tilde{J}^5_{st}|
		\leq
			\const[g]
			(t-s)^{5\HolExp}
 $.
 Noting $[\sigma b'](x_s)g^{10}_{s,t}+[b\sigma'](x_s)g^{01}_{st}=
 [b\sigma'](x_s)(g_t-g_s)(t-s)+[\sigma b'-b\sigma'](x_s)g^{10}_{st}$,
we complete the proof.
\end{proof}

\subsection{Directional derivatives of solutions}
In what follows, we assume that \hyporef{hypo_ellipticity} is satisfied
and find expressions of the solution $x\equiv x(g)$ to \eqref{eq_20140928070616} given by \eqref{eq_1545722900}
and its directional derivatives.
We follow the approach employed in \cite{DetempleGarciaRindisbacher2005}
in order to do so.

For $g\in\HolFunc[0]{\HolExp}{[0,1]}{\RealNum}$, we set
\begin{align}\label{eq_20150218112958}
	J_t(g)
	=
		\exp
			\left(
				\int_0^t
					b'(x_u(g))\,
					du
				+
				\int_0^t
					\sigma'(x_u(g))\,
					d^\circ g_u
			\right).
\end{align}
This is a deterministic version of \eqref{eq_20141226020239}.
Note that $J_t(g)$ is expressed by
\begin{align}\label{eq_20141226020204}
	J_t(g)
	=
		\frac{\sigma(x_t(g))}{\sigma(x_0(g))}
		\exp
			\left(
				\int_0^t
					\left[
						\frac{\Wronskian}{\sigma}
					\right]
					(x_u(g))\,
					du
			\right).
\end{align}
Indeed, we see
\begin{align*}
	\log \sigma(x_t(g))
	&=
		\log (\sigma\circ\phi)(a_t(g),g_t)\\
	&=
		\log \sigma(x_0)
		+
		\int_0^t
			\left[
				\frac{\sigma'b}{\sigma}
			\right]
				(x_u(g))\,
			du
		+
		\int_0^t
			\sigma'(x_u(g))\,
			d^\circ g_u
\end{align*}
from \pref{prop_20140928075311}.
This implies
\begin{align*}
	\sigma(x_t(g))
	=
		\sigma(x_0)
		\exp
			\left(
				\int_0^t
					\left[
						\frac{\sigma'b}{\sigma}
					\right]
						(x_u(g))\,
					du
				+
				\int_0^t
					\sigma'(x_u(g))\,
					d^\circ g_u
			\right).
\end{align*}
Substituting the above to \eqref{eq_20141226020204},
we obtain \eqref{eq_20150218112958}.

\begin{proposition}\label{prop_20140929105014}
	Let $b,\sigma\in\SmoothFunc[bdd]{n+1}{\RealNum}{\RealNum}$ for $n\geq 1$.
	Assume that \hyporef{hypo_ellipticity} is satisfied.
	Then, the functional $g\mapsto x_t(g)$ is
	$n$-times Fr\'{e}chet differentiable in $\HolFunc[0]{\HolExp}{[0,1]}{\RealNum}$.

	In particular, the derivatives satisfy the following;
	\begin{enumerate}
		\item	\label{item_20141228072212}
			 	For any $h^1,\dots,h^\nu\in\HolFunc[0]{\HolExp}{[0,1]}{\RealNum}$,
				we have
				\begin{align*}
					|
						\nabla_{h^\nu}
						\cdots
						\nabla_{h^1}
						x_t(g)
					|
					\leq
						\const[\nu]
						\|h^1\|_\infty
						\cdots
						\|h^\nu\|_\infty,
				\end{align*}
				where  $\const[\nu]$ is a positive constant depending only on $b,\sigma$ and $\nu$.
		\item	\label{item_20141106063937}
			The first derivative $\nabla_h x_t(g)$
			is expressed as
				\begin{align*}
					\nabla_h x_t(g)
					&=
						\sigma(x_t(g))h_t
						+
						\int_0^t
							J_t(g)(J_s(g))^{-1}
							\Wronskian(x_s(g))
							h_s\,
							ds.
				\end{align*}
		\item	\label{item_20141106063948}
			If $h$ is Lipschitz continuous,
			then $\nabla_h x_t(g)$ is expressed as
				\begin{align*}
					\nabla_h x_t(g)
					=
						\int_0^t
							\dot{h}_s
							\sigma(x_s(g))
							J_t(g)(J_s(g))^{-1}\,
							ds
					=
						\sigma(x_t(g))
						\int_0^t
							\exp
								\left(
									\int_s^t
										\left[
											\frac{\Wronskian}{\sigma}
										\right]
										(x_u(g))\,
										du
								\right)
							\dot{h}_s\,
							ds.
				\end{align*}
	\end{enumerate}
\end{proposition}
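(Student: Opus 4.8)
Write $x=x(g)$, $a=a(g)$, $J=J(g)$ throughout. The plan is to run everything through the Doss--Sussmann representation $x_t=\phi(a_t,g_t)$ of \pref{prop_20140928080940}: the flow $\phi$ is a fixed smooth function of two real variables, so the only genuinely path-dependent object is $a_\cdot$, which solves the \emph{ordinary} (Riemann) ODE $\dot a_t=f_{\sigma,b}(a_t,g_t)$, $a_0=\xi$, in which $g$ enters only as a parameter through the bounded linear evaluations $g\mapsto g_t$. Thus differentiability in $g$ reduces to smooth dependence of an ODE on a parameter, while the explicit formulas for the derivatives come from differentiating $\phi$ and from variation of constants for the linearized equation. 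I will also use the consequence of \hyporef{hypo_ellipticity} that, substituting $z=\phi(\alpha,\eta)$ (so $dz=\partial_2\phi\,d\eta=\sigma(z)\,d\eta$) and using $\phi(\alpha,0)=\alpha$, the formula \pref{prop_20140929101727}~(\ref{item_1545964864}) becomes $\partial_1\phi(\alpha,\beta)=\sigma(\phi(\alpha,\beta))/\sigma(\alpha)$, and hence $f_1(\alpha,\beta)=1/\partial_1\phi(\alpha,\beta)=\sigma(\alpha)/\sigma(\phi(\alpha,\beta))$.

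\emph{Differentiability and the bound~(\ref{item_20141228072212}).} For $b,\sigma\in\SmoothFunc[bdd]{n+1}{\RealNum}{\RealNum}$, \pref{prop_20140929101727}~(\ref{item_1545964854}) gives $\phi\in\SmoothFunc{n+1,n+2}{\RealNum^2}{\RealNum}\cap\SmoothFunc{n+1}{\RealNum^2}{\RealNum}$ with bounded partial derivatives (for $\partial_1\phi$ this follows from the displayed formula together with $\inf\sigma>0$ and $\|\sigma'\|_\infty<\infty$), so $f_{\sigma,b}=f_1f_2\in\SmoothFunc{n}{\RealNum^2}{\RealNum}$ with bounded derivatives as well. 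By the classical theory of smooth dependence of solutions of ODEs on a parameter --- differentiating the integral equation for $a$ repeatedly and applying Gronwall's inequality at each step --- the map $g\mapsto a_t$ is $n$-times Fr\'echet differentiable on $\HolFunc[0]{\HolExp}{[0,1]}{\RealNum}$ and obeys $|\nabla_{h^\nu}\cdots\nabla_{h^1}a_t|\le\const[\nu]\|h^1\|_\infty\cdots\|h^\nu\|_\infty$. Composing with $\phi$ by the higher-order chain rule and using the boundedness of the derivatives of $\phi$ transfers both statements to $x_t=\phi(a_t,g_t)$, which yields the asserted differentiability and~(\ref{item_20141228072212}).

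\emph{The first derivative~(\ref{item_20141106063937}).} Differentiating once, the chain rule and $\partial_2\phi=\sigma\circ\phi$ (from \eqref{eq_20140928162150}) give $\nabla_h x_t=\partial_1\phi(a_t,g_t)\,\zeta_t+\sigma(x_t)\,h_t$ with $\zeta_t:=\nabla_h a_t$; differentiating the integral equation for $a$ shows $\zeta_\cdot$ solves $\dot\zeta_t=\partial_1 f_{\sigma,b}(a_t,g_t)\zeta_t+\partial_2 f_{\sigma,b}(a_t,g_t)h_t$, $\zeta_0=0$, hence by variation of constants
\begin{align*}
	\zeta_t=\int_0^t\exp\left(\int_s^t\partial_1 f_{\sigma,b}(a_r,g_r)\,dr\right)\partial_2 f_{\sigma,b}(a_s,g_s)\,h_s\,ds.
\end{align*}
From $f_{\sigma,b}(\alpha,\beta)=\sigma(\alpha)b(\phi(\alpha,\beta))/\sigma(\phi(\alpha,\beta))$ and $\partial_2\phi=\sigma\circ\phi$, a direct differentiation gives $\dot a_r=\sigma(a_r)b(x_r)/\sigma(x_r)$, $\partial_2 f_{\sigma,b}(a_s,g_s)=\sigma(a_s)\Wronskian(x_s)/\sigma(x_s)$ and $\partial_1 f_{\sigma,b}(a_r,g_r)=\sigma'(a_r)b(x_r)/\sigma(x_r)+[\Wronskian/\sigma](x_r)$, the last of which equals $\tfrac{d}{dr}\log\sigma(a_r)+[\Wronskian/\sigma](x_r)$ since $\tfrac{d}{dr}\log\sigma(a_r)=\sigma'(a_r)\dot a_r/\sigma(a_r)=\sigma'(a_r)b(x_r)/\sigma(x_r)$. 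Therefore $\exp\left(\int_s^t\partial_1 f_{\sigma,b}(a_r,g_r)\,dr\right)=\tfrac{\sigma(a_t)}{\sigma(a_s)}\exp\left(\int_s^t[\Wronskian/\sigma](x_r)\,dr\right)$, and substituting this, the expression for $\partial_2 f_{\sigma,b}(a_s,g_s)$, and $\partial_1\phi(a_t,g_t)=\sigma(x_t)/\sigma(a_t)$ into $\partial_1\phi(a_t,g_t)\zeta_t$ cancels every $\sigma(a_{\cdot})$-factor and leaves
\begin{align*}
	\partial_1\phi(a_t,g_t)\zeta_t=\int_0^t\frac{\sigma(x_t)}{\sigma(x_s)}\exp\left(\int_s^t[\Wronskian/\sigma](x_r)\,dr\right)\Wronskian(x_s)\,h_s\,ds.
\end{align*}
Since by \eqref{eq_20141226020204} the first two factors of this integrand equal $J_tJ_s^{-1}$, this is exactly~(\ref{item_20141106063937}).

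\emph{The Lipschitz case~(\ref{item_20141106063948}) and the main difficulty.} For $h$ Lipschitz (so $\dot h$ is bounded and $h_0=0$) I would integrate~(\ref{item_20141106063937}) by parts in $s$, after factoring out $J_t$ and setting $M_s:=\sigma(x_s)J_s^{-1}$. The only fact needed is that $s\mapsto M_s$ is $C^1$: by \eqref{eq_20141226020204}, $M_s=\sigma(x_0)\exp(-\int_0^s[\Wronskian/\sigma](x_u)\,du)$, so $\dot M_s=-J_s^{-1}\Wronskian(x_s)$; integration by parts turns $M_th_t+\int_0^t J_s^{-1}\Wronskian(x_s)h_s\,ds$ into $\int_0^t\dot h_s M_s\,ds$, giving $\nabla_h x_t=\int_0^t\dot h_s\,\sigma(x_s)J_tJ_s^{-1}\,ds$, and the last expression in~(\ref{item_20141106063948}) follows from $\sigma(x_s)J_tJ_s^{-1}=\sigma(x_t)\exp(\int_s^t[\Wronskian/\sigma](x_u)\,du)$, again read off from \eqref{eq_20141226020204}. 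The step I expect to require the most care is making the smooth-dependence-on-a-parameter argument rigorous in the H\"older space $\HolFunc[0]{\HolExp}{[0,1]}{\RealNum}$ rather than merely in $C([0,1];\RealNum)$, which is where I would follow the approach of \cite{DetempleGarciaRindisbacher2005}; once the explicit expressions for $\partial_1\phi$, $f_1$ and $J$ are available, the identifications in~(\ref{item_20141106063937}) and~(\ref{item_20141106063948}) become routine one-variable calculus.
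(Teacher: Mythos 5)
Your argument is correct, and all the identities you use check out ($\partial_1\phi(\alpha,\beta)=\sigma(\phi(\alpha,\beta))/\sigma(\alpha)$, the formulas for $\partial_1 f_{\sigma,b}$ and $\partial_2 f_{\sigma,b}$, the cancellation of the $\sigma(a_\cdot)$ factors, and the identification of $\sigma(x_t)\sigma(x_s)^{-1}\exp(\int_s^t[\Wronskian/\sigma](x_u)\,du)$ with $J_tJ_s^{-1}$ via \eqref{eq_20141226020204}). But you take a genuinely different route from the paper. The paper does not linearize the equation for $a$ at all: it introduces the Lamperti-type transform $F(x)=\int_0^x d\xi/\sigma(\xi)$, $G=F^{-1}$, $\tilde b=[b/\sigma]\circ G$, shows in \pref{prop_20140928105149} that $x=G(y)$ where $y$ solves the \emph{additive} equation $y_t=y_0+\int_0^t\tilde b(y_u)\,du+g_t$, proves differentiability and the variation-of-constants formula $\nabla_h y_t=h_t+\int_0^t\exp(\int_s^t\tilde b'(y_u)\,du)\tilde b'(y_s)h_s\,ds$ for that equation (\pref{prop_20141030154436}), and then reads off assertion~(\ref{item_20141106063937}) from $\nabla_h x_t=\sigma(x_t)\nabla_h y_t$ and the single identity $\tilde b'(y_t(g))=[\Wronskian/\sigma](x_t(g))$. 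The ellipticity hypothesis is spent once, on the invertibility of $F$, and thereafter $g$ enters the transformed equation linearly, so no bookkeeping with $\partial_1\phi$, $f_1$ or $\sigma(a_\cdot)$ is needed. Your version spends the ellipticity instead on the closed form of $\partial_1\phi$ and on the boundedness of $f_{\sigma,b}$ and its derivatives, and pays for staying with the Doss--Sussmann pair $(\phi,a)$ by a longer computation; what it buys is that you never leave the objects already set up in \pref{prop_20140928080940}. Both approaches leave the same analytic core (Fr\'echet differentiability of the parameter-dependent ODE in $\HolFunc[0]{\HolExp}{[0,1]}{\RealNum}$, with remainder control via Gronwall) at the level of a standard citation, exactly as the paper does by omitting the proof of \pref{prop_20141030154436}; and your integration-by-parts derivation of~(\ref{item_20141106063948}) from~(\ref{item_20141106063937}) is precisely the step the paper asserts without detail.
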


In order to prove \pref{prop_20140929105014}, we set
\begin{align*}
	F(x)
	&=
		\int_0^x
			\frac{d\xi}{\sigma(\xi)},
	&
	G&=F^{-1},
	&
	\tilde{b}
	&=
		\left[
			\frac{b}{\sigma}
		\right]
		\circ
		G,
	&
	y_0
	&=
		F(x_0).
\end{align*}
We consider a solution $y$ to an ODE
\begin{align}\label{eq_20140928074406}
	y_t
	=
		y_0
		+
		\int_0^t
			\tilde{b}(y_u)\,
			du
		+
		g_t.
\end{align}
Then we obtain an expression of the solution $x_t$ to \eqref{eq_20140928070616} as follows;
\begin{proposition}\label{prop_20140928105149}
	Let $y$ be a solution to \eqref{eq_20140928074406}.
	The solution $x$ to \eqref{eq_20140928070616} given by \eqref{eq_1545722900} is expressed by $x=G(y)$.
\end{proposition}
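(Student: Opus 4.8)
The plan is to deduce the identity $x = G(y)$ from the uniqueness part of \pref{prop_20140928080940} together with the change-of-variables formula \pref{prop_20140928075311}. First I would write the given solution of \eqref{eq_20140928074406} as $y_t = k_t + g_t$, where $k_t := y_0 + \int_0^t \tilde{b}(y_u)\,du$. Since $\tilde b = [b/\sigma]\circ G$ is continuous — indeed $C^1$ with bounded derivatives, because $\inf\sigma>0$ and $b,\sigma\in\SmoothFunc[bdd]{\infty}{\RealNum}{\RealNum}$ — the path $k$ lies in $\HolFunc{1}{[0,1]}{\RealNum}$; this is also what makes \eqref{eq_20140928074406} uniquely solvable in the first place. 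I then set $z_t := G(y_t)$. Because $F' = 1/\sigma > 0$, the map $F$ is a smooth diffeomorphism and $G = F^{-1}$ is smooth with $G' = \sigma\circ G$; consequently $f(\alpha,\beta) := G(\alpha+\beta)$ belongs to $\SmoothFunc{1,3}{\RealNum^2}{\RealNum}$ and $z_t = f(k_t,g_t)$.

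Next I would apply \pref{prop_20140928075311} to this $f$, the $C^1$ path $k$, and $g \in \HolFunc[0]{\HolExp}{[0,1]}{\RealNum}$ (so $g_0 = 0$). It gives the existence of $\int_0^t \partial_2 f(k_u,g_u)\,d^\circ g_u$ together with
\begin{align*}
	z_t
	=
		f(k_0,g_0)
		+
		\int_0^t \partial_1 f(k_u,g_u)\,dk_u
		+
		\int_0^t \partial_2 f(k_u,g_u)\,d^\circ g_u.
\end{align*}
Here $f(k_0,g_0) = G(y_0) = G(F(\xi)) = \xi$; both partial derivatives of $f$ equal $G'(\alpha+\beta)$, so $\partial_1 f(k_u,g_u) = \partial_2 f(k_u,g_u) = G'(y_u) = \sigma(G(y_u)) = \sigma(z_u)$; and $dk_u = \tilde b(y_u)\,du = [b/\sigma](z_u)\,du$. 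Substituting these into the identity yields
\begin{align*}
	z_t
	=
		\xi
		+
		\int_0^t \sigma(z_u)\,\frac{b(z_u)}{\sigma(z_u)}\,du
		+
		\int_0^t \sigma(z_u)\,d^\circ g_u
	=
		\xi
		+
		\int_0^t b(z_u)\,du
		+
		\int_0^t \sigma(z_u)\,d^\circ g_u,
\end{align*}
so $z$ solves \eqref{eq_20140928070616}. Since $z_t = f(k_t,g_t)$ with $f\in\SmoothFunc{1,3}{\RealNum^2}{\RealNum}$ and $k\in\HolFunc{1}{[0,1]}{\RealNum}$, we have $z\in\mathfrak{C}$, and the uniqueness assertion of \pref{prop_20140928080940} then forces $z = x$, i.e.\ $x = G(y)$.

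Once the chain rule is in hand the argument is a direct substitution, so the only step that needs genuine care is the applicability of \pref{prop_20140928075311}: one must verify that $f(\alpha,\beta) = G(\alpha+\beta)$ really is of class $\SmoothFunc{1,3}{\RealNum^2}{\RealNum}$ — this is where \hyporef{hypo_ellipticity} enters, through $G' = \sigma\circ G$ and hence $G \in C^3$ — and that $k$ is genuinely $C^1$, which is immediate from the regularity of $\tilde b$. I expect no other obstacle.
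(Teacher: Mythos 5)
Your argument is correct and follows essentially the same route as the paper: write $y_t=k_t+g_t$ with $k\in\HolFunc{1}{[0,1]}{\RealNum}$, apply the change-of-variables formula of \pref{prop_20140928075311} to $G(k_t+g_t)$ to check that $G(y)$ satisfies \eqref{eq_20140928070616}, note $G(y)\in\mathfrak{C}$, and conclude by the uniqueness assertion of \pref{prop_20140928080940}. Your added care about $f(\alpha,\beta)=G(\alpha+\beta)$ lying in $\SmoothFunc{1,3}{\RealNum^2}{\RealNum}$ is a harmless elaboration of what the paper leaves implicit.
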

\begin{proof}
	Due to \pref{prop_20140928080940}, we see the assertion
	by showing $G(y)\in\mathfrak{C}$ and it satisfies \eqref{eq_20140928070616}.
	Note that the solution $y$ is given by
	$
		y_t=\tilde{a}_t+g_t
	$,
	where $\tilde{a}$ is a solution to
	$
		\tilde{a}_t
		=
			y_0
			+
			\int_0^t
				\tilde{b}(\tilde{a}_u+g_u)\,
				du
	$.
	Hence $G(y)\in\mathfrak{C}$.
	We prove that $G(y)$ satisfies \eqref{eq_20140928070616}.
	From \pref{prop_20140928075311}, we see
	\begin{align*}
		G(y_t)-x_0
		&=
			G(\tilde{a}_t+g_t)-G(\tilde{a}_0+g_0)\\
		&=
			\int_0^t
				G'(\tilde{a}_u+g_u)\,
				d\tilde{a}_u
			+
			\int_0^t
				G'(\tilde{a}_u+g_u)\,
				d^\circ g_u.
	\end{align*}
	The first term is equal to
	\begin{align*}
		\int_0^t
			\sigma(G(\tilde{a}_u+g_u))
			\tilde{b}(\tilde{a}_u+g_u)\,
			du
		&=
		\int_0^t
			\sigma(G(\tilde{a}_u+g_u))
			\left[
				\frac{b}{\sigma}
			\right]
			(G(\tilde{a}_u+g_u))\,
			du\\
		&=
		\int_0^t
			b(G(y_u))\,
			du
	\end{align*}
	and the second one is
	$
		\int_0^t
			\sigma(G(y_u))\,
			d^\circ g_u
	$.
	We see that $G(y)$ satisfies \eqref{eq_20140928070616}.
	The proof is completed.
\end{proof}
We see that the solution $y_t$ to \eqref{eq_20140928074406}
with any coefficient $\tilde{b}$ and initial point $y_0$ is differentiable.
\begin{proposition}\label{prop_20141030154436}
	Assume that $\tilde{b}\in\SmoothFunc[bdd]{n+1}{\RealNum}{\RealNum}$ for $n\geq 1$.
	The functional $g\mapsto y_t(g)$	is
	$n$-times Fr\'{e}chet differentiable in $\HolFunc[0]{\HolExp}{[0,1]}{\RealNum}$.

	In particular, the derivatives satisfy the following;
	\begin{enumerate}
		\item	For any $h^1,\dots,h^\nu\in\HolFunc[0]{\HolExp}{[0,1]}{\RealNum}$,
				we have
				\begin{align*}
					|
						\nabla_{h^\nu}
						\cdots
						\nabla_{h^1}
						y_t(g)
					|
					\leq
						\const[\nu]
						\|h^1\|_\infty
						\cdots
						\|h^\nu\|_\infty
				\end{align*}
				where $\const[\nu]$ is a positive constant depending only on $\tilde{b}$ and $\nu$.
		\item	\label{item_1505962351}The first derivative $\nabla_h y_t(g)$ is expressed by
				\begin{align*}
					\nabla_h y_t(g)
					=
						h_t
						+
						\int_0^t
							\exp
								\left(
									\int_s^t
										\tilde{b}'(y_u(g))\,
										du
								\right)
							\tilde{b}'(y_s(g))
							h_s\,
							ds.
				\end{align*}
	\end{enumerate}
\end{proposition}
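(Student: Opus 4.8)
The plan is to remove the rough driver $g$ from inside the differential, reducing \eqref{eq_20140928074406} to a genuine time-regular ODE in which $g$ appears only as a parameter through a smoothing time integral, and then to run the classical Gronwall-type arguments for smooth dependence of ODE solutions on a parameter. First I would set $z_t=y_t-g_t$, so that \eqref{eq_20140928074406} becomes $z_t=y_0+\int_0^t\tilde{b}(z_u+g_u)\,du$; since $\tilde{b}$ is globally Lipschitz, a Picard iteration on $C([0,1];\RealNum)$ produces a unique $z=z(g)\in\HolFunc{1}{[0,1]}{\RealNum}$, hence a unique $y=z+g$ solving \eqref{eq_20140928074406}. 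A Gronwall estimate on $z_t(g)-z_t(g')$ using $|\tilde{b}'|\le\|\tilde{b}'\|_\infty$ gives $\|z(g)-z(g')\|_\infty\le e^{\|\tilde{b}'\|_\infty}\|g-g'\|_\infty$. All estimates below are carried out for the uniform norm; since it is dominated by the H\"older norm, they yield in particular the assertions in $\HolFunc[0]{\HolExp}{[0,1]}{\RealNum}$, and the above is the base case of the differentiability claim.

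Next I would pin down the first derivative. Differentiating \eqref{eq_20140928074406} formally in a direction $h\in\HolFunc[0]{\HolExp}{[0,1]}{\RealNum}$ and writing $\zeta_t=\nabla_h y_t(g)$ yields the linear equation $\zeta_t=h_t+\int_0^t\tilde{b}'(y_u(g))\,\zeta_u\,du$; equivalently $w_t:=\zeta_t-h_t$ solves $\dot{w}_t=\tilde{b}'(y_t(g))(w_t+h_t)$ with $w_0=0$, and variation of constants gives precisely the closed form asserted in the second item. To verify that this $\zeta$ is genuinely the Fr\'echet derivative I would put $\delta_t=y_t(g+h)-y_t(g)$, so that $\|\delta\|_\infty\le C\|h\|_\infty$ by the previous step, subtract the integral equations for $\delta$ and for $\zeta$, and expand $\tilde{b}$ to second order using $|\tilde{b}''|\le\|\tilde{b}''\|_\infty$: the quadratic Taylor remainder contributes $O(\|h\|_\infty^2)$, so Gronwall gives $\|\delta-\zeta\|_\infty\le C\|h\|_\infty^2=o(\|h\|_\infty)$.

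For the higher derivatives I would iterate the same scheme: the map $g\mapsto\zeta=\nabla_h y(g)$ is governed by the linear equation just written, whose coefficient $\tilde{b}'(y(g))$ again depends smoothly on $g$, each further differentiation consuming one more derivative of $\tilde{b}$, which is exactly why $\tilde{b}\in\SmoothFunc[bdd]{n+1}{\RealNum}{\RealNum}$ suffices for $n$ derivatives. By induction on $\nu$, $\nabla_{h^\nu}\cdots\nabla_{h^1}y_t(g)$ satisfies a linear equation $\Xi_t=(\text{a forcing term built from lower-order directional derivatives of }y\text{ and from }\tilde{b}^{(j)},\ j\le\nu)+\int_0^t\tilde{b}'(y_u(g))\,\Xi_u\,du$; the uniform bounds $|\tilde{b}^{(j)}|\le\|\tilde{b}^{(j)}\|_\infty$ together with Gronwall then yield $|\nabla_{h^\nu}\cdots\nabla_{h^1}y_t(g)|\le\const[\nu]\|h^1\|_\infty\cdots\|h^\nu\|_\infty$ with $\const[\nu]$ depending only on $\tilde{b}$ and $\nu$, and at each stage the Taylor-plus-Gronwall argument of the preceding paragraph (now invoking $\tilde{b}^{(\nu+1)}$) confirms that the formally obtained $\Xi$ is the true iterated Fr\'echet derivative.

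An alternative packaging is to apply the implicit function theorem to the $C^n$ map $(z,g)\mapsto z-y_0-\int_0^{\cdot}\tilde{b}(z_u+g_u)\,du$ on $C([0,1];\RealNum)\times\HolFunc[0]{\HolExp}{[0,1]}{\RealNum}$, using that the Volterra operator $v\mapsto\int_0^{\cdot}\tilde{b}'(y_u(g))\,v_u\,du$ is quasi-nilpotent so that the linearization in $z$ is boundedly invertible; but the explicit first-derivative formula and the $g$-uniformity of the constants $\const[\nu]$ would still be extracted from the Gronwall computation above. I expect the main obstacle to be organizational rather than conceptual: carrying the Taylor-remainder/Gronwall verification cleanly through all orders $\nu\le n$ and checking that the constants $\const[\nu]$ are genuinely independent of $g$, which holds precisely because $\tilde{b}$ has globally bounded derivatives up to the relevant order.
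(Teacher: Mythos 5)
Your proposal is correct; note that the paper explicitly omits the proof of this proposition ``for the sake of conciseness,'' so there is no printed argument to compare against. Your substitution $z_t=y_t-g_t$ is precisely the auxiliary path $\tilde{a}$ that the paper introduces in the proof of \pref{prop_20140928105149}, and the Picard/Gronwall/Taylor-remainder scheme you describe (with the variation-of-constants formula for the linearized equation yielding item~(2), and the global boundedness of $\tilde{b}',\dots,\tilde{b}^{(n+1)}$ making the constants $\const[\nu]$ independent of $g$) is the standard argument the authors evidently have in mind.
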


For the sake of conciseness,
we omit the proof of the above proposition and show \pref{prop_20140929105014}.
\begin{proof}[Proof of \pref{prop_20140929105014}]
	The differentiability and Assertion~(\ref{item_20141228072212})
	follow from \prefs{prop_20140928105149}{prop_20141030154436}.
	Noting $\tilde{b}'(y_t(g))=[\Wronskian/\sigma](x_t(g))$,
	we see that Assertion~(\ref{item_20141106063937}) is true.
	Assertion (\ref{item_20141106063948}) follows from Assertion~(\ref{item_20141106063937})
	and the integration by parts formula.
\end{proof}

\subsection{SDEs driven by fBm}\label{sec_201708051641}
We consider existence and properties of a solution to an SDE
\eqref{eq_20141122070455}.
Let us start our discussion with the definition of fBm;
\begin{definition}
	A one-dimensional centered Gaussian process $B=\{B_t\}_{0\leq t<\infty}$ starting from zero is called
	fractional Brownian motion (fBm) with the Hurst $0<\Hurst<1$ if its covariance is given by
	\begin{align}\label{eq_20150219103359}
		\expect[B_sB_t]
		=
			R(s,t)
		=
			\frac{1}{2}
			\left\{
				s^{2\Hurst}
				+
				t^{2\Hurst}
				-
				|t-s|^{2\Hurst}
			\right\}.
	\end{align}
\end{definition}
It is well known that fBm $B$ has stationary increments in the sense of
$
	\expect
		[(B_t-B_s)(B_v-B_u)]
	=
		\expect
			[(B_{t+a}-B_{s+a})(B_{v+a}-B_{u+a})]
$
for any $0\leq s\leq t\leq u\leq v<\infty$ and $0\leq a<\infty$
and that it has self-similarity, namely,
for any $a>0$, $\{a^{-\Hurst} B_{at}\}_{0\leq t<\infty}$ is also fBm with the Hurst $\Hurst$.
In addition, it has a modulus of continuity of trajectories;
there exists a measurable subset $\probSp_0$ of $\probSp$ such that $\prob(\probSp_0)=1$
and for any $0<\epsilon<\Hurst$, there exists a nonnegative random variable
$G_\epsilon$ such that
$\expect[G_\epsilon^p]<\infty$ for any $p\geq 1$ and
\begin{align}\label{eq_20141224083943}
	|B_t(\omega)-B_s(\omega)|
	\leq
		G_\epsilon(\omega)
		|t-s|^{\Hurst-\epsilon}
\end{align}
for any $0\leq s,t<\infty$ and $\omega\in\probSp_0$.

Assume that $1/3<\Hurst<1$.
From \pref{prop_20140928080940}
and the H\"{o}lder continuity of fBm \eqref{eq_20141224083943},
we see existence of a unique solution to the SDE \eqref{eq_20141122070455}
in the pathwise sense.
More precisely, since $B(\omega)$ for any
$\omega\in\probSp_0$ is $(\Hurst-\epsilon)$-H\"{o}lder continuous,
a solution $X$ to \eqref{eq_20141122070455} is give by \eqref{eq_1545722900}
and it is unique in sense of \pref{prop_20140928080940}.
In the same way as $x$, we shall also write $X(\xi)$, $X(B)$, or $X(\xi,B)$
to emphasize dependence on the initial value $\xi$ and/or the driver $B$.
\begin{proposition}\label{prop_20150520004508}
	Assume that $b\in\SmoothFunc[bdd]{1}{\RealNum}{\RealNum}$
	and $\sigma\in\SmoothFunc[bdd]{2}{\RealNum}{\RealNum}$.
	Then there exists a unique solution $X$ to \eqref{eq_20141122070455}
	and the following are satisfied:
	\begin{enumerate}
		\item	\label{item_20150327035831}
				$X$ is adapted to the fBm filtration $\{\sigmaField_{t}\}_{0\leq t\leq1}$,
				where
				$
					\sigmaField_{t}
					=
						\sigma(B_u;0\leq u\leq t)
				$,
		\item	\label{item_20141102234451}
				$t\mapsto X_t$ is $(\Hurst-\epsilon)$-H\"{o}lder continuous a.s.\ for every $0<\epsilon<\Hurst$,
		\item	\label{item_20141103222159}
				for any $r\geq 1$, there exists a positive constant $\const$ such that
				\begin{align*}
					\expect[|X_t-X_s|^r]^{1/r}
					\leq
						\const
						(t-s)^\Hurst
				\end{align*}
				for any $0\leq s<t\leq 1$.
	\end{enumerate}
\end{proposition}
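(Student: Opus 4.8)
The plan is to reduce everything to the Doss--Sussmann representation $X_t=\phi(a_t,B_t)$ supplied by \pref{prop_20140928080940}, applied pathwise on the set $\probSp_0$ of \eqref{eq_20141224083943}. The hypotheses $b\in\SmoothFunc[bdd]{1}{\RealNum}{\RealNum}$ and $\sigma\in\SmoothFunc[bdd]{2}{\RealNum}{\RealNum}$ are exactly those of \pref{prop_20140928080940}, so for each $\omega\in\probSp_0$, the path $B(\omega)$ being $(\Hurst-\epsilon)$-H\"older continuous, there is a unique solution $X(\omega)=x_\cdot(B(\omega))$ to \eqref{eq_20141122070455} in the class $\mathfrak{C}$. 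This gives existence and uniqueness, and, since that proposition places $x(g)$ in $\HolFunc{\HolExp}{[0,1]}{\RealNum}$ for every H\"older driver $g$, it gives Assertion~(\ref{item_20141102234451}) at once, for every $0<\epsilon<\Hurst$.

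For Assertion~(\ref{item_20150327035831}) I would use that $a$ solves the ordinary integral equation $a_t=\xi+\int_0^t f_{\sigma,b}(a_u,B_u)\,du$; its Picard iterates are manifestly $\sigmaField_t$-measurable, so the limit $a_t$ is $\sigmaField_t$-measurable, and hence so is $X_t=\phi(a_t,B_t)$.

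The substantive part is Assertion~(\ref{item_20141103222159}). I would write $X_t-X_s=\{\phi(a_t,B_t)-\phi(a_t,B_s)\}+\{\phi(a_t,B_s)-\phi(a_s,B_s)\}$. By \eqref{eq_20140928162150} the first bracket equals $\int_{B_s}^{B_t}\sigma(\phi(a_t,\eta))\,d\eta$, hence is at most $\|\sigma\|_\infty|B_t-B_s|$. For the second bracket, the mean value theorem and \pref{prop_20140929101727}~(\ref{item_1545964864}) bound $\partial_1\phi$ by $\exp(\|\sigma'\|_\infty\|B\|_\infty)$ uniformly in its first argument, while $|f_{\sigma,b}(a_u,B_u)|\le\|b\|_\infty\exp(\|\sigma'\|_\infty\|B\|_\infty)$ gives $|a_t-a_s|\le\|b\|_\infty\exp(\|\sigma'\|_\infty\|B\|_\infty)(t-s)$; thus the second bracket is at most $\|b\|_\infty\exp(2\|\sigma'\|_\infty\|B\|_\infty)(t-s)$. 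Taking $L^r$-norms, $\expect[|B_t-B_s|^r]^{1/r}=c_r(t-s)^{\Hurst}$ because $B_t-B_s$ is centered Gaussian of variance $(t-s)^{2\Hurst}$ by \eqref{eq_20150219103359}, and $\expect[\exp(2r\|\sigma'\|_\infty\|B\|_\infty)]<\infty$ by Fernique's theorem; combining these and using $t-s\le1$ together with $\Hurst<1$ to absorb the drift term into $(t-s)^{\Hurst}$ yields $\expect[|X_t-X_s|^r]^{1/r}\le\const(t-s)^{\Hurst}$ with a deterministic constant, as required.

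The main obstacle is the exponential factor $\exp(c\|B\|_\infty)$ that unavoidably enters the Doss--Sussmann bound through $\partial_1\phi$ and through $\dot{a}$: one needs $\sup_{0\le u\le1}|B_u|$ to have Gaussian-type tails so that this factor lies in every $L^q(\probSp)$, which is precisely Fernique's theorem. A secondary point is that the drift contributes a term of order $t-s$ rather than $(t-s)^{\Hurst}$, which is why the bound is phrased with $(t-s)^{\Hurst}$ and invokes $\Hurst<1$.
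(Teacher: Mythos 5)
Your proposal is correct and follows essentially the same route as the paper: both reduce everything to the Doss--Sussmann representation $X_t=\phi(a_t,B_t)$ from \pref{prop_20140928080940} and then bound the two brackets of the telescoping decomposition of $X_t-X_s$ by $\|\sigma\|_\infty|B_t-B_s|$ and by an $\exp(c\|B\|_\infty)\cdot(t-s)$ term, respectively (the paper changes the first argument of $\phi$ first while you change the second, which is immaterial). You merely make explicit two points the paper leaves implicit, namely the Picard-iteration argument for adaptedness of $a$ and the appeal to Fernique's theorem to integrate the exponential factor.
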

\begin{proof}
	The first assertion follows from \pref{prop_20140928080940}.
	We show the second and third assertion.
	We decompose $X_t-X_s$ into
	$
		\{\phi(a^B_t,B_t)-\phi(a^B_s,B_t)\}
		+\{\phi(a^B_s,B_t)-\phi(a^B_s,B_s)\}
	$.
	From \prefs{prop_20140929101727}{prop_20140928080940}, we have
	\begin{align*}
		|\phi(a^B_t,B_t)-\phi(a^B_s,B_t)|
		&\leq
			e^{c_1|B_t|}
			\int_s^t
				c_2e^{c_3|B_u|}\,
				du,\\
		|\phi(a^B_s,B_t)-\phi(a^B_s,B_s)|
		&\leq
			c_4|B_t-B_s|,
	\end{align*}
	where $c_1$, $c_2$, $c_3$, $c_4$ are positive constants.
	The proof is completed.
\end{proof}

\section{Convergence of variation functionals}\label{sec_20150519021523}
Let $B=\{B_t\}_{0\leq t\leq 1}$ be an fBm with the Hurst $1/3<\Hurst<1$
and $X=\{X_t\}_{0\leq t\leq 1}$ the solution to \eqref{eq_20141122070455} given by \eqref{eq_1545722900}.
We assume that $b,\sigma\in\SmoothFunc[bdd]{\infty}{\RealNum}{\RealNum}$.
For these processes, we define the weighted Hermite variations and
the trapezoidal error variations.
The purpose of this section is to present
necessary results for asymptotics of the variations.

Let $f\in\SmoothFunc[poly]{2q}{\RealNum}{\RealNum}$ for $q\geq 2$
and $g\in\SmoothFunc[poly]{2}{\RealNum}{\RealNum}$.
Let $\mu$ be a probability measure on $[0,1]$.
For every $0\leq s<t\leq 1$ and continuous path $x:[0,1]\to\RealNum$, define
\begin{align*}
	F_{st}(x)
	\equiv
		F^{f,\mu}_{st}(x)
	:=
		\int_0^1
			f(\theta x_t+(1-\theta)x_s)\,
			\mu(d\theta).
\end{align*}
We define the weighted Hermite variations
$
	\wHerVar{q}{m}(t)
	\equiv
		\wHerVar{q,f,\mu}{m}(t)
$
by
\begin{align*}
	\wHerVar{q}{m}(t)
	=
		\sum_{k=1}^{\intPart{2^m t}}
			F_{\dyadicPart[m]{k-1}\dyadicPart[m]{k}}(X)
			\hermitePoly{q}(2^{m\Hurst}B_{\dyadicPart[m]{k-1}\dyadicPart[m]{k}})
\end{align*}
and
the trapezoidal error variations
$
	\wTRVar{}{m}(t)
	\equiv
		\wTRVar{g}{m}(t)
$
by
\begin{align*}
	\wTRVar{}{m}(t)
	=
		\sum_{k=1}^{\intPart{2^m t}}
			g(X_{\dyadicPart[m]{k-1}})
			\left(
				\frac{1}{2\cdot 2^m}
				B_{\dyadicPart[m]{k-1}\dyadicPart[m]{k}}
				-
				\int_{\dyadicPart[m]{k-1}}^{\dyadicPart[m]{k}}
					B_{\dyadicPart[m]{k-1}u}\,
					du
			\right).
\end{align*}
Here, $B_{st}=B_t-B_s$ for $0\leq s<t\leq 1$ and $H_q$ is the $q$-th Hermite polynomial defined by
\begin{align*}
	\hermitePoly{q}(\xi)
	=
		(-1)^q
		e^{\xi^2/2}
		\frac{d^q}{d\xi^q}
			e^{-\xi^2/2}.
\end{align*}
The first few Hermite polynomials are
$\hermitePoly{1}(\xi)=\xi$,
$\hermitePoly{2}(\xi)=\xi^2-1$,
$\hermitePoly{3}(\xi)=\xi^3-3\xi$,
and $\hermitePoly{4}(\xi)=\xi^4-6\xi^2+3$.
We set $\hermitePoly{0}(\xi)=1$ by convention.

The following limit theorems are vital for our proof.
These results are proved in Appendixes~\ref{sec_20150107074711} and \ref{sec_20170519021523}.

\begin{theorem}\label{thm_20141203094309}
	Let $q\geq 2$ be even.
	We have
	\begin{align*}
		\lim_{m\to\infty}
			2^{m(q\Hurst-1)}
			\sum_{k=1}^{\intPart{2^m\cdot}}
				F_{\dyadicPart[m]{k-1}\dyadicPart[m]{k}}(X)
				(B_{\dyadicPart[m]{k-1}\dyadicPart[m]{k}})^q
		=
			\expect[Z^q]
			\int_0^\cdot
				f(X_s)\,
				ds
	\end{align*}
	in probability with respect to the uniform norm.
	Here $Z$ is a standard Gaussian random variable.
\end{theorem}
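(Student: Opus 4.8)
The plan is to reduce the statement to the known central limit theorem for the weighted $q$-th power variation of fractional Brownian motion. Since $q\geq 2$ is even, the increment $(B_{\dyadicPart[m]{k-1}\dyadicPart[m]{k}})^q$ decomposes, via the Hermite expansion of the monomial $\xi^q$, as a finite linear combination $\sum_{j=0}^{q/2} c_j (2^{-m\Hurst})^{q-2j} \hermitePoly{q-2j}(2^{m\Hurst}B_{\dyadicPart[m]{k-1}\dyadicPart[m]{k}})$, where the top coefficient is $c_0=1$ (the Hermite polynomials are monic) and the constant term $c_{q/2}=\expect[Z^q]$ corresponds to $\hermitePoly{0}=1$. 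Multiplying by $F_{\dyadicPart[m]{k-1}\dyadicPart[m]{k}}(X)$, summing over $k\leq\intPart{2^m t}$, and scaling by $2^{m(q\Hurst-1)}$, one obtains a sum of terms of two types: the term coming from $\hermitePoly{0}$, which is exactly $2^{-m}\sum_k F_{\dyadicPart[m]{k-1}\dyadicPart[m]{k}}(X)\,\expect[Z^q]$, a Riemann sum that converges uniformly in $t$ to $\expect[Z^q]\int_0^t f(X_s)\,ds$ by continuity of $X$ and the definition of $F$; and for each $1\le j\le q/2$ a term of the form $2^{m(q\Hurst-1)}(2^{-m\Hurst})^{q-2j}\wHerVar{q-2j,f,\mu}{m}(t)=2^{m(2j\Hurst-1)}\wHerVar{q-2j,f,\mu}{m}(t)$, with $q-2j$ an even positive integer between $2$ and $q-2$.

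First I would invoke the results collected in \secref{sec_20150519021523} (proved in the appendices) concerning the asymptotics of the weighted Hermite variations $\wHerVar{p}{m}$ for even $p\ge 2$: these give that $\wHerVar{p}{m}(t)$, after the appropriate normalization, converges in law (or in probability) at rate governed by the exponent depending on whether $\Hurst<1/2$, $\Hurst=1/2$ or $\Hurst>1/2$, and in particular that $2^{m(p\Hurst-1/2)}$ or $2^{m(p\Hurst-1)}$ (whichever is relevant) times $\wHerVar{p}{m}$ is tight. The key point is that each correction term carries the prefactor $2^{m(2j\Hurst-1)}$ with $1\le j\le q/2-1$, hence $2j\Hurst-1 < q\Hurst - 1$ is strictly smaller than the exponent controlling the leading term only when... —more precisely one checks that $2^{m(2j\Hurst-1)}\wHerVar{q-2j}{m}(t)$ tends to $0$ uniformly in $t$ in probability, because the normalization needed to keep $\wHerVar{q-2j}{m}$ bounded is $2^{m((q-2j)\Hurst-1)}$ or $2^{m((q-2j)\Hurst-1/2)}$, and in either case the residual power of $2^m$ left over after comparison with $2^{m(2j\Hurst-1)}$ is strictly negative for $1\le j\le q/2-1$ (using $\Hurst>1/3$, so $(q-2j)\Hurst - 1/2 > 0$ and the exponents genuinely decay). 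Thus all intermediate Hermite contributions vanish in the limit, and only the $\hermitePoly{0}$-term survives.

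The main obstacle will be handling the borderline cases in the exponent bookkeeping — specifically verifying that for every even $p$ with $2\le p\le q-2$ the leftover exponent $2^{m(q\Hurst-1)}\cdot 2^{-m p\Hurst}$ times the sharp normalization of $\wHerVar{p}{m}$ really is a negative power of $2^m$, uniformly over the regimes $\Hurst<1/2$, $\Hurst=1/2$, $\Hurst>1/2$; and, relatedly, upgrading the convergence of the intermediate terms from "converges in law to something finite" to "converges to zero uniformly in $t$ in probability", which requires a tightness/maximal-inequality argument for the partial-sum processes rather than just finite-dimensional control. I expect this can be done by the same moment estimates (via hypercontractivity on a fixed Wiener chaos) used in the appendices to prove the limit theorems for $\wHerVar{q}{m}$ themselves, applied now to the lower-order variations, together with a Kolmogorov-type continuity estimate in $t$ for the increment $\wHerVar{p}{m}(t)-\wHerVar{p}{m}(s)$. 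The uniform convergence of the surviving Riemann sum is routine given the a.s.\ uniform continuity of $s\mapsto f(X_s)$ and the bound $|F_{\dyadicPart[m]{k-1}\dyadicPart[m]{k}}(X)-f(X_{\dyadicPart[m]{k-1}})|\le \omega_f(\|X\|\text{-modulus})$.
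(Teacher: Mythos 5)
Your overall strategy --- expand $\xi^q$ in Hermite polynomials, identify the $\hermitePoly{0}$ contribution as a Riemann sum converging to $\expect[Z^q]\int_0^\cdot f(X_s)\,ds$, and kill the remaining weighted Hermite variations by second-moment estimates --- is exactly the one the paper uses. However, your decomposition of $(B_{\dyadicPart[m]{k-1}\dyadicPart[m]{k}})^q$ contains a genuine scaling error that breaks the exponent bookkeeping. The correct identity is
\begin{align*}
	(B_{\dyadicPart[m]{k-1}\dyadicPart[m]{k}})^q
	=
		2^{-mq\Hurst}
		\sum_{r=0}^q
			\binom{q}{r}
			\expect[Z^{q-r}]
			\hermitePoly{r}(2^{m\Hurst}B_{\dyadicPart[m]{k-1}\dyadicPart[m]{k}}),
\end{align*}
so \emph{every} Hermite term carries the full factor $(2^{-m\Hurst})^{q}$, not $(2^{-m\Hurst})^{q-2j}$ as you wrote: the Wiener-chaos coefficient of $\sigma^{q-2j}\hermitePoly{q-2j}(\xi/\sigma)$ in $\xi^q$ is $\binom{q}{2j}(2j-1)!!\,\sigma^{2j}$, which is not a constant, and the two powers of $\sigma$ always recombine to $\sigma^{q}$. (Your own computation of the $\hermitePoly{0}$ term silently uses the correct factor $2^{-mq\Hurst}$, contradicting the decomposition you stated.) Consequently each correction term appears with prefactor $2^{-m}$, not $2^{m(2j\Hurst-1)}$. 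This is not cosmetic: with your prefactor the correction terms do not vanish. For instance with $q=4$, $j=1$, $\Hurst=1/2$ your prefactor is $2^{0}=1$, while $\wHerVar{2}{m}$ is of exact order $2^{m/2}$ by \tref{thm_20141117071529}, so that term would diverge; your subsequent claim that ``the residual power of $2^m$ is strictly negative'' is false for $\Hurst>1/4$, i.e.\ throughout the range considered here.

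Once the prefactor is corrected to $2^{-m}$, the rest of your plan goes through and coincides with the paper's proof. \pref{prop_20141203092733} gives, for some $\kappa>0$,
\begin{align*}
	\expect\bigl[|2^{-m}\wHerVar{r}{m}(t)-2^{-m}\wHerVar{r}{m}(s)|^2\bigr]
	\leq
		\const
		\left(
			\frac{\intPart{2^m t}-\intPart{2^m s}}{2^m}
		\right)^{1+\kappa}
		m\,2^{-\kappa m},
\end{align*}
which simultaneously yields convergence of the finite-dimensional distributions to zero and the tightness you flagged as the remaining obstacle (these are exactly the hypercontractivity-free second-moment estimates you anticipated, and since the increments live on a fixed chaos, second moments suffice). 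The Riemann-sum term is handled by the a.s.\ H\"older continuity of $X$, as you describe. Note also that only even $r$ survive in the expansion because $\expect[Z^{q-r}]=0$ for $q-r$ odd, which is consistent with your indexing, though your stated range $1\le j\le q/2$ for the correction terms should be $0\le j\le q/2-1$ (it must include the top term $\hermitePoly{q}$ and exclude $\hermitePoly{0}$).
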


\begin{theorem}\label{thm_20141117071529}
	Let $q\geq 2$ and $1/2q<\Hurst<1-1/2q$.
	We have
	\begin{align*}
		\lim_{m\to\infty}
			\left(
				B,2^{-m/2}\wHerVar{q}{m}
			\right)
		=
			\left(
				B,
				\sigma_{q,\Hurst}
				\int_0^\cdot
					f(X_s)\,
					dW_s
			\right)
	\end{align*}
	weakly in the Skorokhod topology,
	where
	$\sigma_{q,\Hurst}$ is a constant defined by \eqref{eq_const_sigma_q_Hurst}
	and $W$ is a standard Brownian motion independent of $B$.
\end{theorem}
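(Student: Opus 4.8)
The plan is to reduce the weak convergence of $2^{-m/2}\wHerVar{q}{m}$ to a known functional central limit theorem for the pure Hermite variation of fBm, using the Doss–Sussmann representation $X_t=\phi(a_t,B_t)$ to handle the random weights $F_{\dyadicPart[m]{k-1}\dyadicPart[m]{k}}(X)$. First I would recall the classical result (Breton–Nourdin, Nourdin–Nualart–Tudor) that, for $q\ge 2$ and $1/2q<\Hurst<1-1/2q$, the normalized $q$-th Hermite variation
\begin{align*}
	2^{-m/2}
	\sum_{k=1}^{\intPart{2^m\cdot}}
		\hermitePoly{q}(2^{m\Hurst}B_{\dyadicPart[m]{k-1}\dyadicPart[m]{k}})
\end{align*}
converges jointly with $B$ to $(B,\sigma_{q,\Hurst}W)$ weakly in the Skorokhod topology, with $W$ a Brownian motion independent of $B$ and $\sigma_{q,\Hurst}^2$ the usual series \eqref{eq_const_sigma_q_Hurst}. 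The task is then to show that the extra weights $F_{\dyadicPart[m]{k-1}\dyadicPart[m]{k}}(X)$, which are $\sigmaField_{\dyadicPart[m]{k-1}}$-close to $f(X_{\dyadicPart[m]{k-1}})$, may be pulled out and replaced by the continuous functional $f(X_s)$ inside the limit integral.

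The key steps, in order, are as follows. (1) Replace $F_{\dyadicPart[m]{k-1}\dyadicPart[m]{k}}(X)$ by $f(X_{\dyadicPart[m]{k-1}})$: since $f\in\SmoothFunc[poly]{2q}{\RealNum}{\RealNum}$ and, by \pref{prop_20150520004508}~(\ref{item_20141103222159}), $\expect[|X_{\dyadicPart[m]{k}}-X_{\dyadicPart[m]{k-1}}|^r]^{1/r}\le\const 2^{-m\Hurst}$, one has $|F_{\dyadicPart[m]{k-1}\dyadicPart[m]{k}}(X)-f(X_{\dyadicPart[m]{k-1}})|\le\const|X_{\dyadicPart[m]{k}}-X_{\dyadicPart[m]{k-1}}|$ up to polynomial-growth factors; the resulting error in $2^{-m/2}\wHerVar{q}{m}$ has $L^2$-norm bounded, via Cauchy–Schwarz and Gaussian hypercontractivity applied to $\hermitePoly{q}(2^{m\Hurst}B_{\dyadicPart[m]{k-1}\dyadicPart[m]{k}})$ together with the covariance decay of the increments, by $\const 2^{-m/2}\cdot 2^{m/2}\cdot 2^{-m\Hurst}\to 0$. (2) Localize: decompose $[0,1]$ into dyadic blocks on a coarser scale $2^{-p}$ with $p=p(m)\to\infty$ slowly, and on each block freeze $f(X)$ at the left endpoint of the block; the block-freezing error is controlled by the $(\Hurst-\epsilon)$-Hölder modulus of $X$ from \pref{prop_20150520004508}~(\ref{item_20141102234451}) times the block sums of Hermite polynomials, which are $O_{L^2}(2^{-(p-m)/2})$ per block. (3) On each frozen block the sum is $f(X_{\text{left}})$ times a pure Hermite variation of the \emph{increments} of $B$ over that block, to which the classical FCLT applies conditionally on $\sigmaField_{\text{left}}$; since $W$ in the limit is independent of $B$, and the freezing points generate an increasing family of $\sigmaField$-algebras, one assembles the block contributions into $\sigma_{q,\Hurst}\int_0^\cdot f(X_s)\,dW_s$ by a Riemann-sum/martingale argument together with the joint convergence with $B$. (4) Tightness in the Skorokhod topology follows from the uniform $L^2$ moment bounds on the increments of $2^{-m/2}\wHerVar{q}{m}$ over subintervals, again by hypercontractivity and the covariance structure of the fBm increments; combined with finite-dimensional convergence from steps (1)–(3) this yields the asserted weak limit jointly with $B$.

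The main obstacle I expect is step (3): rigorously transferring the classical Hermite-variation FCLT from the clean setting (no weights, whole interval) to the weighted, block-by-block setting while keeping track of the independence of the limiting Brownian motion $W$ from $B$. The subtlety is that the weights $f(X_{\dyadicPart[m]{k-1}})$ are themselves measurable with respect to the same Gaussian field driving the Hermite polynomials, so one cannot simply condition and quote the scalar result; the clean way is to work in the Wiener chaos decomposition, write $\wHerVar{q}{m}$ as an element of the $q$-th chaos plus lower-order corrections via the product formula $f(X_{\dyadicPart[m]{k-1}})\hermitePoly{q}(\cdot)$, and show the lower-chaos pieces are negligible after normalization while the top-chaos piece converges by a fourth-moment/contraction-estimate argument à la Nualart–Peccati, with the weights entering only through the deterministic kernel. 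This is exactly the type of argument carried out in Appendixes~\ref{sec_20150107074711} and \ref{sec_20170519021523}, so here I would organize the proof around it rather than the more elementary block-freezing heuristic sketched above, using the latter only to identify the limit.
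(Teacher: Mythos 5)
Your proposal is correct and follows essentially the same route as the paper: the paper also freezes the weights $F_{\dyadicPart[m]{k-1}\dyadicPart[m]{k}}(X)$ at a coarser dyadic scale, controls the remainder in $L^2$ via the product formula and the Malliavin duality estimate of \pref{prop_20141117073846}, and then invokes the unweighted multidimensional CLT of \pref{prop_20141102222020} (resting on the Peccati--Tudor fourth moment theorem) for the frozen sums, with tightness from the second-moment bounds of \pref{prop_20141203092733}. Your closing observation---that one cannot simply condition because the weights live in the same Gaussian field, and that the chaos-decomposition/fourth-moment argument is the correct resolution---is precisely the point on which the paper's proof (deferring to the scheme of \cite[Theorem~15]{Naganuma2015}) turns.
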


\begin{theorem}\label{thm_20141123062022}
	Let $q\geq 2$ and $\Hurst=1/2$.
	We have
	\begin{align*}
		\lim_{m\to\infty}
			\left(
				B,2^{-m/2}\wHerVar{q}{m},2^{m}\wTRVar{}{m}
			\right)
		=
			\left(
				B,
				\sqrt{q!}
				\int_0^\cdot
					f(X_s)\,
					dW_s,
				\frac{1}{\sqrt{12}}
				\int_0^\cdot
					g(X_s)\,
					d\tilde{W}_s
			\right)
	\end{align*}
	weakly in the Skorokhod topology,
	where
	$W$ and $\tilde{W}$ are standard Brownian motions
	and $B$, $W$ and $\tilde{W}$ are independent.
\end{theorem}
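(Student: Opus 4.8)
The plan is to exploit the fact that at $\Hurst=1/2$ the driver $B$ is a standard Brownian motion, so its increments $\Delta_k:=B_{\dyadicPart[m]{k-1}\dyadicPart[m]{k}}$ are independent and $Z_k:=2^{m/2}\Delta_k$ are i.i.d.\ standard Gaussians, with $Z_k$ independent of $\sigmaField_{\dyadicPart[m]{k-1}}$. First I would record what is needed about the trapezoidal increment: decomposing $B$ on $[\dyadicPart[m]{k-1},\dyadicPart[m]{k}]$ into the chord through $(0,\Delta_k)$ plus a Brownian bridge $\beta^{(k)}$, one gets $\xi_k:=(2\cdot 2^m)^{-1}\Delta_k-\int_{\dyadicPart[m]{k-1}}^{\dyadicPart[m]{k}}B_{\dyadicPart[m]{k-1}u}\,du=-\int_{\dyadicPart[m]{k-1}}^{\dyadicPart[m]{k}}\beta^{(k)}_u\,du$, a centered Gaussian with $\expect[\xi_k^{2}]=2^{-3m}/12$ that is independent of $\sigmaField_{\dyadicPart[m]{k-1}}\vee\sigma(Z_k)$. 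The orthogonality relations that drive everything are, for $q\ge 2$, $\expect[H_q(Z_k)]=\expect[Z_kH_q(Z_k)]=0$ (since $H_q\perp H_1$ in $L^{2}$ of the Gaussian measure), $\expect[\xi_k]=\expect[\Delta_k\xi_k]=0$, and $\expect[H_q(Z_k)\xi_k]=0$ (the last two because $\beta^{(k)}$ is independent of $\Delta_k$).

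Second, I would reduce to \emph{adapted} weights. The weight $g(X_{\dyadicPart[m]{k-1}})$ in $\wTRVar{}{m}$ is already $\sigmaField_{\dyadicPart[m]{k-1}}$-measurable, so nothing is needed there. In $\wHerVar{q}{m}$ I would replace $F_{\dyadicPart[m]{k-1}\dyadicPart[m]{k}}(X)$ by $f(X_{\dyadicPart[m]{k-1}})$: with $\delta:=X_{\dyadicPart[m]{k}}-X_{\dyadicPart[m]{k-1}}$ one has $F_{\dyadicPart[m]{k-1}\dyadicPart[m]{k}}(X)=f(X_{\dyadicPart[m]{k-1}})+c_1f'(X_{\dyadicPart[m]{k-1}})\delta+\tfrac{c_2}{2}f''(X_{\dyadicPart[m]{k-1}})\delta^{2}+O(|\delta|^{3})$, and by the Taylor expansion of $\delta$ in \pref{prop_20141010094511} (together with the iterated-integral bounds of \lref{lem_201708052140} and $|\Delta_k|$ of order $2^{-m/2}$) the difference $F_{\dyadicPart[m]{k-1}\dyadicPart[m]{k}}(X)-f(X_{\dyadicPart[m]{k-1}})$ splits into a polynomial in $\Delta_k$ of positive degree with $\sigmaField_{\dyadicPart[m]{k-1}}$-measurable coefficients, terms of deterministic order $o(1)$, and genuinely bridge-dependent iterated-integral remainders of size $O(2^{-m})$ or smaller. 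Multiplying by $H_q(Z_k)$, summing over $k\le\intPart{2^m t}$ and dividing by $2^{m/2}$, every contribution has either conditional mean zero (by $\expect[H_q(Z_k)]=\expect[Z_kH_q(Z_k)]=0$, which is where $q\ge 2$ enters) or deterministic order $o(1)$; a direct second-moment estimate using the polynomial moment bounds of \pref{prop_20150520004508} then shows this error tends to $0$ in $L^{2}$, uniformly in $t\in[0,1]$. By Slutsky it suffices to prove the theorem with $\wHerVar{q}{m}(t)$ replaced by $\sum_{k\le\intPart{2^m t}}f(X_{\dyadicPart[m]{k-1}})H_q(Z_k)$.

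The main step is then a functional stable martingale central limit theorem. Set $M^{(1)}_m(t)=2^{-m/2}\sum_{k\le\intPart{2^m t}}f(X_{\dyadicPart[m]{k-1}})H_q(Z_k)$ and $M^{(2)}_m(t)=2^{m}\sum_{k\le\intPart{2^m t}}g(X_{\dyadicPart[m]{k-1}})\xi_k$; by the first paragraph these are martingales for the discrete filtration $(\sigmaField_{\dyadicPart[m]{k}})_k$, for which $B$ is also adapted. I would verify: (i) the cross-brackets vanish, $\langle M^{(i)}_m,B\rangle\equiv 0$ (as $B$ is continuous and $\expect[\Delta_kH_q(Z_k)]=\expect[\Delta_k\xi_k]=0$) and $\langle M^{(1)}_m,M^{(2)}_m\rangle\equiv 0$ with $[M^{(1)}_m,M^{(2)}_m]_t\to 0$ in $L^{2}$ (by $\expect[H_q(Z_k)\xi_k]=0$); (ii) the predictable brackets converge in probability uniformly in $t$, namely $\langle M^{(1)}_m\rangle_t=q!\,2^{-m}\sum_{k\le\intPart{2^m t}}f(X_{\dyadicPart[m]{k-1}})^{2}\to q!\int_0^t f(X_s)^{2}\,ds$ and $\langle M^{(2)}_m\rangle_t=\tfrac1{12}2^{-m}\sum_{k\le\intPart{2^m t}}g(X_{\dyadicPart[m]{k-1}})^{2}\to\tfrac1{12}\int_0^t g(X_s)^{2}\,ds$ (Riemann sums of a.s.\ continuous integrands with all moments, using $\expect[H_q(Z_k)^{2}]=q!$ and $\expect[\xi_k^{2}]=2^{-3m}/12$); (iii) the conditional Lyapunov bound $\sum_{k\le 2^m}\expect[|\Delta M^{(i)}_{m,k}|^{4}\mid\sigmaField_{\dyadicPart[m]{k-1}}]=O(2^{-m})\to 0$. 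Feeding these into the Rebolledo / Jacod--Shiryaev stable central limit theorem for the three-dimensional martingale $(B,M^{(1)}_m,M^{(2)}_m)$, whose limiting bracket matrix is diagonal with entries $t$, $q!\int_0^t f(X_s)^{2}\,ds$ and $\tfrac1{12}\int_0^t g(X_s)^{2}\,ds$, yields weak convergence in the Skorokhod topology to $\bigl(B,\sqrt{q!}\int_0^{\cdot}f(X_s)\,dW_s,\tfrac1{\sqrt{12}}\int_0^{\cdot}g(X_s)\,d\tilde W_s\bigr)$ with $W,\tilde W$ standard Brownian motions independent of each other and of $B$---the independence being exactly the vanishing of the cross-brackets, the diffusion coefficients read off from (ii), and the mode of convergence as stated because the limit is continuous. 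Combining with the reduction of the second paragraph via Slutsky finishes the proof.

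I expect the only real obstacle to be the reduction of the second paragraph: organizing $F_{\dyadicPart[m]{k-1}\dyadicPart[m]{k}}(X)-f(X_{\dyadicPart[m]{k-1}})$ into the part that is a genuine polynomial in $\Delta_k$ with adapted coefficients (annihilated by Hermite orthogonality after the $2^{-m/2}$ scaling), the deterministic $o(1)$ part coming from the drift, and the genuinely bridge-dependent small iterated integrals (including the dependence of $a_{\dyadicPart[m]{k}}$ on $B$ restricted to $[\dyadicPart[m]{k-1},\dyadicPart[m]{k}]$), and then bounding each of these in $L^{2}$ uniformly in $t$ despite the polynomial growth of $f$; \pref{prop_20150520004508} and \lref{lem_201708052140} are exactly the tools for this. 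Everything else---the bracket computations and the appeal to the martingale central limit theorem---is routine once the three orthogonality relations are in hand.
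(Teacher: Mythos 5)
Your proposal is correct in outline, but it takes a genuinely different route from the paper. The paper first proves the unweighted statement (\pref{prop_20141102222020}) via the Peccati--Tudor multidimensional fourth moment theorem, after computing the covariances $a_{k,l}$ and $a^\dagger_{k,l}$ explicitly in Appendix~\ref{sec_20150107074711}; it then passes to the weighted sums in Appendix~\ref{sec_20170519021523} by freezing the weights on a coarser dyadic grid (the processes $\wHerVar{q}{m,n}$, $\wTRVar{}{m,n}$ built from $X_{\lowerPart[n]{\dyadicPart[m]{k-1}}}$), applying the unweighted result blockwise, and killing the remainders in $L^2$ --- the Hermite remainder through the product formula \eqref{eq_product_formula} and the Malliavin derivative bounds of \pref{prop_20150520004520}, and the trapezoidal remainder through exactly the martingale orthogonality you use; tightness is a separate moment estimate. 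You instead exploit that at $\Hurst=1/2$ the increments $Z_k$ and the bridge functionals $\xi_k$ are independent of the past, reduce to adapted weights via the Hermite orthogonality $\expect[H_q(Z)]=\expect[ZH_q(Z)]=0$ (only the first-order term of $F_{\dyadicPart[m]{k-1}\dyadicPart[m]{k}}(X)-f(X_{\dyadicPart[m]{k-1}})$ needs this; the quadratic remainder is already $o(1)$ after summation), and then invoke a stable functional martingale CLT, which delivers fdd convergence, tightness and the joint convergence with $B$ in one stroke. Your route is more elementary --- no Malliavin calculus or fourth-moment machinery --- and is the classical Jacod-style treatment of power variations of semimartingales; its price is that it is confined to $\Hurst=1/2$, whereas the paper's machinery is exactly what is reused for \tref{thm_20141117071529} at $\Hurst\neq 1/2$ and \pref{prop_20141102222020} must be proved anyway. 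Two small points to make explicit if you write this up: the conditional-mean-zero terms in your reduction must also be shown to have vanishing predictable bracket (your $L^2$ estimate does cover this), and the independence of the limit from $B$ in the stable MCLT requires checking the predictable cross-bracket against $B$ together with the fact that the Brownian filtration carries no martingales orthogonal to $B$.
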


\begin{proposition}\label{prop_20141117062354}
	If $0<\Hurst<1/2$ (resp. $1/2\leq\Hurst<1$),
	then the process $2^{mr}\wTRVar{}{m}$ for $0<r<2\Hurst$ (resp. $0<r<1$)
	converges to the process $0$ in probability
	with respect to the uniform norm.
\end{proposition}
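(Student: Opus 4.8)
The plan is to reduce the claim to one uniform moment estimate together with a union bound over the dyadic points, the only non-routine ingredient being a single $\CM$–inner–product bound that encodes a cancellation hidden in the trapezoidal kernel. First I would rewrite the summands: set $\Delta=2^{-m}$, $I_k=[\dyadicPart[m]{k-1},\dyadicPart[m]{k}]$, $c_k=(\dyadicPart[m]{k-1}+\dyadicPart[m]{k})/2$ and $\psi^m_k=(u-c_k)\indicator{I_k}$. A stochastic Fubini argument gives $\int_{I_k}B_{\dyadicPart[m]{k-1}u}\,du=\int_{I_k}(\dyadicPart[m]{k}-u)\,dB_u$, so that
\begin{equation*}
	\zeta^m_k
	:=\frac{1}{2\cdot2^m}B_{\dyadicPart[m]{k-1}\dyadicPart[m]{k}}-\int_{\dyadicPart[m]{k-1}}^{\dyadicPart[m]{k}}B_{\dyadicPart[m]{k-1}u}\,du
	=\int_{I_k}(u-c_k)\,dB_u
\end{equation*}
is the Wiener integral of $\psi^m_k$ and $\wTRVar{}{m}(t)=\sum_{k=1}^{\intPart{2^m t}}g(X_{\dyadicPart[m]{k-1}})\,\zeta^m_k$. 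The key point is that $\psi^m_k$ has \emph{vanishing mean on $I_k$}: by self-similarity and stationarity of the increments of $B$, $\expect[\zeta^m_j\zeta^m_k]=\innerProd[\CM]{\psi^m_j}{\psi^m_k}=2^{-m(2+2\Hurst)}\rho_{j-k}$ for a sequence $(\rho_\ell)_{\ell\in\mathbf Z}$ not depending on $m$, and expanding the covariance kernel of $B$ about the midpoints $c_j,c_k$ and using this vanishing mean twice gives $|\rho_\ell|\lesssim|\ell|^{2\Hurst-4}$, so $c_\Hurst:=\sum_\ell|\rho_\ell|<\infty$ for every $\Hurst\in(1/3,1)$ (with $\rho_\ell=0$ for $\ell\ne0$ when $\Hurst=1/2$). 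Consequently, for any bounded sequence $(a_k)_{k\le2^m}$,
\begin{equation*}
	\Bigl\|\sum_{k\le2^m}a_k\psi^m_k\Bigr\|_\CM^2
	=2^{-m(2+2\Hurst)}\sum_{k,l}a_ka_l\rho_{k-l}
	\le c_\Hurst\,2^{-m(1+2\Hurst)}\sup_k a_k^2 .
\end{equation*}

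Next, since $\wTRVar{}{m}(t)$ depends on $t$ only through $\intPart{2^m t}$, it is constant on each $[\dyadicPart[m]{j},\dyadicPart[m]{j+1})$, whence $\|\wTRVar{}{m}\|_\infty=\max_{0\le j\le2^m}|\wTRVar{}{m}(\dyadicPart[m]{j})|$; it is therefore enough to prove, for some $\alpha>r$, that $\sup_{0\le j\le2^m}\expect[|\wTRVar{}{m}(\dyadicPart[m]{j})|^{2N}]\le C_N\,2^{-2Nm\alpha}$ for all $N\in\NaturalNum$. A union bound over the $2^m+1$ dyadic points together with Markov's inequality then gives $\prob(2^{mr}\|\wTRVar{}{m}\|_\infty>\epsilon)\le C_N\epsilon^{-2N}\,2^{m(1+2N(r-\alpha))}\to0$ as soon as $N>1/\{2(\alpha-r)\}$. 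I will obtain this with $\alpha=\min\{\tfrac12+\Hurst,2\Hurst\}$, which is $2\Hurst$ for $\Hurst\le\tfrac12$ and $\tfrac12+\Hurst\ (\ge1)$ for $\Hurst\ge\tfrac12$, so it covers both ranges in the statement; the case $\Hurst=\tfrac12$ (where $\alpha=1$) is in any case already contained in \tref{thm_20141123062022}.

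To produce the moment bound I would expand by Malliavin calculus. Since $Dg(X_{\dyadicPart[m]{k-1}})=g'(X_{\dyadicPart[m]{k-1}})DX_{\dyadicPart[m]{k-1}}$ and $\zeta^m_k$ is the Wiener integral of $\psi^m_k$, the multiplication/duality formula for the divergence operator $\delta$ gives $\wTRVar{}{m}(\dyadicPart[m]{j})=\delta(u_j)+T_j$, with $u_j=\sum_{k\le j}g(X_{\dyadicPart[m]{k-1}})\psi^m_k$ and $T_j=\sum_{k\le j}g'(X_{\dyadicPart[m]{k-1}})\innerProd[\CM]{DX_{\dyadicPart[m]{k-1}}}{\psi^m_k}$. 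Meyer's inequalities bound $\|\delta(u_j)\|_{L^{2N}}$ in terms of the $L^{2N}$–norms of $\|u_j\|_\CM$ and $\|Du_j\|_{\CM^{\otimes2}}$; by the Gram–matrix bound above $\|u_j\|_\CM^2\le c_\Hurst\,2^{-m(1+2\Hurst)}\sup_{0\le s\le1}|g(X_s)|^2$, and, writing $Du_j=\sum_{k\le j}g'(X_{\dyadicPart[m]{k-1}})\,DX_{\dyadicPart[m]{k-1}}\otimes\psi^m_k$ and using $\|DX_{\dyadicPart[m]{k-1}}\|_\CM\le\const[B]$ from \pref{prop_20140929105014}, the same bound gives $\|Du_j\|_{\CM^{\otimes2}}^2\le\const[B]\,2^{-m(1+2\Hurst)}$; both estimates hold with random constants having moments of every order (using $\expect[\const[B]^p]<\infty$ and the polynomial growth of $g,g'$ against the moments of $X$). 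Hence $\|\delta(u_j)\|_{L^{2N}}\le C_N\,2^{-m(1/2+\Hurst)}$, uniformly in $j\le2^m$.

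The hard part is the trace term $T_j$, and this is where I expect the main difficulty to lie. By \pref{prop_20140929105014} the Malliavin derivative $DX_{\dyadicPart[m]{k-1}}$ is the bounded process $r\mapsto\sigma(X_r)J_{\dyadicPart[m]{k-1}}J_r^{-1}$ on $[0,\dyadicPart[m]{k-1}]$ (and zero afterwards), supported in the interval \emph{adjacent} to $I_k$, on which $\psi^m_k$ is a mean–zero bump. The point to establish is the bound
\begin{equation*}
	\bigl|\innerProd[\CM]{DX_{\dyadicPart[m]{k-1}}}{\psi^m_k}\bigr|\le\const[B]\,2^{-m(1+2\Hurst)},
\end{equation*}
i.e.\ a gain of a whole factor $2^{-m\Hurst}$ over the naive Cauchy–Schwarz bound $\le\const[B]\,2^{-m(1+\Hurst)}$; the gain comes precisely from the vanishing mean of $\psi^m_k$ together with the smoothness of the covariance of $B$ off the diagonal (for $\Hurst=\tfrac12$ the two supports are disjoint and this inner product is $0$). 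Granting it, $|T_j|\le\const[B]\cdot2^m\cdot2^{-m(1+2\Hurst)}\cdot\sup_{0\le s\le1}|g'(X_s)|=\const[B]\,2^{-2m\Hurst}$ uniformly in $j$, so $\|\max_j|T_j|\|_{L^{2N}}\le C_N\,2^{-2m\Hurst}$; combined with the divergence estimate this yields $\sup_j\expect[|\wTRVar{}{m}(\dyadicPart[m]{j})|^{2N}]\le C_N\,2^{-2Nm\min\{1/2+\Hurst,2\Hurst\}}$, and the reduction step concludes the proof. Thus the whole difficulty is concentrated in the displayed $\CM$–inner–product estimate — showing that the long memory of fractional Brownian motion does not annihilate the cancellation built into the trapezoidal kernel — and it is exactly this cancellation that is responsible for the exponent $2\Hurst$ appearing in the statement for $1/3<\Hurst<1/2$.
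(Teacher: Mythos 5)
Your argument is correct in substance but follows a genuinely different route from the paper's. The paper proves the second-moment bound $\expect[|\wTRVar{}{m}(t)-\wTRVar{}{m}(s)|^2]\le C\,2^{-m}(\intPart{2^m t}-\intPart{2^m s})\cdot 2^{-4m\Hurst}$ (resp.\ $2^{-2m}$) in \pref{prop_20141104000212} via the product formula, splitting $|\wTRVar{}{m}(t)-\wTRVar{}{m}(s)|^2$ into a zeroth-chaos part controlled by the covariance decay $|a_{k,l}|\lesssim|k-l|^{2\Hurst-4}$ (\pref{prop_20150518041212}) and a second-chaos part controlled by duality together with the bound $\|\zeta_{st}\|_\infty\lesssim(t-s)^{2\Hurst+1}$ resp.\ $(t-s)^2$ (\pref{prop_20150520010300}); it then concludes by convergence of fdds plus relative compactness (Burdzy--Swanson). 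You use exactly the same two analytic inputs --- the summable off-diagonal decay of the Gram matrix and the sup-norm bound on the trapezoidal kernel --- but package them differently: the identity $F\delta(h)=\delta(Fh)+\innerProd[\CM]{DF}{h}$, Meyer's inequality for the divergence part, a pathwise bound for the trace part, and a union bound over the $2^m+1$ dyadic points with moments of arbitrarily high order, which avoids any tightness argument. This is a legitimate and somewhat more self-contained alternative, at the price of needing $L^{2N}$ estimates rather than only $L^2$.

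Two caveats. First, your ``key inner-product bound'' is precisely what the paper already supplies: $\innerProd[\CM]{DX_{\dyadicPart[m]{k-1}}}{\zeta_{\dyadicPart[m]{k-1}\dyadicPart[m]{k}}}=\nabla_{\zeta}X_{\dyadicPart[m]{k-1}}$ is bounded by $C\|\zeta_{\dyadicPart[m]{k-1}\dyadicPart[m]{k}}\|_\infty$ by \pref{prop_20140929105014}~(2) (with $C$ deterministic, since $J$ and $J^{-1}$ are bounded under \hyporef{hypo_ellipticity}), and \pref{prop_20150520010300} gives $\|\zeta_{st}\|_\infty\le C(t-s)^{2\Hurst+1}$ for $\Hurst<1/2$. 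Be careful, though, with the mechanism you propose: the supports of $DX_{\dyadicPart[m]{k-1}}$ and of $\psi^m_k$ are adjacent, not separated, so ``smoothness of the covariance off the diagonal'' is not available near the common endpoint; the clean proof goes through $|\expect[B_{st}B_u]|\le(t-s)^{2\Hurst}$ uniformly in $u$, which already makes each of the two terms of $\zeta_{st}(u)$ of order $(t-s)^{1+2\Hurst}$ without invoking the vanishing mean of the kernel. Second, for $\Hurst>1/2$ the exponent $1+2\Hurst$ you claim for the inner product is too strong; the available bound is $\|\zeta_{st}\|_\infty\le 2\Hurst(t-s)^2$, so the trace term is only $O(2^{-m})$ and your $\alpha$ should be $\min\{1/2+\Hurst,1\}=1$ rather than $1/2+\Hurst$. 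This still yields the stated range $r<1$, so the conclusion is unaffected.
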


In order to prove \tref[thm_20141117071529]{thm_20141123062022},
we use a simplified version of them.
Let $q\geq 2$. We set
\begin{align*}
	\hermiteVar{q}{m}(t)
	&=
		2^{-m/2}
		\sum_{k=1}^{\intPart{2^m t}}
			\hermitePoly{q}
				(
					2^{m\Hurst} B_{\dyadicPart[m]{k-1}\dyadicPart[m]{k}}
				)
\end{align*}
and
\begin{align*}
	\TRVar{m}(t)
	&=
		2^{-m/2}
		\sum_{k=1}^{\intPart{2^m t}}
			2^{m(\Hurst+1)}
			\left(
				\frac{1}{2\cdot 2^m}
				B_{\dyadicPart[m]{k-1}\dyadicPart[m]{k}}
				-
				\int_{\dyadicPart[m]{k-1}}^{\dyadicPart[m]{k}}
					B_{\dyadicPart[m]{k-1}u}\,
					du
			\right).
\end{align*}
Then, we see
$
	\hermiteVar{q}{m}
	=
		2^{-m/2}
		\wHerVar{q,f,\mu}{m}
$
and
$
	\TRVar{m}
	=
		2^{m(\Hurst+1/2)}
		\wTRVar{g}{m}
$
for $f=g\equiv 1$
and the following:
\begin{proposition}\label{prop_20141102222020}
	Assume $q\geq 2$ and $0<\Hurst<1-1/2q$. Then we have
	\begin{align*}
		\lim_{m\to\infty}
		(
			B,\hermiteVar{q}{m},\TRVar{m}
		)
		=
			(
				B,\sigma_{q,\Hurst}W,\tilde{\sigma}_\Hurst \tilde{W}
			)
	\end{align*}
	weakly in the Skorokhod topology.
	Here $W$ and $\tilde{W}$ are independent standard Brownian motions independent of $B$,
	and $\sigma_{q,\Hurst}$ and $\sigma_\Hurst$ are positive constants given by
	\begin{align}
		\label{eq_const_sigma_q_Hurst}
		\sigma_{q,\Hurst}^2
		&=
			q!
			\left(
				1+2\sum_{l=1}^\infty \rho_\Hurst(l)^q
			\right),\\
		\notag
		\tilde{\sigma}_\Hurst^2
		&=
			\frac{1}{4}
			\frac{1-\Hurst}{1+\Hurst}
			+
			2
			\sum_{l=1}^\infty
				\tilde{\rho}_\Hurst(l)
	\end{align}
	with
	\begin{gather*}
		\rho_\Hurst(l)
		=
			\expect
				[B_1(B_{l+1}-B_l)]
		=
			\frac{1}{2}
			(
				|l+1|^{2\Hurst}
				+
				|l-1|^{2\Hurst}
				-
				2|l|^{2\Hurst}
			),\\
		\tilde{\rho}_\Hurst(l)
		=
			\expect
				\left[
					\left(
						\frac{1}{2}
						B_1
						-
						\int_0^1
							B_u\,
							du
					\right)
					\left(
						\frac{1}{2}
						(B_{l+1}-B_l)
						-
						\int_l^{l+1}
							(B_u-B_l)\,
							du
					\right)
				\right].
	\end{gather*}
\end{proposition}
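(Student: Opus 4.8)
The plan is to realise all the processes inside the single Gaussian space generated by $B$, to express $\hermiteVar{q}{m}$ and $\TRVar{m}$ as multiple Wiener--It\^o integrals, and then to combine a Breuer--Major / fourth moment argument for the finite dimensional distributions with a hypercontractivity based tightness estimate. Let $(\CM,\langle\cdot,\cdot\rangle_{\CM})$ be the Hilbert space canonically associated with $B$, so that $B_t=\WienerInt{1}(\mathbf{1}_{[0,t]})$ and $\langle\mathbf{1}_{[s,t]},\mathbf{1}_{[u,v]}\rangle_{\CM}=\expect[(B_t-B_s)(B_v-B_u)]$, and put $\varepsilon^{(m)}_k=2^{m\Hurst}\mathbf{1}_{[\dyadicPart[m]{k-1},\dyadicPart[m]{k}]}$; stationarity of the increments and self-similarity of $B$ give $\|\varepsilon^{(m)}_k\|_{\CM}=1$ and $\langle\varepsilon^{(m)}_j,\varepsilon^{(m)}_k\rangle_{\CM}=\rho_\Hurst(|j-k|)$ with $\rho_\Hurst(0)=1$. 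Since $H_q(\WienerInt{1}(h))=\WienerInt{q}(h^{\otimes q})$ when $\|h\|_{\CM}=1$, one gets $\hermiteVar{q}{m}(t)=\WienerInt{q}(f_m(t))$ with $f_m(t)=2^{-m/2}\sum_{k=1}^{\intPart{2^m t}}(\varepsilon^{(m)}_k)^{\otimes q}$, and $\TRVar{m}(t)=\WienerInt{1}(g_m(t))$, where $g_m(t)\in\CM$ is the step function whose restriction to the cell $[\dyadicPart[m]{k-1},\dyadicPart[m]{k}]$ is a multiple of $u-\tfrac12(\dyadicPart[m]{k-1}+\dyadicPart[m]{k})$; this kernel integrates to zero on each cell, a fact used repeatedly below.

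Next I would compute the limiting covariances. From $\expect[\WienerInt{q}(a)\WienerInt{q}(b)]=q!\langle a,b\rangle_{\CM^{\otimes q}}$ and $\langle\varepsilon^{(m)}_j,\varepsilon^{(m)}_k\rangle_{\CM}^q=\rho_\Hurst(|j-k|)^q$, a direct Ces\`{a}ro-type averaging gives $\expect[\hermiteVar{q}{m}(s)\hermiteVar{q}{m}(t)]\to(s\wedge t)\,\sigma_{q,\Hurst}^2$; the series $\sum_{l\ge1}\rho_\Hurst(l)^q$ converges because $\Hurst<1-1/2q$ forces $\rho_\Hurst(l)=O(l^{2\Hurst-2})$ with $q(2\Hurst-2)<-1$ (and it is trivial for $\Hurst=1/2$, where $\rho_{1/2}(l)=0$ for $l\ge1$). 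The analogous computation for $\TRVar{m}$ gives $\expect[\TRVar{m}(s)\TRVar{m}(t)]\to(s\wedge t)\,\tilde\sigma_\Hurst^2$; here the mean-zero property of the trapezoidal kernel makes the two leading terms in the large-lag expansion of $\tilde\rho_\Hurst(l)$ cancel, so $\tilde\rho_\Hurst(l)=O(l^{2\Hurst-4})$ and the series converges for every $\Hurst\in(0,1)$. For the cross terms, $\expect[\hermiteVar{q}{m}(s)\TRVar{m}(t)]=0$ and $\expect[B_s\hermiteVar{q}{m}(t)]=0$ identically, by orthogonality of the Wiener chaoses of orders $q\ge2$ and $1$, while $\expect[B_s\TRVar{m}(t)]\to0$ because the correlation of $B_s$ with a mean-zero linear functional supported on a cell of length $2^{-m}$ is $o(2^{-m(\Hurst+1/2)})$ and there are $O(2^m)$ cells against the prefactor $2^{-m/2}$. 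Hence the limiting covariance of $(B,\hermiteVar{q}{m},\TRVar{m})$ is block diagonal.

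Then I would prove convergence of the finite dimensional distributions. The components $B_{t_i}$ and $\TRVar{m}(u_j)$ lie in the first chaos and $\hermiteVar{q}{m}(s_l)$ in the $q$-th chaos, so by the multivariate fourth moment theorem of Peccati and Tudor it suffices to check that the covariance matrices converge (done above) and that each $\hermiteVar{q}{m}(s_l)$ converges to a Gaussian, which holds once the contraction norms $\|f_m(t)\otimes_r f_m(t)\|_{\CM^{\otimes(2q-2r)}}$ tend to $0$ for $r=1,\dots,q-1$. Expanding, $\|f_m(t)\otimes_r f_m(t)\|_{\CM^{\otimes(2q-2r)}}^2$ equals $2^{-2m}$ times a fourfold sum, over indices at most $\intPart{2^m t}$, of $\rho_\Hurst(|i-j|)^r\rho_\Hurst(|k-l|)^r\rho_\Hurst(|i-k|)^{q-r}\rho_\Hurst(|j-l|)^{q-r}$, and the standard Breuer--Major estimate shows this fourfold sum is $o(2^{2m})$ under $\sum_l|\rho_\Hurst(l)|^q<\infty$, i.e.\ under $\Hurst<1-1/2q$, so the contraction norms vanish. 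Since the limiting covariance is block diagonal, the finite dimensional distributions of $(B,\hermiteVar{q}{m},\TRVar{m})$ converge to those of $(B,\sigma_{q,\Hurst}W,\tilde\sigma_\Hurst\tilde W)$ with $W,\tilde W$ independent Brownian motions independent of $B$.

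For tightness, $B$ is a fixed continuous process, hence tight; for $\hermiteVar{q}{m}$ and $\TRVar{m}$, hypercontractivity on a fixed Wiener chaos bounds all moments in terms of the second, and the second moment of an increment over $[s,t]$ is $O(|t-s|)$ up to the lattice effect of $\intPart{\cdot}$, yielding $\expect[|\hermiteVar{q}{m}(t)-\hermiteVar{q}{m}(s)|^{2p}]\le C_p|t-s|^p$ when $2^{-m}\le|t-s|$ and a similar bound for $\TRVar{m}$; together with a trivial estimate when $|t-s|<2^{-m}$ this gives tightness in $D([0,1];\RealNum)$ by the usual moment criterion. As the candidate limit has continuous paths and the finite dimensional distributions converge at every point of $[0,1]$, tightness plus finite dimensional convergence yield weak convergence in the Skorokhod topology, which is the assertion. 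I expect the main obstacle to be the contraction estimate $\|f_m(t)\otimes_r f_m(t)\|\to0$ — the fourfold-sum bookkeeping that produces a factor beating $2^{2m}$, and the place where $\Hurst<1-1/2q$ is genuinely used — with the cross-covariance estimate $\expect[B_s\TRVar{m}(t)]\to0$ via the mean-zero trapezoidal kernel the other point needing real care.
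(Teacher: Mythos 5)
Your overall strategy---chaos representation of $\hermiteVar{q}{m}$ and $\TRVar{m}$, computation of the limiting covariance matrix, the multivariate fourth moment theorem of Peccati--Tudor for the finite-dimensional distributions, and a hypercontractivity/moment criterion for tightness---is exactly the route the paper takes in Appendix~\ref{sec_20150107074711}. Your treatment of $\hermiteVar{q}{m}$ (summability of $\rho_\Hurst(l)^q$ under $\Hurst<1-1/2q$, vanishing contractions, exact orthogonality to the first chaos), your identification of the decay $\tilde\rho_\Hurst(l)=O(l^{2\Hurst-4})$ as coming from the mean-zero trapezoidal kernel killing the leading terms of the binomial expansion, and your tightness argument all match \lref[lem_20150519013003]{lem_20150519013014} and \prefs[prop_20150518041212]{prop_20141122044947}{prop_20141122044103}.

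The one step that does not close as written is the cross covariance $\expect[B_s\TRVar{m}(t)]\to 0$. Since $\TRVar{m}(t)=2^{-m/2}\sum_{k\le\intPart{2^mt}}2^{m(\Hurst+1)}\WienerInt{1}(\zeta_{\dyadicPart[m]{k-1}\dyadicPart[m]{k}})$, the per-cell correlation is $|\expect[B_s\WienerInt{1}(\zeta_{\dyadicPart[m]{k-1}\dyadicPart[m]{k}})]|=|\zeta_{\dyadicPart[m]{k-1}\dyadicPart[m]{k}}(s)|\le C\,2^{-m(2\Hurst+1)}$ for $\Hurst<1/2$ by \pref{prop_20150520010300}, and a uniform per-cell bound times the cell count yields $2^{m(\Hurst+1/2)}\cdot 2^{m}\cdot 2^{-m(2\Hurst+1)}=2^{m(1/2-\Hurst)}$, which diverges for $\Hurst<1/2$ (and the analogous computation diverges for $\Hurst>1/2$ as well); your accounting of ``$O(2^m)$ cells against the prefactor $2^{-m/2}$'' drops the internal normalization $2^{m(\Hurst+1)}$. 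A genuine cancellation is required here. The paper first replaces $B_t$ by its dyadic discretization $B^{(m)}_t$ (a harmless error) and then uses the exact antisymmetry $a^\dagger_{k,l}+a^\dagger_{l,k}=0$ of \pref{prop_20141122044103}, proved by the substitution $t=1-s$ in the explicit covariance, so that the double sum over a common time interval vanishes identically; for disjoint intervals the third-order decay $|a^\dagger_{1,j}|\le Cj^{2\Hurst-3}$ of \pref{prop_20141122044947} makes $2^{-m(1/2+\Hurst)}\sum_{j}j\cdot j^{2\Hurst-3}$ tend to zero. Alternatively your route can be salvaged with a nonuniform per-cell bound: interpreting $\zeta_{uv}(s)$ as the trapezoidal-rule error for $w\mapsto R(s,w)-R(s,u)$ gives $|\zeta_{\dyadicPart[m]{k-1}\dyadicPart[m]{k}}(s)|\le C\,2^{-3m}\,(j2^{-m})^{2\Hurst-2}$ when the cell is $j$ cells away from $s$, and the sum over $k$ is then $O(2^{-m(2\Hurst+1)})$ rather than $2^m$ times the supremum. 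Either way, some such refinement is indispensable; the uniform bound ``each cell contributes $o(2^{-m(\Hurst+1/2)})$'' is not sufficient.
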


We close this section with making remarks on results above:
\begin{remark}\label{rem_201708101002}
	\begin{enumerate}
		\item	In Appendix~\ref{sec_20150107074711}, we show \pref{prop_20141102222020}
				by showing relative compactness (\lref{lem_20150519013003})
				and convergence in the sense of finite-dimensional distributions (\lref{lem_20150519013014}).
				In the proof of \lref{lem_20150519013014}, we show independence of $B$, $W$ and $\tilde{W}$
				by using the multidimensional fourth moment theorem
				by Peccati and Tudor \cite{PeccatiTudor2005}.

		\item	In Appendix~\ref{sec_20170519021523},
				we show \trefs{thm_20141203094309}{thm_20141117071529}{thm_20141123062022}
				and \pref{prop_20141102222020}.
				In order to prove \trefs{thm_20141203094309}{thm_20141117071529}{thm_20141123062022},
				we use good properties of the solution $X$:
				for example, the continuity of the solution map $B\mapsto X$, the continuity of the map $t\mapsto X_t$
				and Malliavin differentiability of $X_t$.
				In addition, \pref{prop_20141102222020} is essential for \tref[thm_20141117071529]{thm_20141123062022}.
				Since \pref{prop_20141102222020} is a consequence of the fourth moment theorem,
				these theorems also one of it.

		\item	\tref[thm_20141203094309]{thm_20141117071529} are a slight extensions of
				\cite[Theorem~2.1]{GradinaruNourdin2009}, \cite[Theorem~1]{NourdinNualartTudor2010} and \cite[Theorem~15]{Naganuma2015}.
				In these references, the authors showed convergences of the weighted Hermite variations $\wHerVar{q}{m}$
				in which $F_{\dyadicPart[m]{k-1}\dyadicPart[m]{k}}(X)$ are replaced by
				$f(B_{\dyadicPart[m]{k-1}})$ or $F_{\dyadicPart[m]{k-1}\dyadicPart[m]{k}}(B)$,
				that is, they considered functionals which are expressed by fBm $B$ explicitly.
				On the other hand, we consider functionals of the solution $X$ to \eqref{eq_20141122070455}
				in \tref[thm_20141203094309]{thm_20141117071529}.
				\tref{thm_20141123062022} is an exention of \tref{thm_20141117071529} in the case $\Hurst=1/2$.

		\item	Since a standard Brownian motion has independent increments,
				we see $\rho_{1/2}(l)=0$ and $\tilde{\rho}_{1/2}(l)=0$ for $l\geq 1$.
				Hence we have $\sigma_{q,1/2}=\sqrt{q!}$ and $\sigma_{1/2}=1/\sqrt{12}$.
	\end{enumerate}
\end{remark}

\section{The Crank-Nicolson scheme}\label{sec_201707212021}
In this section, we show \tref{thm_20141204015044}.
Below, we fix sufficiently small $0<\epsilon<H$ and write $H^{-}=H-\epsilon$.
For $m\in\NaturalNum$, we may write  $\Delta=2^{-m}$,
$\Delta B_k=B_{\tau^m_{k-1}\tau^m_{k}}$~$(1\le k\le 2^m)$,
$\Delta(\Delta B_k)^n=\Delta\cdot(\Delta B_k)^n$ ($n=1,2,\dots$)
and $\Delta(\Delta B_k)=\Delta(\Delta B_k)^1$.
We use the notation $B^{\boldsymbol{i}}_{st}$ $(\boldsymbol{i}=10, 01, 011, 101, 110)$
to denotes the iterated integral introduced in \secref{sec_20141211055401}.
We denote by $O(\Delta^p)$ the term which is less than or equal to $C \Delta^p$,
where $C$ does not depend on $m$ and $\xi$.

\subsection{Well-definedness of the Crank-Nicolson scheme}
Since the Crank-Nicolson scheme is an implicit scheme,
we need to define the set on which the scheme can be defined.
Recall that
$(\probSp,\sigmaField,\prob)$ denotes the canonical probability
space which defines fBm $B(\omega)$ with the Hurst parameter $H$
and
\begin{align*}
	\Omega_0
	&=
		\bigcap_{0<\epsilon<H}
			\{
				\omega\in \probSp; B(\omega)
				\in
				\HolFunc[0]{H-\epsilon}{[0,1]}{\RealNum}
			\}.
\end{align*}
For every $m\in\NaturalNum$, we define
\begin{align*}
	 \CN
	 =
	 	\Omega_0
		\cap
	 	\left\{
			\omega\in \probSp ;
			\sup_{|t-s|\le 2^{-m}}
	 			\frac{|B_t(\omega)-B_s(\omega)|}{(t-s)^{H-\epsilon}}
			\le
				1
	 	\right\}.
\end{align*}
Note that $\CN\subset \Omega^{{\rm CN}(m+1)}$
for any $m$ and $\lim_{m\to\infty}P(\CN)=1$
for the fBm with the Hurst parameter $H$.
We show that the Crank-Nicolson scheme is
defined on $\CN$ for large $m$.

\begin{proposition}\label{prop_20141111045543}
	Suppose
	\begin{align}\label{eq_201708101103}
		m
		>
			\max
			\left\{
				1+\log_2(\sup |b'|),
					\frac{1+\log_2(\sup|\sigma'|)}{H-\epsilon}
			\right\}.
	\end{align}
	Let $0<s<t<1$ satisfy $|t-s|\le 2^{-m}$.
	Then for any $\xi\in\RealNum$ and $\omega\in \CN$, there exists a
	unique $\eta_t$ satisfying
	\begin{align*}
		\eta_t
		=
			\xi
			+\frac{b(\xi)+b(\eta_t)}{2}(t-s)
			+\frac{\sigma(\xi)+\sigma(\eta_t)}{2}(B_t(\omega)-B_s(\omega)).
	\end{align*}
\end{proposition}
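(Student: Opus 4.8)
The plan is to show that the map $\eta \mapsto \Phi(\eta)$ defined by
\[
	\Phi(\eta)
	=
		\xi
		+\frac{b(\xi)+b(\eta)}{2}(t-s)
		+\frac{\sigma(\xi)+\sigma(\eta)}{2}(B_t(\omega)-B_s(\omega))
\]
is a contraction on $\RealNum$ with respect to the usual metric, and then invoke the Banach fixed point theorem. For $\eta, \eta' \in \RealNum$, the mean value theorem gives
\[
	|\Phi(\eta)-\Phi(\eta')|
	\leq
		\frac{1}{2}
		\left(
			\sup|b'|\cdot|t-s|
			+
			\sup|\sigma'|\cdot|B_t(\omega)-B_s(\omega)|
		\right)
		|\eta-\eta'|,
\]
so it suffices to bound the coefficient in parentheses strictly below $2$, i.e. to arrange
$\sup|b'|\cdot|t-s| + \sup|\sigma'|\cdot|B_t(\omega)-B_s(\omega)| < 2$.

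First I would estimate each of the two terms separately using the hypotheses. Since $|t-s|\le 2^{-m}$ and $m > 1+\log_2(\sup|b'|)$, we get $\sup|b'|\cdot|t-s| \le \sup|b'|\cdot 2^{-m} < \sup|b'|\cdot 2^{-1-\log_2(\sup|b'|)} = 1/2 < 1$; more crudely this term is at most $1$. For the second term, the assumption $\omega\in\CN$ together with $|t-s|\le 2^{-m}$ gives $|B_t(\omega)-B_s(\omega)| \le |t-s|^{H-\epsilon} \le 2^{-m(H-\epsilon)}$, and since $m > (1+\log_2(\sup|\sigma'|))/(H-\epsilon)$ we have $m(H-\epsilon) > 1+\log_2(\sup|\sigma'|)$, hence $\sup|\sigma'|\cdot 2^{-m(H-\epsilon)} < \sup|\sigma'|\cdot 2^{-1-\log_2(\sup|\sigma'|)} = 1/2 < 1$. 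Adding the two bounds, the Lipschitz constant of $\Phi$ is at most $\frac{1}{2}(1+1) = 1$; to get strict contraction one keeps the sharper bounds $1/2$ each, giving Lipschitz constant at most $1/2 < 1$. (Alternatively, if one only wants the coefficient $<2$, the bound $1+1=2$ is not quite strict, so it is cleanest to carry the $1/2+1/2$ estimate through.)

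Having established that $\Phi$ is a contraction with constant $\le 1/2$ (uniformly in $\xi\in\RealNum$ and $\omega\in\CN$, which is why the condition on $m$ does not depend on $\xi$), the Banach fixed point theorem on the complete metric space $\RealNum$ yields a unique $\eta_t$ with $\Phi(\eta_t)=\eta_t$, which is exactly the asserted equation. I do not anticipate a genuine obstacle here; the only point requiring care is bookkeeping the two logarithmic thresholds so that each contributes a factor strictly less than $1$ to the Lipschitz constant, and noting that the estimate on $|B_t-B_s|$ is precisely the defining property of $\CN$ combined with $|t-s|\le 2^{-m}$.
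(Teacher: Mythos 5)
Your proof is correct and follows essentially the same route as the paper's: both arguments reduce to the observation that the hypothesis on $m$ together with the definition of $\CN$ forces $\tfrac{1}{2}\left(\sup|b'|\,|t-s|+\sup|\sigma'|\,|B_t(\omega)-B_s(\omega)|\right)\le\tfrac{1}{2}$. The paper packages this as strict monotonicity of $\eta\mapsto\eta-\Phi(\eta)$ (derivative bounded below by $\tfrac{1}{2}$), while you package it as $\Phi$ being a $\tfrac{1}{2}$-contraction and invoke the Banach fixed point theorem; the two are interchangeable here.
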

\begin{proof}
	Set
	\begin{align*}
		F(\xi,\delta,\Delta;\eta)
		=
			\eta
			-
			\left[
				\xi
				+
				\frac{1}{2}
				\left\{
					b(\xi)
					+
					b(\eta)
				\right\}
				\delta
				+
				\frac{1}{2}
				\left\{
					\sigma(\xi)
					+
					\sigma(\eta)
				\right\}
				\Delta
			\right].
	\end{align*}
 If $|\delta|<1/(2\sup|b'|)$ and $|\Delta|<
 1/(2\sup|\sigma'|)$,
	then
	$
		[\partial F/\partial\eta](\xi,\delta,\Delta;\eta)
		=
			1
			-
			\{
				(1/2)
				b'(\eta)
				\delta
				+
				(1/2)
				\sigma'(\eta)
				\Delta
			\}
	$
	satisfies
	\begin{align*}
		\frac{\partial F}{\partial\eta}(\xi,\delta,\Delta;\eta)
		\geq
			1
			-
			\frac{1}{2}
			|b'(\eta)|
			|\delta|
			-
			\frac{1}{2}
			|\sigma'(\eta)|
			|\Delta|
		\geq
			\frac{1}{2},
	\end{align*}
	which implies that
	$\eta\mapsto F(\xi,\delta,\Delta;\eta)$ is strictly increasing.
	Hence there exists a unique value $f(\xi,\delta,\Delta)$ such that
	$F(\xi,\delta,\Delta;f(\xi,\delta,\Delta))=0$
	and $f(\xi,0,0)=\xi$.

	Under the assumption on $m$ and $s,t$, it holds that
	$t-s<1/(2\sup|b'|)$ and $|B_t(\omega)-B_s(\omega)|<1/(2\sup|\sigma'|)$
	~$(\omega\in \CN)$.
	Hence $\eta_t$ is uniquely defined as
	$
		\eta_t
		=f(\xi,t-s,B_t(\omega)-B_s(\omega)).
	$
	\end{proof}

	\begin{remark}\label{rem_201708101101}
	 Clearly, the implicit function
	 $f(\xi,\delta,\Delta)$ $(\xi\in \RealNum, |\delta|<1/(2\sup|b'|), |\Delta|<1/(2\sup|\sigma'|)$
	 is a $C^{\infty}$ function.
	\end{remark}

\subsection{Proof of \tref{thm_20141204015044}}\label{sec_1506485494}

The Crank-Nicolson approximation solution $\bar{X}^{(m)}$ can be defined
on $\CN$ for $m$ in \eqref{eq_201708101103}.
From now on, we assume $m$ satisifes \eqref{eq_201708101103}.
For $\omega\notin \CN$, we always set
$\bar{X}^{(m)}_t(\xi,B)\equiv \xi$.

To study the error $\bar{X}^{(m)}-X$,
we prove that there exists a piecewise linear path
$h$ such that $X_{\dyadicPart[m]{k}}(\xi,B+h)=\bar{X}^{(m)}_{\dyadicPart[m]{k}}(\xi,B)$ for all $0\leq k\leq 2^m$.
Let $h$ be a piecewise linear path defined on $[0,1]$ with $h_0=0$ whose partition points
are dyadic points $\{\tau^m_k\}_{k=0}^{2^m}$.
Then $h$ can be identified with the set of values
at the partition points $\{h(\dyadicPart[k]{k})\}_{k=1}^{2^m}$.
We write $\kappa_k=h(\tau^m_k)-h(\tau^m_{k-1})$ $(1\le k\le 2^m)$.

\begin{lemma}\label{lem_201707240936}
	Let $\omega\in \Omega_0$.
	Then there exist unique $\kappa_k\in \RealNum$~$(1\le k\le 2^m)$
	such that
	\begin{align*}
		\bar{X}^{(m)}_{\dyadicPart[m]{k}}(\xi,B)
		=
			X_{\dyadicPart[m]{k}}(\xi,B+h),
		\qquad 1\le k\le 2^m.
	\end{align*}
\end{lemma}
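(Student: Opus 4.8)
The proof is by induction on $k$, constructing the increments $\kappa_1,\dots,\kappa_{2^m}$ one at a time. Suppose $\kappa_1,\dots,\kappa_{k-1}$ have been found so that $\bar{X}^{(m)}_{\dyadicPart[m]{j}}(\xi,B)=X_{\dyadicPart[m]{j}}(\xi,B+h)$ holds for all $j\le k-1$; in particular, with $h$ the piecewise linear path determined by those increments on $[0,\dyadicPart[m]{k-1}]$ (and extended arbitrarily, say constantly, beyond $\dyadicPart[m]{k-1}$ for now), we have matched the value at time $\dyadicPart[m]{k-1}$. We must now choose $\kappa_k$ so that the value at $\dyadicPart[m]{k}$ also matches. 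The key observation is \pref{prop_20140928154144} applied with $s=\dyadicPart[m]{k-1}$: writing $y=X_{\dyadicPart[m]{k-1}}(\xi,B+h)=\bar{X}^{(m)}_{\dyadicPart[m]{k-1}}(\xi,B)$ for the common value and $\tilde g=\theta_{\dyadicPart[m]{k-1}}(B+h)$ for the shifted driver restricted to $[0,\Delta]$, we get
\begin{align*}
	X_{\dyadicPart[m]{k}}(\xi,B+h)
	=
		X_{\Delta}\bigl(y,\theta_{\dyadicPart[m]{k-1}}(B+h)\bigr).
\end{align*}
On the segment $[\dyadicPart[m]{k-1},\dyadicPart[m]{k}]$ the path $h$ is linear with slope $\kappa_k/\Delta$, so $\theta_{\dyadicPart[m]{k-1}}(B+h)$ at time $t\in[0,\Delta]$ equals $\theta_{\dyadicPart[m]{k-1}}B$ plus the ramp $t\kappa_k/\Delta$. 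Thus the right-hand side is a function of the single real parameter $\kappa_k$, and the problem reduces to showing this function $\kappa_k\mapsto X_{\Delta}(y,\theta_{\dyadicPart[m]{k-1}}B+\,\cdot\,\kappa_k/\Delta)$ takes the prescribed value $\bar{X}^{(m)}_{\dyadicPart[m]{k}}(\xi,B)$ for exactly one $\kappa_k$.

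To do this I would use the directional derivative formula from \pref{prop_20140929105014}~(\ref{item_20141106063948}) with the Lipschitz perturbation $h^{\mathrm{ramp}}_t = t/\Delta$: differentiating in $\kappa_k$,
\begin{align*}
	\frac{d}{d\kappa_k}
	X_{\Delta}\bigl(y,\theta_{\dyadicPart[m]{k-1}}B+\,\cdot\,\kappa_k/\Delta\bigr)
	=
		\sigma(X_\Delta)
		\int_0^\Delta
			\exp\!\left(\int_s^\Delta[\Wronskian/\sigma](X_u)\,du\right)
			\frac{1}{\Delta}\,
			ds
	>0,
\end{align*}
since $\sigma>0$ by \hyporef{hypo_ellipticity} (here $X$ inside denotes the solution driven by the perturbed path with the current value of $\kappa_k$). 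Hence the map $\kappa_k\mapsto X_{\Delta}(y,\cdots)$ is strictly increasing and $C^1$ in $\kappa_k$. Moreover, as $\kappa_k\to\pm\infty$ the driving path's uniform norm blows up, and by \rref{rem_201708111136} the solution map $g\mapsto x_\Delta(g)$ extends continuously to $C([0,1];\RealNum)$; using the Doss--Sussmann form $x_\Delta=\phi(a_\Delta,g_\Delta)$ with $\phi(\alpha,\cdot)$ strictly monotone (from \pref{prop_20140929101727}~(\ref{item_1545964864}), $\partial_2\phi=\sigma\circ\phi>0$) and unbounded, one checks the range is all of $\RealNum$. By the intermediate value theorem there is exactly one $\kappa_k$ achieving any target value, in particular $\bar{X}^{(m)}_{\dyadicPart[m]{k}}(\xi,B)$. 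This determines $\kappa_k$ uniquely and continues the induction.

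For the base case $k=0$ there is nothing to prove since $\bar{X}^{(m)}_0=\xi=X_0(\xi,B+h)$ regardless of $h$. The uniqueness assertion follows because at each step the equation in $\kappa_k$ has a unique solution, and the values $\{\kappa_1,\dots,\kappa_{2^m}\}$ are forced in order. The main obstacle is the surjectivity of $\kappa_k\mapsto X_{\Delta}(y,\cdots)$ onto $\RealNum$: while strict monotonicity is immediate from the positivity of $\sigma$, verifying that the limits as $\kappa_k\to\pm\infty$ are $\pm\infty$ requires a little care with the Doss--Sussmann representation and the growth of $\phi$ in its second argument; I expect one argues that since $\phi(\alpha,\beta)\to\pm\infty$ as $\beta\to\pm\infty$ (because $\sigma$ is bounded below, $\phi(\alpha,\beta)\ge \alpha+\beta\inf\sigma$ for $\beta\ge 0$, and similarly below), and $a_\Delta$ stays bounded as a function of the perturbation on a fixed short interval, the composition is unbounded above and below. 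One should also record that each $\kappa_k$ is measurable in $\omega$ (indeed continuous, by the implicit function theorem applied to the strictly monotone $C^1$ map), which is needed later, though this is not part of the stated lemma.
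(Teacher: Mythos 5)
Your induction, the reduction via \pref{prop_20140928154144} to a one--parameter equation in $\kappa_k$, and the use of the directional--derivative formula \pref{prop_20140929105014}~(3) to get strict monotonicity are exactly the paper's argument (the paper packages the monotonicity/bijectivity statement as \lref{lem_201707251240}). The one place where you genuinely diverge is the surjectivity step, and that is where there is a gap: you claim that ``$a_\Delta$ stays bounded as a function of the perturbation on a fixed short interval.'' This is not justified. In \pref{prop_20140928080940} one has $\dot a_t=f_1(a_t,g_t)f_2(a_t,g_t)$ with $f_1(x,y)=\exp(-\int_0^y\sigma'(\phi(x,\eta))\,d\eta)$, so the only a priori bound is $|\dot a_t|\le \|b\|_\infty e^{\|\sigma'\|_\infty\|g\|_{\infty}}$, which grows exponentially in $|\kappa_k|$; integrating over $[0,\Delta]$ does not tame this. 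Since your lower bound $\phi(\alpha,\beta)\ge\alpha+\beta\inf\sigma$ has $\alpha=a_\Delta$ in it, you cannot conclude $\phi(a_\Delta,g_\Delta)\to+\infty$ without controlling $a_\Delta$, so as written the intermediate--value argument is incomplete.

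The fix is already contained in the formula you wrote down: the derivative
\begin{align*}
	\frac{d}{d\kappa_k}\,X_\Delta\bigl(y,\theta_{\dyadicPart[m]{k-1}}B+\cdot\,\kappa_k/\Delta\bigr)
	=
		\sigma(X_\Delta)\int_0^\Delta
			\exp\!\left(\int_s^\Delta\Bigl[\tfrac{\Wronskian}{\sigma}\Bigr](X_u)\,du\right)\frac{ds}{\Delta}
\end{align*}
is bounded \emph{below} by $(\inf\sigma)\,e^{-\Delta\|\Wronskian/\sigma\|_\infty}>0$ and above by $(\sup\sigma)\,e^{\Delta\|\Wronskian/\sigma\|_\infty}$, uniformly in $\kappa_k$, because $b,\sigma\in\SmoothFunc[bdd]{\infty}{\RealNum}{\RealNum}$ and $\inf\sigma>0$. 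A strictly increasing $C^1$ function with derivative bounded below by a positive constant is automatically a bijection of $\RealNum$; this two--sided bound is precisely the content of \lref{lem_201707251240}, which the paper invokes at this point. (Alternatively, the representation $x=G(y)$ of \pref{prop_20140928105149} gives $x_\Delta=G(y_0+O(\Delta)+B_\Delta+\kappa_k)$ with $\tilde b$ bounded and $G$ an increasing bijection of $\RealNum$, which also yields surjectivity cleanly.) With that repair your proof coincides with the paper's.
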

We denote the above $h$ by $h^{(m)}$.
Although $\kappa_k$ depends on $m$ similarly,
we use the same notation $\kappa_k$ for simplicity.
$h^{(m)}(\omega)$ is defined for all $\omega\in \Omega_0$.
Of course, the definition of $\bar{X}^{(m)}$ on
$\Omega_0\setminus \CN$ is essentially meaningless
and the behavior of $h^{(m)}$ on $\Omega_0\setminus \CN$
has nothing to do with the asymptotics
of the error.
Before proving the existence of $h^{(m)}$,
we give a rough sketch how to prove \tref{thm_20141204015044}
by using $h^{(m)}$.

\begin{remark}[Rough sketch of the proof of \tref{thm_20141204015044}]\label{rem_201708102135}
	We decompose $h^{(m)}$ as $h^{(m)}=h_M^{(m)}+h_{R}^{(m)}$.
	Here, $h_M^{(m)}$ is the main term
	and we see
	\begin{align}\label{eq_1506055600}
		\lim_{m\to\infty}
			2^{m(3H-\frac{1}{2})}
			h_M^{(m)}
		=
			U
		\qquad
		\text{in law},
	\end{align}
	where $U$ is a random variable.
	The term $h_{R}^{(m)}$ is the remainder term satisfying that for small $\delta>0$,
	\begin{align}\label{eq_1506055256}
		\lim_{m\to\infty}
			2^{m(3H-\frac{1}{2}+\delta)}
			\|h_R^{(m)}\|_{\infty}
		=
			0
		\qquad \text{in probability}
	\end{align}
	By using the derivative of $X(\xi,B)$ with respect to $B$,
	we have
	$
		2^{m(3H-\frac{1}{2})}
		\{\bar{X}^{(m)}(\xi,B)-X(\xi,B)\}
		=
			I_1+I_2+I_3
	$,
	where
	\begin{align*}
		I_1
		&=
			\nabla_{(2^{m})^{3H-\frac{1}{2}}h^{(m)}_M}X(\xi,B),\\
		I_2
		&=
			(2^m)^{3H-\frac{1}{2}}
			\left\{\bar{X}^{(m)}(\xi,B)-X(\xi,B+h^{(m)}_M)\right\},\\
		I_3
		&=
			(2^m)^{3H-\frac{1}{2}}
			\left\{
				X(\xi,B+h^{(m)}_M)
				-X(\xi,B)
				-\nabla_{h^{(m)}_M}X(\xi,B)
			\right\}.
	\end{align*}
	By the convergence $2^{m(3H-\frac{1}{2})}h_M^{(m)}\to U$ in law,
	we have $I_1=\nabla_{2^{m(3H-\frac{1}{2})}h^{(m)}_M}X(\xi,B)\to
	\nabla_U X(\xi,B)$ in law.
	Since
	\begin{align*}
		I_2
		\thickapprox
			2^{m(3H-\frac{1}{2})}
			\left\{X(\xi,B+h^{(m)})-X(\xi,B+h^{(m)}_M)\right\}
		\thickapprox
			\nabla_{2^{m(3H-\frac{1}{2})}h^{(m)}_R}X(\xi,B),
	\end{align*}
	the middle term converges to 0 in probability.
	For the third term, considering the second derivative,
	we have
	\begin{align*}
		I_3
		\thickapprox
			2^{m(3H-\frac{1}{2})}
			\frac{1}{2}
			\nabla_{h^{(m)}_M}^2X(\xi,B).
	\end{align*}
	Therefore
	this term also converges to $0$ in probability because
	$h^{(m)}_M$ is of order $2^{-m(3H-\frac{1}{2})}$.
	In the following, $h^{(m)}_M$ and $h^{(m)}_R$ are
	piecewise linear paths corresponding
	to $\{\tilde{\kappa}_k\}$ and $\{R_k(\omega)\}$ in \lref{lem_201708091518}.

	We conclude this remark by making a comment on \eqref{eq_1506055600} and \eqref{eq_1506055256}.
	The convergence \eqref{eq_1506055600} of the main term is shown by \tref{thm_20141117071529} and so on in \lref{lem_201708091528}.
	By using this result, we see the convergence \eqref{eq_1506055256} of the remainder in \lref{lem_201708091518}.
	We should mention that the method used in \lref{lem_201708091518} makes estimate of the remainder simpler drastically
	than that of \cite{Naganuma2015}.
\end{remark}

We now prove the existence of $h^{(m)}$.
To this end, we need the bijectivity of
the map $\kappa\mapsto X_t(\xi,B+\kappa\ell)$
which follows from the following lemma.
Here $\ell_t=t$.
This lemma is an immediate consequence of
\pref{prop_20140929105014}~(\ref{item_20141106063948}).

\begin{lemma}\label{lem_201707251240}
	There exist positive numbers $C_1, C_2$ which are independent of
	 $B, \xi, t$ such that
	\begin{align*}
		C_1t
		\leq
			\frac{d}{d\kappa}X_t(\xi,B+\kappa\ell)
		\leq
			C_2t.
	\end{align*}
	In particular, the mapping $\RealNum\ni\kappa\mapsto X_t(\xi,B+\kappa\ell)$ is bijection on $\RealNum$.
\end{lemma}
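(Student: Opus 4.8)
The statement is an almost immediate corollary of Proposition~\ref{prop_20140929105014}~(\ref{item_20141106063948}), applied to the Lipschitz path $h=\kappa\ell$, whose derivative is $\dot h_s=\kappa$. The plan is to write down the formula
\begin{align*}
	\frac{d}{d\kappa}X_t(\xi,B+\kappa\ell)
	=
		\nabla_{\ell}X_t(\xi,B+\kappa\ell)
	=
		\sigma(X_t(\xi,B+\kappa\ell))
		\int_0^t
			\exp
				\left(
					\int_s^t
						\left[
							\frac{\Wronskian}{\sigma}
						\right]
						(X_u(\xi,B+\kappa\ell))\,
						du
				\right)
			ds,
\end{align*}
which is exactly the second expression in Assertion~(\ref{item_20141106063948}) with $\dot h_s\equiv 1$. (One should first remark that, since $g\mapsto x_t(g)$ is Fr\'echet differentiable by Proposition~\ref{prop_20140929105014}, the one-parameter map $\kappa\mapsto X_t(\xi,B+\kappa\ell)$ is differentiable in $\kappa$ with derivative $\nabla_\ell X_t$; this is just the chain rule for the affine map $\kappa\mapsto B+\kappa\ell$.)

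Next I would bound the integrand. Under \hyporef{hypo_ellipticity} we have $\inf\sigma>0$, and since $b,\sigma\in\SmoothFunc[bdd]{\infty}{\RealNum}{\RealNum}$ the ratio $\Wronskian/\sigma=(\sigma b'-\sigma'b)/\sigma$ is bounded, say $|\Wronskian/\sigma|\leq K$. Hence for $0\leq s\leq t\leq 1$,
\begin{align*}
	e^{-K}
	\leq
		e^{-K(t-s)}
	\leq
		\exp
			\left(
				\int_s^t
					\left[
						\frac{\Wronskian}{\sigma}
					\right]
					(X_u(\xi,B+\kappa\ell))\,
					du
			\right)
	\leq
		e^{K(t-s)}
	\leq
		e^{K}.
\end{align*}
Integrating over $s\in[0,t]$ and multiplying by $\sigma(X_t(\xi,B+\kappa\ell))$, which lies in $[\inf\sigma,\sup\sigma]$, gives
\begin{align*}
	(\inf\sigma)\,e^{-K}\,t
	\leq
		\frac{d}{d\kappa}X_t(\xi,B+\kappa\ell)
	\leq
		(\sup\sigma)\,e^{K}\,t.
\end{align*}
Thus $C_1=(\inf\sigma)e^{-K}$ and $C_2=(\sup\sigma)e^{K}$ work, and these constants depend only on $b$ and $\sigma$, not on $B$, $\xi$ or $t$, as required.

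Finally, for the bijectivity claim: the lower bound $\frac{d}{d\kappa}X_t(\xi,B+\kappa\ell)\geq C_1 t>0$ for $t\in(0,1]$ shows $\kappa\mapsto X_t(\xi,B+\kappa\ell)$ is strictly increasing, and the two-sided linear bound in $\kappa$ (integrate $C_1 t\,d\kappa$ and $C_2 t\,d\kappa$) shows it is proper, i.e.\ tends to $\pm\infty$ as $\kappa\to\pm\infty$; being continuous, strictly increasing and proper, it is a bijection of $\RealNum$ onto $\RealNum$. I do not expect any real obstacle here; the only point requiring a line of care is justifying the passage from Fr\'echet differentiability of $g\mapsto x_t(g)$ to ordinary differentiability of the scalar map $\kappa\mapsto X_t(\xi,B+\kappa\ell)$ together with the identification of its derivative as $\nabla_\ell X_t$, which is a routine application of the chain rule once one notes $\ell\in\HolFunc[0]{\HolExp}{[0,1]}{\RealNum}$.
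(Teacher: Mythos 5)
Your proposal is correct and follows exactly the route the paper intends: the paper dispatches this lemma with the single remark that it is an immediate consequence of \pref{prop_20140929105014}~(3), which is precisely the formula you invoke, and your bounds $C_1=(\inf\sigma)e^{-K}$, $C_2=(\sup\sigma)e^{K}$ together with the strict-monotonicity-plus-properness argument for bijectivity supply the details the paper leaves implicit. No gaps.
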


We prove \lref{lem_201707240936}.
We write $\xi_k=\bar{X}^{(m)}_{\tau^m_k}(\xi,B)$.

\begin{proof}[Proof of \lref{lem_201707240936}]
	We prove this by an induction on $k$.
	Let $k=1$.
	It suffices to prove the existence $\kappa_1$ satisfying
	$\xi_1=X_{2^{-m}}(\xi,B+2^m\kappa_1\ell)$.
	Since $\kappa\mapsto X_{2^{-m}}(\xi,B+2^m\kappa\ell)$
	is a bijective mapping, $\kappa_1$ is uniquely determined.
	Suppose the equality holds upto $k$.
	Noting
	$\xi_{k+1}=X_{\tau^m_{k+1}}(\xi,B+h)$ is equivalent to
	$\xi_{k+1}=X_{2^{-m}}(\xi_k,\theta_{\tau^m_k}B+2^m\kappa_{k+1}\ell)$ and by applying \lref{lem_201707251240},
	the proof is completed.
\end{proof}

In the rest of this subsection, we state some key lemmas (\lrefs[lem_201707121721]{lem_201708091518}{lem_201708091528})
for \tref{thm_20141204015044} and show the theorem.
The key lemmas is shown in the next subsection.
In these lemmas, we calculate $\kappa_k$ and determine the main term of the error.
By the definition, $\kappa_{k}$ $(1\le k\le 2^m)$ satisfies the equation
\begin{align}\label{eq_1545356684}
	X_{2^{-m}}(\xi_{k-1},\theta_{\tau^m_{k-1}}B+2^m\kappa_{k}\ell)
	-X_{2^{-m}}(\xi_{k-1},\theta_{\tau^m_{k-1}}B)
	=
		\{
			\xi_{k}-\xi_{k-1}
		\}
		-
		\{
			X_{2^{-m}}(\xi_{k-1},\theta_{\tau^m_{k-1}}B)
			-
			\xi_{k-1}
		\}.
\end{align}
We set $\hat{\kappa}_{k}$ by the left-hand side of the above equality.
The quantity $\hat{\kappa}_k$ is the 1-step error of the Crank-Nicolson scheme.
We calculate $\hat{\kappa}_k$ and $\kappa_k$ with small remainder terms.
By this calculation and the H\"older continuity of $B$, we see
that $\max_{1\le k\le 2^m}|\bar{X}^{(m)}_{\tau^m_k}-X_{\tau^m_k}|$ converges to $0$ if $H>\frac{1}{3}$ (\lref{lem_201707121721}).
This is a rough estimate.
We improve it later by identifying the main term of the error (\lref{lem_201708091518}).

In order to express $\hat{\kappa}_k$, we introduce
\begin{align*}
	\hat{f}_3
	&=
		\frac{1}{12}
		[
			\sigma^2\sigma''+\sigma (\sigma')^2
		], \quad
	\hat{f}_4
	=
		\frac{1}{24}
		[
			\sigma^3\sigma'''
			+5\sigma^2\sigma'\sigma''
			+2\sigma(\sigma')^3
		],\quad
	\hat{g}_1
	=
		\Wronskian,\\
	\hat{\varphi}
	&=
		\frac{1}{4}
		\left[b(\sigma')^2+\sigma^2 b''\right]
		+
		\frac{1}{2}
		[
			b\sigma\sigma''+\sigma\sigma'b'
		],\\
	\hat{\varphi}_{011}
	&=
		-b(\sigma\sigma')', \quad
	\hat{\varphi}_{101}
	=
		-\sigma (b\sigma')',\quad
	\hat{\varphi}_{110}
	=
		-\sigma(\sigma b')'.
\end{align*}
Here, we recall $\Wronskian=\sigma b'-\sigma'b$.
We also see that the main term of $\kappa_k$ is expressed by the following functions:
\begin{align*}
	f_3
	&=
		\frac{1}{12}
		[\sigma\sigma''+(\sigma')^2], \quad
	f_4
	=
		\frac{1}{24}
		\sigma
		(\sigma\sigma'''+ 3\sigma'\sigma''), \quad
	g_1
	=
		\frac{\Wronskian}{\sigma},\\
	\varphi
	&=
		\frac{1}{4}
		\left[
			\frac{b(\sigma')^2}{\sigma}
			+\sigma b''
		\right]
		+\frac{1}{2}(b\sigma''+\sigma'b'),\\
	\varphi_{011}
	&=
		-\frac{b(\sigma\sigma')'}{\sigma}, \quad
	\varphi_{101}
	=
		-(b\sigma')',\quad
	\varphi_{110}
	=
		-(\sigma b')'.
\end{align*}
Note that $f_4=(\hat{f}_4-\sigma'\hat{f}_3)/\sigma$
and that $h=\hat{h}/\sigma$ for $h=f_3,g_1,\phi,\phi_{011},\phi_{101},\phi_{110}$.
By a simple calculation, we have $f_4=\sigma f_3'/2$.
This identity is a key for the convergence of the main term of the error
similarly to the case where $b\equiv 0$
(\cite{NeuenkirchNourdin2007,Naganuma2015}); see \lref{lem_201708091528}.

The expression of $\hat{\kappa}_{k}$ and the convergence of $\max_{1\le k\le 2^m}|\bar{X}^{(m)}_{\tau^m_k}-X_{\tau^m_k}|$
are obtained as follows:
\begin{lemma}\label{lem_201707121721}
	For any $\omega\in \CN$, the following hold.
	\begin{enumerate}
		\item	\label{item_1505965509}
				We have
				\begin{align*}
					\hat{\kappa}_{k}
					&=
						\hat{f}_3(\xi_{k-1})(\Delta B_k)^3
						+\hat{f}_4(\xi_{k-1})(\Delta B_k)^4
						+
							\hat{g}_1(\xi_{k-1})
							\left(
								\frac{\Delta}{2}\Delta B_k-B^{10}_{\tau^m_{k-1}\tau^m_k}
							\right)\\
					&\quad\quad
						+
							\hat{\varphi}(\xi_{k-1})\Delta (\Delta B_k)^2
						+
							\hat{\varphi}_{011}(\xi_{k-1})
							B^{011}_{\tau^m_{k-1}\tau^m_k}
						+
							\hat{\varphi}_{101}(\xi_{k-1})
							B^{101}_{\tau^m_{k-1}\tau^m_k}
						+
							\hat{\varphi}_{110}(\xi_{k-1})
							B^{110}_{\tau^m_{k-1}\tau^m_k}\\
					&\quad\quad
						+O(\Delta^{5H^{-}})+O(\Delta^{3H^{-}+1})
						+O(\Delta^{H^{-}+2}).
				\end{align*}
		\item	\label{item_1505964312}
				We have
				$\hat{\kappa}_k=O(\Delta^{3H^{-}})$,
				$\kappa_k=O(\Delta^{3H^{-}})$ and
				\begin{align*}
					\max_{1\le k\le 2^m}
						|
							X_{\tau^m_k}(\xi,B)
							-
							\bar{X}^{(m)}_{\tau^m_k}(\xi,B)
						|
					=
						O(\Delta^{3H^{-}-1}).
				\end{align*}
				In particular, the Crank-Nicolson approximation solution
				converges to the solution itself at the partition
				points
				uniformly if $H>\frac{1}{3}$.
		\item	\label{item_1547629361}
				We have
				\begin{align*}
					\max_{0\leq t\leq 1}
						|\bar{X}^{(m)}_t(\xi,B)-X_t(\xi,B+h^{(m)})|
					=
						O(\Delta^{3H^{-}}).
				\end{align*}
	\end{enumerate}
\end{lemma}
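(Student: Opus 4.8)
The plan is to prove the three assertions in order: (2) is read off from the one‑step expansion in (1), and (3) follows from (2) together with the flow property \pref{prop_20140928154144} and the Lipschitz estimate \pref{prop_20140929105014}(\ref{item_20141228072212}). Throughout we work on $\CN$, where $|B_t-B_s|\le(t-s)^{H^{-}}$ whenever $|t-s|\le 2^{-m}$; in particular $|\Delta B_k|\le\Delta^{H^{-}}$, and by \lref{lem_201708052140} the iterated integrals satisfy $|B^{10}_{\tau^m_{k-1}\tau^m_k}|=O(\Delta^{1+H^{-}})$ and $|B^{011}_{\tau^m_{k-1}\tau^m_k}|,|B^{101}_{\tau^m_{k-1}\tau^m_k}|,|B^{110}_{\tau^m_{k-1}\tau^m_k}|=O(\Delta^{1+2H^{-}})$, with constants independent of $m$, $\xi$ and $k$. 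Since $1/3<H\le 1/2$, for $\epsilon$ small we have $1/3<H^{-}<1/2$, so $3H^{-}>1$ and $1+H^{-}>3H^{-}$, facts used repeatedly below.

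For (1), I would start from \eqref{eq_1545356684}, which gives $\hat{\kappa}_k=\{\xi_k-\xi_{k-1}\}-\{X_{2^{-m}}(\xi_{k-1},\theta_{\tau^m_{k-1}}B)-\xi_{k-1}\}$, and expand the two brackets separately. The second bracket is handled by \pref{prop_20141010094511} applied to $X(\xi_{k-1},\theta_{\tau^m_{k-1}}B)$ on $[0,2^{-m}]$: the increment of the driver there equals $\Delta B_k$, its iterated integrals equal the $B^{\cdot}_{\tau^m_{k-1}\tau^m_k}$ above by translation invariance of these pathwise integrals, and on $\CN$ the relevant H\"older constant is $\le 1$, so the remainder is $O(\Delta^{\min\{2+H^{-},1+3H^{-},5H^{-}\}})$ with a uniform constant — the same three exponents as in the statement. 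The first bracket equals $f(\xi_{k-1},\Delta,\Delta B_k)-\xi_{k-1}$ for the implicit function $f$ of \pref{prop_20141111045543}, which by \rref{rem_201708101101} is $C^\infty$ with all $(\delta,\Delta)$‑derivatives bounded uniformly in $\xi$ (because $\partial_\eta F\ge 1/2$ and $b,\sigma$ have bounded derivatives). I would Taylor‑expand it around $(0,0)$ to total degree four in $(\Delta,\Delta B_k)$; using $|\Delta B_k|\le\Delta^{H^{-}}$, every monomial $\Delta^i(\Delta B_k)^j$ of degree $\ge 5$, together with $\Delta^2\Delta B_k$ and $\Delta(\Delta B_k)^3$, is of order at most $\Delta^{5H^{-}}+\Delta^{3H^{-}+1}+\Delta^{H^{-}+2}$ and goes into the remainder. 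Subtracting the two expansions, the monomials of degree $\le 2$ together with $\Delta^2$ cancel (consistency of the scheme), the $\Delta\,\Delta B_k$ monomial combines with the solution's $\Wronskian\,B^{10}_{\tau^m_{k-1}\tau^m_k}$ into $\hat{g}_1(\xi_{k-1})\bigl(\tfrac{\Delta}{2}\Delta B_k-B^{10}_{\tau^m_{k-1}\tau^m_k}\bigr)$, and the surviving monomials $(\Delta B_k)^3$, $\Delta(\Delta B_k)^2$, $(\Delta B_k)^4$ and the iterated‑integral terms $B^{011},B^{101},B^{110}$ carry exactly the coefficients $\hat{f}_3$, $\hat{\varphi}$, $\hat{f}_4$, $\hat{\varphi}_{011}$, $\hat{\varphi}_{101}$, $\hat{\varphi}_{110}$. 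Verifying these coefficients reduces to a finite list of elementary identities among $\sigma$, $b$ and their derivatives.

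Given (1), assertion (2) is immediate: on $\CN$ the dominant term of $\hat{\kappa}_k$ is $\hat{f}_3(\xi_{k-1})(\Delta B_k)^3=O(\Delta^{3H^{-}})$ ($\hat f_3$ bounded, $|\Delta B_k|\le\Delta^{H^{-}}$), while each other term is $O(\Delta^{4H^{-}})+O(\Delta^{1+H^{-}})+O(\Delta^{1+2H^{-}})$ plus the remainder, hence $O(\Delta^{3H^{-}})$ by the inequalities noted above; thus $\hat{\kappa}_k=O(\Delta^{3H^{-}})$. Since $\hat{\kappa}_k=X_{2^{-m}}(\xi_{k-1},\theta_{\tau^m_{k-1}}B+2^m\kappa_k\ell)-X_{2^{-m}}(\xi_{k-1},\theta_{\tau^m_{k-1}}B)$, the mean value theorem and \lref{lem_201707251240} give $|\hat{\kappa}_k|\ge C_1\,2^{-m}\,|2^m\kappa_k|=C_1|\kappa_k|$, so $\kappa_k=O(\Delta^{3H^{-}})$ uniformly. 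Finally $\bar{X}^{(m)}_{\tau^m_k}(\xi,B)=X_{\tau^m_k}(\xi,B+h^{(m)})$ by \lref{lem_201707240936}, and since the solution map is globally Lipschitz in $\|\cdot\|_\infty$ with constant depending only on $b,\sigma$ by \pref{prop_20140929105014}(\ref{item_20141228072212}), we obtain $\max_k|X_{\tau^m_k}(\xi,B)-\bar{X}^{(m)}_{\tau^m_k}(\xi,B)|\le C\|h^{(m)}\|_\infty\le C\sum_{j=1}^{2^m}|\kappa_j|=O(2^m\Delta^{3H^{-}})=O(\Delta^{3H^{-}-1})$, which tends to $0$ for $H>1/3$.

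For (3), the difference $\bar{X}^{(m)}_t(\xi,B)-X_t(\xi,B+h^{(m)})$ vanishes at the grid points by \lref{lem_201707240936}, so it suffices to bound it on a single $[\tau^m_{k-1},\tau^m_k]$; put $s'=t-\tau^m_{k-1}\in[0,\Delta]$. On the one hand $\bar{X}^{(m)}_t-\xi_{k-1}=f(\xi_{k-1},s',B_t-B_{\tau^m_{k-1}})-\xi_{k-1}$ and $X_{s'}(\xi_{k-1},\theta_{\tau^m_{k-1}}B)-\xi_{k-1}$ is the solution increment for the same driver over time $s'$, so the computation of (1) with $\Delta$ replaced by $s'\le\Delta$ gives $\bar{X}^{(m)}_t-X_{s'}(\xi_{k-1},\theta_{\tau^m_{k-1}}B)=O(\Delta^{3H^{-}})$. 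On the other hand, by \pref{prop_20140928154144}, $X_t(\xi,B+h^{(m)})=X_{s'}(\xi_{k-1},\theta_{\tau^m_{k-1}}B+\theta_{\tau^m_{k-1}}h^{(m)})$, and since $X_{s'}$ depends only on the driver up to time $s'$ and $\|\theta_{\tau^m_{k-1}}h^{(m)}\|_{\infty,[0,s']}\le|\kappa_k|=O(\Delta^{3H^{-}})$ by (2), \pref{prop_20140929105014}(\ref{item_20141228072212}) yields $|X_{s'}(\xi_{k-1},\theta_{\tau^m_{k-1}}B+\theta_{\tau^m_{k-1}}h^{(m)})-X_{s'}(\xi_{k-1},\theta_{\tau^m_{k-1}}B)|=O(\Delta^{3H^{-}})$; adding the two estimates proves (3). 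The main obstacle is step (1): organizing the fourth‑order implicit Taylor expansion of the Crank--Nicolson map, matching it term by term against \pref{prop_20141010094511}, and checking all coefficient identities — a long but mechanical bookkeeping task. Everything else is a short deduction from it and from \pref{prop_20141111045543}, \pref{prop_20140928154144}, \lref{lem_201707251240}, \lref{lem_201707240936} and \pref{prop_20140929105014}.
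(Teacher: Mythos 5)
Your proposal follows essentially the same route as the paper: part (1) is obtained by subtracting the Taylor expansion of the true one-step increment (\pref{prop_20141010094511}) from the expansion of the implicit Crank--Nicolson map (the paper's Lemma~\textup{\ref{lem_1506077157}}, where the coefficients are found by substituting an ansatz back into the implicit equation rather than by direct differentiation, a cosmetic difference), part (2) from the H\"older bound on $\CN$, the lower derivative bound of \lref{lem_201707251240}, and the Lipschitz continuity of $B\mapsto X(B)$, and part (3) from the validity of the one-step expansion at intermediate times together with the bound $|h^{(m)}_t-h^{(m)}_{\tau^m_{k-1}}|\le|\kappa_k|=O(\Delta^{3H^{-}})$. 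The argument is correct and matches the paper's proof in all essential steps.
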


The next lemma asserts that $\tilde{\kappa}_k$ is the main term of $\kappa_k$.
As stated in \rref{rem_201708102135},
in order to prove it, we use not only the H\"older regularity of
$B$ but also the convergence in law of the main term of $h^{(m)}$.

\begin{lemma}\label{lem_201708091518}
	For $1\le k\le 2^m$, let
	\begin{align*}
		\tilde{\kappa}_{k}
		&=
			f_3(X_{\tau^m_{k-1}})(\Delta B_k)^3
			+f_4(X_{\tau^m_{k-1}})(\Delta B_k)^4
			+
			g_1(X_{\tau^m_{k-1}})
			\left(\frac{\Delta}{2}\Delta B_k-B^{10}_{\tau^m_{k-1}\tau^m_k}\right)\\
		&\quad
			+\varphi(X_{\tau^m_{k-1}})\Delta (\Delta B_k)^2
			+\varphi_{011}(X_{\tau^m_{k-1}})B^{011}_{\tau^m_{k-1}\tau^m_k}
			+\varphi_{101}(X_{\tau^m_{k-1}})B^{101}_{\tau^m_{k-1}\tau^m_k}
			+\varphi_{110}(X_{\tau^m_{k-1}})B^{110}_{\tau^m_{k-1}\tau^m_k}
	\end{align*}
	and set
	$R_k(\omega)=\kappa_k-\tilde{\kappa}_k$.
	Then there exists $\delta>0$ such that
	$
		\lim_{m\to\infty}
			(2^m)^{3H-\frac{1}{2}+\delta}
			\max_{1\le k\le 2^{m}}
				|\sum_{i=1}^{k}R_i|
		=
			0
	$
	in probability.
\end{lemma}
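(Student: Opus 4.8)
The plan is to turn the implicit relation \eqref{eq_1545356684} into an explicit expansion $\kappa_k=\tilde\kappa_k+R_k$, to split $R_k$ into a part that is controlled pathwise on $\CN$ and a ``substitution'' part, to handle the first by a crude triangle inequality, and to handle the second by a self-improving (bootstrap) estimate fed by the already-established convergence \eqref{eq_1506055600} of the main term $h_M^{(m)}$ (i.e.\ by \lref{lem_201708091528}). The mechanism that makes this short — and, as the paper advertises, shorter than \cite{Naganuma2015} — is that once one knows $h_M^{(m)}$ is of the right order $2^{-m(3\Hurst-1/2)}$, the remainder $h_R^{(m)}=h^{(m)}-h_M^{(m)}$ satisfies a linear inequality $\|h_R^{(m)}\|_\infty\le(\text{small})+C\Delta^{3\Hurst^{-}-1}\|h_R^{(m)}\|_\infty$ with $\Delta^{3\Hurst^{-}-1}\to0$ (here $\Delta=2^{-m}$ and $3\Hurst^{-}>1$ for $\epsilon$ small, since $\Hurst>1/3$), so one simply absorbs the last term.

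\textbf{Step 1: the one-step expansion and the algebraic cancellation.} Put $\Phi_k(\kappa)=X_{2^{-m}}(\xi_{k-1},\theta_{\dyadicPart[m]{k-1}}B+\kappa\ell)$ with $\ell_t=t$. By the flow property \pref{prop_20140928154144} and the inductive identity $\xi_{k-1}=X_{\dyadicPart[m]{k-1}}(\xi,B+h^{(m)})$ of \lref{lem_201707240936}, relation \eqref{eq_1545356684} is exactly $\hat\kappa_k=\Phi_k(2^m\kappa_k)-\Phi_k(0)$. I would Taylor expand $\Phi_k$ at $0$: by \pref{prop_20140929105014}~(\ref{item_20141106063948}) one has $\Phi_k'(0)=\int_0^{2^{-m}}\sigma(X_s(\xi_{k-1},\theta_{\dyadicPart[m]{k-1}}B))\exp\!\big(\int_s^{2^{-m}}[\Wronskian/\sigma](X_u)\,du\big)\,ds$, and expanding $\sigma(X_s)$ around $\xi_{k-1}$ by \pref{prop_20141010094511} gives $\Phi_k'(0)=\sigma(\xi_{k-1})\Delta+(\text{explicit multiple of }B^{10}_{\dyadicPart[m]{k-1}\dyadicPart[m]{k}})+O(\Delta^2)$, while $|\Phi_k''|\le C\Delta^2$ follows from \pref{prop_20140929105014}~(\ref{item_20141228072212}) by differentiating under the integral. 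Inverting and using the a priori bounds $\hat\kappa_k,\kappa_k=O(\Delta^{3\Hurst^{-}})$ of \lref{lem_201707121721}~(\ref{item_1505964312}), one obtains $\kappa_k=\hat\kappa_k/\sigma(\xi_{k-1})+(\text{explicit }B^{10}\hat\kappa_k/\Delta\text{ term})+O(\Delta^{5\Hurst^{-}})$. Substituting the expansion of $\hat\kappa_k$ from \lref{lem_201707121721}~(\ref{item_1505965509}), using the elementary splitting $B^{10}_{\dyadicPart[m]{k-1}\dyadicPart[m]{k}}=\tfrac12\Delta\,\Delta B_k-\big(\tfrac12\Delta\,\Delta B_k-B^{10}_{\dyadicPart[m]{k-1}\dyadicPart[m]{k}}\big)$, and replacing $\xi_{k-1}$ by $X_{\dyadicPart[m]{k-1}}$ (whose cost I postpone to Step 3), one collects $\kappa_k=\tilde\kappa_k+R_k$. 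The crucial point is that the identities $f_3=\hat f_3/\sigma$, $g_1=\hat g_1/\sigma$, $\varphi=\hat\varphi/\sigma$, $\varphi_{\boldsymbol i}=\hat\varphi_{\boldsymbol i}/\sigma$ and $f_4=(\hat f_4-\sigma'\hat f_3)/\sigma$ are exactly what forces the coefficients of each of the ``slow'' monomials $(\Delta B_k)^3$, $(\Delta B_k)^4$, $\Delta(\Delta B_k)^2$, $\tfrac12\Delta\,\Delta B_k-B^{10}_{\dyadicPart[m]{k-1}\dyadicPart[m]{k}}$, $B^{011}_{\dyadicPart[m]{k-1}\dyadicPart[m]{k}}$, $B^{101}_{\dyadicPart[m]{k-1}\dyadicPart[m]{k}}$, $B^{110}_{\dyadicPart[m]{k-1}\dyadicPart[m]{k}}$ in $\kappa_k-\tilde\kappa_k$ to cancel; in particular the $(\Delta B_k)^4$-identity absorbs not only the direct $\hat f_4(\Delta B_k)^4$ term but also the piece the $B^{10}$-correction throws onto degree four, the net effect being that what is left on that scale is \emph{mean-zero}, the deterministic part vanishing since $\expect[(\Delta B_k)^3(\tfrac12\Delta\,\Delta B_k-B^{10}_{\dyadicPart[m]{k-1}\dyadicPart[m]{k}})]=0$.

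\textbf{Step 2: estimating the pathwise part.} After the cancellation, write $R_k=R_k^{(1)}+R_k^{(2)}$, where $R_k^{(1)}$ collects all terms bounded pointwise on $\CN$ by $C(\Delta^{5\Hurst^{-}}+\Delta^{3\Hurst^{-}+1}+\Delta^{\Hurst^{-}+2})$ — the remainders of \lref{lem_201707121721}~(\ref{item_1505965509}) divided by $\inf\sigma>0$ (\hyporef{hypo_ellipticity}), together with the cross-products produced by the inversion and by the Taylor expansions of $\sigma$ and $\Wronskian$, all estimated via \lref{lem_201708052140} — plus the genuinely mean-zero degree-four leftovers from Step 1, and $R_k^{(2)}$ collects the substitution errors $[(\text{smooth})(\xi_{k-1})-(\text{smooth})(X_{\dyadicPart[m]{k-1}})]\times(\text{monomial of degree}\ge3)$. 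For $R_k^{(1)}$ the triangle inequality gives $\sum_{i=1}^{2^m}|R_i^{(1)}|\le C(\Delta^{5\Hurst^{-}-1}+\Delta^{3\Hurst^{-}}+\Delta^{\Hurst^{-}+1})$, and since $\Hurst>1/3$ each exponent here is $>3\Hurst-\tfrac12$ once $\epsilon>0$ is small; choosing $\delta>0$ below the gap gives $2^{m(3\Hurst-1/2+\delta)}\sum_i|R_i^{(1)}|\to0$. (The mean-zero leftovers, e.g.\ $\sum(\text{smooth})(X_{\dyadicPart[m]{i-1}})(\Delta B_i)^3(\tfrac12\Delta\,\Delta B_i-B^{10}_{\dyadicPart[m]{i-1}\dyadicPart[m]{i}})$, have partial sums $o_{\prob}(\Delta^{3\Hurst-1/2+\delta})$ by a second-moment estimate using the stationarity and fast decorrelation of the increments of $B$ — in the spirit of \pref{prop_20141102222020} — and, where applicable, \tref{thm_20141203094309}, \tref[thm_20141117071529]{thm_20141123062022} or \pref{prop_20141117062354}, whose normalizations beat $2^{-m(3\Hurst-1/2)}$ in the range $\Hurst>1/3$; in the boundary case $\Hurst=1/2$ the non-vanishing deterministic parts that appear on scale $\Delta^2$ are by design already part of $\tilde\kappa_k$, consistently with the drift term $\int_0^\cdot\psi(X_u)\,du$ in \tref{thm_20141204015044}~(2).)

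\textbf{Step 3: the substitution part and the bootstrap.} By \pref{prop_20140929105014}~(\ref{item_20141228072212}) and \lref{lem_201708052140}, $|R_k^{(2)}|\le C\|h^{(m)}\|_{\infty,[0,\dyadicPart[m]{k-1}]}\Delta^{3\Hurst^{-}}$, hence $\max_{1\le k\le2^m}\big|\sum_{i\le k}R_i^{(2)}\big|\le C\Delta^{3\Hurst^{-}-1}\|h^{(m)}\|_\infty\le C\Delta^{3\Hurst^{-}-1}\big(\|h_M^{(m)}\|_\infty+\max_k|\sum_{i\le k}R_i|\big)$. Choosing $\epsilon$ with $3\Hurst^{-}>1$ makes $C\Delta^{3\Hurst^{-}-1}\to0$, so for $m$ large the $\max_k|\sum R_i|$ on the right is absorbed, leaving $\max_k\big|\sum_{i\le k}R_i\big|\le C'\big(\sum_i|R_i^{(1)}|+\Delta^{3\Hurst^{-}-1}\|h_M^{(m)}\|_\infty\big)$. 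Since \eqref{eq_1506055600} (\lref{lem_201708091528}) gives $\|h_M^{(m)}\|_\infty=O_{\prob}(\Delta^{3\Hurst-1/2})$, the second term is $O_{\prob}(\Delta^{6\Hurst-3/2-3\epsilon})$, and $6\Hurst-3/2>3\Hurst-1/2$ iff $\Hurst>1/3$; hence for $\epsilon,\delta>0$ small enough, $2^{m(3\Hurst-1/2+\delta)}\max_{1\le k\le2^m}\big|\sum_{i=1}^kR_i\big|\to0$ in probability, which is the claim.

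\textbf{Main obstacle.} The hard part is entirely the bookkeeping of Step 1: performing the inversion of $\kappa\mapsto\Phi_k(\kappa)$ to the right order, correctly tracking the $B^{10}_{\dyadicPart[m]{k-1}\dyadicPart[m]{k}}$-correction coming from $\Phi_k'(0)\ne\sigma(\xi_{k-1})\Delta$, and checking that the hatted-versus-unhatted identities (above all $f_4=(\hat f_4-\sigma'\hat f_3)/\sigma$) make every ``slow'' coefficient in $\kappa_k-\tilde\kappa_k$ cancel, so that only pathwise-$O(\Delta^{5\Hurst^{-}})$-type terms, genuinely mean-zero remainders, and substitution errors are left — with the case $\Hurst=1/2$ requiring extra care because there the ``corrections'' feed the drift part of the limit. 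Everything afterward (triangle inequality plus bootstrap) is routine, the only genuinely probabilistic input being the already-established order of $h_M^{(m)}$.
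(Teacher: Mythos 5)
Your proposal follows essentially the same route as the paper's proof: your Step 1 is \lref{lem_201707121723} (pathwise inversion of the one-step relation, with the identities $h=\hat h/\sigma$ and $f_4=(\hat f_4-\sigma'\hat f_3)/\sigma$ producing the cancellation), your splitting of $R_k$ into a pathwise remainder, a mean-zero trapezoidal-type term and a substitution error is exactly the paper's $R_k=G_k(X_{\tau^m_{k-1}},B)+\kappa_{k,2}+r_k$, and your bootstrap absorbing a factor $C\Delta^{3H^{-}-1}$ of the unknown and then feeding in the order $\Delta^{3H-1/2}$ of the main term from \lref{lem_201708091528} is the paper's iteration of $\epsilon_m\le\tilde K\epsilon_m+\hat K$ combined with $\Psi_{m,\delta}\to0$ (\lref{lem_1506322555}). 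The one concrete slip is your formula for $\Phi_k'(0)$: by \pref{prop_20140929105014}~(\ref{item_20141106063948}) the correct expression is $\sigma(X_\Delta)\int_0^\Delta\exp\bigl(\int_s^\Delta[\Wronskian/\sigma](X_u)\,du\bigr)\,ds$, equivalently $\int_0^\Delta\sigma(X_s)J_\Delta J_s^{-1}\,ds$, and the increment $(B_s-B_{\tau^m_{k-1}})$ coming from expanding $\sigma(X_s)$ combines with the increment $(B_{\tau^m_k}-B_s)$ hidden in $J_\Delta J_s^{-1}$ to give a correction proportional to $\Delta\cdot\Delta B_k$, \emph{not} a multiple of $B^{10}_{\tau^m_{k-1}\tau^m_k}$. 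This is precisely what makes the relative correction to $\kappa_k$ equal to $-\sigma'(\xi_{k-1})\Delta B_k$, so that $f_4=(\hat f_4-\sigma'\hat f_3)/\sigma$ closes the degree-four books and the only surviving product term is $\Delta B_k\bigl(\frac{\Delta}{2}\Delta B_k-B^{10}_{\tau^m_{k-1}\tau^m_k}\bigr)$ (the paper's $G_k$, i.e.\ $\Phi_4$), which for $H<1/2$ is killed by the crude pathwise bound $O(\Delta^{2H^{-}})$ and for $H=1/2$ by Doob's inequality. With a literal $B^{10}$-correction you would instead be left with an extra term of size $(\Delta B_k)^3\bigl(\frac{\Delta}{2}\Delta B_k-B^{10}_{\tau^m_{k-1}\tau^m_k}\bigr)/\Delta$, whose pathwise sum is only $O(\Delta^{4H^{-}-1})$ and hence not $o(\Delta^{3H-1/2})$ when $H<1/2$, forcing an additional weighted-variation second-moment argument there; apart from this the architecture and the exponent counts in your Steps 2 and 3 are sound.
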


\begin{remark}
	Although $\tilde{\kappa}_k$ and $\kappa_k$ are defined on $\Omega_0$,
	the definition of $\kappa_k$ on $\Omega_0\setminus\CN$ is essentially meaningless.
	However, the statement of the convergence of $R_k$ makes sense
	because	$\lim_{m\to\infty}\prob(\CN)=1$.
\end{remark}

The following processes are candidates of the main term of $h^{(m)}$:
\begin{gather}\label{eq_1506569861}
	\left\{
		\begin{aligned}
			\Phi_1(t)
			&=
				\sum_{k=1}^{\intPart{2^mt}}
					\left\{
						f_3(X_{\tau^m_{k-1}})(\Delta B_k)^3
						+f_4(X_{\tau^m_{k-1}})(\Delta B_k)^4
					\right\},\\
			\Phi_2(t)
			&=
				\sum_{k=1}^{\intPart{2^mt}}
					g_1(X_{\tau^m_{k-1}})
					\left(
						\frac{\Delta}{2}\Delta B_k
						-
						B^{10}_{\tau^m_{k-1}\tau^m_k}
					\right),\\
			\Phi_3(t)
			&=
				\sum_{k=1}^{\intPart{2^mt}}
					\Biggl\{
						\varphi(X_{\tau^m_{k-1}})
						\Delta(\Delta B_k)^2
						+
						\varphi_{011}(X_{\tau^m_{k-1}})
						B^{011}_{\tau^m_{k-1}\tau^m_k}
						+
						\varphi_{101}(X_{\tau^m_{k-1}})
						B^{101}_{\tau^m_{k-1}\tau^m_k}
						+
						\varphi_{110}(X_{\tau^m_{k-1}})
						B^{110}_{\tau^m_{k-1}\tau^m_k}
					\Biggr\},\\
			\Phi_4(t)
			&=
				-\sum_{k=1}^{\intPart{2^mt}}
					[g_1\sigma'](X_{\tau^m_{k-1}})
					\Delta B_k
					\left(
						\frac{\Delta}{2}\Delta B_k
						-B^{10}_{\tau^m_{k-1}\tau^m_k}
					\right).
		\end{aligned}
	\right.
\end{gather}
\begin{remark}
	The processes $\Phi_1$, $\Phi_2$ and $\Phi_3$ are arising from the expression of $\tilde{\kappa}_{k}$.
	In order to prove \lref{lem_201708091518}, it is necessary to consider $\Phi_4$ together.
\end{remark}
By using \tref{thm_20141117071529}, \tref{thm_20141123062022} and \pref{prop_20141117062354},
we can show the next lemma, which gives us asymptotic of $\Phi_1$, $\Phi_2$, $\Phi_3$ and $\Phi_4$.
\begin{lemma}\label{lem_201708091528}
	Let $W$ and $\tilde{W}$ be standard Brownian motions.
	Assume that $B$, $W$ and $\tilde{W}$ are independent.
	The next assertions hold.
	\begin{enumerate}
		\item	Let $\frac{1}{3}<\Hurst<\frac{1}{2}$.
				Then
				$
					\left(
						B,
						(2^m)^{3H-\frac{1}{2}}
						(\Phi_1,\Phi_2,\Phi_3,\Phi_4)
					\right)
				$
				converges weakly to
				$
					\left(
						B,
						\sigma_{3,H}\int_0^{\cdot}f_3(X_t)dW_t,
						0,
						0,
						0
					\right)$
				in $D([0,1];\RealNum^4)$ with respect to the Skorokhod topology.
				Here, $\sigma_{3,H}$ is a constant defined by \eqref{eq_const_sigma_q_Hurst}.
		\item	Let $\Hurst=\frac{1}{2}$.
				Then
				$
					\left(
						B,
						2^m
						(\Phi_1,\Phi_2,\Phi_3,\Phi_4)
					\right)
				$
				converges weakly to
				\begin{align*}
					&\Biggl(
						B,
						\sqrt{6}\int_0^{\cdot}f_3(X_s)\,dW_s+3\int_0^{\cdot}f_3(X_s)\circ dB_s,
						\frac{1}{\sqrt{12}}\int_0^{\cdot}g_1(X_s)d\tilde{W}_s,\\
					&\qquad\qquad\qquad\qquad\qquad\qquad
						\int_0^{\cdot}\varphi(X_s)\,ds
						+
						\frac{1}{4}
						\int_0^{\cdot}
							\left\{\varphi_{011}(X_s)+\varphi_{110}(X_s)\right\}\,ds,
						0
					\Biggr)
				\end{align*}
				in $D([0,1];\RealNum^4)$ with respect to the Skorokhod topology.
	\end{enumerate}
\end{lemma}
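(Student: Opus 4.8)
\emph{Strategy.} I would rewrite each of $\Phi_1,\dots,\Phi_4$ so that its asymptotics reduces to one of the convergence results of \secref{sec_20150519021523}. First, since $B^{10}_{\dyadicPart[m]{k-1}\dyadicPart[m]{k}}=\int_{\dyadicPart[m]{k-1}}^{\dyadicPart[m]{k}}B_{\dyadicPart[m]{k-1}u}\,du$ and $\tfrac{\Delta}{2}\Delta B_k=\tfrac{1}{2\cdot2^m}B_{\dyadicPart[m]{k-1}\dyadicPart[m]{k}}$, one reads off that $\Phi_2=\wTRVar{g_1}{m}$ exactly. The crucial step is the treatment of $\Phi_1$, which uses the identity $f_4=\sigma f_3'/2$: on $\CN$ the Taylor expansion of \pref{prop_20141010094511} gives $X_{\dyadicPart[m]{k}}-X_{\dyadicPart[m]{k-1}}=\sigma(X_{\dyadicPart[m]{k-1}})\Delta B_k+O(\Delta^{2H^{-}})$, hence
\[
	\tfrac12\{f_3(X_{\dyadicPart[m]{k}})-f_3(X_{\dyadicPart[m]{k-1}})\}(\Delta B_k)^3=f_4(X_{\dyadicPart[m]{k-1}})(\Delta B_k)^4+O(\Delta^{5H^{-}}),
\]
so $\Phi_1(t)$ coincides with the symmetrized sum $\sum_{k\le\intPart{2^mt}}\tfrac12\{f_3(X_{\dyadicPart[m]{k-1}})+f_3(X_{\dyadicPart[m]{k}})\}(\Delta B_k)^3$ up to an error bounded by $\const 2^m\Delta^{5H^{-}}$. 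Expanding $(\Delta B_k)^3=2^{-3m\Hurst}\{\hermitePoly{3}(2^{m\Hurst}\Delta B_k)+3\cdot2^{m\Hurst}\Delta B_k\}$ splits the symmetrized sum as $2^{-3m\Hurst}\wHerVar{3,f_3,\mu}{m}(t)+3\cdot2^{-2m\Hurst}S_m(t)$, where $\mu=\tfrac12(\delta_0+\delta_1)$ ($\delta_\theta$ the Dirac mass at $\theta$) and $S_m(t)=\sum_{k\le\intPart{2^mt}}\tfrac12\{f_3(X_{\dyadicPart[m]{k-1}})+f_3(X_{\dyadicPart[m]{k}})\}\Delta B_k\to\int_0^{\cdot}f_3(X_s)\,d^{\circ}B_s$ uniformly by \pref{prop_20150424100658}. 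Similarly, writing $(\Delta B_k)^2=2^{-2m\Hurst}\{\hermitePoly{2}(2^{m\Hurst}\Delta B_k)+1\}$ one peels off from $\Phi_3$ an order-$2$ weighted Hermite variation, a Riemann sum of $\varphi(X)$, and the three iterated-integral terms; $\Phi_4$ carries an extra factor $\Delta B_k$ and is handled by a direct estimate.

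\emph{The case $1/3<\Hurst<1/2$.} Multiplying by $(2^m)^{3\Hurst-1/2}$ and using $(2^m)^{3\Hurst-1/2}2^{-3m\Hurst}=2^{-m/2}$, the main part of $\Phi_1$ becomes $2^{-m/2}\wHerVar{3,f_3,\mu}{m}$, which by \tref{thm_20141117071529} (applicable since $1/6<\Hurst<5/6$) converges jointly with $B$ to $\sigma_{3,\Hurst}\int_0^{\cdot}f_3(X_s)\,dW_s$; the contributions $3\cdot2^{m(\Hurst-1/2)}S_m$ and the symmetrization error, which is $O(\Delta^{2\Hurst-1/2-5\epsilon})$, tend to $0$ (here $\Hurst>1/4$ and $\epsilon$ small are used). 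For $\Phi_2=\wTRVar{g_1}{m}$, \pref{prop_20141117062354} with $r=3\Hurst-1/2<2\Hurst$ gives $(2^m)^{3\Hurst-1/2}\Phi_2\to0$ in probability. For $\Phi_3$ and $\Phi_4$, the bounds $|B^{\boldsymbol{i}}_{\dyadicPart[m]{k-1}\dyadicPart[m]{k}}|\le \const\Delta^{1+2H^{-}}$ from \lref{lem_201708052140} and $|\Delta B_k|\le\Delta^{H^{-}}$ on $\CN$ give $(2^m)^{3\Hurst-1/2}\Phi_3,(2^m)^{3\Hurst-1/2}\Phi_4=O(\Delta^{1/2-\Hurst-2\epsilon})\to0$ for $\epsilon$ small. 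By Slutsky's lemma the four-dimensional limit is $(B,\sigma_{3,\Hurst}\int_0^{\cdot}f_3(X_s)\,dW_s,0,0,0)$.

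\emph{The case $\Hurst=1/2$.} Now $(2^m)^{3\Hurst-1/2}=2^m$ and $2^{m(\Hurst-1/2)}=1$, so $2^m\Phi_1=2^{-m/2}\wHerVar{3,f_3,\mu}{m}+3S_m+o_P(1)$; since $\sigma_{3,1/2}=\sqrt{3!}$ and $d^{\circ}B$ is the Stratonovich integral at $\Hurst=1/2$, \tref{thm_20141123062022} together with the convergence of $S_m$ yields $2^m\Phi_1\to\sqrt6\int_0^{\cdot}f_3(X_s)\,dW_s+3\int_0^{\cdot}f_3(X_s)\circ dB_s$, jointly with $B$, and the same theorem gives $2^m\Phi_2\to\frac1{\sqrt{12}}\int_0^{\cdot}g_1(X_s)\,d\tilde W_s$ with $B,W,\tilde W$ independent. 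For $\Phi_3$: the $\varphi(X)\Delta(\Delta B_k)^2$ term produces $\int_0^{\cdot}\varphi(X_s)\,ds$ (the order-$2$ Hermite piece being $O_P(2^{-m/2})$ after scaling); for the iterated-integral terms one centers $B^{\boldsymbol{i}}_{\dyadicPart[m]{k-1}\dyadicPart[m]{k}}$ by its mean, which at $\Hurst=1/2$ equals its conditional mean given $\sigmaField_{\dyadicPart[m]{k-1}}$ by independence and stationarity of the increments, and a direct computation gives $\expect[B^{011}_{\dyadicPart[m]{k-1}\dyadicPart[m]{k}}]=\expect[B^{110}_{\dyadicPart[m]{k-1}\dyadicPart[m]{k}}]=\Delta^2/4$, $\expect[B^{101}_{\dyadicPart[m]{k-1}\dyadicPart[m]{k}}]=0$, so the martingale remainder is $O_{L^2}(2^{-3m/2})$ (negligible after $\times2^m$) and the mean part gives $\tfrac14\int_0^{\cdot}\{\varphi_{011}(X_s)+\varphi_{110}(X_s)\}\,ds$. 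Finally $\Delta B_k(\tfrac{\Delta}{2}\Delta B_k-B^{10}_{\dyadicPart[m]{k-1}\dyadicPart[m]{k}})$ has mean zero and is independent of $\sigmaField_{\dyadicPart[m]{k-1}}$, so $2^m\Phi_4$ is a martingale with $\expect[\|2^m\Phi_4\|_{\infty}^2]=O(2^{-m})\to0$. Slutsky's lemma assembles the stated joint limit, using that the terms converging in probability are continuous functionals of $B$ (continuity of $g\mapsto x(g)$ and of $g\mapsto\int_0^{\cdot}f_3(x_s(g))\,d^{\circ}g_s$ in the uniform topology by \rref{rem_201708111136}).

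\emph{Anticipated difficulty.} The delicate point is the remainder bookkeeping in the symmetrization of $\Phi_1$: one must verify that, after invoking $f_4=\sigma f_3'/2$, every leftover term sums to $o((2^m)^{-(3\Hurst-1/2)})$, which forces careful tracking of the powers of $\Delta$ and $\Delta B_k$ and the use of $|\Delta B_k|\le\Delta^{H^{-}}$ on $\CN$; the secondary difficulty, for $\Hurst=1/2$, is the exact evaluation of the third-level iterated-integral expectations together with the $L^2$/martingale estimates for $\Phi_3$ and $\Phi_4$.
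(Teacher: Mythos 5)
Your proposal is correct and follows essentially the same route as the paper: the symmetrization of $\Phi_1$ via $f_4=\sigma f_3'/2$, the Hermite expansion of $(\Delta B_k)^3$ feeding into \tref[thm_20141117071529]{thm_20141123062022} and \pref{prop_20150424100658}, the identification $\Phi_2=\wTRVar{g_1}{m}$ handled by \pref{prop_20141117062354}, the pathwise bounds from \lref{lem_201708052140} for $\Phi_3,\Phi_4$ when $\Hurst<1/2$, and the It\^o/martingale decomposition of the iterated integrals and of $\Phi_4$ at $\Hurst=1/2$. The only (harmless) cosmetic difference is that you invoke Slutsky's lemma where the paper cites \pref{prop_201708091009}~(\ref{item_1506072032}).
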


We are in a position to show \tref{thm_20141204015044}.
Proofs of \lrefs[lem_201707121721]{lem_201708091518}{lem_201708091528} are postponed in \secref{sec_1506314800}.

\begin{proof}[Proof of \tref{thm_20141204015044}]
	We follow the idea in \rref{rem_201708102135}.
	Let $h^{(m)}_M$ and $h^{(m)}_R$ be piecewise linear paths associated
	with $\{\tilde{\kappa}_k\}$ and $\{R_k\}$, respectively, in \lref{lem_201708091518}.
	By \lref{lem_201708091528},
	we have the weak convergence in the Skorokhod topology in $D([0,1];\RealNum^2)$,
	\begin{align*}
		\left(B,(2^m)^{3H-\frac{1}{2}}(\Phi_1+\Phi_2+\Phi_3)\right)
		\to (B,U),
	\end{align*}
	where $U$ is the same process defined in \tref{thm_20141204015044}.
	Since $h^{(m)}_M$ is a piecewise linear and $\Phi_1+\Phi_2+\Phi_3$ is step function, we have
	\begin{align*}
		\|h^{(m)}_M-(\Phi_1+\Phi_2+\Phi_3)\|_{\infty}
		=
			O(\Delta^{3H^-}) \qquad \omega\in \CN.
	\end{align*}
	Hence
	$\lim_{m\to\infty}
	(2^m)^{3H-\frac{1}{2}}\|h^{(m)}_M-(\Phi_1+\Phi_2+\Phi_3)\|_{\infty}=0$
	in probability.
	Consequently,
	we have the weak convergence in the uniform convergence topology
	in $C([0,1];\RealNum^3)$:
	\begin{align}\label{eq_201708110956}
		\left(B,(2^m)^{3H-\frac{1}{2}}h^{(m)}_M\right)
		\to (B,U).
	\end{align}
	As stated in \rref{rem_201708102135}, we have
	$
		(2^m)^{3H-\frac{1}{2}}
		\{\bar{X}^{(m)}(\xi,B)-X(\xi,B)\}
		=
			I_1+I_2+I_3
	$,
	where
	\begin{align*}
		I_1
		&=
			\nabla_{(2^{m})^{3H-\frac{1}{2}}h^{(m)}_M}X(\xi,B),\\
		I_2
		&=
			(2^m)^{3H-\frac{1}{2}}
			\left\{\bar{X}^{(m)}(\xi,B)-X(\xi,B+h^{(m)}_M)\right\},\\
		I_3
		&=
			(2^m)^{3H-\frac{1}{2}}
			\left\{
				X(\xi,B+h^{(m)}_M)
				-X(\xi,B)
				-\nabla_{h^{(m)}_M}X(\xi,B)
			\right\}.
	\end{align*}
	We consider $I_2$ and $I_3$ first.
	By Taylor's theorem, we have
	\begin{align*}
		|\bar{X}^{(m)}_t(\xi,B)-X_t(\xi,B+h^{(m)}_M)|
		&\leq
			|
				\bar{X}^{(m)}_t(\xi,B)-X_t(\xi,B+h^{(m)})
			|
			+
			|
				X_t(\xi,B+h^{(m)})-X_t(\xi,B+h^{(m)}_M)
			|\\
		&\leq
			|
				\bar{X}^{(m)}_t(\xi,B)-X_t(\xi,B+h^{(m)})
			|
			+
			\left|
				\int_0^1\nabla_{h^{(m)}_R} X_t(\xi,B+\theta h^{(m)}_R)\,d\theta
			\right|.
	\end{align*}
	By using \lref{lem_201707121721}~(\ref{item_1547629361}) and the boundedness of the derivative, we have
	\begin{align*}
		\|\bar{X}^{(m)}(\xi,B)-X(\xi,B+h^{(m)}_M)\|_\infty
		&\le
			C
			\{
				\Delta^{3H^{-}}
				+
				\|h^{(m)}_R\|_{\infty}
			\}.
	\end{align*}
	Here $C$ is a constant independent of $m$.
	Combining this and \lref{lem_201708091518}, we have
	$\|I_2\|_{\infty}$ converges to $0$
	in probability.
	Similarly, we have
	\begin{align*}
		\|I_3\|_{\infty}\le C(2^m)^{3H-\frac{1}{2}}\|h^{(m)}_M\|_{\infty}^2
		\to 0\qquad \text{in probability}.
	\end{align*}
	We next consider the main term $I_1$.
	Let $J_t(g)$ be the continuous path defined by
	$g$ in \eqref{eq_20150218112958}.
	By \rref{rem_201708111136},	the mapping $g\mapsto J(g)$ is continuous on $C([0,1];\RealNum)$.
	From this, we have the continuity of the mapping
	\begin{align*}
		C([0,1];\RealNum^2)\ni(g,z)
		\mapsto
			\sigma(x(g))z+J(g)\int_0^{\cdot}J_s^{-1}(g)\Wronskian(x_s(g))z_s\,ds
			\in
			C([0,1];\RealNum).
	\end{align*}
	Combining \pref{prop_20140929105014}, \eqref{eq_201708110956} and the above,
	we complete the proof.
\end{proof}

\subsection{Proof of key lemmas}\label{sec_1506314800}
In the rest of this section, we show \lrefs[lem_201707121721]{lem_201708091518}{lem_201708091528}.
\lref{lem_201707121721} follows from the next lemma immediately:
\begin{lemma}\label{lem_1506077157}
	For any $\omega\in \CN$, the following hold.
	\begin{enumerate}
		\item	\label{item_1506567781}
				We have
				 \begin{align*}
					&
						\xi_k-\xi_{k-1}
						=
							b(\xi_{k-1})\Delta+\sigma(\xi_{k-1})\Delta B_k
							+\frac{1}{2}[\sigma'\sigma](\xi_{k-1})(\Delta B_k)^2
							+\frac{1}{4}\left[\sigma(\sigma')^2+\sigma^2\sigma''\right](\xi_{k-1})
							(\Delta B_k)^3\\
					&\quad
							+
							\left[
								\frac{1}{12}\sigma'''\sigma^3
								+\frac{3}{8}\sigma^2\sigma'\sigma''
								+\frac{1}{8}\sigma(\sigma')^3
							\right]
								(\xi_{k-1})
							(\Delta B_k)^4
							+
							\frac{1}{2}\left[\sigma'b+\sigma b'\right](\xi_{k-1})
							\Delta(\Delta B_k)\\
					&\quad
							+
								\frac{1}{4}
								\left[
									(b(\sigma')^2+\sigma^2b'')
									+
									2(\sigma b\sigma''+\sigma\sigma'b')
								\right]
									(\xi_{k-1})
								\Delta (\Delta B_k)^2
							+
								\frac{1}{2}[bb'](\xi_{k-1})\Delta^2\\
					&\quad
							+O(\Delta^{5H^{-}})
							+O(\Delta^{3H^{-}+1}).
				\end{align*}
		\item	\label{item_1506567805}
				We have
				\begin{multline*}
					X_\Delta(\xi_{k-1},\theta_{\tau^m_{k-1}}B)-\xi_{k-1}\\
					\begin{aligned}
						&=
								b(\xi_{k-1})\Delta+\sigma(\xi_{k-1})\Delta B_k
								+
								\frac{1}{2}
								\left[\sigma\sigma'\right](\xi_{k-1})(\Delta B_k)^2
								+
								\frac{1}{3!}
								\left[\sigma(\sigma\sigma')'\right](\xi_{k-1})
								(\Delta B_k)^3\\
						&\phantom{=}\quad
								+
								\frac{1}{4!}
								\left[
									\sigma(\sigma(\sigma\sigma')')'
								\right]
									(\xi_{k-1})
								(\Delta B_k)^4
								+[b\sigma'](\xi_{k-1})\Delta(\Delta B_k)
								+
									[\sigma b'-b\sigma'](\xi_{k-1})
									B^{10}_{\tau^m_{k-1}\tau^m_k}\\
						&\phantom{=}\quad
								+
								b(\sigma\sigma')'(\xi_{k-1})
								B^{011}_{\tau^m_{k-1}\tau^m_k}
								+
								\sigma(b\sigma')'(\xi_{k-1})
								B^{101}_{\tau^m_{k-1}\tau^m_k}
								+
								\sigma(\sigma b')'(\xi_{k-1})
								B^{110}_{\tau^m_{k-1}\tau^m_k}
								+\frac{1}{2}[b'b](\xi_{k-1})\Delta^2\\
						&\phantom{=}\quad
								+O(\Delta^{5H^{-}})+O(\Delta^{3H^{-}+1})
								+O(\Delta^{H^{-}+2}).
					\end{aligned}
				\end{multline*}
	\end{enumerate}
\end{lemma}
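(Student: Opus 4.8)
The plan is to prove the two assertions by different routes: assertion~(\ref{item_1506567805}) will follow directly from the Taylor expansion of \pref{prop_20141010094511}, whereas assertion~(\ref{item_1506567781}) requires solving the implicit Crank--Nicolson relation to the relevant order.

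For (\ref{item_1506567805}), fix $k$ and set $x_t:=X_t(\xi_{k-1},\theta_{\tau^m_{k-1}}B)$. By construction $x$ solves the ODE \eqref{eq_20140928070616} driven by the shifted path $\theta_{\tau^m_{k-1}}B$ with initial value $\xi_{k-1}$, and since $b,\sigma\in\SmoothFunc[bdd]{\infty}{\RealNum}{\RealNum}$ the hypotheses of \pref{prop_20141010094511} are met; applied with $s=0$ and $t=\Delta$ it gives precisely the asserted identity, once one identifies $(\theta_{\tau^m_{k-1}}B)_\Delta=\Delta B_k$ and the iterated integrals $g^{10}_{0\Delta},g^{011}_{0\Delta},g^{101}_{0\Delta},g^{110}_{0\Delta}$ of $\theta_{\tau^m_{k-1}}B$ with $B^{10}_{\tau^m_{k-1}\tau^m_k},\dots,B^{110}_{\tau^m_{k-1}\tau^m_k}$. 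The latter is the translation covariance of the (iterated) symmetric integral: the change of variables $v=\tau^m_{k-1}+u$ in the Riemann-sum description of \pref{prop_20150424100658} turns an iterated integral of $\theta_{\tau^m_{k-1}}B$ over $[0,\Delta]$ into the corresponding iterated integral of $B$ over $[\tau^m_{k-1},\tau^m_k]$. Finally, for $\omega\in\CN$ the $(H-\epsilon)$-H\"older constant of $B$ on $[\tau^m_{k-1},\tau^m_k]$ is at most $1$, so the constant $\const[g]$ in \pref{prop_20141010094511} is bounded uniformly in $m,k,\xi$ and its remainder is $O(\Delta^{\min\{2+H^{-},1+3H^{-},5H^{-}\}})=O(\Delta^{5H^{-}})+O(\Delta^{3H^{-}+1})+O(\Delta^{H^{-}+2})$.

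For (\ref{item_1506567781}) I would use that, by \pref{prop_20141111045543} and \rref{rem_201708101101}, $\xi_k=f(\xi_{k-1},\Delta,\Delta B_k)$ for a $C^\infty$ implicit function $f$ with $f(\xi,0,0)=\xi$; hence $y:=\xi_k-\xi_{k-1}$ has, for each fixed $N$, a Taylor expansion in $(\delta,\Delta):=(\Delta,\Delta B_k)$ about the origin with coefficients that are smooth functions of $\xi_{k-1}$, plus a remainder of order $O((\Delta+|\Delta B_k|)^{N+1})$. Taking $N=4$ and using $|\Delta B_k|\le\Delta^{H^{-}}$ on $\CN$, this degree-$5$ remainder is $O(\Delta^{5H^{-}})$, and every monomial $\Delta^{i}(\Delta B_k)^{j}$ with $i+j\le4$ that does not occur in the asserted formula has order $i+jH^{-}\ge\min\{5H^{-},3H^{-}+1\}$ (using $H^{-}<1/2$; e.g.\ $\Delta^2\Delta B_k$ and $\Delta^3$ are $O(\Delta^{3H^{-}+1})$), so it too is absorbed into $O(\Delta^{5H^{-}})+O(\Delta^{3H^{-}+1})$. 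It then remains to compute the coefficients of the surviving monomials $\Delta,\Delta B_k,(\Delta B_k)^2,(\Delta B_k)^3,(\Delta B_k)^4,\Delta\,\Delta B_k,\Delta(\Delta B_k)^2,\Delta^2$. To this end I would Taylor-expand $b(\xi_{k-1}+y)$ and $\sigma(\xi_{k-1}+y)$ in the defining relation, rewriting it as the fixed-point equation
\begin{align*}
	y
	=
		b\Delta+\sigma\Delta B_k
		+\frac{1}{2}(b'\Delta+\sigma'\Delta B_k)\,y
		+\frac{1}{4}(b''\Delta+\sigma''\Delta B_k)\,y^2
		+\frac{1}{12}(b'''\Delta+\sigma'''\Delta B_k)\,y^3
		+O\big(|y|^4(\Delta+|\Delta B_k|)\big),
\end{align*}
all coefficients evaluated at $\xi_{k-1}$, and then determine the coefficients recursively by matching powers of $\Delta$ and $\Delta B_k$ --- equivalently by iterated substitution starting from $y^{(0)}=b\Delta+\sigma\Delta B_k$, each substitution raising the order of the correction by at least $H^{-}$, so that the relevant coefficients stabilize after finitely many steps. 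Carrying this out reproduces the stated formula; for instance the coefficient of $(\Delta B_k)^2$ is $\frac{1}{2}\sigma'\sigma$, that of $(\Delta B_k)^3$ is $\frac{1}{4}[\sigma(\sigma')^2+\sigma^2\sigma'']$, and that of $\Delta(\Delta B_k)^2$ is $\frac{1}{4}[(b(\sigma')^2+\sigma^2b'')+2(\sigma b\sigma''+\sigma\sigma'b')]$.

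The main obstacle is the bookkeeping in (\ref{item_1506567781}): one must check, uniformly in the range $1/3<H\le1/2$, that every monomial absent from the formula really has order at least $\min\{5H^{-},3H^{-}+1\}$, and --- for the coefficients of $(\Delta B_k)^3$ and $(\Delta B_k)^4$ --- feed the order-$2H^{-}$ correction of $y$ back simultaneously through the $\frac{1}{2}\sigma'\Delta B_k\,y$, $\frac{1}{4}\sigma''\Delta B_k\,y^2$ and $\frac{1}{12}\sigma'''\Delta B_k\,y^3$ terms, rather than truncating after a single substitution. Assertion~(\ref{item_1506567805}) is essentially a citation of \pref{prop_20141010094511}, the only nontrivial point being the (routine) identification of the iterated integrals of the shifted path with those of $B$.
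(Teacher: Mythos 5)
Your proposal is correct and follows essentially the same route as the paper: assertion~(2) is obtained by citing \pref{prop_20141010094511} for the shifted driver, and assertion~(1) by using the smoothness of the implicit function (\rref{rem_201708101101}) to posit a Taylor expansion in $(\Delta,\Delta B_k)$ with a remainder absorbed into $O(\Delta^{5H^-})+O(\Delta^{3H^-+1})$, then substituting back into the defining relation \eqref{eq_20177111737} and matching coefficients. Your extra detail on the fixed-point iteration and the order bookkeeping for the omitted monomials only makes explicit what the paper leaves implicit.
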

\begin{proof}
	(\ref{item_1506567781}) $\xi_k$ is determined by the equation
	\begin{align}\label{eq_20177111737}
		\xi_k
		=
			\xi_{k-1}
			+\frac{\sigma(\xi_{k-1})+\sigma(\xi_{k})}{2}\Delta B_k
			+\frac{b(\xi_{k-1})+b(\xi_{k})}{2}\Delta.
	\end{align}
	Since the implicit function is $C^{\infty}$ as in \rref{rem_201708101101},
	there exist constants $a_{1,0},\dots,a_{4,0}$, $a_{0,1}$, $a_{1,1}$, $a_{2,1}$ and $a_{0,2}$ such that
	\begin{align*}
		\xi_k-\xi_{k-1}
		=
			\sum_{i=1}^4a_{i,0}
				(\Delta	B_k)^i
			+a_{0,1}\Delta
			+a_{1,1}\Delta(\Delta B_k)
			+a_{2,1}\Delta(\Delta B_k)^2
			+a_{0,2}\Delta^2
			+O(\Delta^{3H^{-}+1})
			+O(\Delta^{5H^{-}}).
	\end{align*}
	Putting this expansion of $\xi_k$ into the equation
	(\ref{eq_20177111737}) and compare the coefficients of
	the both sides of equation, we obtain the desired formula.

	\noindent
	(\ref{item_1506567805}) This is a immediate consequence of \pref{prop_20141010094511}.
\end{proof}

\begin{proof}[Proof of \lref{lem_201707121721}]
	(\ref{item_1505965509}) The assertion follows from \lref{lem_1506077157} and the definition of $\hat{\kappa}_k$.

	\noindent
	(\ref{item_1505964312})
	The estimate $\hat{\kappa}_k=O(\Delta^{3H^{-}})$ follows from
	(\ref{item_1505965509}) and the H\"older continuity of $B$.
	It follows that $\kappa_k=O(\Delta^{3H^{-}})$
	from the estimate of $\hat{\kappa}_k$ and \lref{lem_201707251240}.
	By combining $\kappa_k=O(\Delta^{3H^{-}})$ and
	the Lipschitz continuity of the mapping $B\mapsto X(B)$,
	we obtain the last assertion.

	\noindent
	(\ref{item_1547629361})
	Since \lref{lem_1506077157} for $\Delta=t-\tau^m_{k-1}$ is still valid,
	for $\tau^m_{k-1}<t\leq\tau^m_k$, we have
	\begin{align*}
		\bar{X}^{(m)}_t(\xi,B)-X_t(\xi,B+h^{(m)})
		&=
			\{\bar{X}^{(m)}_t(\xi,B)-\xi_{k-1}\}
			-
			\{X_{t-\tau^m_{k-1}}(\xi_{k-1},\theta_{\tau^m_{k-1}}(B+h^{(m)}))-\xi_{k-1}\}\\
		&=
			O(h^{(m)}_t-h^{(m)}_{\tau^m_{k-1}}).
	\end{align*}
	Noting
	$
		O(h^{(m)}_t-h^{(m)}_{\tau^m_{k-1}})
		=
			O(\kappa_k)
		=
			O(\Delta^{3H^{-}})
	$,
	we see the assertion.
\end{proof}

Next we show \lref{lem_201708091528}.
To prove this lemma, we use the following results concerning the
Skorokhod topology.
\begin{proposition}\label{prop_201708091009}
	The following hold.
	\begin{enumerate}
		\item	\label{item_1506071756}
				The mapping
				$
					D([0,1];\RealNum^d)\ni(x_i)_{i=1}^d
					\mapsto
						(\sum_{i=1}^dx_i)\in D([0,1];\RealNum)
				$
				is continuous.
		\item	\label{item_1506071772}
				The mapping
				$
					D([0,1];\RealNum^d)\ni x
					\mapsto
						\sup_{0\le t\le 1}|x_t|\in \RealNum
				$
				is continuous.
		\item	\label{item_1506072032}
				We assume random variables in this statement are
				defined in the same probability space.
				Let $\{X_n\}_{n=1}^{\infty}$ and $\{Y_n\}_{n=1}^{\infty}$
				be random variables with values in $C([0,1];\RealNum^{d_1})$ and $D([0,1];\RealNum^{d_2})$, respectively.
				Let $\{Z_n\}_{n=1}^\infty$ be random variables with values in $D([0,1];\RealNum^{d_3})$.
				Let $\varphi : C([0,1];\RealNum^{d_1})\to C([0,1];\RealNum^{d_4})$ be a continuous mapping.
				Suppose that $(X_n, Y_n)\in D([0,1];\RealNum^{d_1+d_2})$
				converges to $(X,Y)$ in law with respect to the Skorokhod topology and $\|Z_n\|_{\infty}\to 0$ in probability.
				Then $(X_n,Y_n,\varphi(X_n),Z_n)$ converges in
				law in the Skorokhod topology
				to $(X,Y,\varphi(X),0)\in D([0,1];\RealNum^{d_1+d_2+d_3+d_4})$.
	\end{enumerate}
\end{proposition}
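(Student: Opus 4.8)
The plan is to prove the three assertions separately, using throughout the time-change characterization of the Skorokhod $J_1$-topology: $w_n\to w$ in $D([0,1];\RealNum^d)$ if and only if there are increasing homeomorphisms $\lambda_n$ of $[0,1]$ with $\sup_{0\le t\le1}|\lambda_n(t)-t|\to0$ and $\|w_n\circ\lambda_n-w\|_\infty\to0$, where $\|\cdot\|_\infty$ is read componentwise. For assertion~(\ref{item_1506071756}), if $(x^{(1)}_n,\dots,x^{(d)}_n)\to(x^{(1)},\dots,x^{(d)})$ in $D([0,1];\RealNum^d)$ with time changes $\lambda_n$, then $\big(\sum_ix^{(i)}_n\big)\circ\lambda_n=\sum_i\big(x^{(i)}_n\circ\lambda_n\big)\to\sum_ix^{(i)}$ uniformly, so $\sum_ix^{(i)}_n\to\sum_ix^{(i)}$ in $D([0,1];\RealNum)$ with the same $\lambda_n$; the essential point is that the summands share a single time change. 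For assertion~(\ref{item_1506071772}), since each $\lambda_n$ is a bijection of $[0,1]$ we have $\sup_t|x_n(t)|=\sup_t|x_n(\lambda_n(t))|$, whence $\big|\sup_t|x_n(\lambda_n(t))|-\sup_t|x(t)|\big|\le\|x_n\circ\lambda_n-x\|_\infty\to0$.

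For assertion~(\ref{item_1506072032}) I would argue in two steps. \emph{Step 1: reduce to $(X_n,Y_n,Z_n)\to(X,Y,0)$ in law.} The map $w\mapsto(w,0)$ from $D([0,1];\RealNum^{d_1+d_2})$ into $D([0,1];\RealNum^{d_1+d_2+d_3})$ is continuous (adjoining a zero coordinate preserves any time change), so $(X_n,Y_n,0)\to(X,Y,0)$ in law by the continuous mapping theorem. Taking the identity as the time change in the definition of the Skorokhod metric $d$ gives $d\big((X_n,Y_n,Z_n),(X_n,Y_n,0)\big)\le\|Z_n\|_\infty$, which tends to $0$ in probability; hence, by the standard fact that $A_n\to A$ in law together with $d(A_n,B_n)\to0$ in probability implies $B_n\to A$ in law, we obtain $(X_n,Y_n,Z_n)\to(X,Y,0)$ in law in $D([0,1];\RealNum^{d_1+d_2+d_3})$.

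\emph{Step 2: insert $\varphi$.} Since $C([0,1];\RealNum^{d_1})$ is closed in $D([0,1];\RealNum^{d_1})$ for $J_1$ (a $J_1$-limit of continuous paths is a uniform limit of reparametrized paths, hence continuous) and $X_n$ is $C([0,1];\RealNum^{d_1})$-valued, the portmanteau theorem gives $\prob(X\in C([0,1];\RealNum^{d_1}))=1$; in particular $X$ has continuous paths a.s. Define $\Psi(x,y,z)=(x,y,\varphi(x),z)$, meaningful on $C([0,1];\RealNum^{d_1})\times D([0,1];\RealNum^{d_2})\times D([0,1];\RealNum^{d_3})$ — a set carrying the full mass of $(X_n,Y_n,Z_n)$ and of the limit $(X,Y,0)$ — and extend it measurably to the ambient space by setting $\varphi(x)=0$ for $x\notin C([0,1];\RealNum^{d_1})$. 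I claim $\Psi$ is continuous at every point whose first coordinate is continuous: if $(x_n,y_n,z_n)\to(x,y,z)$ in $J_1$ with $x,x_n$ continuous and common time changes $\lambda_n$, then the coordinate projection gives $x_n\to x$ in $J_1$, hence $x_n\to x$ uniformly since $x$ is continuous, hence $\varphi(x_n)\to\varphi(x)$ uniformly by continuity of $\varphi$, and therefore $\varphi(x_n)\circ\lambda_n\to\varphi(x)$ uniformly (using uniform continuity of $\varphi(x)$ and $\lambda_n\to\mathrm{id}$ uniformly), so the \emph{same} $\lambda_n$ witnesses $(x_n,y_n,\varphi(x_n),z_n)\to(x,y,\varphi(x),z)$ in $J_1$. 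The discontinuity set of the extended $\Psi$ is thus contained in the complement of $C([0,1];\RealNum^{d_1})\times D([0,1];\RealNum^{d_2+d_3})$, which is null under $(X,Y,0)$, so the continuous mapping theorem applied to the conclusion of Step~1 yields $(X_n,Y_n,\varphi(X_n),Z_n)\to(X,Y,\varphi(X),0)$ in law.

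The main obstacle is the mismatch between the Skorokhod topology on a product of path spaces and that on the path space of the product: what makes the argument go through is precisely that $X_n$, and hence $\varphi(X_n)$, have continuous sample paths, so $J_1$-convergence of these coordinates upgrades to uniform convergence and a single time change can serve all coordinates at once. Some care is also needed to verify that the limit $X$ is a.s.\ continuous and that the discontinuity set of $\Psi$ is a null set, so that the continuous mapping theorem applies.
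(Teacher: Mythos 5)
The paper states \pref{prop_201708091009} without proof, treating all three assertions as standard facts about the Skorokhod topology, so there is no argument of the authors to compare yours against; judged on its own, your proof is essentially complete and its key ideas are the right ones. Assertions (\ref{item_1506071756}) and (\ref{item_1506071772}) are handled correctly, and the crucial observation --- that the $J_1$-topology on $D([0,1];\RealNum^d)$ uses a \emph{single} time change for all coordinates, which is what makes both the coordinate sum and the insertion of the extra coordinate $\varphi(X_n)$ continuous operations --- is exactly the point on which assertion (\ref{item_1506072032}) turns. The reduction of the $Z_n$-coordinate via the bound of the Skorokhod distance between $(X_n,Y_n,Z_n)$ and $(X_n,Y_n,0)$ by $\|Z_n\|_\infty$ together with the converging-together lemma is standard and correct, as is the use of the closedness of $C([0,1];\RealNum^{d_1})$ in $D([0,1];\RealNum^{d_1})$ to conclude that $X$ has continuous paths almost surely.

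The one place you should tighten the write-up is the final invocation of the continuous mapping theorem. Extending $\varphi$ by $0$ off $C([0,1];\RealNum^{d_1})$ does \emph{not} confine the discontinuities of $\Psi$ to the complement of $C([0,1];\RealNum^{d_1})\times D([0,1];\RealNum^{d_2+d_3})$: at a point $(x,y,z)$ with $x$ continuous and $\varphi(x)\neq 0$ one can approach $x$ in $J_1$ by paths with small jumps, along which the extended map returns $0$, so $\Psi$ is discontinuous there as a map on the whole space and the mapping theorem stated in terms of the discontinuity set does not literally apply. What you have actually verified --- sequential continuity of $\Psi$ along sequences whose first coordinates all lie in $C([0,1];\RealNum^{d_1})$ --- is precisely the hypothesis of the extended continuous mapping theorem for maps defined on a subset carrying full mass under all the laws involved; alternatively, one can note that weak convergence restricts to any measurable subset of full measure for the $P_n$ and for $P$ (the portmanteau condition for relatively open sets is inherited), and then apply the ordinary continuous mapping theorem on that subset with its relative topology. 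Either patch is one line, so this is a defect of citation rather than of substance.
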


\begin{proof}[Proof of \lref{lem_201708091528}]
	First, we consider $\Phi_1$ and $\Phi_2$.
	Recalling $f_4=\sigma f_3'/2$, we have
	$
		f_3(X_{\tau^m_{k-1}})
		+
		f_4(X_{\tau^m_{k-1}})\Delta B_k
		=
			\{f_3(X_{\tau^m_{k-1}})+f_3(X_{\tau^m_k})\}/2
			+O(\Delta^{2H^-})
			+O(\Delta)
	$.
	Hence
	\begin{multline*}
		(2^m)^{3H-\frac{1}{2}}
		\left\{
			f_3(X_{\tau^m_{k-1}})(\Delta B_k)^3
			+
			f_4(X_{\tau^m_{k-1}})(\Delta B_k)^4
		\right\}\\
		=
			(2^m)^{-1/2}
			\frac{f_3(X_{\tau^m_{k-1}})+f_3(X_{\tau^m_k})}{2}
			\hermitePoly{3}(2^{mH}\Delta B_k)
			+
			(2^m)^{H-1/2}
			\frac{f_3(X_{\tau^m_{k-1}})+f_3(X_{\tau^m_k})}{2}3\Delta B_k
			+R_{m,k}(B)
	\end{multline*}
	where $R_{m,k}(B)=O(\Delta^{5H^{-}-3H+\frac{1}{2}})+O(\Delta^{3H^{-}-3H+\frac{3}{2}})$.
	Note that
	$
		\lim_{m\to\infty}
			\sum_{k=1}^{2^m}|R_{m,k}|
			=
		0
	$
	for any $\omega\in \bigcup_m\CN$.
	By \pref{prop_20150424100658}, we have
	\begin{align*}
		\left\|
			\sum_{k=1}^{\intPart{2^m\cdot}}
				\frac{f_3(X_{\tau^m_{k-1}})+f_3(X_{\tau^m_k})}{2}
				\Delta B_k
			-
				\int_0^{\cdot}f_3(X_s)\,d^\circ B_s
		\right\|_{\infty}
		\to
			0\qquad \omega\in \bigcup_{m}\CN.
	\end{align*}
	By \rref{rem_201708111136},	the mapping $B\mapsto\int_0^{\cdot}f_3(X_s)\,d^{\circ}B_s$ is
	continuous in the uniform norm.
	By \tref{thm_20141117071529}, \tref{thm_20141123062022}, \pref{prop_20141117062354} and \pref{prop_201708091009}~(\ref{item_1506072032}),
	\begin{multline*}
		\left(
			B,
			(2^m)^{3H-\frac{1}{2}}(\Phi_1,\Phi_2)
		\right)\\
		\begin{aligned}
			\to
				\begin{cases}
					\left(
						B,
						\displaystyle{\sqrt{6}\int_0^{\cdot}f_3(X_s)\,dW_s}
						+\displaystyle{3\int_0^{\cdot}f_3(X_s)\,d^\circ B_s},
						\displaystyle{\frac{1}{\sqrt{12}}\int_0^{\cdot}g_1(X_s)\,d\tilde{W}_s}
					\right), &
					\Hurst=1/2,\\
					\left(
						B,
						\displaystyle{\sigma_{3,H}\int_0^{\cdot}f_3(X_s)\,dW_s},
						0
					\right),&
					1/3<\Hurst<1/2
				\end{cases}
		\end{aligned}
	\end{multline*}
	weakly in the Skorokhod topology.
	Note that $\sigma_{3,\frac{1}{2}}=\sqrt{6}$.
	(See \rref{rem_201708101002}.)

	Next, we consider $\Phi_3$.
	Suppose $1/3<\Hurst<1/2$.
	By \lref{lem_201708052140}, for any $\omega\in\CN$,
	\begin{align*}
		(2^m)^{3H-\frac{1}{2}}
		\sum_{k=1}^{2^m}
			\left(
				|\Delta (\Delta B_k)^2|
				+|B^{011}_{\tau^m_{k-1}\tau^m_k}|
				+|B^{101}_{\tau^m_{k-1}\tau^m_k}|
				+|B^{110}_{\tau^m_{k-1}\tau^m_k}|
			\right)
		=
			O(\Delta^{2H^{-}-3H+\frac{1}{2}}).
	\end{align*}
	Hence $\|\Phi_3\|_{\infty}$ converges to $0$ in probability.
	We consider the case $\Hurst=\frac{1}{2}$.
	Then we have
	\begin{align*}
		B^{011}_{s,t}
		&=
			\int_s^t\left(\int_s^u(r-s)dB_r\right)\,dB_u+\frac{(t-s)^2}{4},\\
		B^{101}_{s,t}
		&=
			\int_s^t\left(\int_s^u(B_r-B_s)dr\right)\,dB_u,\\
		B^{110}_{s,t}
		&=
			\int_s^t\left(\int_s^u(B_r-B_s)dB_r\right)\,du+\frac{(t-s)^2}{4},
	\end{align*}
	where $dB_r$ is the It\^o integral.
	By the same reason as for $\Phi_3$,
	we see that for almost all $\omega$ uniformly,
	\begin{align*}
		\lim_{m\to\infty}2^m
			\sum_{k=1}^{\intPart{2^m\cdot}}
				\varphi_{\boldsymbol{i}}(X_{\tau^m_{k-1}})
				B_{\tau^m_{k-1}\tau^m_k}^{\boldsymbol{i}}
		=
		\begin{cases}
			\displaystyle{\frac{1}{4}\int_0^{\cdot}\varphi_{\boldsymbol{i}}(X_s)\,ds}, & \boldsymbol{i}=011, 110,\\
			0, & \boldsymbol{i}=101.
		\end{cases}
	\end{align*}
	By a similar calculation to the above, we have
	\begin{align*}
		\lim_{m\to\infty}
			2^m
			\sum_{k=1}^{\intPart{2^m\cdot}}
				\varphi(X_{\tau^m_{k-1}})
				\Delta(\Delta B_k)^2
		=
			\int_0^{\cdot}
				\varphi(X_s)\,ds
		\quad
		\text{a.s. $\omega$ uniformly}.
	\end{align*}
	Hence, we see that for almost all $\omega$ uniformly,
	\begin{align*}
		\lim_{m\to\infty}
			(2^m)^{3H-\frac{1}{2}}\Phi_3
		=
			\int_0^{\cdot}\varphi(X_s)\,ds
			+
			\frac{1}{4}
			\int_0^{\cdot}
				\left\{\varphi_{011}(X_s)+\varphi_{110}(X_s)\right\}\,ds.
	\end{align*}

	Finally, we consider the term $\Phi_4$.
	Suppose $1/3<\Hurst<1/2$.
	Then for any $\omega\in \CN$
	\begin{align}\label{eq_201707261039}
		(2^m)^{3H-\frac{1}{2}+\delta}
		\sum_{k=1}^{2^{m}}
		   \left|
				\Delta B_k
				\left(
					\frac{\Delta}{2}\Delta B_k
					-
					B^{10}_{\tau^m_{k-1}\tau^m_k}
				\right)
			\right|
		=O(\Delta^{2H^{-}-3H+\frac{1}{2}-\delta})
		=O\left(\Delta^{\frac{1}{2}-H-2\epsilon-\delta}\right).
	\end{align}
	Hence, if $\delta<\frac{1}{2}-H-2\epsilon$,
	$
		\lim_{m\to\infty}
			\|(2^m)^{3H-\frac{1}{2}+\delta}\Phi_4\|_{\infty}
		=
			0
	$
	in probability.
	We consider the case where $\Hurst=\frac{1}{2}$.
	In this case, $B_t$ is a standard Brownian motion and we have
	\begin{align*}
		\expect
		\left[
			\Delta B_k
			\left(
				\frac{\Delta}{2}\Delta B_k
				-
				B^{10}_{\tau^m_{k-1}\tau^m_k}
			\right)
		\right]
		&=
		0,
		&
		\expect
			\left[
				\left\{
					\Delta B_k
					\left(
						\frac{\Delta}{2}\Delta B_k-B^{10}_{\tau^m_{k-1}\tau^m_k}
					\right)
				\right\}^2
			\right]
		=
		\frac{\Delta^4}{3}.
	\end{align*}
	Since $X_t(\xi,B)$ is $\sigma(\{B_u~|~0\le u\le t\})$-adapted, by Doob's inequality, we have
	\begin{align*}
	\Delta^{-2} E\left[\sup_{0\le t\le 1}
	|\Phi_4(t)|^2\right]\le C\Delta.
	\end{align*}
	This implies that for any $\delta<\frac{1}{2}$,
	\begin{align}\label{eq_201707241515}
		\lim_{m\to\infty}\Delta^{-1-\delta}
		\sup_{0\le t\le 1}|\Phi_4(t)|=0 \qquad\text{a.s. $\omega$}.
	\end{align}

	From the calculation above, \rref{rem_201708111136} and \pref{prop_201708091009}~(\ref{item_1506072032}), we see the conclusion.
\end{proof}

The next lemma is a corollary of \lref{lem_201708091528} and \pref{prop_201708091009},
which is used in the proof of \lref{lem_201708091518}.
\begin{lemma}\label{lem_1506322555}
	Set
	\begin{align*}
		\Psi_{m,\delta}
		&=
			(2^m)^{3H-\frac{1}{2}-\delta}
			\max_{0\le t\le 1}
				\left|\sum_{i=1}^4\Phi_i(t)\right|.
	\end{align*}
	Then, for any $\delta>0$, $\lim_{m\to\infty}\Psi_{m,\delta}=0$ in probability.
\end{lemma}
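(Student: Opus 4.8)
The plan is to deduce \lref{lem_1506322555} from \lref{lem_201708091528} and \pref{prop_201708091009} by a soft ``bounded in probability times a deterministic null sequence'' argument, with no new computation. First I would note that throughout the range $1/3<\Hurst\le1/2$ one has $(2^m)^{3\Hurst-1/2}=2^m$ precisely when $\Hurst=1/2$, so \lref{lem_201708091528} can be read uniformly as the statement that $\bigl(B,(2^m)^{3\Hurst-1/2}(\Phi_1,\Phi_2,\Phi_3,\Phi_4)\bigr)$ converges weakly in $D([0,1];\RealNum^{5})$ for the Skorokhod topology, the limit being $(B,0,0,0,0)$ when $1/3<\Hurst<1/2$ and the explicit (continuous) process displayed in \lref{lem_201708091528} when $\Hurst=1/2$. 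In either case the limit process is finite almost surely; write it $(B,\Phi_1^\infty,\dots,\Phi_4^\infty)$.

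Next I would apply \pref{prop_201708091009}~(\ref{item_1506071756}): the summation map $D([0,1];\RealNum^4)\ni(x_i)\mapsto\sum_{i=1}^4 x_i\in D([0,1];\RealNum)$ is continuous, so the continuous mapping theorem gives that $(2^m)^{3\Hurst-1/2}\sum_{i=1}^4\Phi_i$ converges weakly in $D([0,1];\RealNum)$ to $\sum_{i=1}^4\Phi_i^\infty$. Composing with the supremum functional $x\mapsto\sup_{0\le t\le1}|x_t|$, which is continuous from $D([0,1];\RealNum)$ to $\RealNum$ by \pref{prop_201708091009}~(\ref{item_1506071772}), the real random variables $Y_m:=(2^m)^{3\Hurst-1/2}\max_{0\le t\le1}|\sum_{i=1}^4\Phi_i(t)|$ converge weakly to $M:=\sup_{0\le t\le1}|\sum_{i=1}^4\Phi_i^\infty(t)|$, a nonnegative random variable that is finite almost surely.

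Finally, a weakly convergent sequence of real random variables is tight and hence bounded in probability: given $\varepsilon>0$, pick a continuity point $K$ of the law of $M$ with $\prob(M>K)<\varepsilon/2$ (possible since $M<\infty$ a.s.), so that $\prob(Y_m>K)\to\prob(M>K)$ yields $\prob(Y_m>K)<\varepsilon$ for all large $m$, and enlarge $K$ to absorb the finitely many remaining indices. Since $\Psi_{m,\delta}=(2^m)^{-\delta}Y_m$ with $(2^m)^{-\delta}\to0$, the elementary fact that a deterministic null sequence times a sequence bounded in probability converges to $0$ in probability gives $\Psi_{m,\delta}\to0$ in probability for every $\delta>0$. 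The argument is essentially bookkeeping; the one ingredient that must be invoked rather than reproved is the continuity of $x\mapsto\|x\|_\infty$ on $D([0,1];\RealNum)$ for the Skorokhod topology (time changes leave the supremum unchanged), recorded in \pref{prop_201708091009}~(\ref{item_1506071772}), so I do not anticipate a genuine obstacle.
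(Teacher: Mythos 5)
Your proposal is correct and follows exactly the route of the paper's own proof: \lref{lem_201708091528} plus the continuity of the summation and supremum maps from \pref{prop_201708091009} gives convergence in law of $(2^m)^{3H-\frac{1}{2}}\max_{0\le t\le 1}|\sum_{i=1}^4\Phi_i(t)|$, and boundedness in probability times the null factor $(2^m)^{-\delta}$ finishes it. The only difference is that you spell out the tightness step explicitly, which the paper leaves implicit.
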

\begin{proof}
	From \pref{prop_201708091009}~(\ref{item_1506071756}) and (\ref{item_1506071772}),
	we see that
	$
		\sup_{0\le t\le 1}
			\left|\sum_i(2^m)^{3H-\frac{1}{2}}\Phi_i(t)\right|
	$
	converges in law.
	Thus we obtain that
	$\lim_{m\to\infty}\Psi_{m,\delta}=0$
	in probability.
\end{proof}

Next, we show \lref{lem_201708091518}.
By using \lref[lem_201707121721]{lem_201708091528}, we obtain a representation of
the main term of $\kappa_{k}$ in terms of $\Delta$, $\Delta B_k$, $B^{\boldsymbol{i}}_{\tau^m_{k-1}\tau^m_{k}}$ and $X_{\tau^m_{k-1}}$.
We divide this calculation into two steps.
In the first step, we have the following.
This estimate is a pathwise estimate.
We use just H\"older continuity of the path of $B$.

\begin{lemma}\label{lem_201707121723}
    Let $\omega\in \CN$.
    For $k$ $(1\le k\le 2^m)$ and $x\in \RealNum$, let
    \begin{align*}
        F_k(x,B)
        &=
            f_3(x)(\Delta B_k)^3
            +f_4(x)(\Delta B_k)^4
            +g_1(x)
            \left(
                \frac{\Delta}{2}\Delta B_k-B^{10}_{\tau^m_{k-1}\tau^m_k}
            \right)\\
        &\phantom{=}\quad
            +\varphi(x)\Delta (\Delta B_k)^2
            +\varphi_{011}(x)B^{011}_{\tau^m_{k-1}\tau^m_k}
            +\varphi_{101}(x)B^{101}_{\tau^m_{k-1}\tau^m_k}
            +\varphi_{110}(x)B^{110}_{\tau^m_{k-1}\tau^m_k},\\
        G_k(x,B)
        &=
            -
            [g_1\sigma'](x)
            \Delta B_k
            \left(\frac{\Delta}{2}\Delta B_k-B^{10}_{\tau^m_{k-1}\tau^m_k}\right),\\
        r_k
        &=
            \kappa_{k}-F_k(\xi_{k-1},B)-G_k(\xi_{k-1},B).
    \end{align*}
	Then it holds that $r_k=O(\Delta^{3H^{-}+1})+O(\Delta^{5H^{-}})$.
\end{lemma}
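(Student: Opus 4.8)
The plan is to solve for $\kappa_k$ from its defining equation \eqref{eq_1545356684}, which reads
\begin{align*}
	\hat\kappa_k
	=
		X_\Delta(\xi_{k-1},\theta_{\tau^m_{k-1}}B+2^m\kappa_k\ell)
		-
		X_\Delta(\xi_{k-1},\theta_{\tau^m_{k-1}}B)
\end{align*}
with $\ell_t=t$, by linearizing the right-hand side in $\kappa_k$ and inverting; since all error terms below are uniform in $\omega\in\CN$ and $\xi\in\RealNum$, the argument is purely pathwise. First I would apply the one-variable Taylor formula to $t\mapsto X_\Delta(\xi_{k-1},\theta_{\tau^m_{k-1}}B+t\cdot 2^m\kappa_k\ell)$. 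As $X_\Delta(\xi_{k-1},\cdot)$ depends only on the restriction of its path argument to $[0,\Delta]$, and $2^m\kappa_k\ell$ has uniform norm $|\kappa_k|$ there, \pref{prop_20140929105014}~(\ref{item_20141228072212}) bounds the second-order remainder by $C\kappa_k^2$; combined with $\kappa_k=O(\Delta^{3H^-})$ from \lref{lem_201707121721}~(\ref{item_1505964312}) this gives $\hat\kappa_k=2^m\kappa_k\,\nabla_\ell X_\Delta(\xi_{k-1},\theta_{\tau^m_{k-1}}B)+O(\Delta^{6H^-})$. Because $\nabla_\ell X_\Delta(\xi_{k-1},\theta_{\tau^m_{k-1}}B)\geq C_1\Delta$ by \lref{lem_201707251240}, this can be inverted to
\begin{align*}
	\kappa_k
	=
		\frac{\Delta\hat\kappa_k}{\nabla_\ell X_\Delta(\xi_{k-1},\theta_{\tau^m_{k-1}}B)}
		+O(\Delta^{6H^-}).
\end{align*}

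Next I would expand the denominator with \pref{prop_20140929105014}~(\ref{item_20141106063948}),
\begin{align*}
	\nabla_\ell X_\Delta(\xi_{k-1},\theta_{\tau^m_{k-1}}B)
	=
		\sigma(X_\Delta(\xi_{k-1},\theta_{\tau^m_{k-1}}B))
		\int_0^\Delta
			\exp\left(\int_s^\Delta\left[\frac{\Wronskian}{\sigma}\right](X_u)\,du\right)ds.
\end{align*}
Boundedness of the coefficients and \hyporef{hypo_ellipticity} give $\int_0^\Delta\exp(\cdots)\,ds=\Delta+O(\Delta^2)$, while \lref{lem_1506077157}~(\ref{item_1506567805}) gives $X_\Delta(\xi_{k-1},\theta_{\tau^m_{k-1}}B)-\xi_{k-1}=\sigma(\xi_{k-1})\Delta B_k+O(\Delta^{2H^-})$; hence $\nabla_\ell X_\Delta(\xi_{k-1},\theta_{\tau^m_{k-1}}B)=\sigma(\xi_{k-1})\Delta(1+A_k)$ with $A_k=\sigma'(\xi_{k-1})\Delta B_k+O(\Delta^{2H^-})$. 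Thus $\Delta/\nabla_\ell X_\Delta=\sigma(\xi_{k-1})^{-1}(1-A_k+A_k^2-\cdots)$, and since $\hat\kappa_k=O(\Delta^{3H^-})$ and $A_k=O(\Delta^{H^-})$ the quadratic and higher terms of this series contribute only $O(\Delta^{5H^-})$, so
\begin{align*}
	\kappa_k
	=
		\frac{\hat\kappa_k}{\sigma(\xi_{k-1})}
		-
		\frac{\hat\kappa_k}{\sigma(\xi_{k-1})}A_k
		+O(\Delta^{5H^-}).
\end{align*}

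Finally I would substitute the expansion of $\hat\kappa_k$ from \lref{lem_201707121721}~(\ref{item_1505965509}) together with the identities $f_3=\hat f_3/\sigma$, $g_1=\hat g_1/\sigma$, $\varphi=\hat\varphi/\sigma$, $\varphi_{\boldsymbol{i}}=\hat\varphi_{\boldsymbol{i}}/\sigma$ $(\boldsymbol{i}=011,101,110)$ and $\hat f_4/\sigma=f_4+\sigma'f_3$ (the last from $f_4=(\hat f_4-\sigma'\hat f_3)/\sigma$). Then $\hat\kappa_k/\sigma(\xi_{k-1})$ reproduces $F_k(\xi_{k-1},B)$ except that the coefficient of $(\Delta B_k)^4$ is $f_4+\sigma'f_3$ in place of $f_4$. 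In $-\hat\kappa_kA_k/\sigma(\xi_{k-1})$ only the leading part $\sigma'(\xi_{k-1})\Delta B_k$ of $A_k$ matters (its remainder contributes $O(\Delta^{5H^-})$); multiplying it against the monomials of $\hat\kappa_k/\sigma(\xi_{k-1})$ and using the bounds of \lref{lem_201708052140} for $B^{10}_{\tau^m_{k-1}\tau^m_k}$, $B^{011}_{\tau^m_{k-1}\tau^m_k}$, $B^{101}_{\tau^m_{k-1}\tau^m_k}$, $B^{110}_{\tau^m_{k-1}\tau^m_k}$ together with $H\leq 1/2$, every contribution falls into $O(\Delta^{5H^-})+O(\Delta^{3H^-+1})$ apart from $-[\sigma'f_3](\xi_{k-1})(\Delta B_k)^4$ (from the $(\Delta B_k)^3$-term) and $-[g_1\sigma'](\xi_{k-1})\Delta B_k\left(\frac{\Delta}{2}\Delta B_k-B^{10}_{\tau^m_{k-1}\tau^m_k}\right)=G_k(\xi_{k-1},B)$ (from the $g_1$-term). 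The two $(\Delta B_k)^4$-contributions combine to $f_4(\xi_{k-1})(\Delta B_k)^4$, so, collecting everything, $\kappa_k=F_k(\xi_{k-1},B)+G_k(\xi_{k-1},B)+O(\Delta^{5H^-})+O(\Delta^{3H^-+1})$, which is the claim.

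The main obstacle is the bookkeeping in this last step: one must check that it suffices to retain only the leading term of $A_k$, that the genuinely non-negligible quantity $G_k$ (of size $\Delta^{1+2H^-}$) arises precisely from the interaction of the $\nabla_\ell X_\Delta$-correction with the $g_1$-part of $\hat\kappa_k$, that the two spurious $(\Delta B_k)^4$-contributions cancel, and that every remaining monomial — in particular those carrying an extra factor $\Delta$ or one of the iterated integrals $B^{011}_{\tau^m_{k-1}\tau^m_k}$, $B^{101}_{\tau^m_{k-1}\tau^m_k}$, $B^{110}_{\tau^m_{k-1}\tau^m_k}$ — is of order at least $\min\{5H^-,3H^-+1\}$; this is exactly where the iterated-integral estimates of \lref{lem_201708052140} and the inequality $H\leq 1/2$ (used e.g.\ in the form $\Delta^{2+H^-}=O(\Delta^{3H^-+1})$) enter.
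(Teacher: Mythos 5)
Your proposal is correct and follows essentially the same route as the paper: Taylor-expand $\hat\kappa_k$ in $\kappa_k$ to second order, bound the quadratic remainder by $O(\Delta^{6H^-})$ via \pref{prop_20140929105014}~(\ref{item_20141228072212}) and $\kappa_k=O(\Delta^{3H^{-}})$, expand $\nabla_\ell X_\Delta$ to obtain $\hat\kappa_k=\sigma(\xi_{k-1})\{1+\sigma'(\xi_{k-1})\Delta B_k\}\kappa_k+O(\Delta^{3H^{-}+1})+O(\Delta^{5H^{-}})$, invert, and substitute the expansion of $\hat\kappa_k$ from \lref{lem_201707121721}~(\ref{item_1505965509}), with the $(\Delta B_k)^4$-cancellation producing $f_4$ and the $g_1$-cross-term producing $G_k$. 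The only cosmetic difference is that you invert $1+A_k$ by a geometric series while the paper solves the linear relation directly using $|\sigma'(\xi_{k-1})\Delta B_k|\le 1/2$ on $\CN$; the bookkeeping you describe in the final step is exactly what the paper does.
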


\begin{proof}
	By the Taylor formula, there exists $0<\rho<1$ such that
	\begin{align*}
		\hat{\kappa}_k
		&=\xi_k
			-X_{2^{-m}}(\xi_{k-1}, \theta_{\tau^m_{k-1}}B)\\
		&=
			X_{2^{-m}}(\xi_{k-1},\theta_{\tau^m_{k-1}}B+2^m\kappa_{k} \ell)
			-X_{2^{-m}}(\xi_{k-1}, \theta_{\tau^m_{k-1}}B)\\
		&=
			\nabla_{2^m\kappa_{k}\ell} X_{2^{-m}}(\xi_{k-1},\theta_{\tau^m_{k-1}}B)
			+
			\frac{1}{2}\nabla_{2^m\kappa_{k}\ell}^2X_{2^{-m}}(\xi_{k-1},\theta_{\tau^m_{k-1}}B
			+\rho 2^m\kappa_{k}\ell).
	\end{align*}
	Applying the estimate $\kappa_k=O(\Delta^{3H^{-}})$ and
	\pref{prop_20140929105014}~(\ref{item_20141228072212}),
	we see that the second term of the right-hand side is $O(\Delta^{6H^{-}})$.
	As for the first term, \pref{prop_20140929105014}~(\ref{item_20141106063948}), \lref{lem_201707121721}~(\ref{item_1505964312})
	and \pref{prop_20141010094511} yield
	\begin{align*}
		\nabla_{2^m\kappa_{k}\ell} X_{2^{-m}}(\xi_{k-1},\theta_{\tau^m_{k-1}}B)
			&=
				\sigma\left(X_\Delta(\xi_{k-1},\theta_{\tau^m_{k-1}}B)\right)
				\int_0^{\Delta}
					\exp
						\left(
							\int_s^{\Delta}
								\left[\frac{\Wronskian}{\sigma}\right]
								\left(X_{u}(\xi_{k-1},\theta_{\tau^m_{k-1}}B)\right)\,
							du
						\right)
					\frac{\kappa_k}{\Delta}\,
					ds\\
			&=
				\sigma(\xi_{k-1})\kappa_{k}
				+
				\left\{
					\sigma\left(X_\Delta(\xi_{k-1},\theta_{\tau^m_{k-1}}B)\right)
					-
					\sigma(\xi_{k-1})
				\right\}
				\kappa_{k}
				+O(\Delta^{3H^{-}+1})\\
			&=
				\left\{
					\sigma(\xi_{k-1})
					+\sigma(\xi_{k-1})\sigma'(\xi_{k-1})\Delta B_k
				\right\}
				\kappa_{k}
				+O(\Delta^{5H^{-}})
				+O(\Delta^{3H^{-}+1}).
	\end{align*}
	Hence we see that $\hat{\kappa}_{k}$ and $\kappa_{k}$ satisfy
	\begin{align*}
		\hat{\kappa}_{k}
		=
			\sigma(\xi_{k-1})
			\left\{
				1+
				\sigma'(\xi_{k-1})\Delta B_k
			\right\}
			\kappa_{k}
			+O(\Delta^{3H^{-}+1})
			+O(\Delta^{5H^{-}}).
	\end{align*}
	Since $|\sigma'(\xi_{k-1})\Delta B_k|\le 1/2$ on $\CN$, we can solve this equation and using
    \lref{lem_201707121721}~(\ref{item_1505965509}),
    \begin{align*}
        \kappa_k
        &=
            \sigma(\xi_{k-1})^{-1}\{1-\sigma'(\xi_{k-1})\Delta B_k\}\hat{\kappa}_k
            +O(\Delta^{3H^{-}+1})
            +O(\Delta^{5H^{-}})\\
        &=
            F_k(\xi_{k-1},B)+G_k(\xi_{k-1},B)\\
        &\phantom{=}\quad
            -
            [\sigma^{-1}\sigma'](\xi_{k-1})
            \Delta B_k
            \left\{
                \hat{\kappa}_k
                -\hat{f}_3(\xi_{k-1})(\Delta B_k)^3
                -\hat{g}_1(\xi_{k-1})\left(\frac{\Delta}{2}\Delta B_k
                -B^{10}_{\tau^m_{k-1}\tau^m_k}\right)
            \right\}\\
        &\phantom{=}\quad
            +O(\Delta^{3H^{-}+1})
            +O(\Delta^{5H^{-}}).
    \end{align*}
    Since
    $
        \hat{\kappa}_k
        -\hat{f}_3(\xi_{k-1})(\Delta B_k)^3
        -\hat{g}_1(\xi_{k-1})\left(\frac{\Delta}{2}\Delta B_k
        -B^{10}_{\tau^m_{k-1}\tau^m_k}\right)
        =
            O(\Delta^{2H^{-}+1})
            +O(\Delta^{4H^{-}})
    $,
    we complete the proof.
\end{proof}

Now, we are in a position to prove \lref{lem_201708091518}.

\begin{proof}[Proof of \lref{lem_201708091518}]
	Let
	$
		\epsilon_m
		=
			\max_{1\le k\le 2^m}
				|
		X_{\tau^m_k}(\xi,B)-\bar{X}^{(m)}_{\tau^m_k}(\xi,B)
				|
	$.
    We proved that $\lim_{m\to\infty}(2^m)^{3H^{-}-1}\epsilon_m=0$ for $\omega\in \bigcup_m\CN$.
	Our first task is to improve this estimate as $\lim_{m\to\infty}(2^m)^{3H-1/2-\delta}\epsilon_m=0$
	in probability for any $\delta>0$ by using $\lim_{m\to\infty}\Psi_{m,\delta}=0$ in probability
    (recall \lref{lem_1506322555}).
    To this end, let
    \begin{align*}
        \kappa_{k,1}&=F_k(X_{\tau^m_{k-1}},B)+G_k(X_{\tau^m_{k-1}},B),\\
        \kappa_{k,2}&=F_k(\xi_{k-1},B)+G_k(\xi_{k-1},B)-
        \left(F_k(X_{\tau^m_{k-1}},B)+G_k(X_{\tau^m_{k-1}},B)\right),
    \end{align*}
    where $F_k$ and $G_k$ are the same functions as in \lref{lem_201707121723}.
    Then $\kappa_k=\kappa_{k,1}+\kappa_{k,2}+r_k$,
    $\tilde{\kappa}_k=F_k(X_{\tau^m_{k-1}},B)$ and
    $R_k=G_k(X_{\tau^m_{k-1}}, B)+\kappa_{k,2}+r_k$ hold.
 	Here, $r_k$ is defined in \lref{lem_201707121723}.
	Let $h^{(m)}_i$ $(i=1,2)$ be piecewise linear paths which are defined by $\{\kappa_{k,i}\}$.
	We define $h^{(m)}_r$ similarly by $\{r_k\}$.
	Note that
	$\|h_1^{(m)}\|_{\infty}=O(\Delta^{3H-1/2-\delta})\Psi_{m,\delta}$ holds.
    By the Lipschitz continuity of $F_k$ and $G_k$ with respect to $x$-variable,
 	we have
	\begin{align*}
		\|h^{(m)}_2\|_{\infty}\le \sum_{k=1}^{2^m}|\kappa_{k,2}|&\le
		K\epsilon_{m},
		\qquad \omega\in \CN
	\end{align*}
	where
	$
		K=O(\Delta^{3H^{-}-1}).
	$
	By \lref{lem_201707121723}, we have
	\begin{align}\label{eq_2017071724}
		\|h^{(m)}_r\|_{\infty}
		\le
			\sum_{k=1}^{2^m}
				|r_k|
		=
			O(\Delta^{3H^{-}})
			+O(\Delta^{5H^{-}-1}),
		\qquad
			\omega\in \CN.
	\end{align}
	By the Lipschitz continuity of $B\mapsto X(\xi,B)$ in the uniform
	norm, we have
	\begin{align}
		\epsilon_m
		&=
			\max_{1\le k\le 2^m}
				|X_{\tau^m_k}(\xi,B)-X_{\tau^m_k}(\xi,B+h^{(m)}_1+h^{(m)}_2+h^{(m)}_r)|\nonumber\\
		&\le
			C\sum_{i=1}^3\|h^{(m)}_i\|_{\infty}
		=
			\tilde{K}\epsilon_m+\hat{K},\qquad \omega\in \CN,\label{eq_201707121756}
	\end{align}
	where $\tilde{K}=CK=O(\Delta^{3H^{-}-1})$ and
	$\hat{K}=C(\|h^{(m)}_1\|_{\infty}+\|h^{(m)}_r\|_{\infty})$.
	By applying the inequality \eqref{eq_201707121756}, $n$-times and
	using the rough estimate $\epsilon_m=O(\Delta^{3H^{-}-1})$, we
	get
	\begin{align*}
		\epsilon_m
		&\le
			\tilde{K}^nO(\Delta^{3H^{-}-1})+\hat{K}\left(1+\sum_{j=1}^{n-1}
			\tilde{K}^j\right).
	\end{align*}
	From this, we conclude that for $\omega\in \CN$,
	$\epsilon_m=\Psi_{m,\delta}(\omega)O(\Delta^{3H-1/2-\delta})
	+O(\Delta^{3H^{-}})+O(\Delta^{5H^{-}-1})$ holds for any
	$\delta>0$.
    We now prove the estimate of
    the sum of $R_k$.
    Thanks for the the improved estimate of $\epsilon_m$,
    we obtain for any $\delta>0$
	\begin{align*}
		\sum_{k=0}^{2^m-1}|\kappa_{k,2}|=O(\Delta^{3H-1/2+3H^{-}-1-\delta})
			\Psi_{m,\delta}(\omega)+O(\Delta^{6H^{-}-1})+O(\Delta^{8H^{-}-2}),
		\qquad \text{$\omega\in \CN$.}
	\end{align*}
 We already proved the necessary estimates in
 (\ref{eq_2017071724}), (\ref{eq_201707261039}) and
 (\ref{eq_201707241515})
 for the sum of $r_k$ and $G_k(X_{\tau^m_{k-1}},B)$.
Thus, we complete the proof.
\end{proof}

\section{The Euler scheme and the Milstein scheme}\label{sec_201707212051}
In this section, we show \tref[thm_20141204013919]{thm_20141204015019},
which are concerning with the Euler-Maruyama scheme and the Milstein scheme, respectively.
Since the proofs are similar to one of \tref{thm_20141204015044},
we omit the detail and give key lemmas.
We denote by $\bar{X}^{(m)}$ the Euler scheme or the Milstein scheme
and set $\xi_k=\bar{X}^{(m)}_{\dyadicPart[m]{k}}$.

Note that \lref{lem_201707240936} holds for the Euler scheme and the Milstein scheme.
We see \lref{lem_201707240936} holds for the both of the schemes.
We denote by $h^{(m)}$ the piecewise linear function which appears in \lref{lem_201707240936}
and we write $\kappa_k=h^{(m)}(\dyadicPart[m]{k})-h^{(m)}(\dyadicPart[m]{k-1})$ for every $1\le k\le 2^m$.
Because analysis of 1-step error
$
	\hat{\kappa}_k
	=
		\{
			\xi_{k}-\xi_{k-1}
		\}
		-
			\{
				X_{2^{-m}}(\xi_{k-1},\theta_{\tau^m_{k-1}}B)
				-
				\xi_{k-1}
			\}
$
of the scheme and the main term $\tilde{\kappa}_k$ are essential in the proof,
we state assertions on them, that is, we give counterparts of \lrefs[lem_201707121721]{lem_201708091518}{lem_201708091528}.

\subsection{The Euler scheme}
In this subsection, we assume $1/2<\Hurst<1$ and show \tref{thm_20141204013919}.
To state assertions, we set $\hat{f}_2=-\sigma\sigma'/2$ and $f_2=-\sigma'/2$.
Then we see the following lemmas:
\begin{lemma}\label{lem_1506481850}
	For any $\omega\in\probSp_0$, the following hold:
	\begin{enumerate}
		\item	We have $\hat{\kappa}_k=\hat{f}_2(\xi_{k-1})(\Delta B_k)^2+O(\Delta^{\Hurst^{-}+1})$.
		\item	We have	$\hat{\kappa}_k=O(\Delta^{2H^{-}})$, $\kappa_k=O(\Delta^{2H^{-}})$ and
				\begin{align*}
					\max_{1\le k\le 2^m}
						|
							X_{\tau^m_k}(\xi,B)
							-
							\bar{X}^{(m)}_{\tau^m_k}(\xi,B)
						|
					=
						O(\Delta^{2H^{-}-1}).
				\end{align*}
		\item	We have
				\begin{align*}
					\max_{0\leq t\leq 1}
						|\bar{X}^{(m)}_t(\xi,B)-X_t(\xi,B+h^{(m)})|
					=
						O(\Delta^{2H^{-}}).
				\end{align*}
	\end{enumerate}
\end{lemma}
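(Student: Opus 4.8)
The plan is to mirror the proof of \lref{lem_201707121721}, with the explicit Euler one-step map in place of the implicit Crank--Nicolson map and with the single leading monomial $\hat f_2(x)(\Delta B_k)^2$ in place of the longer list $\hat f_3,\hat f_4,\hat g_1,\hat\varphi,\hat\varphi_{\boldsymbol i}$. At the outset I fix $\epsilon>0$ small enough that $\Hurst^{-}=\Hurst-\epsilon>1/2$; this is possible because $\Hurst>1/2$, and it is the only place the hypothesis $\Hurst>1/2$ is used.

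For (1): by definition of the Euler scheme, $\xi_k-\xi_{k-1}=b(\xi_{k-1})\Delta+\sigma(\xi_{k-1})\Delta B_k$, while \pref{prop_20141010094511}, applied on $[\dyadicPart[m]{k-1},\dyadicPart[m]{k}]$ to the solution started at $\xi_{k-1}$ with driver $\theta_{\dyadicPart[m]{k-1}}B$ (as in \lref{lem_1506077157}~(\ref{item_1506567805})), gives
\[
	X_\Delta(\xi_{k-1},\theta_{\dyadicPart[m]{k-1}}B)-\xi_{k-1}
	=
		b(\xi_{k-1})\Delta+\sigma(\xi_{k-1})\Delta B_k
		+\frac{1}{2}[\sigma\sigma'](\xi_{k-1})(\Delta B_k)^2+\rho_k .
\]
By the exponent bookkeeping of \pref{prop_20141010094511} and \lref{lem_201708052140}, together with the H\"older bound $|\Delta B_k|\le\const[B]\Delta^{\Hurst^{-}}$ on $\probSp_0$, every remaining monomial in $\rho_k$ has order at least $\Delta^p$ with $p\in\{3\Hurst^{-},\,1+\Hurst^{-},\,1+2\Hurst^{-},\,2,\,4\Hurst^{-},\,2+\Hurst^{-},\,1+3\Hurst^{-},\,5\Hurst^{-}\}$; since $\Hurst^{-}>1/2$ the smallest of these is $1+\Hurst^{-}$ (one uses $3\Hurst^{-}\ge 1+\Hurst^{-}$ and $2\ge 1+\Hurst^{-}$), so $\rho_k=O(\Delta^{1+\Hurst^{-}})$. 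Subtracting the two expansions and recalling $\hat f_2=-\sigma\sigma'/2$ yields $\hat\kappa_k=\hat f_2(\xi_{k-1})(\Delta B_k)^2+O(\Delta^{1+\Hurst^{-}})$.

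For (2): from (1) and the H\"older bound, $\hat\kappa_k=O(\Delta^{2\Hurst^{-}})+O(\Delta^{1+\Hurst^{-}})=O(\Delta^{2\Hurst^{-}})$ as $\Hurst^{-}<1$. Since by definition $\hat\kappa_k=X_\Delta(\xi_{k-1},\theta_{\dyadicPart[m]{k-1}}B+2^m\kappa_k\ell)-X_\Delta(\xi_{k-1},\theta_{\dyadicPart[m]{k-1}}B)$, \lref{lem_201707251240} (valid here for the driver $\theta_{\dyadicPart[m]{k-1}}B$ and $t=\Delta$, by \pref{prop_20140928154144} and stationarity of increments) gives $|\hat\kappa_k|\ge C_1\Delta\cdot 2^m|\kappa_k|=C_1|\kappa_k|$, so $\kappa_k=O(\Delta^{2\Hurst^{-}})$; then $\|h^{(m)}\|_\infty\le\sum_{k=1}^{2^m}|\kappa_k|=O(\Delta^{2\Hurst^{-}-1})$, and since $\App_{\dyadicPart[m]{k}}(\xi,B)=X_{\dyadicPart[m]{k}}(\xi,B+h^{(m)})$ by \lref{lem_201707240936}, the Lipschitz continuity of $B\mapsto X(\xi,B)$ in the uniform norm yields $\max_{1\le k\le 2^m}|X_{\dyadicPart[m]{k}}(\xi,B)-\App_{\dyadicPart[m]{k}}(\xi,B)|\le C\|h^{(m)}\|_\infty=O(\Delta^{2\Hurst^{-}-1})$. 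For (3): fix $\dyadicPart[m]{k-1}<t\le\dyadicPart[m]{k}$; since $X_{\dyadicPart[m]{k-1}}(\xi,B+h^{(m)})=\xi_{k-1}$, \pref{prop_20140928154144} gives $X_t(\xi,B+h^{(m)})=X_{t-\dyadicPart[m]{k-1}}(\xi_{k-1},\theta_{\dyadicPart[m]{k-1}}(B+h^{(m)}))$, and comparing $\App_t-\xi_{k-1}=b(\xi_{k-1})(t-\dyadicPart[m]{k-1})+\sigma(\xi_{k-1})(B_t-B_{\dyadicPart[m]{k-1}})$ with the first-order expansion from \pref{prop_20141010094511}, the drift terms cancel, the linear terms differ by $\sigma(\xi_{k-1})(h^{(m)}_t-h^{(m)}_{\dyadicPart[m]{k-1}})=O(\kappa_k)=O(\Delta^{2\Hurst^{-}})$, and all further terms are of order $(\Delta B_k)^2$ or smaller (the increment of $h^{(m)}$ on the subinterval being smaller than that of $B$), hence $O(\Delta^{2\Hurst^{-}})$; taking the maximum over $k$ and $t$ gives the claim.

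The one delicate point is the exponent accounting in (1): one must verify that, \emph{for $\Hurst>1/2$}, the one-step remainder is $O(\Delta^{1+\Hurst^{-}})$ rather than $O(\Delta^{3\Hurst^{-}})$, i.e.\ that $3\Hurst^{-}\ge 1+\Hurst^{-}$, so that the only surviving leading term is the quadratic one with coefficient $\hat f_2$; this is precisely why the Euler scheme has a clean limit only in this range (for $\Hurst\le 1/2$ the cubic term would dominate and change the limit). A second, routine but necessary, point is that the constants hidden in $O(\cdot)$ depend on $\omega$ only through the H\"older constant of $B$ in the sense of the convention $\const[B]$, and are uniform in $m$ and $\xi$; tracking this uniformity is what later lets the Euler analogues of \lref[lem_201708091518]{lem_201708091528} go through by the same bootstrap argument as in the Crank--Nicolson case.
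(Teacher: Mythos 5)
Your proposal is correct and follows exactly the route the paper intends: the paper gives no separate proof of this lemma beyond the remark that it ``is seen by the similar way with \lref{lem_201707121721}'', and your argument is precisely that adaptation --- expand the one-step exact solution via \pref{prop_20141010094620} / \pref{prop_20141010094511}, subtract the explicit Euler step to isolate $\hat f_2(\xi_{k-1})(\Delta B_k)^2$, check that for $\Hurst^->1/2$ all other monomials are $O(\Delta^{1+\Hurst^-})$, then pass from $\hat\kappa_k$ to $\kappa_k$ via \lref{lem_201707251240} and conclude with the Lipschitz continuity of $B\mapsto X(\xi,B)$. The exponent bookkeeping and the uniformity of the $O(\cdot)$ constants in $m$ and $\xi$ are both handled correctly.
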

\begin{lemma}\label{lem_1506481781}
	For $1\le k\le 2^m$, let
	\begin{align*}
		\tilde{\kappa}_{k}
		&=
			f_2(X_{\dyadicPart[m]{k}})(\Delta B_k)^2
	\end{align*}
	and set
	$R_k(\omega)=\kappa_k-\tilde{\kappa}_k$.
    Then $R_k=O(\Delta^{4\Hurst^{-}-1})+O(\Delta^{\Hurst^{-}+1})$.
\end{lemma}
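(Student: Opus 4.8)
The plan is to adapt the argument of \lref{lem_201707121723} to the Euler scheme and the regime $\Hurst>1/2$. First I would relate the one-step error $\hat{\kappa}_k$ to $\kappa_k$. Writing
\[
	\hat{\kappa}_k
	=
		X_{2^{-m}}(\xi_{k-1},\theta_{\tau^m_{k-1}}B+2^m\kappa_k\ell)
		-X_{2^{-m}}(\xi_{k-1},\theta_{\tau^m_{k-1}}B)
\]
and Taylor expanding to second order in the direction $2^m\kappa_k\ell$, the remainder is $O(\kappa_k^2)=O(\Delta^{4\Hurst^{-}})$ by \pref{prop_20140929105014}~(\ref{item_20141228072212}) and the crude bound $\kappa_k=O(\Delta^{2\Hurst^{-}})$ of \lref{lem_1506481850}. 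For the first-order term I would use the explicit formula \pref{prop_20140929105014}~(\ref{item_20141106063948}), replace the exponential there by $1+O(\Delta)$, and use \pref{prop_20141010094511} to write $\sigma(X_\Delta(\xi_{k-1},\theta_{\tau^m_{k-1}}B))=\sigma(\xi_{k-1})\{1+\sigma'(\xi_{k-1})\Delta B_k\}+O(\Delta)$. Since $\Hurst^{-}>1/2$, both $O(\Delta^{4\Hurst^{-}})$ and the $O(\Delta)\cdot\kappa_k$-type errors are of order at most $\Delta^{2\Hurst^{-}+1}$, and one obtains
\[
	\hat{\kappa}_k
	=
		\sigma(\xi_{k-1})\{1+\sigma'(\xi_{k-1})\Delta B_k\}\kappa_k+O(\Delta^{2\Hurst^{-}+1}).
\]

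Next I would invert this relation. For $\omega\in\probSp_0$ and $m$ large enough that $|\sigma'(\xi_{k-1})\Delta B_k|\le1/2$ (which holds because $|\Delta B_k|$ is of order $\Delta^{\Hurst^{-}}$), expanding $\{1+\sigma'(\xi_{k-1})\Delta B_k\}^{-1}=1-\sigma'(\xi_{k-1})\Delta B_k+O(\Delta^{2\Hurst^{-}})$ and using $\inf\sigma>0$ gives $\kappa_k=\sigma(\xi_{k-1})^{-1}\{1-\sigma'(\xi_{k-1})\Delta B_k+O(\Delta^{2\Hurst^{-}})\}\hat{\kappa}_k+O(\Delta^{2\Hurst^{-}+1})$. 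I would then substitute $\hat{\kappa}_k=\hat{f}_2(\xi_{k-1})(\Delta B_k)^2+O(\Delta^{\Hurst^{-}+1})$ from \lref{lem_1506481850} and use the identity $\hat{f}_2=\sigma f_2$: the leading term is $f_2(\xi_{k-1})(\Delta B_k)^2$, while every further product is $O(\Delta^{3\Hurst^{-}})$, $O(\Delta^{2\Hurst^{-}+1})$ or smaller, all of which are absorbed into $O(\Delta^{\Hurst^{-}+1})$ because $3\Hurst^{-}\ge\Hurst^{-}+1$ for $\Hurst>1/2$. Hence $\kappa_k=f_2(\xi_{k-1})(\Delta B_k)^2+O(\Delta^{\Hurst^{-}+1})$; note that, unlike in \lref{lem_201707121723}, the precise form of the correction factor $\{1+\sigma'(\xi_{k-1})\Delta B_k\}$ turns out to be immaterial here.

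Finally I would pass from $\xi_{k-1}=\bar{X}^{(m)}_{\tau^m_{k-1}}$ to $X_{\tau^m_k}$. By the Lipschitz continuity of $f_2$,
\[
	|f_2(\xi_{k-1})-f_2(X_{\tau^m_k})|
	\le
		C\bigl(|\bar{X}^{(m)}_{\tau^m_{k-1}}-X_{\tau^m_{k-1}}|+|X_{\tau^m_{k-1}}-X_{\tau^m_k}|\bigr),
\]
and this is $O(\Delta^{2\Hurst^{-}-1})$: the first difference is estimated by the uniform bound of \lref{lem_1506481850}, the second by the $\Hurst^{-}$-H\"older continuity of $X$ (\pref{prop_20150520004508}), and $2\Hurst^{-}-1$ dominates $\Hurst^{-}$ since $\Hurst^{-}<1$. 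Multiplying by $(\Delta B_k)^2=O(\Delta^{2\Hurst^{-}})$ yields $\{f_2(\xi_{k-1})-f_2(X_{\tau^m_k})\}(\Delta B_k)^2=O(\Delta^{4\Hurst^{-}-1})$, and therefore $R_k=\kappa_k-\tilde{\kappa}_k=O(\Delta^{4\Hurst^{-}-1})+O(\Delta^{\Hurst^{-}+1})$, which is the assertion.

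There is no serious obstacle beyond careful exponent bookkeeping, where $1/2<\Hurst^{-}<1$ is used repeatedly to decide which $O$-terms dominate. It is worth recording why this crude pointwise bound already suffices for \tref{thm_20141204013919}: summing it over the $2^m$ subintervals and normalizing by $2^{m(2\Hurst-1)}$ gives a bound of order $\Delta^{2\Hurst-1-4\epsilon}+\Delta^{1-\Hurst-\epsilon}$, which tends to $0$ once $\epsilon$ is chosen small, so no cancellation among the partial sums of $\{R_k\}$ — such as the one established in \lref{lem_201708091518} for the Crank-Nicolson scheme — is needed.
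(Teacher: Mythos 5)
Your proposal is correct and follows essentially the same route as the paper, which only records the key identity $\hat{\kappa}_k=\sigma(\xi_{k-1})\kappa_k+O(\Delta^{3\Hurst^{-}})$ and refers to \lref{lem_1506481850}; your version simply fleshes out that sketch (the extra factor $\{1+\sigma'(\xi_{k-1})\Delta B_k\}$ you carry along is, as you note, absorbed since $3\Hurst^{-}>\Hurst^{-}+1$ for $\Hurst>1/2$). Your closing observation that no cancellation in $\sum_k R_k$ is needed matches the paper's remark that \lref{lem_1506482751} is not used here, in contrast to the Crank--Nicolson case.
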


\begin{lemma}\label{lem_1506482751}
	Let
	\begin{align*}
		\Phi_1(t)
		&=
			\sum_{k=1}^{\intPart{2^mt}}
				f_2(X_{\tau^m_{k-1}})(\Delta B_k)^2.
	\end{align*}
	Then,
	$
		\left(
			B,
			2^{m(2\Hurst-1)}
			\Phi_1
		\right)
	$
	converges to
	$
		\left(
			B,
			\int_0^\cdot
				f_2(X_u)\,
				du
		\right)
	$
	in $D([0,1];\RealNum^2)$ with respect to the Skorokhod topology in probability.
\end{lemma}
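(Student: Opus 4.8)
The plan is to obtain the lemma as an immediate consequence of \tref{thm_20141203094309} applied with $q=2$, followed by a routine comparison between the uniform and Skorokhod topologies.

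First I would specialise \tref{thm_20141203094309} to the even integer $q=2$, to the integrand $f=f_2=-\sigma'/2$, and to the weight measure $\mu=\delta_0$, the Dirac mass at $0\in[0,1]$. Since $\sigma\in\SmoothFunc[bdd]{\infty}{\RealNum}{\RealNum}$ we have $f_2\in\SmoothFunc[bdd]{\infty}{\RealNum}{\RealNum}\subset\SmoothFunc[poly]{4}{\RealNum}{\RealNum}$, so $f_2$ is an admissible integrand there. With $\mu=\delta_0$ the kernel from \secref{sec_20150519021523} reduces to $F^{f_2,\delta_0}_{\dyadicPart[m]{k-1}\dyadicPart[m]{k}}(X)=\int_0^1 f_2(\theta X_{\dyadicPart[m]{k}}+(1-\theta)X_{\dyadicPart[m]{k-1}})\,\delta_0(d\theta)=f_2(X_{\dyadicPart[m]{k-1}})$, so that the weighted $q$-th variation appearing in \tref{thm_20141203094309} is exactly $\Phi_1$:
\begin{align*}
	\sum_{k=1}^{\intPart{2^m\cdot}}
		F^{f_2,\delta_0}_{\dyadicPart[m]{k-1}\dyadicPart[m]{k}}(X)
		(B_{\dyadicPart[m]{k-1}\dyadicPart[m]{k}})^2
	=
		\sum_{k=1}^{\intPart{2^m\cdot}}
			f_2(X_{\dyadicPart[m]{k-1}})(\Delta B_k)^2
	=
		\Phi_1.
\end{align*}
Since $\expect[Z^2]=1$ for a standard Gaussian $Z$, \tref{thm_20141203094309} then yields
\begin{align*}
	\lim_{m\to\infty}
		2^{m(2\Hurst-1)}\Phi_1
	=
		\int_0^\cdot f_2(X_s)\,ds
\end{align*}
in probability with respect to the uniform norm on $[0,1]$.

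It remains to upgrade this to convergence of the pair $(B,2^{m(2\Hurst-1)}\Phi_1)$ in $D([0,1];\RealNum^2)$ for the Skorokhod topology. The first coordinate is the fixed process $B$ (not depending on $m$), and the limit $\int_0^\cdot f_2(X_s)\,ds$ of the second coordinate is a continuous process; hence, taking the identity time change, the Skorokhod distance between $(B,2^{m(2\Hurst-1)}\Phi_1)$ and $(B,\int_0^\cdot f_2(X_s)\,ds)$ is bounded above by $\|2^{m(2\Hurst-1)}\Phi_1-\int_0^\cdot f_2(X_s)\,ds\|_\infty$, which tends to $0$ in probability by the previous display. This gives the asserted convergence in probability in the Skorokhod topology.

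There is essentially no obstacle here: the Euler scheme is the easy case, its one-step error — and therefore the leading term $\Phi_1$ of the perturbation path — being governed by the $q=2$ monomial variation, whose limit is deterministic, in contrast to the Crank--Nicolson and Milstein cases where odd Hermite variations and trapezoidal corrections produce genuinely random limits. The only points to verify are that $\mu=\delta_0$ reproduces $\Phi_1$ and that $f_2$ belongs to the function class required by \tref{thm_20141203094309}, both of which are immediate.
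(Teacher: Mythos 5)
Your proposal is correct and follows exactly the route the paper takes: the paper's entire justification is the remark that \lref{lem_1506482751} ``is a direct consequence of \tref{thm_20141203094309},'' and your specialisation to $q=2$, $f=f_2$, $\mu=\delta_0$ (so that $F^{f_2,\delta_0}_{\dyadicPart[m]{k-1}\dyadicPart[m]{k}}(X)=f_2(X_{\dyadicPart[m]{k-1}})$ and $\expect[Z^2]=1$) is precisely how that consequence is realised. The final passage from uniform convergence in probability of the second coordinate to Skorokhod convergence of the pair is routine and correctly handled.
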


Here we make comments on proof of the lemmas above:
\begin{itemize}
	\item	\lref{lem_1506481850} is seen by the similar way with \lref{lem_201707121721}.
	\item	\lref{lem_1506481781} follows from
			the equality $\hat{\kappa}_k=\sigma(\xi_{k-1})\kappa_k+O(\Delta^{3\Hurst^{-}})$
			and \lref{lem_1506481850} (note that we do not use \lref{lem_1506482751}).
	\item	\lref{lem_1506482751} is a direct consequence of \tref{thm_20141203094309}.
\end{itemize}
Combining the lemmas, we obtain \tref{thm_20141204013919}.

\subsection{The Milstein scheme}
In this subsection, we assume $1/3<\Hurst\leq 1/2$ and \tref{thm_20141204015019}.
We set
\begin{align*}
	\hat{f}_3
	&=
		-
		\frac{1}{3!}
		\sigma(\sigma\sigma')',
	&
	\hat{f}_4
	&=
		-
		\frac{1}{4!}
		\sigma(\sigma(\sigma\sigma')')',\\
	f_3
	&=
		-
		\frac{1}{3!}
		(\sigma\sigma')',
	&
	f_4
	&=
		-
		\frac{1}{4!}
		[
			\sigma^2\sigma'''-3(\sigma')^3
		],
	&
	f_4^\dagger
	&=
		\frac{1}{4!}
		[
			\sigma^2\sigma'''
			+6\sigma\sigma'\sigma''
			+3(\sigma')^3
		].
\end{align*}
Note that $f_4=(\hat{f}_4-\sigma'\hat{f}_3)/\sigma$ and $f_4^\dagger=f_4-\sigma f_3'/2$.
We set $\varphi=0$ and use functions $\hat{g}_1$, $g_1$,
$\hat{\varphi}_{\boldsymbol{i}}$,
$\varphi_{\boldsymbol{i}}$ $(\boldsymbol{i}=011,101,110)$
introduced in \secref{sec_1506485494}.
We define processes $\Phi_1,\dots,\Phi_4$ by \eqref{eq_1506569861} with the functions above.
Then we see the next lemmas:
\begin{lemma}
	For any $\omega\in\probSp$, the following hold.
	\begin{enumerate}
		\item	We have
				\begin{align*}
					\hat{\kappa}_{k}
					&=
						\hat{f}_3(\xi_{k-1})(\Delta B_k)^3
						+\hat{f}_4(\xi_{k-1})(\Delta B_k)^4
						+
							\hat{g}_1(\xi_{k-1})
							\left(
								\frac{\Delta}{2}\Delta B_k-B^{10}_{\tau^m_{k-1}\tau^m_k}
							\right)\\
					&\quad\quad
						+
							\hat{\varphi}_{011}(\xi_{k-1})
							B^{011}_{\tau^m_{k-1}\tau^m_k}
						+
							\hat{\varphi}_{101}(\xi_{k-1})
							B^{101}_{\tau^m_{k-1}\tau^m_k}
						+
							\hat{\varphi}_{110}(\xi_{k-1})
							B^{110}_{\tau^m_{k-1}\tau^m_k}\\
					&\quad\quad
						+O(\Delta^{5H^{-}})
						+O(\Delta^{3H^{-}+1})
						+O(\Delta^{H^{-}+2}).
				\end{align*}
		\item	We have
				$\hat{\kappa}_k=O(\Delta^{3H^{-}})$, $\kappa_k=O(\Delta^{3H^{-}})$ and
				\begin{align*}
					\max_{1\le k\le 2^m}
						|
							X_{\tau^m_k}(\xi,B)
							-
							\bar{X}^{(m)}_{\tau^m_k}(\xi,B)
						|
					=
						O(\Delta^{3H^{-}-1}).
				\end{align*}
		\item	We have
				\begin{align*}
					\max_{0\leq t\leq 1}
						|\bar{X}^{(m)}_t(\xi,B)-X_t(\xi,B+h^{(m)})|
					=
						O(\Delta^{3H^{-}}).
				\end{align*}
	\end{enumerate}
\end{lemma}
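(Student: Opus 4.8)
The plan is to follow the proof of \lref{lem_201707121721} essentially verbatim, the one structural change being that the analysis of the (implicit) one-step increment there is replaced by the explicit formula defining the Milstein scheme. For assertion~(1) I would read the one-step increment $\xi_k-\xi_{k-1}$ off the Milstein scheme at $t=\dyadicPart[m]{k}$ (so that $t-\dyadicPart[m]{k-1}=\Delta$ and $B_t-B_{\dyadicPart[m]{k-1}}=\Delta B_k$), and expand $X_{2^{-m}}(\xi_{k-1},\theta_{\dyadicPart[m]{k-1}}B)-\xi_{k-1}$ by \pref{prop_20141010094511} applied on $[0,\Delta]$ with starting point $\xi_{k-1}$ and driver $\theta_{\dyadicPart[m]{k-1}}B$. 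Subtracting, the terms $b(\xi_{k-1})\Delta$, $\sigma(\xi_{k-1})\Delta B_k$, $\frac12[\sigma\sigma'](\xi_{k-1})(\Delta B_k)^2$ and $\frac12[bb'](\xi_{k-1})\Delta^2$ cancel, the coefficient of $\Delta(\Delta B_k)$ collapses from $\frac12(\sigma b'+\sigma'b)-b\sigma'$ to $\frac12\Wronskian=\frac12\hat{g}_1$, and the monomials $(\Delta B_k)^3$, $(\Delta B_k)^4$, $B^{10}_{\dyadicPart[m]{k-1}\dyadicPart[m]{k}}$, $B^{011}_{\dyadicPart[m]{k-1}\dyadicPart[m]{k}}$, $B^{101}_{\dyadicPart[m]{k-1}\dyadicPart[m]{k}}$, $B^{110}_{\dyadicPart[m]{k-1}\dyadicPart[m]{k}}$ appear with coefficients $\hat{f}_3$, $\hat{f}_4$, $-\hat{g}_1$, $\hat{\varphi}_{011}$, $\hat{\varphi}_{101}$, $\hat{\varphi}_{110}$; the remainder $r_{st}$ of \pref{prop_20141010094511} contributes $O(\Delta^{5H^{-}})+O(\Delta^{3H^{-}+1})+O(\Delta^{H^{-}+2})$. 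Unlike the Crank--Nicolson case, no implicit equation has to be solved, so this step is pure bookkeeping.

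For assertion~(2) I would bound each term of~(1) using the (pathwise) H\"older continuity of $B$ and the iterated-integral estimates of \lref{lem_201708052140}: $\Delta(\Delta B_k)^2=O(\Delta^{1+2H^{-}})$, $\frac{\Delta}{2}\Delta B_k-B^{10}_{\dyadicPart[m]{k-1}\dyadicPart[m]{k}}=O(\Delta^{1+H^{-}})$, $B^{\boldsymbol{i}}_{\dyadicPart[m]{k-1}\dyadicPart[m]{k}}=O(\Delta^{1+2H^{-}})$ for $\boldsymbol{i}=011,101,110$, while $(\Delta B_k)^3=O(\Delta^{3H^{-}})$ is the dominant order since $1/3<\Hurst\le\frac12$ forces $H^{-}<\frac12$; hence $\hat{\kappa}_k=O(\Delta^{3H^{-}})$. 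As $\hat{\kappa}_k$ is the left-hand side of \eqref{eq_1545356684}, the mean value theorem and \lref{lem_201707251240} (which bounds $\frac{d}{d\kappa}X_{2^{-m}}(\xi_{k-1},\theta_{\dyadicPart[m]{k-1}}B+\kappa\ell)$ above and below by constant multiples of $2^{-m}$) give $|\kappa_k|$ comparable to $|\hat{\kappa}_k|$, so $\kappa_k=O(\Delta^{3H^{-}})$. Finally, using $\bar{X}^{(m)}_{\dyadicPart[m]{k}}(\xi,B)=X_{\dyadicPart[m]{k}}(\xi,B+h^{(m)})$ and the Lipschitz continuity of $B\mapsto X(B)$ in the uniform norm (\rref{rem_201708111136}), $\max_k|X_{\dyadicPart[m]{k}}(\xi,B)-\bar{X}^{(m)}_{\dyadicPart[m]{k}}(\xi,B)|=O(\|h^{(m)}\|_\infty)=O\bigl(\sum_k|\kappa_k|\bigr)=O(\Delta^{3H^{-}-1})$.

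For assertion~(3) I would fix $\dyadicPart[m]{k-1}<t\le\dyadicPart[m]{k}$ and use the flow property \pref{prop_20140928154144} with $\xi_{k-1}=\bar{X}^{(m)}_{\dyadicPart[m]{k-1}}(\xi,B)=X_{\dyadicPart[m]{k-1}}(\xi,B+h^{(m)})$ to write
\begin{align*}
	\bar{X}^{(m)}_t(\xi,B)-X_t(\xi,B+h^{(m)})
	&=
		\bigl\{\bar{X}^{(m)}_t(\xi,B)-\xi_{k-1}-[X_{t-\dyadicPart[m]{k-1}}(\xi_{k-1},\theta_{\dyadicPart[m]{k-1}}B)-\xi_{k-1}]\bigr\}\\
	&\quad
		+\bigl\{X_{t-\dyadicPart[m]{k-1}}(\xi_{k-1},\theta_{\dyadicPart[m]{k-1}}B)-X_{t-\dyadicPart[m]{k-1}}(\xi_{k-1},\theta_{\dyadicPart[m]{k-1}}(B+h^{(m)}))\bigr\}.
\end{align*}
The Milstein formula and \pref{prop_20141010094511} remain valid with $\delta=t-\dyadicPart[m]{k-1}\le\Delta$ in place of $\Delta$, so the first brace is the expression of~(1) with $\delta$, hence $O(\delta^{3H^{-}})=O(\Delta^{3H^{-}})$; and since $\theta_{\dyadicPart[m]{k-1}}h^{(m)}$ has uniform norm at most $|\kappa_k|$ on $[0,\delta]$ (it is linear there with total increment $\kappa_k$ over a step of length $\Delta$), the second brace is $O(|\kappa_k|)=O(\Delta^{3H^{-}})$ by \rref{rem_201708111136}. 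Adding the two bounds proves~(3).

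The only point requiring attention is the coefficient bookkeeping in~(1): one must check that the polynomial part of the Milstein scheme (the $\Delta$, $\Delta B_k$, $(\Delta B_k)^2$, $\Delta(\Delta B_k)$, $\Delta^2$ terms) is designed precisely so that the zeroth-, first- and second-order contributions of \pref{prop_20141010094511} are cancelled, leaving the cubic-and-higher monomials and the iterated integrals with the coefficients $\hat{f}_3$, $\hat{f}_4$, $\hat{g}_1$, $\hat{\varphi}_{\boldsymbol{i}}$. This is entirely routine; once it is done, the rest of the argument is word-for-word that of \lref{lem_201707121721}, with the definition of the scheme playing the role of \lref{lem_1506077157}.
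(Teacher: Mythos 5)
Your proposal is correct and follows exactly the route the paper intends: the paper omits the proof of this lemma, stating only that it is obtained as in \lref{lem_201707121721} (via the analogue of \lref{lem_1506077157}), and your argument supplies precisely those details — the explicit Milstein one-step increment replacing the implicit expansion, subtraction against \pref{prop_20141010094620}'s expansion in \pref{prop_20141010094511}, the collapse of the $\Delta(\Delta B_k)$ coefficient to $\tfrac12\Wronskian$, and then the pathwise estimates via \lref{lem_201707251240} and the Lipschitz continuity of $B\mapsto X(B)$. The coefficient bookkeeping and the order estimates all check out.
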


\begin{lemma}
	Let
	\begin{align*}
		\tilde{\kappa}_{k}
		&=
			f_3(X_{\tau^m_{k-1}})(\Delta B_k)^3
			+f_4(X_{\tau^m_{k-1}})(\Delta B_k)^4
			+
			g_1(X_{\tau^m_{k-1}})
			\left(\frac{\Delta}{2}\Delta B_k-B^{10}_{\tau^m_{k-1}\tau^m_k}\right)\\
		&\quad
			+\varphi_{011}(X_{\tau^m_{k-1}})B^{011}_{\tau^m_{k-1}\tau^m_k}
			+\varphi_{101}(X_{\tau^m_{k-1}})B^{101}_{\tau^m_{k-1}\tau^m_k}
			+\varphi_{110}(X_{\tau^m_{k-1}})B^{110}_{\tau^m_{k-1}\tau^m_k}
	\end{align*}
	and set
    $R_k(\omega)=\kappa_k-\tilde{\kappa}_k$.
    Then there exists $\delta>0$ such that
	$
		\lim_{m\to\infty}
			(2^m)^{4\Hurst-1+\delta}
			\max_{1\le k\le 2^{m}}
				|\sum_{i=1}^{k}R_i|
		=
			0
	$
	in probability.
\end{lemma}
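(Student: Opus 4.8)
The plan is to reproduce the proof of \lref{lem_201708091518} verbatim, with the convergence exponent $3\Hurst-1/2$ replaced everywhere by $4\Hurst-1$ and the Crank--Nicolson auxiliary lemmas replaced by their Milstein counterparts. First I would set up the one-step analogue of \lref{lem_201707121723}. Writing $\xi_k=\bar{X}^{(m)}_{\tau^m_k}(\xi,B)$ and $\hat\kappa_k=\{\xi_k-\xi_{k-1}\}-\{X_{2^{-m}}(\xi_{k-1},\theta_{\tau^m_{k-1}}B)-\xi_{k-1}\}$, the argument of \lref{lem_201707121723} — which only uses \pref{prop_20140929105014}~(\ref{item_20141228072212}),~(\ref{item_20141106063948}), \pref{prop_20141010094511} and the rough bound $\kappa_k=O(\Delta^{3\Hurst^{-}})$ — gives $\hat\kappa_k=\sigma(\xi_{k-1})\{1+\sigma'(\xi_{k-1})\Delta B_k\}\kappa_k+O(\Delta^{3\Hurst^{-}+1})+O(\Delta^{5\Hurst^{-}})$. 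Solving for $\kappa_k$, substituting the expansion of $\hat\kappa_k$ from the one-step error lemma for the Milstein scheme stated just above, and using $f_4=(\hat f_4-\sigma'\hat f_3)/\sigma$, $g_1=\hat g_1/\sigma$, $\varphi_{\boldsymbol i}=\hat\varphi_{\boldsymbol i}/\sigma$ and $[g_1\sigma']=\sigma^{-1}\sigma'\hat g_1$, the cross terms produced by the factor $1-\sigma'(\xi_{k-1})\Delta B_k$ collapse into $F_k(\xi_{k-1},B)+G_k(\xi_{k-1},B)$, where $F_k$ is built from the Milstein $f_3,f_4,g_1,\varphi_{\boldsymbol i}$ (with $\varphi\equiv0$) exactly as $\tilde\kappa_k$ is, and $G_k(x,B)=-[g_1\sigma'](x)\Delta B_k(\frac{\Delta}{2}\Delta B_k-B^{10}_{\tau^m_{k-1}\tau^m_k})$ is the same correction as in \lref{lem_201707121723}; the remainder is $r_k:=\kappa_k-F_k(\xi_{k-1},B)-G_k(\xi_{k-1},B)=O(\Delta^{3\Hurst^{-}+1})+O(\Delta^{5\Hurst^{-}})$.

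Next I would run the bootstrap. Set $\epsilon_m=\max_{1\le k\le2^m}|X_{\tau^m_k}(\xi,B)-\bar{X}^{(m)}_{\tau^m_k}(\xi,B)|$, which satisfies the rough bound $\epsilon_m=O(\Delta^{3\Hurst^{-}-1})$ coming from the one-step error lemma; decompose $\kappa_k=\kappa_{k,1}+\kappa_{k,2}+r_k$ with $\kappa_{k,1}=F_k(X_{\tau^m_{k-1}},B)+G_k(X_{\tau^m_{k-1}},B)$ and $\kappa_{k,2}$ the increment of $F_k+G_k$ obtained by replacing $X_{\tau^m_{k-1}}$ by $\xi_{k-1}$, so that $\tilde\kappa_k=F_k(X_{\tau^m_{k-1}},B)$ and $R_k=G_k(X_{\tau^m_{k-1}},B)+\kappa_{k,2}+r_k$. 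Since $\sum_{k\le\intPart{2^mt}}\kappa_{k,1}=\sum_{i=1}^4\Phi_i(t)$ for the Milstein processes $\Phi_1,\dots,\Phi_4$, the Milstein analogue of \lref{lem_1506322555} — which follows from the Milstein analogue of \lref{lem_201708091528} via \tref{thm_20141117071529}, \tref{thm_20141123062022} and \pref{prop_20141117062354} — gives $\|h^{(m)}_1\|_\infty=O(\Delta^{4\Hurst-1-\delta})\Psi_{m,\delta}$, where $\Psi_{m,\delta}:=(2^m)^{4\Hurst-1-\delta}\max_{0\le t\le1}|\sum_{i=1}^4\Phi_i(t)|\to0$ in probability; the Lipschitz continuity of $F_k,G_k$ in $x$ gives $\sum_k|\kappa_{k,2}|\le K\epsilon_m$ with $K=O(\Delta^{3\Hurst^{-}-1})$; and the Milstein version of \lref{lem_201707121723} gives $\sum_k|r_k|=O(\Delta^{3\Hurst^{-}})+O(\Delta^{5\Hurst^{-}-1})$. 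Lipschitz continuity of $B\mapsto X(\xi,B)$ in the uniform norm then yields $\epsilon_m\le\tilde K\epsilon_m+\hat K$ with $\tilde K=CK=O(\Delta^{3\Hurst^{-}-1})$ and $\hat K=C(\|h^{(m)}_1\|_\infty+\sum_k|r_k|)$; iterating this a fixed finite number of times and inserting the rough bound improves it to $\epsilon_m=\Psi_{m,\delta}O(\Delta^{4\Hurst-1-\delta})+O(\Delta^{3\Hurst^{-}})+O(\Delta^{5\Hurst^{-}-1})$ for every $\delta>0$.

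Finally, using $R_k=G_k(X_{\tau^m_{k-1}},B)+\kappa_{k,2}+r_k$, I would bound $\max_{1\le k\le2^m}|\sum_{i=1}^kR_i|\le\max_{0\le t\le1}|\Phi_4(t)|+\sum_k|\kappa_{k,2}|+\sum_k|r_k|$ with the Milstein process $\Phi_4$. From the improved bound, $\sum_k|\kappa_{k,2}|\le K\epsilon_m=O(\Delta^{4\Hurst-1+3\Hurst^{-}-1-\delta})\Psi_{m,\delta}+O(\Delta^{6\Hurst^{-}-1})+O(\Delta^{8\Hurst^{-}-2})$; moreover $\sum_k|r_k|=O(\Delta^{3\Hurst^{-}})+O(\Delta^{5\Hurst^{-}-1})$, and $\max_t|\Phi_4(t)|=O(\Delta^{2\Hurst^{-}})$ for $1/3<\Hurst<1/2$ as in \eqref{eq_201707261039}, while for $\Hurst=1/2$ Doob's inequality gives $\Delta^{-1-\delta}\sup_t|\Phi_4(t)|\to0$ a.s.\ as in \eqref{eq_201707241515}. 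Multiplying each of these terms by $(2^m)^{4\Hurst-1+\delta'}=\Delta^{1-4\Hurst-\delta'}$ and choosing $\epsilon,\delta,\delta'$ small, a direct power count — using only that $3\Hurst-1$, $2\Hurst$, $4\Hurst-1$, $1-2\Hurst$, $1-\Hurst$ and $\Hurst$ are positive on $1/3<\Hurst\le1/2$ — shows that everything tends to $0$ in probability, which is the assertion. I expect the main obstacle to be precisely this bootstrap: the purely pathwise (H\"older) estimates only give $\epsilon_m=O(\Delta^{3\Hurst^{-}-1})$, and it is solely the probabilistic input $\Psi_{m,\delta}\to0$ (ultimately the fourth-moment / central limit results of \secref{sec_20150519021523}) that drives the iteration down to the sharp exponent $4\Hurst-1$; one must also keep track that the $\epsilon$-losses stay harmless uniformly as $\Hurst\downarrow1/3$, where $3\Hurst-1$ degenerates.
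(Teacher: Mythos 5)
Your proposal is correct and is precisely the argument the paper intends: for this lemma the paper omits the proof, saying only that it is obtained by repeating the Crank--Nicolson argument with the Milstein one-step expansion and the normalization $4\Hurst-1$ in place of $3\Hurst-\frac{1}{2}$, and your adaptation — the analogue of \lref{lem_201707121723} via $\hat{\kappa}_k=\sigma(\xi_{k-1})\{1+\sigma'(\xi_{k-1})\Delta B_k\}\kappa_k+O(\Delta^{3\Hurst^-+1})+O(\Delta^{5\Hurst^-})$, the bootstrap on $\epsilon_m$ driven by $\Psi_{m,\delta}\to 0$, and the final power count over the three pieces of $R_k$ — reproduces \lref{lem_201708091518} faithfully. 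The exponent bookkeeping is consistent on $1/3<\Hurst\le 1/2$, so nothing further is needed.
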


\begin{lemma}\label{lem_1506499391}
	The following hold:
	\begin{enumerate}
		\item	Let $\frac{1}{3}<\Hurst<\frac{1}{2}$.
				Then
				$
					\left(
						B,
						(2^m)^{4\Hurst-1}
						(\Phi_1,\Phi_2,\Phi_3,\Phi_4)
					\right)
				$
				converges to
				$
					\left(
						B,
						3\int_0^{\cdot}f_4^\dagger(X_s)\, ds,
						0,
						0,
						0
					\right)$
				in $D([0,1];\RealNum^4)$ with respect to the Skorokhod topology
				in probability.
		\item	Let $\Hurst=\frac{1}{2}$.
				Then
				$
					\left(
						B,
						2^m
						(\Phi_1,\Phi_2,\Phi_3,\Phi_4)
					\right)
				$
				converges weakly to
				\begin{multline*}
					\Biggl(
						B,
						\sqrt{6}\int_0^{\cdot}f_3(X_s)\,dW_s
						+3\int_0^{\cdot}f_3(X_s)\circ dB_s
						+3\int_0^{\cdot}f_4^\dagger(X_s)\, ds,\\
						\frac{1}{\sqrt{12}}\int_0^{\cdot}g_1(X_s)d\tilde{W}_s,
						\frac{1}{4}
						\int_0^{\cdot}
							\left\{\varphi_{011}(X_s)+\varphi_{110}(X_s)\right\}\,ds,
						0
					\Biggr)
				\end{multline*}
				in $D([0,1];\RealNum^4)$ with respect to the Skorokhod topology.
	\end{enumerate}
\end{lemma}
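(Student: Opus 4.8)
The plan is to follow the proof of \lref{lem_201708091528} almost verbatim, replacing the exponent $3H-\tfrac12$ by $4H-1$ throughout. The only genuinely new feature is that for the Milstein scheme $f_4\neq\sigma f_3'/2$; instead $f_4=\sigma f_3'/2+f_4^\dagger$, so a residual term built from $f_4^\dagger$ survives in the limit of $\Phi_1$. I would treat $\Phi_1,\dots,\Phi_4$ separately and then assemble everything with \pref{prop_201708091009}.

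For $\Phi_1$, I would first write $f_3(X_{\tau^m_{k-1}})(\Delta B_k)^3+f_4(X_{\tau^m_{k-1}})(\Delta B_k)^4=[f_3(X_{\tau^m_{k-1}})+\tfrac12\sigma f_3'(X_{\tau^m_{k-1}})\Delta B_k](\Delta B_k)^3+f_4^\dagger(X_{\tau^m_{k-1}})(\Delta B_k)^4$; using $X_{\tau^m_{k}}-X_{\tau^m_{k-1}}=\sigma(X_{\tau^m_{k-1}})\Delta B_k+O(\Delta^{2H^{-}})+O(\Delta)$ from \pref{prop_20141010094511}, the bracket becomes $\tfrac12\{f_3(X_{\tau^m_{k-1}})+f_3(X_{\tau^m_{k}})\}$ up to an error $O(\Delta^{2H^{-}})+O(\Delta)$. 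Then, from $(\Delta B_k)^3=2^{-3mH}\hermitePoly{3}(2^{mH}\Delta B_k)+3\cdot 2^{-2mH}\Delta B_k$, after multiplying by $(2^m)^{4H-1}$ and summing: the $\hermitePoly{3}$-part equals $(2^m)^{H-\frac12}\cdot 2^{-m/2}\wHerVar{3}{m}$ (with weight $f=f_3$ and $\mu=\tfrac12(\delta_0+\delta_1)$); the $3\Delta B_k$-part equals $3(2^m)^{2H-1}\sum_k\tfrac12\{f_3(X_{\tau^m_{k-1}})+f_3(X_{\tau^m_{k}})\}\Delta B_k$, which by \pref{prop_20150424100658} is $3(2^m)^{2H-1}\int_0^{\cdot}f_3(X_s)\,d^{\circ}B_s$ up to a vanishing error; and the $f_4^\dagger$-part is $(2^m)^{4H-1}\sum_k f_4^\dagger(X_{\tau^m_{k-1}})(\Delta B_k)^4$, which by \tref{thm_20141203094309} with $q=4$ converges to $\expect[Z^4]\int_0^{\cdot}f_4^\dagger(X_s)\,ds=3\int_0^{\cdot}f_4^\dagger(X_s)\,ds$. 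For $1/3<H<1/2$, $(2^m)^{H-\frac12}$ and $(2^m)^{2H-1}$ tend to $0$ against a tight factor (\tref{thm_20141117071529} for $2^{-m/2}\wHerVar{3}{m}$) and a convergent one, so $(2^m)^{4H-1}\Phi_1\to 3\int_0^{\cdot}f_4^\dagger(X_s)\,ds$ in probability; for $H=1/2$ the $\hermitePoly{3}$-part converges to $\sqrt{6}\int_0^{\cdot}f_3(X_s)\,dW_s$ by \tref{thm_20141123062022} (recall $\sigma_{3,1/2}=\sqrt{3!}=\sqrt{6}$) and the $\Delta B_k$-part to $3\int_0^{\cdot}f_3(X_s)\circ dB_s$, jointly with $B$.

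For $\Phi_2$, since $B^{10}_{\tau^m_{k-1}\tau^m_k}=\int_{\tau^m_{k-1}}^{\tau^m_k}(B_u-B_{\tau^m_{k-1}})\,du$, we have $\Phi_2=\wTRVar{g_1}{m}$; then \pref{prop_20141117062354} with $r=4H-1\in(0,2H)$ gives $(2^m)^{4H-1}\Phi_2\to0$ in probability for $1/3<H<1/2$, while \tref{thm_20141123062022} gives $2^m\Phi_2\to\frac{1}{\sqrt{12}}\int_0^{\cdot}g_1(X_s)\,d\tilde W_s$ for $H=1/2$. For $\Phi_3$ (with $\varphi\equiv0$) and $\Phi_4$, when $1/3<H<1/2$ the bounds $|B^{011}_{st}|,|B^{101}_{st}|,|B^{110}_{st}|\le C(t-s)^{1+2H^{-}}$ and $|\Delta B_k(\tfrac{\Delta}{2}\Delta B_k-B^{10}_{\tau^m_{k-1}\tau^m_k})|\le C\Delta^{1+2H^{-}}$ from \lref{lem_201708052140} give $(2^m)^{4H-1}(\|\Phi_3\|_{\infty}+\|\Phi_4\|_{\infty})=O(\Delta^{1-2H-2\epsilon})\to0$; when $H=1/2$, $\Phi_4$ has zero-mean increments with second moment $\Delta^4/3$, so adaptedness of $X$ and Doob's inequality give $2^m\Phi_4\to0$ in $L^2$, while for $\Phi_3$ one writes $B^{011}_{st},B^{110}_{st}$ as a double It\^o integral plus $(t-s)^2/4$ and $B^{101}_{st}$ as a pure double It\^o integral, the deterministic parts producing $\frac14\int_0^{\cdot}\{\varphi_{011}(X_s)+\varphi_{110}(X_s)\}\,ds$ and the stochastic parts vanishing by the same $L^2$/Doob estimate. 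Finally I would assemble: by \rref{rem_201708111136} the maps $B\mapsto\int_0^{\cdot}f_3(X_s)\circ dB_s$, $B\mapsto\int_0^{\cdot}f_4^\dagger(X_s)\,ds$ and $B\mapsto\int_0^{\cdot}\{\varphi_{011}+\varphi_{110}\}(X_s)\,ds$ are continuous on $C([0,1];\RealNum)$, \tref{thm_20141117071529}, \tref{thm_20141123062022} and \pref{prop_20141117062354} supply the joint convergence of $(B,2^{-m/2}\wHerVar{3}{m},2^m\wTRVar{g_1}{m})$ with $B,W,\tilde W$ independent, and \pref{prop_201708091009}~(\ref{item_1506072032}) glues these pieces together with the negligible remainders to give the claimed convergence in $D([0,1];\RealNum^4)$ with the Skorokhod topology.

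The hard part will not be a single estimate---each mirrors the Crank--Nicolson case---but rather the bookkeeping of the new term: checking that the $\tfrac12\sigma f_3'$ piece split off from $f_4$ is exactly absorbed into the trapezoidal symmetrization of $f_3$ (so that $f_4$, not merely $f_4^\dagger$, disappears from the limit), that the surviving constant is $\expect[Z^4]=3$, and that the prefactor $(2^m)^{2H-1}$ on the Russo--Vallois integral $\int_0^{\cdot}f_3(X_s)\,d^{\circ}B_s$ indeed annihilates it for $H<1/2$ but reproduces the Stratonovich term $3\int_0^{\cdot}f_3(X_s)\circ dB_s$ at $H=1/2$.
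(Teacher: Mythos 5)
Your proposal is correct and follows essentially the same route as the paper: the paper itself only sketches this proof, pointing to the Crank--Nicolson argument for \lref{lem_201708091528} together with the decomposition $f_3(X_{\tau^m_{k-1}})+f_4(X_{\tau^m_{k-1}})\Delta B_k=\tfrac12\{f_3(X_{\tau^m_{k-1}})+f_3(X_{\tau^m_k})\}+f_4^\dagger(X_{\tau^m_k})\Delta B_k+O(\Delta^{2H^-})+O(\Delta)$ and an appeal to \trefs{thm_20141203094309}{thm_20141117071529}{thm_20141123062022}, which is exactly the splitting and the set of limit theorems you use. Your treatment of $\Phi_2$, $\Phi_3$ (with $\varphi\equiv 0$), $\Phi_4$ and the final assembly via \rref{rem_201708111136} and \pref{prop_201708091009} mirrors the proof of \lref{lem_201708091528} with the exponent $3H-\tfrac12$ replaced by $4H-1$, as intended.
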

Note that in proof \lref{lem_1506499391} we used the decomposition
\begin{align*}
	f_3(X_{\tau^m_{k-1}})
	+
	f_4(X_{\tau^m_{k-1}})\Delta B_k
	&=
		\left\{f_3(X_{\tau^m_{k-1}})+\frac{1}{2}f_3'\sigma(X_{\tau^m_k})\Delta B_k\right\}
		+f_4^\dagger(X_{\tau^m_k})\Delta B_k\\
	&=
		\frac{f_3(X_{\tau^m_{k-1}})+f_3(X_{\tau^m_k})}{2}
		+O(\Delta^{2H^-})
		+O(\Delta)
		+f_4^\dagger(X_{\tau^m_k})\Delta B_k
\end{align*}
and apply \trefs{thm_20141203094309}{thm_20141117071529}{thm_20141123062022}.

\appendix

\section{Gaussian analysis and Malliavin calculus}\label{sec_20150623014415}

We summarize basic results on Gaussian analysis and Malliavin calculus
which we use to estimate some terms of error.
For details, see \cite{Nualart2006}.

Let $(\probSp,\sigmaField,\prob)$ be the canonical probability space
for a one-dimensional centered continuous Gaussian
process $X=\{X_t\}_{0\leq t\leq 1}$
with the covariance $\expect[X_sX_t]=R(s,t)$,
that is,
$\probSp$ is the Banach space of continuous functions from
$[0,1]$ to $\RealNum$ starting at zero with the uniform norm $\|\cdot\|_\infty$,
$\sigmaField$ the $\sigma$-field generated by the cylindrical subsets of $\probSp$,
and $\prob$ a probability measure on $\probSp$ such that
the canonical process $X(\omega)=\omega$, $\omega\in\probSp$, is the Gaussian process.

We construct an abstract Wiener space $(\probSp,\CM,\prob)$
and an isonormal Gaussian process $\{X(h)\}_{h\in\CM}$.
The Hilbert space $\CM$ with the norm $\|\cdot\|_\CM$
and the inner product $\innerProd[\CM]{\cdot}{\ast}$ is defined by as follows;
set
$
	[\mathscr{R}\indicator{[0,t)}](\cdot)
	=
		R(t,\cdot)
	=
		\expect[X_tX_\cdot]
$
and let $\CM_0$ be the linear span of functions $\mathscr{R}\indicator{[0,t)}$
and $\CM$ the Hilbert space defined as the closure of $\CM_0$ with respect to the inner product
$
	\innerProd[\CM]
		{\mathscr{R}\indicator{[0,s)}}
		{\mathscr{R}\indicator{[0,t)}}
	=
		\expect[X_sX_t]
$.
We call the Hilbert space $\CM$ the Cameron-Martin subspace.
Note the map
$
	\CM_0\ni \mathscr{R}\indicator{[0,t)}
	\mapsto
	X(\indicator{[0,t)})\in\LebSp{2}{\probSp}{\RealNum}
$
is an isometry.
Hence if $\{h_n\}_{n=1}^\infty\subset\CM_0$ converges to $h\in\CM$,
then $\{X(h_n)\}_{n=1}^\infty$ converges to some element
$X(h)\in\LebSp{2}{\probSp}{\RealNum}$.
Hence we obtain the isonormal Gaussian process $\{X(h)\}_{h\in\CM}$.

Next, we define the $q$-th Wiener integral $\WienerInt{q}$
which is a map from the symmetric space $\CM^{\odot q}$
to the $q$-th Wiener chaos $\WienerChaos{q}$ for $q\in\NaturalNum$.

In order to define $\CM^{\odot q}$, $\WienerChaos{q}$ and $\WienerInt{q}$,
we denote by $\Lambda$ the set of sequences
$\lambda=(\lambda_1,\dots)\in(\NaturalNum\cup\{0\})^\infty$
such that all the elements vanish except a finite number of them
and set $\lambda!=\prod_{n=1}^\infty \lambda_n!$ for $\lambda\in\Lambda$.
We take an orthonormal basis $\{e_n\}_{n=1}^\infty$ of $\CM$.

We denote by $\otimes$ the tensor product
and by $\CM^{\otimes q}$ the tensor product space for $q\geq 2$.
For $q=0,1$, we set $\CM^{\otimes 0}=\RealNum$ and $\CM^{\otimes 1}=\CM$ by convention.
We define the symmetrization $\tilde{h}\in\CM^{\otimes q}$ for $h\in\CM^{\otimes q}$ as follows:
if $h$ has the form of $h=h_1\otimes\cdots\otimes h_q$ for $h_r\in\CM$, we set
\begin{align*}
	(h_1\otimes\cdots\otimes h_q)^\sim
	=
	\frac{1}{q!}
	\sum_{\sigma\in\SymmetricGroup{q}}
		h_{\sigma(1)}\otimes\cdots\otimes h_{\sigma(q)},
\end{align*}
where $\SymmetricGroup{q}$ is the symmetric group on $\{1,\dots,q\}$;
we also define the symmetrization for general elements in $\CM^{\otimes q}$
by linearity.
For notational simplicity, we set
$
	h_1\odot\cdots\odot h_q
	=
	(h_1\otimes\cdots\otimes h_q)^\sim
$.
An element $h\in\CM^{\otimes q}$ is said to be symmetric if $\tilde{h}=h$.
We denote by $\CM^{\odot q}$ the set of symmetric elements of
$\CM^{\otimes q}$. The space $\CM^{\odot q}$ forms a Hilbert space with
respect to the scaled norm $\sqrt{q!}\|\cdot\|_{\CM^{\otimes q}}$. For
$\lambda\in\Lambda$, set
\begin{align*}
	e^{\lambda}
	=
	\frac{1}{\sqrt{\lambda!}}
	e_1^{\odot \lambda_1}\odot
	e_2^{\odot \lambda_2}\odot\cdots.
\end{align*}
Then, $\{e^{\lambda};|\lambda|=q,\lambda\in\Lambda\}$
is an orthonormal basis of $\CM^{\odot q}$.

As we introduced in Section~\secref{sec_20150519021523},
$\hermitePoly{q}$ denotes the $q$-th Hermite polynomial.
The $q$-th Wiener chaos $\WienerChaos{q}$
is defined as the closed subspace spanned by
$\{\hermitePoly{q}(X(h));h\in\CM,\|h\|_\CM=1\}$ in $\LebSp{2}{\probSp}{\RealNum}$.
For $\lambda\in\Lambda$, set
\begin{align*}
	\WienerHermitePoly{\lambda}
	=
	\frac{1}{\sqrt{\lambda!}}
	\prod_{n=1}^{\infty}
		\hermitePoly{\lambda_n}(X(e_n)).
\end{align*}
Then, $\{\WienerHermitePoly{\lambda};|\lambda|=q,\lambda\in\Lambda\}$
is an orthonormal basis of $\WienerChaos{q}$.

The $q$-th Wiener integral $\WienerInt{q}$ is defined by
$
	\WienerInt{q}(e^\lambda)=\WienerHermitePoly{\lambda}
$
and is extend by linearity.
The mapping $\WienerInt{q}:\CM^{\odot q}\to\WienerChaos{q}$
provides a real linear isometry between $\CM^{\odot q}$ and $\WienerChaos{q}$.

Finally, we summarize results on Malliavin calculus.
Let $\mathcal{S}$ be the totality of all smooth functionals
which have the form of $F=f(X(h_1),\dots,X(h_\alpha))$,
where $h_\beta\in\CM$ and
$f\in\SmoothFunc[poly]{\infty}{\RealNum^\alpha}{\RealNum}$.
The Malliavin derivative $DF$ of $F\in\mathcal{S}$ is
an $\CM$-valued random variable and defined by
\begin{align*}
	DF
	=
	\sum_{\beta=1}^\alpha
		\frac{\partial f}{\partial\xi_\beta}(X(h_1),\dots,X(h_\alpha))
		h_\beta.
\end{align*}
By the iteration, one can define $n$-th derivative $D^nF$, which is an
$\CM^{\odot n}$-valued random variable, by
\begin{align*}
	D^nF
	=
	\sum_{\beta_1,\dots,\beta_n=1}^\alpha
		\frac{\partial^n f}{\partial\xi_{\beta_1}\cdots\partial\xi_{\beta_n}}
			(X(h_1),\dots,X(h_\alpha))
			h_{\beta_1}\otimes\dots\otimes h_{\beta_n}.
\end{align*}
As usual, for $n\in\NaturalNum$ and $1<p<\infty$, we define the Sobolev space $\SobSp{n}{p}{\probSp}{\RealNum}$ by
the completion of $\mathcal{S}$ by the norm
\begin{align*}
	\|F\|_{\SobSp{n}{p}{\probSp}{\RealNum}}^p
	=
	\sum_{k=0}^n
		\expect[\|D^k F\|_{\CM^{\odot k}}^p].
\end{align*}
We set $\SobSp{n}{\infty-}{\probSp}{\RealNum}=\bigcap_{1<p<\infty}\SobSp{n}{p}{\probSp}{\RealNum}$.

Since the derivative operator $D$ is a continuous operator from $\SobSp{1}{2}{\probSp}{\RealNum}$
to $\LebSp{2}{\probSp}{\CM}$, there exists its adjoint operator $\delta$, which is
called the divergence operator or the Skorokhod integral. Notice that the duality
relationship
\begin{align*}
	\expect[F\delta(u)]=\expect[\innerProd[\CM]{DF}{u}]
\end{align*}
holds for any $F\in\SobSp{1}{2}{\probSp}{\RealNum}$ and $u$ belonging to the domain of $\delta$.
By the iteration, we see that there exists an operator $\delta^n$ such that
\begin{align}\label{eq_duality_relationship}
	\expect[F\delta^n(u)]=\expect[\innerProd[\CM^{\otimes n}]{D^nF}{u}]
\end{align}
for any $F\in\SobSp{n}{2}{\probSp}{\RealNum}$ and $u$ belonging to the domain of $\delta^n$.
Notice that $h\in\CM^{\odot q}$ belongs to the domain of $\delta^q$ and
$\delta^q(h)=\WienerInt{q}(h)$. From the It\^{o}-Wiener expansion and the Stroock formula,
we obtain the product formula:
\begin{align}\label{eq_product_formula}
	\WienerInt{p}(h^{\odot p})\WienerInt{q}(k^{\odot q})
	=
	\sum_{r=0}^{p\wedge q}
		r!
		\binom{p}{r}
		\binom{q}{r}
		(h,k)_\CM^r
		\WienerInt{p+q-2r}(h^{\odot p-r}\odot k^{\odot q-r})
\end{align}
for every $h,k\in\CM$.

In what follows, we assume that fBm $B$ is defined
on the canonical probability space $(\probSp,\sigmaField,\prob)$,
that is, $B(\omega)=\omega$ for $\omega\in\probSp$ is fBm under the probability measure $\prob$.
In this setting, we can apply Gaussian analysis and Malliavin calculus to fBm.
In particular, since $h\in\CM$ is given by $h_t=\expect[ZB_t]$ for some square-integrable random variable $Z$,
we see
$
	|h_t-h_s|
	\leq
		\expect[Z^2]^{1/2}
		\expect[(B_t-B_s)^2]^{1/2}
	=
		\expect[Z^2]^{1/2}
		(t-s)^\Hurst
$,
which implies
$
	\CM
	\subset
		\HolFunc[0]{\Hurst}{[0,1]}{\RealNum}
	\subset
		\HolFunc[0]{\Hurst-\epsilon}{[0,1]}{\RealNum}
$.
From \pref{prop_20140929105014} and the inclusion
$\CM\subset\HolFunc[0]{\Hurst-\epsilon}{[0,1]}{\RealNum}$,
the functional $\omega\mapsto X_t(\omega)$ is Fr\'{e}chet differentiable
in $\CM$ and the derivative is integrable.
Hence we see that $X_t$ is Malliavin differentiable
and have
$
	\innerProd[\CM]{D X_t}{h}
	=
		\nabla_h
		X_t
$
for any $h\in\CM$.
More precisely, we obtain the following proposition.
\begin{proposition}\label{prop_20150520004520}
	Let $b,\sigma\in\SmoothFunc[bdd]{n+1}{\RealNum}{\RealNum}$ for $n\geq 1$.
	Assume that \hyporef{hypo_ellipticity} is satisfied.
	Then $X_t\in\SobSp{n}{\infty-}{\probSp}{\RealNum}$ and
	\begin{align*}
		|\innerProd[\CM^{\odot \nu}]{D^\nu X_t}{h^1\odot\cdots\odot h^\nu}|
		\leq
			\const[\nu]
			\|h^1\|_\infty
			\cdots
			\|h^n\|_\infty,
	\end{align*}
	for any $h^1,\dots,h^\nu\in\CM$ and $1\leq \nu\leq n$.
	Here $\const[\nu]$ is a positive constant depending only on $b,\sigma$ and $\nu$.
\end{proposition}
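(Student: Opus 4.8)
The plan is to read off the statement from \pref{prop_20140929105014} by restricting the Fr\'echet differentiability proved there to Cameron--Martin directions. Two elementary facts are needed: first, $\CM\subset\HolFunc[0]{\Hurst-\epsilon}{[0,1]}{\RealNum}$; second, the uniform norm is dominated by the Cameron--Martin norm, since if $h\in\CM$ is written $h_t=\expect[ZB_t]$ with $\|h\|_\CM=\expect[Z^2]^{1/2}$, then $|h_t|\le\|h\|_\CM\,t^\Hurst\le\|h\|_\CM$. Thus, fixing $\omega\in\probSp_0$ (so that $B(\omega)\in\HolFunc[0]{\Hurst-\epsilon}{[0,1]}{\RealNum}$) and applying \pref{prop_20140929105014} with $g=B(\omega)$, the map $\CM\ni h\mapsto X_t(\omega+h)$ is $n$-times Fr\'echet differentiable and, by Assertion~(\ref{item_20141228072212}) combined with the norm comparison,
\begin{align*}
	|\nabla_{h^\nu}\cdots\nabla_{h^1}X_t(\omega)|
	\le
		\const[\nu]\|h^1\|_\infty\cdots\|h^\nu\|_\infty
	\le
		\const[\nu]\|h^1\|_\CM\cdots\|h^\nu\|_\CM
\end{align*}
for all $h^1,\dots,h^\nu\in\CM$ and $1\le\nu\le n$, with $\const[\nu]$ a \emph{deterministic} constant; its finiteness uses \hyporef{hypo_ellipticity} through the boundedness of $\Wronskian/\sigma$ and of the propagator $J_t(g)(J_s(g))^{-1}$ appearing in the formulas of \pref{prop_20140929105014}. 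Hence each iterated directional derivative is a bounded symmetric $\nu$-linear form on $\CM$, uniformly in $\omega$ and $t$.

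Next I would promote this $\CM$-differentiability to Sobolev regularity. For $\nu=1$ the argument is the one already recorded before the proposition: $h\mapsto\nabla_hX_t$ is a bounded linear functional on $\CM$, hence represented by $DX_t\in\CM$ with $\|DX_t\|_\CM\le\const[1]$ almost surely, and ray absolute continuity together with this bounded, $\CM$-valued stochastic G\^ateaux derivative gives $X_t\in\SobSp{1}{p}{\probSp}{\RealNum}$ for all $p<\infty$ with $\innerProd[\CM]{DX_t}{h}=\nabla_hX_t$. For $\nu\ge2$ one iterates, applying the same reasoning to the $\CM$-valued functional $\omega\mapsto DX_t(\omega)$: by the chain rule its $\CM$-directional derivatives are again controlled by the constants $\const[\nu]$ and take values in $\CM^{\otimes2}$ (each term of the derivative formulas is an iterated integral of products of the kernels $R(\cdot,s)$, hence Hilbert--Schmidt), so $DX_t\in\SobSp{n-1}{p}{\probSp}{\CM}$, i.e.\ $X_t\in\SobSp{n}{p}{\probSp}{\RealNum}$, and $D^\nu X_t$ is the symmetric $\CM^{\odot\nu}$-valued random variable representing the $\nu$-th $\CM$-Fr\'echet derivative. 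The deterministic bounds make all integrability requirements automatic, so indeed $X_t\in\SobSp{n}{\infty-}{\probSp}{\RealNum}$.

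For the stated estimate: since $D^\nu X_t$ is symmetric, $\innerProd[\CM^{\odot\nu}]{D^\nu X_t}{h^1\odot\cdots\odot h^\nu}=\innerProd[\CM^{\otimes\nu}]{D^\nu X_t}{h^1\otimes\cdots\otimes h^\nu}$, and up to the normalization of the inner product on $\CM^{\odot\nu}$ this equals the $\nu$-th Fr\'echet derivative evaluated at $(h^1,\dots,h^\nu)$, namely $\nabla_{h^1}\cdots\nabla_{h^\nu}X_t$ (the order of differentiation being irrelevant for a smooth map). The bound of the first paragraph then yields $|\innerProd[\CM^{\odot\nu}]{D^\nu X_t}{h^1\odot\cdots\odot h^\nu}|\le\const[\nu]\|h^1\|_\infty\cdots\|h^\nu\|_\infty$, which is the assertion.

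The step I expect to be the main obstacle is the second one: the clean passage from ``$\CM$-Fr\'echet differentiable to order $n$ with integrable derivatives'' to ``$X_t\in\SobSp{n}{p}$ with Malliavin derivatives equal to the $\CM$-derivatives''. For $n=1$ this is classical and the paper already invokes it; for $n\ge2$ one must either iterate it carefully along the chain $X_t\rightsquigarrow DX_t\rightsquigarrow D^2X_t\rightsquigarrow\cdots$, checking at each stage ray absolute continuity and that the derivative is Hilbert--Schmidt-valued, or appeal to a Sugita/Kusuoka--Stroock-type characterization of $\SobSp{n}{p}$ through iterated $\CM$-differentiability; the only genuinely computational point is the verification that the higher derivatives really lie in $\CM^{\otimes\nu}$ rather than merely in the larger space of bounded multilinear forms, which follows from the explicit kernel structure of the formulas in \pref{prop_20140929105014}.
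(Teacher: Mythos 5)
Your proposal is correct and follows essentially the same route as the paper, which does not give a separate proof but derives the proposition directly from \pref{prop_20140929105014} together with the inclusion $\CM\subset\HolFunc[0]{\Hurst-\epsilon}{[0,1]}{\RealNum}$, the bound $\|h\|_\infty\leq\|h\\|_{\CM}$, and the standard identification of iterated Cameron--Martin derivatives with Malliavin derivatives. Your discussion of the passage from $\CM$-Fr\'echet differentiability to membership in $\SobSp{n}{p}{\probSp}{\RealNum}$ for $n\geq 2$ is in fact more careful than the paper, which states this step without elaboration.
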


In what follows, we set
\begin{align*}
	\delta_{st}
	&=
		\mathscr{R}\indicator{[s,t)},&
	\zeta_{st}
	&=
		\mathscr{R}
		\left[
			\frac{1}{2}
			(t-s)
			\indicator{[s,t)}
			-
			\int_s^t
				\indicator{[s,v)}\,
				dv
		\right]
\end{align*}
for $0\leq s<t\leq 1$.
Note
\begin{gather*}
	\hermitePoly{q}
		(
			2^{m\Hurst} B_{\dyadicPart[m]{k-1}\dyadicPart[m]{k}}
		)
	=
		\WienerInt{q}
			((2^{m\Hurst} \delta_{\dyadicPart[m]{k-1}\dyadicPart[m]{k}})^{\odot q})
	=
		2^{mq\Hurst}
		\WienerInt{q}
			(\delta_{\dyadicPart[m]{k-1}\dyadicPart[m]{k}}^{\odot q}),\\
	2^{m(\Hurst+1)}
	\left(
		\frac{1}{2\cdot 2^m}
		B_{\dyadicPart[m]{k-1}\dyadicPart[m]{k}}
		-
		\int_{\dyadicPart[m]{k-1}}^{\dyadicPart[m]{k}}
			B_{\dyadicPart[m]{k-1}u}\,
			du
	\right)
	=
		2^{m(\Hurst+1)}
		\WienerInt{1}(\zeta_{\dyadicPart[m]{k-1}\dyadicPart[m]{k}}).
\end{gather*}
The functions $\delta_{st}$ and $\zeta_{st}$ are bounded functions as follows:
\begin{proposition}\label{prop_20150520010300}
	For any $0\leq s<t\leq 1$, we have
	\begin{gather*}
		\|\delta_{st}\|_\infty
		\leq
			\begin{cases}
				(t-s)^{2\Hurst},&0<\Hurst< 1/2,\\
				2\Hurst (t-s),&1/2\leq\Hurst<1,\\
			\end{cases}\\
		\|\zeta_{st}\|_\infty
		\leq
			\begin{cases}
				\displaystyle
					{
						\left(
							\frac{1}{2}
							+
							\frac{1}{2\Hurst+1}
						\right)
						(t-s)^{2\Hurst+1},
					}
				&
					0<\Hurst< 1/2,\\
				2\Hurst (t-s)^2,
				&
					1/2\leq\Hurst<1.\\
			\end{cases}
	\end{gather*}
\end{proposition}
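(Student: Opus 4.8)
The plan is to evaluate the two functions pointwise and estimate them by elementary inequalities for power functions; the point is that the na\"ive Cauchy--Schwarz bound $\|\delta_{st}\|_\infty\le\expect[(B_t-B_s)^2]^{1/2}\sup_u\expect[B_u^2]^{1/2}\le(t-s)^{\Hurst}$ is too weak in the regime $\Hurst<1/2$, where we need the exponent $2\Hurst$. By definition $\delta_{st}(u)=[\mathscr{R}\indicator{[s,t)}](u)=\expect[(B_t-B_s)B_u]$, so \eqref{eq_20150219103359} gives
\begin{align*}
	\delta_{st}(u)
	=
		\frac12
		\left(
			t^{2\Hurst}
			-
			s^{2\Hurst}
			-
			|t-u|^{2\Hurst}
			+
			|s-u|^{2\Hurst}
		\right),
	\qquad
	0\le u\le 1 .
\end{align*}
I would first bound $\|\delta_{st}\|_\infty$ by distinguishing the three positions $u\le s$, $s\le u\le t$ and $u\ge t$, in each case grouping the four terms into two differences that can be treated separately.

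In the cases $s\le u\le t$ and $u\ge t$ each of the two differences is nonnegative, and when $0<\Hurst<1/2$ each is bounded by $(t-s)^{2\Hurst}$ by subadditivity of $r\mapsto r^{2\Hurst}$ (e.g.\ $(u-s)^{2\Hurst}\le(t-s)^{2\Hurst}$, $t^{2\Hurst}-s^{2\Hurst}\le(t-s)^{2\Hurst}$, $(u-s)^{2\Hurst}-(u-t)^{2\Hurst}\le(t-s)^{2\Hurst}$), so $|\delta_{st}(u)|\le(t-s)^{2\Hurst}$; in the case $u\le s$ one writes $\delta_{st}(u)=\tfrac12(\psi(t)-\psi(s))$ with $\psi(r)=r^{2\Hurst}-(r-u)^{2\Hurst}$, observes that $\psi$ is decreasing because $r\mapsto r^{2\Hurst-1}$ is decreasing, and then estimates $\psi(s)-\psi(t)=\bigl((t-u)^{2\Hurst}-(s-u)^{2\Hurst}\bigr)-\bigl(t^{2\Hurst}-s^{2\Hurst}\bigr)\le(t-u)^{2\Hurst}-(s-u)^{2\Hurst}\le(t-s)^{2\Hurst}$. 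For $1/2\le\Hurst<1$ the same case split works with subadditivity replaced by the mean value theorem: $t^{2\Hurst}-s^{2\Hurst}=2\Hurst\,\theta^{2\Hurst-1}(t-s)\le2\Hurst(t-s)$ since $\theta\le1$ and $2\Hurst-1\ge0$, likewise for the other differences, $(t-s)^{2\Hurst}\le t-s\le2\Hurst(t-s)$, and the monotonicity of $\psi$ is now reversed; collecting the cases yields the stated bound on $\|\delta_{st}\|_\infty$. For $\zeta_{st}$ I would use linearity and continuity of $\mathscr{R}$ to write $\zeta_{st}(u)=\tfrac{t-s}{2}\delta_{st}(u)-\int_s^t\delta_{sv}(u)\,dv$ and plug in the bound just obtained for $\|\delta_{st}\|_\infty$ and $\|\delta_{sv}\|_\infty$. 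When $0<\Hurst<1/2$ this gives
\begin{align*}
	\|\zeta_{st}\|_\infty
	\le
		\frac{t-s}{2}(t-s)^{2\Hurst}
		+
		\int_s^t (v-s)^{2\Hurst}\,dv
	=
		\left(
			\frac12
			+
			\frac{1}{2\Hurst+1}
		\right)
		(t-s)^{2\Hurst+1},
\end{align*}
and when $1/2\le\Hurst<1$ it gives $\|\zeta_{st}\|_\infty\le\tfrac{t-s}{2}\cdot2\Hurst(t-s)+\int_s^t2\Hurst(v-s)\,dv=2\Hurst(t-s)^2$, as claimed.

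The only genuinely delicate point is the case $u\le s$, i.e.\ the covariance of the increments of $B$ over the disjoint intervals $[0,u]$ and $[s,t]$: here one really needs the explicit formula together with the monotonicity of $\psi$, since grouping the terms in the obvious way does not produce the sharp exponent $2\Hurst$ for $\Hurst<1/2$, and this is exactly where a crude Cauchy--Schwarz estimate fails. Everything else is bookkeeping with power-function inequalities; in particular no Malliavin calculus is used, only the covariance structure \eqref{eq_20150219103359}.
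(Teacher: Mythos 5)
Your proof is correct and follows essentially the same route as the paper: the paper likewise identifies $\delta_{st}(u)=\expect[(B_t-B_s)B_u]$, imports the covariance bound from \cite[Lemmas~5, 6]{NourdinNualartTudor2010} instead of reproving it from \eqref{eq_20150219103359}, and deduces the $\zeta_{st}$ estimate from exactly your decomposition $\zeta_{st}(u)=\tfrac12(t-s)\delta_{st}(u)-\int_s^t\delta_{sv}(u)\,dv$. The only (harmless) imprecision is the claim that for $s\le u\le t$ both groupings are nonnegative --- the difference $|s-u|^{2\Hurst}-|t-u|^{2\Hurst}$ can be negative there --- but since each of the terms $|t-u|^{2\Hurst}$, $|s-u|^{2\Hurst}$ and $t^{2\Hurst}-s^{2\Hurst}$ lies in $[0,(t-s)^{2\Hurst}]$ when $\Hurst<1/2$, the stated bound still follows.
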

\begin{proof}
	Note
	\begin{align*}
		|\expect[(B_t-B_s)B_u]|
		\leq
		\begin{cases}
			(t-s)^{2\Hurst},&0<\Hurst< 1/2,\\
			2\Hurst (t-s),&1/2\leq\Hurst<1,\\
		\end{cases}
	\end{align*}
	for any $0\leq s<t\leq 1$ and $0\leq u\leq 1$.
	We can find this estimate in \cite[Lemma~5,6]{NourdinNualartTudor2010}.
	The first assertion follows from this estimate and the identification
	$
		\delta_{st}(u)
		=
			[\mathscr{R}\indicator{[s,t)}](u)
		=
			\expect[(B_t-B_s)B_u]
	$.
	We see the second one from the expression
	\begin{align*}
		\zeta_{st}(u)
		=
			\frac{1}{2}
				(t-s)
				\expect[(B_t-B_s)B_u]
				-
				\int_s^t
					\expect[(B_v-B_s)B_u]\,
					dv.
	\end{align*}
	The proof is completed.
\end{proof}

\section{Proof of \pref{prop_20141102222020}}\label{sec_20150107074711}

In this section, we prove \pref{prop_20141102222020}.
The result of convergence of $(B,\hermiteVar{q}{m})$ can be found in \cite{NourdinNualartTudor2010}.
Main contribution in this section is proof of convergence of $\TRVar{m}$.

Throughout this section, we use the following notation:
\begin{gather*}
	a_{k,l}
	=
		\expect
			\left[
				\left(
					\frac{1}{2}
					B_{k-1,k}
					-
					\int_{k-1}^k
						B_{k-1,u}\,
						du
				\right)
				\left(
					\frac{1}{2}
					B_{l-1,l}
					-
					\int_{l-1}^l
						B_{l-1,v}\,
						dv
				\right)
			\right],\\
	a^\dagger_{k,l}
	=
		\expect
			\left[
				B_{k-1,k}
				\left(
					\frac{1}{2}
					B_{l-1,l}
					-
					\int_{l-1}^l
						B_{l-1,u}\,
						du
				\right)
			\right]
\end{gather*}
for $k,l\geq 1$.
It follows from the stationary increments of fBm that
\begin{align}
	\label{eq_20150518052410}
	a_{k,l}=a_{1,l-k+1},\\
	\label{eq_20150518053842}
	a^\dagger_{k,l}=a^\dagger_{1,l-k+1}
\end{align}
for $1\leq k\leq l$. For the same reason, we have
\begin{align}\label{eq_20150519005630}
	a_{k,k}
	=
		a_{1,1}
	=
		\frac{1}{4}
		\frac{1-\Hurst}{1+\Hurst}.
\end{align}

\subsection{Key estimates}

Before starting to prove \pref{prop_20141102222020},
we show the next three propositions:
\begin{proposition}\label{prop_20150518041212}
	It holds that
	\begin{align*}
		|a_{k,l}|
		\leq
			\const
			\begin{cases}
				|k-l|^{2\Hurst-4},& |k-l|\geq 1,\\
				1,& |k-l|=0
			\end{cases}
	\end{align*}
	for any $k$ and $l$.
\end{proposition}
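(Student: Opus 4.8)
The plan is to turn $a_{k,l}$ into an explicit double integral against the fractional covariance and then exploit two vanishing moments in each variable. Introduce the signed measure $\nu_k=\tfrac12\delta_{k-1}+\tfrac12\delta_k-\indicator{[k-1,k]}(u)\,du$ supported on $[k-1,k]$, so that $\tfrac12 B_{k-1,k}-\int_{k-1}^{k}B_{k-1,u}\,du=\int B_t\,\nu_k(dt)=:B(\nu_k)$. A one-line computation gives $\int\nu_k(du)=0$ and $\int u\,\nu_k(du)=0$, that is, $\nu_k$ annihilates affine functions, and $\|\nu_k\|_{\mathrm{TV}}=2$. Because $\nu_k$ and $\nu_l$ have total mass $0$, the terms $s^{2\Hurst}$ and $t^{2\Hurst}$ in $R(s,t)=\tfrac12\{s^{2\Hurst}+t^{2\Hurst}-|t-s|^{2\Hurst}\}$ integrate to $0$, so
\[
	a_{k,l}=\expect[B(\nu_k)B(\nu_l)]=-\frac12\iint|t-s|^{2\Hurst}\,\nu_k(ds)\,\nu_l(dt).
\]

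For $|k-l|\le 1$ I would just note that $|t-s|^{2\Hurst}$ is bounded by an absolute constant on the (bounded) union of the two supports and conclude $|a_{k,l}|\le\tfrac12\|\nu_k\|_{\mathrm{TV}}\|\nu_l\|_{\mathrm{TV}}\,\const\le\const$, which is the asserted bound since $|k-l|^{2\Hurst-4}\ge 1$ in this range. The substance is the case $d:=|k-l|\ge 2$; by symmetry assume $k<l$. On $[k-1,k]\times[l-1,l]$ one has $t-s\ge d-1\ge 1$, hence $\Phi(s,t):=(t-s)^{2\Hurst}$ is smooth there and stays away from its diagonal singularity. The tool I would use is the elementary kernel representation of an affine-annihilating measure: if $\nu$ is supported on $[a,b]$ with $\int\nu=\int u\,\nu(du)=0$ and $g\in C^2$ near $[a,b]$, then writing $g(s)-g(a)-g'(a)(s-a)=\int_a^b(s-r)_+g''(r)\,dr$ and using Fubini gives $\int g\,d\nu=\int_a^b g''(r)\,\mu(r)\,dr$ with $\mu(r)=\int(s-r)_+\,\nu(ds)$, so $\|\mu\|_\infty\le(b-a)\|\nu\|_{\mathrm{TV}}$.

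Applying this first to the inner $s$-integral and then to the resulting $C^2$ function of $t$ (differentiation under the integral sign being licit precisely because of the separation $d\ge 2$), one obtains
\[
	a_{k,l}=-\frac12\int_{l-1}^{l}\!\int_{k-1}^{k}\partial_s^2\partial_t^2\Phi(r,u)\,\mu_k(r)\,\mu_l(u)\,dr\,du,
\]
with $\mu_k,\mu_l$ supported on $[k-1,k],[l-1,l]$ and $\|\mu_k\|_\infty,\|\mu_l\|_\infty\le 2$. Since $\partial_s^2\partial_t^2\Phi(r,u)=2\Hurst(2\Hurst-1)(2\Hurst-2)(2\Hurst-3)(u-r)^{2\Hurst-4}$ and $u-r\ge d-1\ge d/2$ throughout the region (using $d\ge 2$ and $2\Hurst-4<0$), we get $|a_{k,l}|\le\const\,(d-1)^{2\Hurst-4}\le\const\,|k-l|^{2\Hurst-4}$ with $\const$ independent of $k$ and $l$.

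The argument is largely routine; the two points that require care are the cancellation of the two $2\Hurst$-power terms in $R$ — which is exactly the zero-mass property of $\nu_k$ — and the legitimacy of the double kernel representation, which is where the hypothesis $|k-l|\ge 2$ is essential, since it keeps $\Phi$ uniformly away from the singularity $s=t$. One could instead invoke \eqref{eq_20150518052410} to reduce first to $a_{1,n}$, but as all the estimates above are uniform in $k,l$ this is unnecessary.
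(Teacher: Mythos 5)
Your proof is correct. The core mechanism is the same as the paper's: both proofs express $a_{k,l}$ as the integral of the fBm covariance against a product of measures that annihilate constants and linear functions (your $\nu_k$ is exactly the paper's $\frac{1}{2}(\delta_{k-1}+\delta_k)-du$, written there as the difference of the probability measures $\mu_1(dx)$ and $ds$), and both convert those two vanishing moments per variable into a gain of $|k-l|^{-4}$ over the raw size $|k-l|^{2\Hurst}$ of the kernel. Where you differ is in how that gain is extracted. The paper proves a separate expansion lemma (\lref{lem_20141120004521}): it Taylor-expands $|1+z|^{2\Hurst}$ to third order in $z=O(1/(k-l))$, writes out the polynomial coefficients $b_2,b_3$ explicitly, checks by hand that they integrate to zero against the product measure, and bounds the fourth-order remainder. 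You instead use a Peano-kernel representation $\int g\,d\nu=\int g''(r)\mu(r)\,dr$ applied once in each variable, which lands the full fourth mixed derivative $\partial_s^2\partial_t^2(t-s)^{2\Hurst}=2\Hurst(2\Hurst-1)(2\Hurst-2)(2\Hurst-3)(t-s)^{2\Hurst-4}$ directly on the kernel, uniformly bounded by $(d-1)^{2\Hurst-4}$ on the separated supports. Your route avoids the explicit polynomial bookkeeping entirely and is arguably cleaner and more robust (it would give the analogous bound for any measure with $p$ vanishing moments without recomputing coefficients); the paper's expansion lemma has the advantage of being reused verbatim in the proof of \pref{prop_20141122044947}, where the third-order term does \emph{not} cancel and its explicit coefficient is needed. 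All the delicate points — the cancellation of the $s^{2\Hurst}$ and $t^{2\Hurst}$ terms via zero mass, the need for $|k-l|\geq 2$ to keep the kernel away from its diagonal singularity, and the crude constant bound when $|k-l|\leq 1$ — are handled correctly.
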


\begin{proposition}\label{prop_20141122044947}
	It holds that
	\begin{align*}
		|a^\dagger_{k,l}|
		\leq
			\const
			\begin{cases}
				|k-l|^{2\Hurst-3},& |k-l|\geq 1,\\
				1,& |k-l|=0,
			\end{cases}
	\end{align*}
	for any $k,l\geq 1$.
\end{proposition}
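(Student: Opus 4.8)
The plan is to remove the randomness at once by pairing the Wiener integrals, reduce to a one-parameter quantity via the stationary increments of $B$, and then recognise the resulting deterministic expression as a quadrature error for the power function $x\mapsto x^{2H}$, which Taylor's formula controls.

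First I would use the stationarity of the increments of fBm. When $k\le l$, \eqref{eq_20150518053842} already reduces the estimate to bounding $a^\dagger_{1,j+1}$ for $j=l-k\ge0$; when $k>l$, shifting all times by $-(l-1)$ gives $a^\dagger_{k,l}=c(j)$ with $j=k-l\ge1$, where $c(j)=\expect[(B_{j+1}-B_j)(\tfrac12 B_1-\int_0^1 B_u\,du)]$. For $|k-l|\le1$ the right-hand side of the claimed inequality is a fixed positive constant, while $a^\dagger_{k,l}$ is a fixed finite number, so the bound is trivial; hence I may assume $j\ge2$, which keeps every power function below evaluated on $[j-1,\infty)\subset[1,\infty)$, away from the singularity at the origin.

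Second, for the case $k\le l$ I would write out $a^\dagger_{1,j+1}$ using the covariance \eqref{eq_20150219103359}: the ``$\tfrac12 B_{l-1,l}$'' term contributes $\tfrac12\rho_H(j)$ and the integral term contributes $\tfrac12\big(\int_j^{j+1}g(u)\,du-g(j)\big)$ with $g(x)=x^{2H}-(x-1)^{2H}$. Using the elementary identity $\rho_H(j)=\tfrac12\big(g(j+1)-g(j)\big)$, everything collapses to
\[
	a^\dagger_{1,j+1}=\tfrac12\left(\frac{g(j)+g(j+1)}{2}-\int_j^{j+1}g(u)\,du\right),
\]
that is, one half of the trapezoidal-rule error for $g$ on $[j,j+1]$. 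By the standard error formula this equals $\tfrac1{24}g''(\xi_j)$ for some $\xi_j\in(j,j+1)$, and since $g''(x)=2H(2H-1)\{x^{2H-2}-(x-1)^{2H-2}\}$, applying the mean value theorem to $t\mapsto t^{2H-2}$ and using $2H-3<0$ gives $|g''(x)|\le C(x-1)^{2H-3}\le C'x^{2H-3}$ for $x\ge2$. Hence $|a^\dagger_{1,j+1}|\le C j^{2H-3}$. The case $k>l$ is handled the same way: a parallel computation expresses $c(j)$ as $\int_0^2\mu(w)f'(j-1+w)\,dw$ with $f(x)=x^{2H}$ and $\mu$ a fixed piecewise-linear kernel on $[0,2]$ that is symmetric about $w=1$ and has $\int_0^2\mu=0$. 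These two properties let me subtract from $f'(j-1+w)$ its constant and linear parts at $w=1$ without changing the integral, leaving a remainder controlled by $f'''$; since $|f'''(v)|=C v^{2H-3}$ on $[j-1,\infty)$, this again yields $|c(j)|\le C j^{2H-3}$ for $j\ge2$. (Equivalently, $c(j)$ is a difference of two consecutive quadrature errors for $f$, which produces the same gain.)

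The main obstacle is the bookkeeping in the case $k>l$: in the case $k\le l$ the function $g$ is already a first difference of $x^{2H}$, so a single application of the trapezoidal error formula lands directly at the exponent $2H-3$, whereas for $c(j)$ one must extract \emph{two} successive cancellations (the vanishing of $\int\mu$ and, by symmetry, of $\int\mu(w)(w-1)\,dw$) to get $2H-3$ rather than the naive $2H-2$. Care is also needed to keep all power functions on $[j-1,\infty)$ with $j\ge2$, which is exactly why the cases $|k-l|\in\{0,1\}$ are disposed of at the outset.
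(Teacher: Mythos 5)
Your proof is correct, but it is organized quite differently from the paper's. The paper proves this estimate (and \pref{prop_20150518041212} alongside it) through \lref{lem_20141120004521}: a binomial expansion of the four-point covariance in powers of $(k-l)^{-1}$ with an $O(|k-l|^{-4})$ remainder, followed by an explicit check that the $(k-l)^{-2}$ coefficients of the two pieces of $a^\dagger_{k,l}$ cancel, leaving the surviving third-order term $-\tfrac14\binom{2\Hurst}{3}|k-l|^{2\Hurst}(k-l)^{-3}$. You instead make the cancellation structural: for $k\leq l$ your identity $a^\dagger_{1,j+1}=\tfrac12\bigl(\tfrac{g(j)+g(j+1)}{2}-\int_j^{j+1}g\bigr)$ with $g(x)=x^{2\Hurst}-(x-1)^{2\Hurst}$ checks out (using $\rho_\Hurst(j)=\tfrac12\{g(j+1)-g(j)\}$), and the trapezoidal error formula plus the mean value theorem on $t\mapsto t^{2\Hurst-2}$ delivers the exponent $2\Hurst-3$ in one step; for $k>l$ your kernel representation is also correct --- one computes $c(j)=\int_{-1}^{1}\bigl(\tfrac14-\tfrac12|t|\bigr)f'(j+t)\,dt$ with $f(x)=x^{2\Hurst}$, and the two moment conditions $\int\mu=0$ and $\int\mu(w)(w-1)\,dw=0$ reduce the bound to $f'''$ exactly as you say. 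What your route buys is that no coefficients need to be computed and the shared expansion lemma is bypassed entirely; what it costs is the case split $k\leq l$ versus $k>l$ (the paper's single formula handles both signs at once, which matters since \pref{prop_20141122044103} shows the two cases are genuinely different --- they sum to zero rather than being equal) and the fact that the ``parallel computation'' producing the kernel $\mu$ is asserted rather than carried out, though it is routine. Your handling of $|k-l|\in\{0,1\}$ by stationarity and finiteness matches the paper's appeal to the H\"older inequality.
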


\begin{proposition}\label{prop_20141122044103}
	It holds that
	$
		a^\dagger_{k,l}+a^\dagger_{l,k}
		=
			0
	$
	for any $k,l\geq 1$.
\end{proposition}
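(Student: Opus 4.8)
The plan is to deduce the identity from two symmetries of fractional Brownian motion — translation invariance of its increments and time reversibility — rather than by a brute computation; this makes the cancellation transparent.

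First I would rewrite the inner factor in a symmetric form. Since $\int_{l-1}^l B_{l-1}\,du=B_{l-1}$ and $B_{l-1,l}=B_l-B_{l-1}$, one has
\[
	\frac12 B_{l-1,l}-\int_{l-1}^l B_{l-1,u}\,du=\frac{B_{l-1}+B_l}{2}-\int_{l-1}^l B_u\,du=:\eta_l,
\]
the trapezoidal–rule error of $\int_{l-1}^l B_u\,du$, so that $a^\dagger_{k,l}=\expect[B_{k-1,k}\,\eta_l]$. (Everything here is square integrable, and any expectation of a product against $\int_{l-1}^l B_u\,du$ is handled by Fubini, since $\expect[\sup_{0\le u\le N}B_u^2]<\infty$.)

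Next, fix an integer $N\ge\max\{k,l\}$ and set $\tilde B_u:=B_N-B_{N-u}$ for $0\le u\le N$. A direct check with \eqref{eq_20150219103359} — or stationarity of the increments — shows that $\tilde B$ is again a fractional Brownian motion on $[0,N]$, hence $\tilde B$ and $B$ have the same law on $[0,N]$. Reflecting the two unit intervals gives $B_{k-1,k}(\tilde B)=B_{N-k,\,N-k+1}(B)$, and — this is the one small computation, and the source of the minus sign — substituting $\tilde B_u=B_N-B_{N-u}$ into $\eta_l$ and changing variables $v=N-u$ in the integral yields $\eta_l(\tilde B)=-\eta_{N-l+1}(B)$. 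Taking expectations and using $\tilde B\stackrel{d}{=}B$,
\[
	a^\dagger_{k,l}=\expect\big[B_{k-1,k}(\tilde B)\,\eta_l(\tilde B)\big]=-\,\expect\big[B_{N-k,\,N-k+1}(B)\,\eta_{N-l+1}(B)\big]=-\,a^\dagger_{N-k+1,\;N-l+1}.
\]
Finally, translation invariance of the fBm increment covariance (the stationarity behind \eqref{eq_20150518053842}; shift all indices by $l-N+k-1$) gives $a^\dagger_{N-k+1,\,N-l+1}=a^\dagger_{l,k}$, and combining the two relations yields $a^\dagger_{k,l}=-a^\dagger_{l,k}$, which is the assertion.

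The only genuinely computational step is the identity $\eta_l(\tilde B)=-\eta_{N-l+1}(B)$, which is a one–line change of variables; everything else is bookkeeping with the two symmetries. If one prefers a purely computational proof, the same conclusion follows by plugging $R(s,t)=\tfrac12(s^{2\Hurst}+t^{2\Hurst}-|s-t|^{2\Hurst})$ into
\[
	a^\dagger_{k,l}=\tfrac12 r_k(l)+\tfrac12 r_k(l-1)-\int_{l-1}^l r_k(u)\,du,\qquad r_k(u):=\expect[B_{k-1,k}B_u]=R(k,u)-R(k-1,u):
\]
the part of $r_k$ that is constant in $u$ is annihilated by the trapezoidal rule, and after the substitution $w=u-k$ with $m:=l-k$ one is left with
\[
	a^\dagger_{k,l}=\tfrac14\big(|m+1|^{2\Hurst}-|m-1|^{2\Hurst}\big)+\tfrac12\int_{m-1}^m|w|^{2\Hurst}\,dw-\tfrac12\int_{m}^{m+1}|w|^{2\Hurst}\,dw,
\]
which is manifestly odd under $m\mapsto-m$ since $|w|^{2\Hurst}$ is even; adding $a^\dagger_{l,k}$ (the same formula with $m$ replaced by $-m$) gives $0$.
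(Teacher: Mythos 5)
Your argument is correct, and your primary route is genuinely different from the paper's. The paper proves the antisymmetry by brute force: it expands $a^\dagger_{k,l}$ and $a^\dagger_{l,k}$ explicitly via the covariance \eqref{eq_20150219103359} into the formulas \eqref{eq_20150508075555} and \eqref{eq_20150508075943} and cancels the two expressions after the substitution $t=1-s$ in the integral. You instead exploit the time-reversal invariance of fBm, $\tilde B_u=B_N-B_{N-u}\stackrel{d}{=}B_u$, under which the unit increment $B_{k-1,k}$ maps to another unit increment while the trapezoidal error $\eta_l$ picks up a sign, giving $a^\dagger_{k,l}=-a^\dagger_{N-k+1,N-l+1}$; stationarity of increments then converts the right-hand side to $-a^\dagger_{l,k}$. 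This makes the origin of the minus sign conceptual rather than computational, and the steps check out: $\tilde B$ has the fBm covariance and starts at zero, $\eta_l(\tilde B)=-\eta_{N-l+1}(B)$ is a correct one-line change of variables, and the stationarity identity does hold for arbitrary index order --- worth a remark, since the paper states \eqref{eq_20150518053842} only for $k\le l$ and your shift by $l-N+k-1$ can be negative, but the underlying translation invariance of increment covariances is two-sided. Your fallback computation --- reducing $a^\dagger_{k,l}$ to a function of $m=l-k$ that is manifestly odd because $|w|^{2\Hurst}$ is even --- is essentially the paper's own proof in slightly cleaner packaging, so the added value of your write-up is the symmetry argument.
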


The following is a key lemma to prove \prefs{prop_20150518041212}{prop_20141122044947}:
\begin{lemma}\label{lem_20141120004521}
	It holds that
	\begin{multline*}
		\expect
			[
				(B_{x+k-1}-B_{s+k-1})
				(B_{y+l-1}-B_{t+l-1})
			]\\
		=
			\frac{1}{2}
			|k-l|^{2\Hurst}
			\left\{
				\binom{2\Hurst}{2}
				\frac{b_2(x,s,y,t)}{(k-l)^2}
				+
				\binom{2\Hurst}{3}
				\frac{b_3(x,s,y,t)}{(k-l)^3}
				+
				R(k-l;x,s,y,t)
			\right\}
	\end{multline*}
	for any $0\leq x,s,y,t\leq 1$ and $k,l\in\NaturalNum$ with $|k-l|\geq 2$.
	Here
	\begin{align*}
		b_2(x,s,y,t)
		&=
			2(xy-xt-sy+st),\\
		b_3(x,s,y,t)
		&=
			3(x^2y-xy^2-x^2t+xt^2-s^2y+sy^2+s^2t-st^2)
	\end{align*}
	and $R$ satisfies $|R(k-l;x,s,y,t)|\leq \const|k-l|^{-4}$ for some positive constant $\const$.
\end{lemma}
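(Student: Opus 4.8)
The plan is to reduce the identity to a Taylor expansion of $r\mapsto(1+r)^{2\Hurst}$. First I would rewrite the left-hand side using the covariance structure of fBm. Starting from $\expect[B_uB_v]=\tfrac12(u^{2\Hurst}+v^{2\Hurst}-|u-v|^{2\Hurst})$ (formula \eqref{eq_20150219103359}) and expanding the product bilinearly, all contributions of the form $u^{2\Hurst}$ cancel and one is left with
\begin{align*}
	\expect
		[
			(B_{x+k-1}-B_{s+k-1})
			(B_{y+l-1}-B_{t+l-1})
		]
	=
		\frac12
		\big(
			|n+(x-t)|^{2\Hurst}
			+|n+(s-y)|^{2\Hurst}
			-|n+(x-y)|^{2\Hurst}
			-|n+(s-t)|^{2\Hurst}
		\big),
\end{align*}
where $n=k-l$. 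Since $|n|\ge 2$ and $x,s,y,t\in[0,1]$, each of the four shifts $u\in\{x-t,\,s-y,\,x-y,\,s-t\}$ satisfies $|u/n|\le 1/2$; in particular $1+u/n>0$, so $|n+u|^{2\Hurst}=|n|^{2\Hurst}(1+u/n)^{2\Hurst}$ and the common prefactor $|n|^{2\Hurst}$ can be pulled out of the bracket. This is the only place where the hypothesis $|k-l|\ge 2$ enters.

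Next I would apply Taylor's formula with Lagrange remainder to $f(r)=(1+r)^{2\Hurst}$ on $[-1/2,1/2]$, writing $f(r)=1+2\Hurst r+\binom{2\Hurst}{2}r^2+\binom{2\Hurst}{3}r^3+\rho(r)$ with $|\rho(r)|\le\const|r|^4$, the constant $\const$ depending only on $\Hurst$ (the fourth derivative $r\mapsto 2\Hurst(2\Hurst-1)(2\Hurst-2)(2\Hurst-3)(1+r)^{2\Hurst-4}$ being bounded on $[-1/2,1/2]$). Substituting $r=u/n$ for the four shifts and forming the alternating sum, the degree-$0$ and degree-$1$ contributions vanish identically, because $1+1-1-1=0$ and $(x-t)+(s-y)-(x-y)-(s-t)=0$. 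For the degree-$2$ part, expanding the squares gives $(x-t)^2+(s-y)^2-(x-y)^2-(s-t)^2=2(xy-xt-sy+st)=b_2(x,s,y,t)$; and expanding the cubes, with the pure powers $x^3,s^3,y^3,t^3$ cancelling, gives exactly $b_3(x,s,y,t)$. Hence the bracket equals $\binom{2\Hurst}{2}b_2/n^2+\binom{2\Hurst}{3}b_3/n^3+R(k-l;x,s,y,t)$, where $R(k-l;x,s,y,t)$ is the alternating sum of the four remainders $\rho(u/n)$; since $|u|\le1$ we get $|R(k-l;x,s,y,t)|\le 4\const|n|^{-4}$, which is the asserted bound. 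Multiplying back by $\tfrac12|n|^{2\Hurst}$ yields the stated formula.

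This is essentially a bookkeeping computation, so I do not anticipate a genuine obstacle. The two points requiring care are (i) checking $1+u/n>0$, so that the factorization $|n+u|^{2\Hurst}=|n|^{2\Hurst}(1+u/n)^{2\Hurst}$ is legitimate, and (ii) performing the polynomial algebra for $b_2$ and $b_3$ without sign errors. The structural fact that makes the estimate useful later is precisely the vanishing of the degree-$0$ and degree-$1$ terms: it is what lets the large prefactor $|k-l|^{2\Hurst}$ (with $2\Hurst$ possibly close to $2$) be compensated by the $|k-l|^{-2}$ and smaller factors, so that $a_{k,l}$ and $a^\dagger_{k,l}$ end up summable in \prefs{prop_20150518041212}{prop_20141122044947}.
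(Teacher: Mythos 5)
Your proposal is correct and follows essentially the same route as the paper: expand the covariance via \eqref{eq_20150219103359} so that only the four terms $|k-l+u|^{2\Hurst}$ survive, factor out $|k-l|^{2\Hurst}$, Taylor-expand $(1+r)^{2\Hurst}$ to third order with an $O(r^4)$ remainder, and observe that the degree-$0$ and degree-$1$ alternating sums vanish while the degree-$2$ and degree-$3$ sums give $b_2$ and $b_3$. The only cosmetic difference is that you make explicit the bound $|u/(k-l)|\le 1/2$ justifying the expansion, which the paper leaves implicit.
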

\begin{proof}
	From \eqref{eq_20150219103359}, we have
	\begin{multline*}
		\expect
			[
				(B_{x+k-1}-B_{s+k-1})
				(B_{y+l-1}-B_{t+l-1})
			]\\
		\begin{aligned}
			&=
				\frac{1}{2}
				\left\{
					-|x-y+k-l|^{2\Hurst}
					+|x-t+k-l|^{2\Hurst}
					+|s-y+k-l|^{2\Hurst}
					-|s-t+k-l|^{2\Hurst}
				\right\}\\
			&=
				\frac{1}{2}
				|k-l|^{2\Hurst}
				\left\{
					-
					\left|
						1+\frac{x-y}{k-l}
					\right|^{2\Hurst}
					+
					\left|
						1+\frac{x-t}{k-l}
					\right|^{2\Hurst}
					+
					\left|
						1+\frac{s-y}{k-l}
					\right|^{2\Hurst}
					-
					\left|
						1+\frac{s-t}{k-l}
					\right|^{2\Hurst}
				\right\}.
		\end{aligned}
	\end{multline*}
	Applying the binomial theorem, we obtain
	\begin{multline*}
		\expect
			[
				(B_{x+k-1}-B_{s+k-1})
				(B_{y+l-1}-B_{t+l-1})
			]\\
		=
			\frac{1}{2}
			|k-l|^{2\Hurst}
			\left\{
				\sum_{\nu=0}^3
					\binom{2\Hurst}{\nu}
					a_\nu
						\left(
							\frac{x-y}{k-l},\frac{x-t}{k-l},\frac{s-y}{k-l},\frac{s-t}{k-l}
						\right)
				+
				R(k-l;x,s,y,t)
			\right\},
	\end{multline*}
	where
	$
		a_\nu(z_1,z_2,z_3,z_4)=-z_1^\nu+z_2^\nu+z_3^\nu-z_4^\nu
	$
	and $R$ is defined by
	\begin{align*}
		R(k-l;x,s,y,t)
		=
			\left\{
				-
				r_3
					\left(
						\frac{x-y}{k-l}
					\right)
				+
				r_3
					\left(
						\frac{x-t}{k-l}
					\right)
				+
				r_3
					\left(
						\frac{s-y}{k-l}
					\right)
				-
				r_3
					\left(
						\frac{s-t}{k-l}
					\right)
			\right\}
	\end{align*}
	with the remainder term $r_3$.
	Note
	$
		|r_3(\xi)|
		\leq
			\const
			|\xi|^4
	$.
	Expanding the polynomials $a_\nu$, we see
	\begin{gather*}
		\begin{aligned}
			a_0
				\left(
					\frac{x-y}{k-l},\frac{x-t}{k-l},\frac{s-y}{k-l},\frac{s-t}{k-l}
				\right)
			&=
				0,
			&
			a_1
				\left(
					\frac{x-y}{k-l},\frac{x-t}{k-l},\frac{s-y}{k-l},\frac{s-t}{k-l}
				\right)
			&=
				0,
		\end{aligned}\\
		\begin{aligned}
			a_2
				\left(
					\frac{x-y}{k-l},\frac{x-t}{k-l},\frac{s-y}{k-l},\frac{s-t}{k-l}
				\right)
			&=
				\frac{1}{(k-l)^2}
				\cdot
				b_2(x,s,y,t),
		\end{aligned}\\
		\begin{aligned}
			a_3
				\left(
					\frac{x-y}{k-l},\frac{x-t}{k-l},\frac{s-y}{k-l},\frac{s-t}{k-l}
				\right)
			=
				\frac{1}{(k-l)^3}
				\cdot
				b_3(x,s,y,t).
		\end{aligned}
	\end{gather*}
	The proof is completed.
\end{proof}

\begin{proof}[Proof of \pref{prop_20150518041212}]
	The assertion for $|k-l|=0,1$ follows
	from the H\"{o}lder inequality and \eqref{eq_20150519005630}.
	We prove the assertion for $|k-l|\geq 2$.
	Note
	\begin{align*}
		\frac{1}{2}
		B_{k-1,k}
		-
		\int_{k-1}^k
			B_{k-1,u}\,
			du
		&=
			\frac{1}{2}
			(B_k-B_{k-1})
			-
			\int_{k-1}^k
				(B_u-B_{k-1})\,
				du\\
		&=
			\int_{k-1}^k
				du
			\int_{k-1}^k
				\mu_k(d\xi)\,
				(B_\xi-B_u)\\
		&=
			\int_0^1
				ds
			\int_0^1
				\mu_1(dx)\,
				(B_{x+k-1}-B_{s+k-1}).
	\end{align*}
	Here we set $\mu_k=(\delta_k+\delta_{k-1})/2$ by using the Dirac delta function $\delta_a$.
	From this equality, we see
	\begin{align*}
		a_{k,l}
		=
			\int_0^1
				ds
			\int_0^1
				\mu_1(dx)
			\int_0^1
				dt
			\int_0^1
				\mu_1(dy)\,
				\expect
					[
						(B_{x+k-1}-B_{s+k-1})
						(B_{y+l-1}-B_{t+l-1})
					].
	\end{align*}
	Note that $b_2$ and $b_3$ in \lref{lem_20141120004521} satisfy
	\begin{gather*}
		\int_0^1
			ds
		\int_0^1
			\mu_1(dx)
		\int_0^1
			dt
		\int_0^1
			\mu_1(dy)\,
			b_\nu(x,s,y,t)
		=
			0.
	\end{gather*}
	From \lref{lem_20141120004521}, we have
	\begin{align*}
		|a_{k,l}|
		&=
			\left|
				\int_0^1
					ds
				\int_0^1
					\mu_1(dx)
				\int_0^1
					dt
				\int_0^1
					\mu_1(dy)\,
					\frac{1}{2}
					|k-l|^{2\Hurst}
					R(k-l;x,s,y,t)
			\right|\\
		&\leq
			\const|k-l|^{2\Hurst-4},
	\end{align*}
	which implies the conclusion for $|k-l|\geq 2$.
	The proof is completed.
\end{proof}

\begin{proof}[Proof of \pref{prop_20141122044947}]
	The assertion for $|k-l|=0,1$ follows
	from the H\"{o}lder inequality and \eqref{eq_20150519005630}.
	We prove the assertion for $|k-l|\geq 2$.
	We have
	\begin{align*}
		a^\dagger_{k,l}
			&=
				\expect
					\left[
						(B_k-B_{k-1})
						\left(
							\frac{1}{2}
							(B_l-B_{l-1})
							-
							\int_0^1
								(B_{y+l-1}-B_{l-1})\,
								dy
						\right)
					\right]\\
			&=
				\frac{1}{2}
				\expect[(B_k-B_{k-1})(B_l-B_{l-1})]
				-
				\int_0^1
					\expect[(B_k-B_{k-1})(B_{y+l-1}-B_{l-1})]\,
					dy.
	\end{align*}
	From \lref{lem_20141120004521}, we have
	\begin{align*}
		\expect[(B_k-B_{k-1})(B_l-B_{l-1})]
		&=
			\frac{1}{2}
			|k-l|^{2\Hurst}
			\left\{
				\binom{2\Hurst}{2}
				\frac{2}{(k-l)^2}
				+
				R(k-l;1,0,1,0)
			\right\}
	\end{align*}
	and
	\begin{multline*}
		\int_0^1
			\expect[(B_k-B_{k-1})(B_{y+l-1}-B_{l-1})]\,
			dy\\
		\begin{aligned}
			&=
				\frac{1}{2}
				|k-l|^{2\Hurst}
				\left\{
					\binom{2\Hurst}{2}
					\frac{1}{(k-l)^2}
					\int_0^1
						2y\,
						dy
					+
					\binom{2\Hurst}{3}
					\frac{1}{(k-l)^3}
					\int_0^1
						3(y-y^2)\,
						dy
					+
					\int_0^1
						R(k-l;1,0,y,0)\,
						dy
				\right\}\\
			&=
				\frac{1}{2}
				|k-l|^{2\Hurst}
				\left\{
					\binom{2\Hurst}{2}
					\frac{1}{(k-l)^2}
					+
					\binom{2\Hurst}{3}
					\frac{1}{(k-l)^3}
					\frac{1}{2}
					+
					\int_0^1
						R(k-l;1,0,y,0)\,
						dy
				\right\}
		\end{aligned}
	\end{multline*}
	From these equality, we have
	\begin{align*}
		a^\dagger_{k,l}
		&=
			\frac{1}{2}
			|k-l|^{2\Hurst}
			\left\{
				-
				\frac{1}{2}
				\binom{2\Hurst}{3}
				\frac{1}{(k-l)^3}
				+
				\frac{1}{2}
				R(k-l;1,0,1,0)
				-
				\int_0^1
					R(k-l;1,0,y,0)\,
					dy
			\right\}\\
		&=
			-
			\frac{1}{4}
			\binom{2\Hurst}{3}
			\frac{|k-l|^{2\Hurst}}{(k-l)^3}
			+
			\frac{1}{2}
			|k-l|^{2\Hurst}
			\left\{
				\frac{1}{2}
				R(k-l;1,0,1,0)
				-
				\int_0^1
					R(k-l;1,0,y,0)\,
					dy
			\right\}.
	\end{align*}
	Recalling that $R$ satisfies $|R(k-l;x,s,y,t)|\leq \const|k-l|^{-4}$ for some positive constant $\const$,
	we obtain the conclusion.
\end{proof}

\begin{proof}[Proof of \pref{prop_20141122044103}]
	A direct computation yields
	\begin{align}\label{eq_20150508075555}
		a^\dagger_{k,l}
		&=
			\frac{1}{4}
			\left\{
				-|k-l+1|^{2\Hurst}
				+|k-l-1|^{2\Hurst}
			\right\}
			-
			\int_0^1
				\frac{1}{2}
				\left\{
					-|k-l+1-s|^{2\Hurst}
					+|k-l-s|^{2\Hurst}
				\right\}\,
				ds
	\end{align}
	and
	\begin{align}\label{eq_20150508075943}
		a^\dagger_{l,k}
		&=
			\frac{1}{4}
			\left\{
				-|l-k+1|^{2\Hurst}
				+|l-k-1|^{2\Hurst}
			\right\}
			{}
			-
			\int_0^1
				\frac{1}{2}
				\left\{
					-|k-l-t|^{2\Hurst}
					+|k-1+(1-t)|^{2\Hurst}
				\right\}\,
				dt.
	\end{align}
	The assertion follows from these two equalities.

	We see \eqref{eq_20150508075555} as follows:
	\begin{align*}
		a^\dagger_{k,l}
		&=
			\frac{1}{2}
			\expect[(B_k-B_{k-1})(B_l-B_{l-1})]
			-
			\int_0^1
				\expect
					[
						(B_k-B_{k-1})
						(B_{s+l-1}-B_{l-1})
					]\,
				ds\\
		&=
			\frac{1}{2}
			\frac{1}{2}
			\left\{
				|k-l+1|^{2\Hurst}
				+|k-l-1|^{2\Hurst}
				-2|k-l|^{2\Hurst}
			\right\}\\
		&\phantom{=}\quad\qquad
			{}
			-
			\int_0^1
				\frac{1}{2}
				\left\{
					-|k-(s+l-1)|^{2\Hurst}
					+|k-(l-1)|^{2\Hurst}
				\right.\\
		&\phantom{=}\quad\qquad\qquad\qquad\qquad
				\left.{}
					+|(k-1)-(s+l-1)|^{2\Hurst}
					-|(k-1)-(l-1)|^{2\Hurst}
				\right\}\,
				ds.
	\end{align*}
	In order to prove \eqref{eq_20150508075943},
	we exchange $k$ and $l$ in \eqref{eq_20150508075555}
	and obtain
	\begin{align*}
		a^\dagger_{l,k}
		&=
			\frac{1}{4}
			\left\{
				-|l-k+1|^{2\Hurst}
				+|l-k-1|^{2\Hurst}
			\right\}
			-
			\int_0^1
				\frac{1}{2}
				\left\{
					-|l-k+1-s|^{2\Hurst}
					+|l-k-s|^{2\Hurst}
				\right\}\,
				ds.
	\end{align*}
	From the integration by substitution $t=1-s$,
	we see that the integral is equal to
	\begin{align*}
			\int_1^0
				\frac{1}{2}
				\left\{
					-|l-k+t|^{2\Hurst}
					+|l-k-(1-t)|^{2\Hurst}
				\right\}
				(-1)\,
				dt
		=
			\int_0^1
				\frac{1}{2}
				\left\{
					-|k-l-t|^{2\Hurst}
					+|k-1+(1-t)|^{2\Hurst}
				\right\}\,
				dt.
	\end{align*}
	These two equalities imply \eqref{eq_20150508075943}.
\end{proof}

\subsection{Relative compactness and convergence in fdds}
We are ready to prove \pref{prop_20141102222020}.
We show relative compactness and convergence in the sense of finite-dimensional distributions (fdds).
\begin{lemma}\label{lem_20150519013003}
	Under the assumption of \pref{prop_20141102222020},
	the sequence $\{(B,\hermiteVar{q}{m},\TRVar{m})\}_{m=1}^\infty$ is relative compact
	in the Skorokhod topology.
\end{lemma}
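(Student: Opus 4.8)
The plan is to prove that the sequence $\{(B,\hermiteVar{q}{m},\TRVar{m})\}_{m\ge1}$ is tight in $D([0,1];\RealNum^3)$; relative compactness then follows by Prohorov's theorem, the space being Polish. Since all candidate limit points are continuous, it is enough to establish $C$-tightness, and for that it suffices to treat the three coordinate sequences separately: $\{B\}$ is ($C$-)tight because $B$ has a fixed law, and $C$-tightness of the individual coordinates implies $C$-tightness of the $\RealNum^3$-valued sequence. Thus the task reduces to showing that $\{\hermiteVar{q}{m}\}$ and $\{\TRVar{m}\}$ are $C$-tight in $D([0,1];\RealNum)$.

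Here I would use that each $\hermiteVar{q}{m}(t)$ lies in the $q$-th Wiener chaos and each $\TRVar{m}(t)$ in the first chaos, so that all moment estimates reduce, via hypercontractivity, to $L^2$-estimates, i.e.\ to inner products of the functions $\delta_{st}$ and $\zeta_{st}$ introduced in Appendix~\ref{sec_20150623014415}. For $0\le s\le t\le1$ with $j=\intPart{2^m s}\le l=\intPart{2^m t}$, the chaos expansions recalled there give
\begin{align*}
	\hermiteVar{q}{m}(t)-\hermiteVar{q}{m}(s)
	&=
		\WienerInt{q}
		\biggl(
			2^{-m/2}2^{mq\Hurst}
			\sum_{k=j+1}^l
				\delta_{\dyadicPart[m]{k-1}\dyadicPart[m]{k}}^{\odot q}
		\biggr),
	&
	\TRVar{m}(t)-\TRVar{m}(s)
	&=
		\WienerInt{1}
		\biggl(
			2^{-m/2}2^{m(\Hurst+1)}
			\sum_{k=j+1}^l
				\zeta_{\dyadicPart[m]{k-1}\dyadicPart[m]{k}}
		\biggr).
\end{align*}
By self-similarity and stationarity of the increments of $B$ one has $\innerProd[\CM]{\delta_{\dyadicPart[m]{k-1}\dyadicPart[m]{k}}}{\delta_{\dyadicPart[m]{k'-1}\dyadicPart[m]{k'}}}=2^{-2m\Hurst}\rho_\Hurst(k-k')$ and $\innerProd[\CM]{\zeta_{\dyadicPart[m]{k-1}\dyadicPart[m]{k}}}{\zeta_{\dyadicPart[m]{k'-1}\dyadicPart[m]{k'}}}=2^{-2m(\Hurst+1)}a_{k,k'}$; combined with $\expect[\WienerInt{p}(h)^2]=p!\,\|h\|_{\CM^{\otimes p}}^2$ this yields $\expect[|\hermiteVar{q}{m}(t)-\hermiteVar{q}{m}(s)|^2]=q!\,2^{-m}\sum_{k,k'=j+1}^l\rho_\Hurst(k-k')^q$ and $\expect[|\TRVar{m}(t)-\TRVar{m}(s)|^2]=2^{-m}\sum_{k,k'=j+1}^l a_{k,k'}$. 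Since $\Hurst<1-1/2q$ and $\rho_\Hurst(n)=O(|n|^{2\Hurst-2})$, we have $\sum_{n\in\mathbf{Z}}|\rho_\Hurst(n)|^q<\infty$; and \pref{prop_20150518041212} together with \eqref{eq_20150518052410} gives $\sum_{n}|a_{1,n}|<\infty$. Hence both double sums are $\le\const\,(l-j)$, and using $2^{-m}(l-j)=\dyadicPart[m]{l}-\dyadicPart[m]{j}\le|t-s|+2^{-m}$ we obtain, for each $p\ge2$ (by hypercontractivity), the uniform bound
\begin{align*}
	\expect\bigl[|\hermiteVar{q}{m}(t)-\hermiteVar{q}{m}(s)|^p\bigr]
	+
	\expect\bigl[|\TRVar{m}(t)-\TRVar{m}(s)|^p\bigr]
	&\le
		\const[p]\,(|t-s|+2^{-m})^{p/2},
	\qquad 0\le s\le t\le1 ,
\end{align*}
and in particular $\sup_m\expect[|\hermiteVar{q}{m}(1)|^2]<\infty$, $\sup_m\expect[|\TRVar{m}(1)|^2]<\infty$.

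I would then finish with the three‑point moment criterion for tightness in $D([0,1];\RealNum)$. Let $X^{(m)}$ be either $\hermiteVar{q}{m}$ or $\TRVar{m}$ and fix $p=4$; by Cauchy--Schwarz and the above, for $0\le s\le t\le u\le1$ one has $\expect[|X^{(m)}(t)-X^{(m)}(s)|^2|X^{(m)}(u)-X^{(m)}(t)|^2]\le\const\,(|u-s|+2^{-m})^2$. If $|u-s|\ge2^{-m}$ this is $\le\const\,(u-s)^2$; if $|u-s|<2^{-m}$ then $(s,u]$ contains at most one dyadic point $\dyadicPart[m]{k}$, so at least one of $X^{(m)}(t)-X^{(m)}(s)$, $X^{(m)}(u)-X^{(m)}(t)$ vanishes identically and the left-hand side is $0$; in either case it is $\le(F(u)-F(s))^2$ with $F(t)=\const^{1/2}t$, which is exactly the hypothesis needed. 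Together with $X^{(m)}(0)=0$ and $\sup_m\expect[|X^{(m)}(1)|^2]<\infty$, this yields tightness of $\{\hermiteVar{q}{m}\}$ and $\{\TRVar{m}\}$ in $D([0,1];\RealNum)$. Moreover the jumps of $\hermiteVar{q}{m}$ at $\dyadicPart[m]{k}$ equal $2^{-m/2}\hermitePoly{q}(2^{m\Hurst}B_{\dyadicPart[m]{k-1}\dyadicPart[m]{k}})$, and those of $\TRVar{m}$ equal in law, by self-similarity, $2^{-m/2}(\tfrac12\tilde B_1-\int_0^1\tilde B_u\,du)$ for a copy $\tilde B$ of the fBm; hence $\expect[\max_k|\cdot|^p]\le\const[p]\,2^{m(1-p/2)}\to0$ for $p>2$ and the maximal jump tends to $0$ in probability, so both sequences are $C$-tight. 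This proves the claim.

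The main difficulty is the step-function nature of $\hermiteVar{q}{m}$ and $\TRVar{m}$: the two-point moment estimate unavoidably carries the term $2^{-m}$, which is not dominated by $|t-s|^{1+\varepsilon}$ on scales below the mesh $2^{-m}$, so a Kolmogorov-type (two-point) criterion cannot be applied directly. The three-point criterion circumvents this because an interval shorter than the mesh can meet at most one jump, which annihilates the product of the two consecutive increments; making this interplay precise, together with the chaos-norm computations that bring \pref{prop_20150518041212} to bear on the summability of the trapezoidal covariances $a_{k,k'}$, is the crux of the argument.
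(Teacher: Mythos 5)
Your proof is correct and follows essentially the same route as the paper's: both rest on fourth-moment bounds of the form $\const\bigl((\intPart{2^m t}-\intPart{2^m s})/2^m\bigr)^2$, which the paper obtains by citing \cite{NourdinNualartTudor2010} for the Hermite part and by combining \eqref{eq_20150518051514}, \pref{prop_20150518041212} and Gaussianity for the trapezoidal part. You merely make explicit what the paper leaves implicit — the chaos computation behind the $L^2$ bounds, the hypercontractive upgrade, the three-point criterion handling sub-mesh intervals, and the vanishing of the maximal jump — so the argument is a self-contained version of the same proof.
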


\begin{lemma}\label{lem_20150519013014}
	Under the assumption of \pref{prop_20141102222020},
	the sequence $\{(B,\hermiteVar{q}{m},\TRVar{m})\}_{m=1}^\infty$ converges in the sense of fdds.
	More precisely, we have, for $0\leq s_1<t_1\leq\dots\leq s_d<t_d\leq 1$,
	\begin{multline*}
		\lim_{m\to\infty}
			\left(
				B_{t_1}-B_{s_1},\hermiteVar{q}{m}(t_1)-\hermiteVar{q}{m}(s_1),\TRVar{m}(t_1)-\TRVar{m}(s_1),
				\dots,
			\right.\\
			\left.
				B_{t_d}-B_{s_d},\hermiteVar{q}{m}(t_d)-\hermiteVar{q}{m}(s_d),\TRVar{m}(t_d)-\TRVar{m}(s_d)
			\right)\\
			=
				\left(
					B_{t_1}-B_{s_1},
					\sigma_\Hurst (W_{t_1}-W_{s_1}),
					\tilde{\sigma}_\Hurst (\tilde{W}_{t_1}-\tilde{W}_{s_1}),
					\dots,
				\right.\\
				\left.
					B_{t_d}-B_{s_d},
					\sigma_\Hurst (W_{t_d}-W_{s_d}),
					\tilde{\sigma}_\Hurst (\tilde{W}_{t_d}-\tilde{W}_{s_d})
				\right)
	\end{multline*}
	weakly in $(\RealNum^d)^3$,
	where
	$W$ and $\tilde{W}$ are standard Brownian motions
	and $B$, $W$ and $\tilde{W}$ are independent.
\end{lemma}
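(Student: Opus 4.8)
The plan is to deduce the statement from the multidimensional fourth moment theorem of Peccati and Tudor \cite{PeccatiTudor2005}, after rewriting all the increments as multiple Wiener integrals living in a fixed-order chaos. Using the notation $\delta_{st},\zeta_{st}$ of \secref{sec_20150623014415}, one has $B_{t_l}-B_{s_l}=\WienerInt{1}(\delta_{s_lt_l})\in\WienerChaos{1}$ and
\begin{align*}
	\hermiteVar{q}{m}(t_l)-\hermiteVar{q}{m}(s_l)
	&=
		2^{-m/2}2^{mq\Hurst}
		\sum_{\intPart{2^ms_l}<k\le\intPart{2^mt_l}}
			\WienerInt{q}\bigl(\delta_{\dyadicPart[m]{k-1}\dyadicPart[m]{k}}^{\odot q}\bigr)
	\in\WienerChaos{q},\\
	\TRVar{m}(t_l)-\TRVar{m}(s_l)
	&=
		2^{-m/2}2^{m(\Hurst+1)}
		\sum_{\intPart{2^ms_l}<k\le\intPart{2^mt_l}}
			\WienerInt{1}(\zeta_{\dyadicPart[m]{k-1}\dyadicPart[m]{k}})
	\in\WienerChaos{1}.
\end{align*}
Since $q\ge2$, the Hermite part sits in $\WienerChaos{q}$ while the $B$- and trapezoidal parts sit in $\WienerChaos{1}$; consequently every covariance between the two groups vanishes identically for all $m$, and once the limit is known to be jointly Gaussian this forces the independence of $W$ from $(B,\tilde W)$.

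By \cite{PeccatiTudor2005} it then suffices to verify (a) convergence in law of each scalar coordinate to its Gaussian marginal, and (b) convergence of the full covariance matrix. For (a): the $B$- and $\TRVar{m}$-coordinates lie in the first chaos and are therefore exactly Gaussian, so only variance convergence is needed, which is part of (b); the $\hermiteVar{q}{m}$-coordinates require a genuine central limit theorem, namely \cite[Theorem~1]{NourdinNualartTudor2010}, equivalently the vanishing of the contraction norms $\|u_m\otimes_ru_m\|_{\CM^{\otimes 2(q-r)}}$, $1\le r\le q-1$, for $u_m=2^{-m/2}\sum_k e_k^{\odot q}$ with $e_k=2^{m\Hurst}\delta_{\dyadicPart[m]{k-1}\dyadicPart[m]{k}}$ (note $\|e_k\|_\CM=1$); this is exactly where the hypothesis $\Hurst<1-1/2q$, i.e.\ summability of $\rho_\Hurst(l)^q$, is used.

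For (b) I would compute the limiting covariances block by block, using stationary increments and self-similarity of fBm to reduce everything to $a_{k,l}$, $a^\dagger_{k,l}$ and $\rho_\Hurst(l-k)$: one gets $\expect[\hermitePoly{q}(2^{m\Hurst}B_{\dyadicPart[m]{k-1}\dyadicPart[m]{k}})\hermitePoly{q}(2^{m\Hurst}B_{\dyadicPart[m]{l-1}\dyadicPart[m]{l}})]=q!\,\rho_\Hurst(l-k)^q$, $2^{2m(\Hurst+1)}\innerProd[\CM]{\zeta_{\dyadicPart[m]{k-1}\dyadicPart[m]{k}}}{\zeta_{\dyadicPart[m]{l-1}\dyadicPart[m]{l}}}=a_{k,l}$, and $2^{m(2\Hurst+1)}\innerProd[\CM]{\delta_{\dyadicPart[m]{p-1}\dyadicPart[m]{p}}}{\zeta_{\dyadicPart[m]{k-1}\dyadicPart[m]{k}}}=a^\dagger_{p,k}$. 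Summing over the relevant index blocks, the $\hermiteVar{q}{m}$–$\hermiteVar{q}{m}$ block is $2^{-m}\sum_{k,l}q!\rho_\Hurst(l-k)^q$, the $\TRVar{m}$–$\TRVar{m}$ block is $2^{-m}\sum_{k,l}a_{k,l}$, and the $B$–$\TRVar{m}$ block is $2^{-m(\Hurst+1/2)}\sum_{p,k}a^\dagger_{p,k}$. For equal time intervals, the identity $\sum_{k,l=1}^{N}a_{k,l}=N a_{1,1}+2\sum_{1\le k<l\le N}a_{1,l-k+1}$ with \eqref{eq_20150519005630} and $\tilde\rho_\Hurst(l)=a_{1,l+1}$ gives $2^{-m}\sum a_{k,l}\to\tilde\sigma_\Hurst^2\,|(s_i,t_i]\cap(s_j,t_j]|$, and the analogous computation yields $\sigma_{q,\Hurst}^2|(s_i,t_i]\cap(s_j,t_j]|$ for the Hermite block; the tails converge and the blocks for disjoint time intervals vanish thanks to \pref{prop_20150518041212} and to the summability of $\rho_\Hurst(l)^q$. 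The cross block $B$–$\TRVar{m}$ is where \pref{prop_20141122044103} is decisive: over a common index block the antisymmetry $a^\dagger_{p,k}+a^\dagger_{k,p}=0$ makes $\sum_{p,k}a^\dagger_{p,k}=0$ exactly, while for distinct time intervals \pref{prop_20141122044947} bounds $\sum_{p,k}|a^\dagger_{p,k}|$ by $O(1)$ for adjacent blocks (using $2\Hurst-3<-1$) and by $O(2^{m(2\Hurst-1)})$ for separated blocks, so that after the prefactor $2^{-m(\Hurst+1/2)}$ both tend to $0$; together with the automatic vanishing of the $B$–$\hermiteVar{q}{m}$ and $\TRVar{m}$–$\hermiteVar{q}{m}$ blocks this produces precisely the covariance structure of $(B,\sigma_{q,\Hurst}W,\tilde\sigma_\Hurst\tilde W)$.

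A routine preliminary is that $s_l,t_l$ need not be dyadic: replacing them by nearest dyadic points (and likewise for the floors $\intPart{2^m\cdot}$) changes each sum by $O(2^{-m/2})$ in $L^2$, which is negligible. I expect the main obstacle to be the bookkeeping in (b), and within it the $B$–$\TRVar{m}$ cross-covariance, where one must combine the exact cancellation from \pref{prop_20141122044103} on diagonal blocks with the polynomial decay from \pref{prop_20141122044947} on the remaining blocks and check that the exponent $2\Hurst-3$ is small enough for the relevant double sums; the contraction estimate for the Hermite part (or the citation of \cite{NourdinNualartTudor2010}) is the other substantial ingredient, but it is classical.
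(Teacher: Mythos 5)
Your proposal is correct and follows essentially the same route as the paper's proof: both reduce the statement to the Peccati--Tudor multidimensional fourth moment theorem by verifying the marginal central limit behaviour of the Hermite coordinates (via \cite{NourdinNualartTudor2010}) together with convergence of the covariance matrix, with the $B$--$\TRVar{m}$ cross-covariance handled exactly as in the paper by combining the antisymmetry of \pref{prop_20141122044103} on diagonal index blocks with the decay bound of \pref{prop_20141122044947} on the off-diagonal ones, and the dyadic-endpoint adjustment playing the role of the paper's boundary terms $I^{(m)}$, $J^{(m)}$.
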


Before beginning our discussion, we note that, for any $0\leq s<t\leq 1$ and $0\leq u<v\leq 1$,
\begin{align}\label{eq_20150518045935}
	\expect[\{\TRVar{m}(t)-\TRVar{m}(s)\}\{\TRVar{m}(v)-\TRVar{m}(u)\}]
	=
		\frac{1}{2^m}
		\sum_{k=\intPart{2^m s}+1}^{\intPart{2^m t}}
		\sum_{l=\intPart{2^m u}+1}^{\intPart{2^m v}}
			a_{k,l}.
\end{align}
Applying \eqref{eq_20150518052410} to \eqref{eq_20150518045935}, we see
\begin{multline}\label{eq_20150518051514}
	\expect[\{\TRVar{m}(t)-\TRVar{m}(s)\}^2]\\
	\begin{aligned}
		&=
			\frac{\intPart{2^m t}-\intPart{2^m s}}{2^m}
			\left(
				a_{1,1}
				+
				2
				\sum_{j=1}^{\intPart{2^m t}-\intPart{2^m s}-1}
					a_{1,j+1}
			\right)
			-
			\frac{2}{2^m}
			\sum_{j=1}^{\intPart{2^m t}-\intPart{2^m s}-1}
				ja_{1,j+1}.
	\end{aligned}
\end{multline}

\begin{proof}[Proof of \lref{lem_20150519013003}]
	The assertion follows from
	\begin{gather*}
		\expect[\{\hermiteVar{q}{m}(t)-\hermiteVar{q}{m}(s)\}^4]
		\leq
			\const
			\left(
				\frac{\intPart{2^m t}-\intPart{2^m s}}{2^m}
			\right)^2,\\
		\expect[\{\TRVar{m}(t)-\TRVar{m}(s)\}^4]
		\leq
			\const
			\left(
				\frac{\intPart{2^m t}-\intPart{2^m s}}{2^m}
			\right)^2
	\end{gather*}
	for any $0\leq s<t\leq 1$ and some constant $\const$.
	The first estimate is proved in \cite{NourdinNualartTudor2010}.
	Combining \eqref{eq_20150518051514} and \pref{prop_20150518041212}, we see
	\begin{align*}
		\expect[\{\TRVar{m}(t)-\TRVar{m}(s)\}^2]
		&\leq
			\const
			\frac{\intPart{2^m t}-\intPart{2^m s}}{2^m}.
	\end{align*}
	Since $\TRVar{m}(t)-\TRVar{m}(s)$ is a Gaussian random variable,
	we have the second estimate.
\end{proof}

\begin{proof}[Proof of \lref{lem_20150519013014}]
	We show
	\begin{gather}
		\label{eq_20150514085853}
		\lim_{m\to\infty}
			\expect[\{\hermiteVar{q}{m}(t)-\hermiteVar{q}{m}(s)\}^4]
		=
			3\sigma_\Hurst^4(t-s)^2,\\
		\label{eq_20150514085924}
		\lim_{m\to\infty}
			\expect
				[
					\{\hermiteVar{q}{m}(t)-\hermiteVar{q}{m}(s)\}^2
				]
		=
			\sigma_\Hurst^2(t-s),\\
		\label{eq_20150514090122}
		\lim_{m\to\infty}
			\expect
				[
					\{\TRVar{m}(t)-\TRVar{m}(s)\}^2
				]
		=
			\tilde{\sigma}_\Hurst^2(t-s),\\
		\label{eq_20150514090838}
		\lim_{m\to\infty}
			\expect
				[
					\{B_t-B_s\}
					\{\hermiteVar{q}{m}(t)-\hermiteVar{q}{m}(s)\}
				]
		=
			0,\\
		\label{eq_20150514090844}
		\lim_{m\to\infty}
			\expect
				[
					\{\hermiteVar{q}{m}(t)-\hermiteVar{q}{m}(s)\}
					\{\TRVar{m}(t)-\TRVar{m}(s)\}
				]
		=
			0,\\
		\label{eq_20150514091605}
		\lim_{m\to\infty}
			\expect
				[
					\{B_t-B_s\}
					\{\TRVar{m}(t)-\TRVar{m}(s)\}
				]
		=
			0
	\end{gather}
	for $0\leq s<t\leq 1$ and
	\begin{gather}
		\label{eq_20150514090229}
		\lim_{m\to\infty}
			\expect
				[
					\{\hermiteVar{q}{m}(t)-\hermiteVar{q}{m}(s)\}
					\{\hermiteVar{q}{m}(v)-\hermiteVar{q}{m}(u)\}
				]
		=
			0,\\
		\label{eq_20150514090334}
		\lim_{m\to\infty}
			\expect
				[
					\{\TRVar{m}(t)-\TRVar{m}(s)\}
					\{\TRVar{m}(v)-\TRVar{m}(u)\}
				]
		=
			0,\\
		\label{eq_20150514090939}
		\lim_{m\to\infty}
			\expect
				[
					\{B_t-B_s\}
					\{\hermiteVar{q}{m}(v)-\hermiteVar{q}{m}(u)\}
				]
		=
			0,\\
		\label{eq_20150514091004}
		\lim_{m\to\infty}
			\expect
				[
					\{\hermiteVar{q}{m}(t)-\hermiteVar{q}{m}(s)\}
					\{\TRVar{m}(v)-\TRVar{m}(u)\}
				]
		=
			0,\\
		\label{eq_20150514091618}
		\lim_{m\to\infty}
			\expect
				[
					\{B_t-B_s\}
					\{\TRVar{m}(v)-\TRVar{m}(u)\}
				]
		=
			0
	\end{gather}
	for $0\leq s<t\leq 1$ and $0\leq u<v\leq 1$ with $(s,t)\cap(u,v)=\emptyset$.
	From these convergence and the fourth moment theorem in \cite{PeccatiTudor2005},
	we see the assertion.

	The convergence \eqref{eq_20150514085853}, \eqref{eq_20150514085924}
	and \eqref{eq_20150514090229} are proved in \cite{NourdinNualartTudor2010}.

	We consider \eqref{eq_20150514090122} and \eqref{eq_20150514090334}.
	Both convergence follows from \eqref{eq_20150518051514} and \pref{prop_20150518041212}.
	In particular, \eqref{eq_20150514090122} is a direct consequence from them.
	We show \eqref{eq_20150514090334} for $s<t\leq u<v$.
	From \eqref{eq_20150518045935} and \eqref{eq_20150518052410},
	we have
	\begin{multline*}
		|
			\expect[\{\TRVar{m}(t)-\TRVar{m}(s)\}\{\TRVar{m}(v)-\TRVar{m}(u)\}]
		|\\
		\leq
			\frac{1}{2^m}
			\sum_{k=\intPart{2^m s}+1}^{\intPart{2^m t}}
			\sum_{l=\intPart{2^m u}+1}^{\intPart{2^m v}}
				|a_{1,l-k+1}|
		\leq
			\frac{1}{2^m}
			\sum_{j=\intPart{2^m u}+1-\intPart{2^m t}}^{\intPart{2^m v}-\intPart{2^m s}-1}
				j|a_{1,j+1}|
		\leq
			\frac{1}{2^m}
			\sum_{j=1}^{2^m}
				j|a_{1,j+1}|.
	\end{multline*}
	Combining this estimate and \pref{prop_20150518041212},
	we obtain \eqref{eq_20150514090334}.

	We study the equalities \eqref{eq_20150514090838}, \eqref{eq_20150514090844},
	\eqref{eq_20150514090939} and \eqref{eq_20150514091004}.
	Since $B_t-B_s$, $\hermiteVar{q}{m}(t)-\hermiteVar{q}{m}(s)$ and $\TRVar{m}(t)-\TRVar{m}(s)$
	belongs to first, $q$-th, first Wiener chaos,
	the expectations in  \eqref{eq_20150514090838} and \eqref{eq_20150514090844} are equal to $0$.
	The same reason yields \eqref{eq_20150514090939} and \eqref{eq_20150514091004}.

	We prove \eqref{eq_20150514091605} and \eqref{eq_20150514091618}.
	Set
	$
		B^{(m)}_t
		=
			B_{\intPart{2^m t}/2^m}
		=
			\sum_{k=1}^{\intPart{2^m t}}
				B_{\dyadicPart[m]{k-1}\dyadicPart[m]{k}}
	$.
	We decompose
	$
		\expect
			[\{B_t-B_s\}\{\TRVar{m}(v)-\TRVar{m}(u)\}]
	$
	into
	$
		I^{(m)}
		+
			\expect
				[
					\{B^{(m)}_t-B^{(m)}_s\}
					\{\TRVar{m}(v)-\TRVar{m}(u)\}
				]
		+
			J^{(m)}
	$, where
	\begin{align*}
		I^{(m)}
		&=
			\expect
				[
					\{B_t-B^{(m)}_t\}
					\{\TRVar{m}(v)-\TRVar{m}(u)\}
				],\\
		J^{(m)}
		&=
			\expect
				[
					\{B^{(m)}_s-B_s\}
					\{\TRVar{m}(v)-\TRVar{m}(u)\}
				].
	\end{align*}

	We can show convergence of $I^{(m)}$ and $J^{(m)}$ easily.
	In fact, we see
	\begin{align*}
		|I^{(m)}|
		&\leq
			\expect[\{B_t-B_{\intPart{2^m t}/2^m}\}^2]^{1/2}
			\expect[\{\TRVar{m}(v)-\TRVar{m}(u)\}^2]^{1/2}\\
		&\leq
			\left(
				t-\frac{\intPart{2^m t}}{2^m}
			\right)^\Hurst
			\left(
				\const
				\frac{\intPart{2^m u}-\intPart{2^m v}}{2^m}
			\right)^{1/2}.
	\end{align*}
	The same inequality holds for $J^{(m)}$.
	Hence we see the convergences.

	We consider convergence of
	$
		\expect
			[
				\{B^{(m)}_t-B^{(m)}_s\}
				\{\TRVar{m}(v)-\TRVar{m}(u)\}
			]
	$.
	Note
	\begin{align*}
		\expect[\{B^{(m)}_t-B^{(m)}_s\}\{\TRVar{m}(v)-\TRVar{m}(u)\}]
		=
			2^{-m(1/2+\Hurst)}
			\sum_{k=\intPart{2^m s}+1}^{\intPart{2^m t}}
			\sum_{l=\intPart{2^m u}+1}^{\intPart{2^m v}}
				a^\dagger_{k,l}.
	\end{align*}
	In the case that $s=u$ and $t=v$, we see
	\begin{multline*}
		\expect[\{B^{(m)}_t-B^{(m)}_s\}\{\TRVar{m}(t)-\TRVar{m}(s)\}]\\
		=
			2^{-m(1/2+\Hurst)}
			\sum_{\intPart{2^m s}+1}^{\intPart{2^m t}}
				a^\dagger_{k,k}
			+
			2^{-m(1/2+\Hurst)}
			\sum_{\intPart{2^m s}+1\leq k<l\leq\intPart{2^m t}}
				(a^\dagger_{k,l}+a^\dagger_{l,k})
		=
			0.
	\end{multline*}
	In the last line, we used \pref{prop_20141122044103}.
	From this, we see \eqref{eq_20150514091605}.

	In the case that $0\leq s<t\leq u<v\leq 1$,
	by noting \eqref{eq_20150518053842},
	we have
	\begin{multline*}
		|\expect[\{B^{(m)}_t-B^{(m)}_s\}\{\TRVar{m}(u)-\TRVar{m}(v)\}]|\\
		\begin{aligned}
			\leq
				2^{-m(1/2+\Hurst)}
				\sum_{k=\intPart{2^m s}+1}^{\intPart{2^m t}}
				\sum_{l=\intPart{2^m u}+1}^{\intPart{2^m v}}
					|a^\dagger_{1,l-k+1}|
			\leq
				2^{-m(1/2+\Hurst)}
				\sum_{j=\intPart{2^m u}-\intPart{2^m t}+1}^{\intPart{2^m v}-\intPart{2^m s}-1}
					j|a^\dagger_{1,j+1}|.
		\end{aligned}
	\end{multline*}
	From \pref{prop_20141122044947}, we see
	\begin{align*}
		|\expect[\{B^{(m)}_t-B^{(m)}_s\}\{\TRVar{m}(u)-\TRVar{m}(v)\}]|
		\leq
			\const
			2^{-m(1/2+\Hurst)}
			\sum_{j=1}^{2^m}
				j
				\cdot
				j^{2\Hurst-3}.
	\end{align*}
	In the case that $0\leq u<v\leq s<t\leq 1$, we obtain the same inequality.
	We complete the proof of \eqref{eq_20150514091618}.
\end{proof}

\section{Proof of convergence of variation functionals}
\label{sec_20170519021523}

\subsection{Estimate on $\tilde{U}_m$}

In this subsection, we prove \tref{thm_20141123062022}.
At the beginning, we give an estimate of
$\expect[|\wTRVar{}{m}(t)-\wTRVar{}{m}(s)|^2]$.
\begin{proposition}\label{prop_20150518032235}
	There exists a positive constant $\const$ independent of $m$ such that
	\begin{align*}
		\left|
			\expect
				[
					g(X_s)
					g(X_t)
					\WienerInt{2}
						(
							\zeta_{\dyadicPart[m]{k-1}\dyadicPart[m]{k}}
							\odot
							\zeta_{\dyadicPart[m]{l-1}\dyadicPart[m]{l}}
						)
				]
		\right|
		\leq
			\const
			\begin{cases}
				2^{-m(4\Hurst+2)},
				&
					0<\Hurst< 1/2,\\
				2^{-4m},
				&
					1/2\leq\Hurst<1\\
			\end{cases}
	\end{align*}
	for any $0\leq s,t\leq 1$ and $1\leq k, l\leq 2^m$.
\end{proposition}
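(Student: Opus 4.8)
The plan is to turn the second Wiener integral into a Malliavin integration by parts and then read off the size from the a priori bounds on the derivatives of $X$ in \pref{prop_20150520004520} and the pointwise bounds on $\zeta_{st}$ in \pref{prop_20150520010300}. Write $\zeta_1=\zeta_{\dyadicPart[m]{k-1}\dyadicPart[m]{k}}$ and $\zeta_2=\zeta_{\dyadicPart[m]{l-1}\dyadicPart[m]{l}}$. Since $\zeta_1\odot\zeta_2\in\CM^{\odot 2}$, it belongs to the domain of $\delta^2$ and $\WienerInt{2}(\zeta_1\odot\zeta_2)=\delta^2(\zeta_1\odot\zeta_2)$. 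Because $b,\sigma\in\SmoothFunc[bdd]{\infty}{\RealNum}{\RealNum}$, \pref{prop_20150520004520} gives $X_s,X_t\in\SobSp{2}{\infty-}{\probSp}{\RealNum}$, and as $g\in\SmoothFunc[poly]{2}{\RealNum}{\RealNum}$ the product $F:=g(X_s)g(X_t)$ lies in $\SobSp{2}{\infty-}{\probSp}{\RealNum}\subset\SobSp{2}{2}{\probSp}{\RealNum}$. Hence \eqref{eq_duality_relationship} with $n=2$ yields
\[
\expect[g(X_s)g(X_t)\WienerInt{2}(\zeta_1\odot\zeta_2)]
=\expect\bigl[\innerProd[\CM^{\otimes 2}]{D^2F}{\zeta_1\odot\zeta_2}\bigr].
\]

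Next I would expand $D^2F$ by the chain and product rules. Writing $DF=g'(X_s)g(X_t)\,DX_s+g(X_s)g'(X_t)\,DX_t$ and differentiating once more gives a finite linear combination, with coefficients of the form $g^{(i)}(X_s)g^{(j)}(X_t)$ ($i+j\le2$), of the tensors $DX_s\otimes DX_s$, $DX_t\otimes DX_t$, $DX_s\otimes DX_t+DX_t\otimes DX_s$, $D^2X_s$ and $D^2X_t$. Pairing each of these with the symmetric element $\zeta_1\odot\zeta_2$ produces only products of at most two scalars drawn from $\innerProd[\CM]{DX_\bullet}{\zeta_\bullet}$ (with $DX_\bullet\in\{DX_s,DX_t\}$, $\zeta_\bullet\in\{\zeta_1,\zeta_2\}$), together with the scalars $\innerProd[\CM^{\odot 2}]{D^2X_s}{\zeta_1\odot\zeta_2}$ and $\innerProd[\CM^{\odot 2}]{D^2X_t}{\zeta_1\odot\zeta_2}$. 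By \pref{prop_20150520004520}, applied with $\nu=1$ and $\nu=2$, each of these scalars is bounded, uniformly in $s,t\in[0,1]$ and a.s.\ in $\omega$, by $\const\,\|\zeta_1\|_\infty\|\zeta_2\|_\infty$ (the constant depending only on $b,\sigma$). Therefore
\[
\bigl|\innerProd[\CM^{\otimes 2}]{D^2F}{\zeta_1\odot\zeta_2}\bigr|
\le
\const\,\|\zeta_1\|_\infty\|\zeta_2\|_\infty\sum_{i+j\le2}\bigl|g^{(i)}(X_s)\bigr|\,\bigl|g^{(j)}(X_t)\bigr|
\qquad\text{a.s.}
\]

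Taking expectations and using the Cauchy--Schwarz inequality, the matter reduces to the bound $\sup_{0\le t\le1}\expect\bigl[|g^{(i)}(X_t)|^2\bigr]<\infty$ for $i=0,1,2$. This holds because $X_t=\phi(a_t,B_t)$ with $a$ bounded and $\phi(\alpha,\cdot)$ of at most linear growth (since $\|\sigma\|_\infty<\infty$), so that $|X_t|\le\const(1+|B_t|)$, while $g^{(i)}$ has polynomial growth and $B_t$ is centered Gaussian with $\sup_{0\le t\le1}\expect[B_t^2]\le1$. Consequently
\[
\bigl|\expect[g(X_s)g(X_t)\WienerInt{2}(\zeta_1\odot\zeta_2)]\bigr|\le\const\,\|\zeta_1\|_\infty\|\zeta_2\|_\infty .
\]
Finally, since $\dyadicPart[m]{k}-\dyadicPart[m]{k-1}=\dyadicPart[m]{l}-\dyadicPart[m]{l-1}=2^{-m}$, \pref{prop_20150520010300} gives $\|\zeta_i\|_\infty\le\const\,2^{-m(2\Hurst+1)}$ when $0<\Hurst<1/2$ and $\|\zeta_i\|_\infty\le\const\,2^{-2m}$ when $1/2\le\Hurst<1$; multiplying the two estimates produces exactly $\const\,2^{-m(4\Hurst+2)}$ and $\const\,2^{-4m}$, respectively.

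The only point that requires care is the bookkeeping of the $D^2$-expansion and the verification that every scalar pairing inherits the factor $\|\zeta_1\|_\infty\|\zeta_2\|_\infty$ from \pref{prop_20150520004520}; beyond this, the argument is a direct application of the duality formula \eqref{eq_duality_relationship} together with the two quoted estimates, and I do not expect a genuine obstacle.
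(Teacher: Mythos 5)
Your proposal is correct and follows essentially the same route as the paper: duality \eqref{eq_duality_relationship} with $n=2$, the Leibniz expansion of $D^{2}\{g(X_s)g(X_t)\}$, the a.s.\ bounds from \pref{prop_20150520004520}, the moment bound on $g^{(i)}(X_t)$ via Cauchy--Schwarz, and finally \pref{prop_20150520010300} for $\|\zeta_{st}\|_\infty$. The only difference is that you spell out the finiteness of $\sup_t\expect[|g^{(i)}(X_t)|^2]$, which the paper leaves implicit.
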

\begin{proof}
	From the duality relationship \eqref{eq_duality_relationship}, we have
	\begin{align*}
		\expect
			[
				g(X_s)
				g(X_t)
				\WienerInt{2}
					(
						\zeta_{\dyadicPart[m]{k-1}\dyadicPart[m]{k}}
						\odot
						\zeta_{\dyadicPart[m]{l-1}\dyadicPart[m]{l}}
					)
			]
		&=
			\expect
				\left[
					\innerProdLarge[\CM^{\odot 2}]
						{
							D^2
								\left\{
									g(X_s)
									g(X_t)
								\right\}
						}
						{
							\zeta_{\dyadicPart[m]{k-1}\dyadicPart[m]{k}}
							\odot
							\zeta_{\dyadicPart[m]{l-1}\dyadicPart[m]{l}}
						}
				\right]
    \end{align*}
    and the Leibniz rule implies
    \begin{align*}
        D^2
            \left\{
                g(X_s)
                g(X_t)
            \right\}
        &=
            g'(X_s)
            g(X_t)
            D^2 X_s
            +
            g''(X_s)
            g(X_t)
            (DX_s)^{\odot 2}\\
        &\phantom{=}\quad
            +
            2
            g'(X_s)
            g'(X_t)
            D X_s
            \odot
            D X_t
            +
            g(X_s)
            g''(X_t)
            (DX_t)^{\odot 2}
            +
            g(X_s)
            g'(X_t)
            D^2 X_t.
    \end{align*}
	From the H\"{o}lder inequality and \pref{prop_20150520004520}, we have
	\begin{align*}
		\expect
			\left[
				g'(X_s)
				g(X_t)
				\innerProdLarge[\CM^{\odot 2}]
						{
							D^2 X_s
						}
						{
							\zeta_{\dyadicPart[m]{k-1}\dyadicPart[m]{k}}
							\odot
							\zeta_{\dyadicPart[m]{l-1}\dyadicPart[m]{l}}
						}
			\right]
		&\leq
			\expect
				[
					|
						g'(X_{\dyadicPart[m]{k-1}})
						g(X_{\dyadicPart[m]{l-1}})
					|^2
				]^{1/2}
			\cdot
			\const
			\|\zeta_{\dyadicPart[m]{k-1}\dyadicPart[m]{k}}\|_\infty
			\|\zeta_{\dyadicPart[m]{l-1}\dyadicPart[m]{l}}\|_\infty\\
		&\leq
			\const'
			\begin{cases}
				\displaystyle
					{
						\left(
							\frac{1}{2}
							+
							\frac{1}{2\Hurst+1}
						\right)^2
						(2^{-m(2\Hurst+1)})^2,
					}
				&
					0<\Hurst< 1/2,\\
				(2\Hurst)^2 (2^{-2m})^2,
				&
					1/2\leq\Hurst<1.\\
			\end{cases}
	\end{align*}
	In the last line, we used \pref{prop_20150520010300}
	and the constant $\const$ and $\const'$ are independent of $m$.
	Since the other terms in the above also admit similar estimates, we see the assertion.
\end{proof}

\begin{proposition}\label{prop_20141104000212}
	There exists a positive constant $\const$ independent of $m$ such that
	\begin{align*}
		\expect
			[|\wTRVar{}{m}(t)-\wTRVar{}{m}(s)|^2]
		\leq
			\const
			\cdot
			\frac{\intPart{2^m t}-\intPart{2^m s}}{2^m}
			\cdot
			\begin{cases}
				2^{-4m\Hurst},	&	0<\Hurst<1/2,\\
				2^{-2m},		&	1/2\leq\Hurst<1\\
			\end{cases}
	\end{align*}
	for any $0\leq s,t\leq 1$.
\end{proposition}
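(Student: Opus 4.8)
The plan is to reduce the estimate to the pointwise bounds already proved: \pref{prop_20150518032235} for the second chaos and \pref{prop_20150518041212} for the covariances $a_{k,l}$. First I would use the identification recorded in Appendix~\ref{sec_20150623014415}, namely $\frac{1}{2\cdot 2^m}B_{\dyadicPart[m]{k-1}\dyadicPart[m]{k}}-\int_{\dyadicPart[m]{k-1}}^{\dyadicPart[m]{k}}B_{\dyadicPart[m]{k-1}u}\,du=\WienerInt{1}(\zeta_{\dyadicPart[m]{k-1}\dyadicPart[m]{k}})$, to write
\begin{align*}
	\wTRVar{}{m}(t)-\wTRVar{}{m}(s)
	=
		\sum_{k=\intPart{2^m s}+1}^{\intPart{2^m t}}
			g(X_{\dyadicPart[m]{k-1}})\,\WienerInt{1}(\zeta_{\dyadicPart[m]{k-1}\dyadicPart[m]{k}}),
\end{align*}
so that, after squaring and taking expectations,
\begin{align*}
	\expect[|\wTRVar{}{m}(t)-\wTRVar{}{m}(s)|^2]
	=
		\sum_{k,l=\intPart{2^m s}+1}^{\intPart{2^m t}}
			\expect\!\left[
				g(X_{\dyadicPart[m]{k-1}})g(X_{\dyadicPart[m]{l-1}})
				\WienerInt{1}(\zeta_{\dyadicPart[m]{k-1}\dyadicPart[m]{k}})\WienerInt{1}(\zeta_{\dyadicPart[m]{l-1}\dyadicPart[m]{l}})
			\right].
\end{align*}
Next I would apply the product formula \eqref{eq_product_formula} with $p=q=1$, i.e.\ $\WienerInt{1}(\zeta_{\dyadicPart[m]{k-1}\dyadicPart[m]{k}})\WienerInt{1}(\zeta_{\dyadicPart[m]{l-1}\dyadicPart[m]{l}})=\WienerInt{2}(\zeta_{\dyadicPart[m]{k-1}\dyadicPart[m]{k}}\odot\zeta_{\dyadicPart[m]{l-1}\dyadicPart[m]{l}})+\innerProd[\CM]{\zeta_{\dyadicPart[m]{k-1}\dyadicPart[m]{k}}}{\zeta_{\dyadicPart[m]{l-1}\dyadicPart[m]{l}}}$, and split the double sum into a chaos part $S_1$ and a covariance part $S_2$.

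For $S_1$ I would invoke \pref{prop_20150518032235}: in absolute value each summand is at most $\const\,2^{-m(4\Hurst+2)}$ when $0<\Hurst<1/2$ and at most $\const\,2^{-4m}$ when $1/2\le\Hurst<1$. Since the number of summands is $(\intPart{2^m t}-\intPart{2^m s})^2\le 2^m(\intPart{2^m t}-\intPart{2^m s})$, this bounds $|S_1|$ by $\const\,2^{-m}(\intPart{2^m t}-\intPart{2^m s})\,2^{-4m\Hurst}$ in the first range and by $\const\,2^{-m}(\intPart{2^m t}-\intPart{2^m s})\,2^{-2m}$ in the second, which are precisely the two forms in the statement. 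The crude count of terms is sharp here and leaves no room to spare, so the $m$-dependence supplied by \pref{prop_20150518032235} is used at full strength.

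For $S_2$ the inner products $\innerProd[\CM]{\zeta_{\dyadicPart[m]{k-1}\dyadicPart[m]{k}}}{\zeta_{\dyadicPart[m]{l-1}\dyadicPart[m]{l}}}$ are deterministic and factor out of the expectation, leaving $\expect[g(X_{\dyadicPart[m]{k-1}})g(X_{\dyadicPart[m]{l-1}})]$, which by Cauchy--Schwarz is at most $\sup_{0\le u\le 1}\expect[g(X_u)^2]$; this is finite because $g$ has polynomial growth and $\sup_u\expect[|X_u|^p]<\infty$ for all $p$ by \pref{prop_20150520004508}~(\ref{item_20141103222159}) together with $X_0=\xi$. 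By the same scaling computation that underlies \eqref{eq_20150518045935} one has $\innerProd[\CM]{\zeta_{\dyadicPart[m]{k-1}\dyadicPart[m]{k}}}{\zeta_{\dyadicPart[m]{l-1}\dyadicPart[m]{l}}}=2^{-2m(\Hurst+1)}a_{k,l}$, so \pref{prop_20150518041212} yields
\begin{align*}
	|S_2|
	\le
		\const\,2^{-2m(\Hurst+1)}
		\sum_{k,l=\intPart{2^m s}+1}^{\intPart{2^m t}}|a_{k,l}|
	\le
		\const\,2^{-2m(\Hurst+1)}\bigl(\intPart{2^m t}-\intPart{2^m s}\bigr)
		\Bigl(a_{1,1}+2\sum_{j=1}^\infty|a_{1,j+1}|\Bigr),
\end{align*}
and the last series converges since $2\Hurst-4<-1$. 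Finally, since $2^{-2m(\Hurst+1)}=2^{-m}\cdot 2^{-m(2\Hurst+1)}\le 2^{-m}\cdot 2^{-4m\Hurst}$ when $\Hurst\le 1/2$ and $2^{-2m(\Hurst+1)}\le 2^{-m}\cdot 2^{-2m}$ when $\Hurst\ge 1/2$, the estimate for $S_2$ also has the claimed form, and adding the bounds for $S_1$ and $S_2$ concludes. I do not expect a genuine obstacle; the only delicate points are the exponent bookkeeping just sketched and the use of the summability $\sum_j|a_{1,j+1}|<\infty$ coming from \pref{prop_20150518041212}.
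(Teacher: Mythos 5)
Your proposal is correct and follows essentially the same route as the paper: the product formula splits the squared increment into a covariance part and a second-chaos part, the former is controlled by the self-similarity scaling $\innerProd[\CM]{\zeta_{\dyadicPart[m]{k-1}\dyadicPart[m]{k}}}{\zeta_{\dyadicPart[m]{l-1}\dyadicPart[m]{l}}}=2^{-2m(\Hurst+1)}a_{k,l}$ together with the summability from \pref{prop_20150518041212}, and the latter by \pref{prop_20150518032235} with the crude count $(\intPart{2^m t}-\intPart{2^m s})^2\le 2^m(\intPart{2^m t}-\intPart{2^m s})$. The only difference is cosmetic: you make explicit the final comparison $2^{-m(2\Hurst+1)}\le 2^{-4m\Hurst}$ (resp.\ $\le 2^{-2m}$), which the paper leaves implicit.
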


\begin{proof}
	From the product formula \eqref{eq_product_formula}, we have
	\begin{align*}
		|\wTRVar{}{m}(t)-\wTRVar{}{m}(s)|^2
		=
			\sum_{k,l=\intPart{2^m s}+1}^{\intPart{2^m t}}
				g(X_{\dyadicPart[m]{k-1}})
				g(X_{\dyadicPart[m]{l-1}})
				\WienerInt{1}(\zeta_{\dyadicPart[m]{k-1}\dyadicPart[m]{k}})
				\WienerInt{1}(\zeta_{\dyadicPart[m]{l-1}\dyadicPart[m]{l}})
		=
			S+T,
	\end{align*}
	where
	\begin{align*}
		S
		&=
			\sum_{k,l=\intPart{2^m s}+1}^{\intPart{2^m t}}
				g(X_{\dyadicPart[m]{k-1}})
				g(X_{\dyadicPart[m]{l-1}})
				\innerProd[\CM]
					{\zeta_{\dyadicPart[m]{k-1}\dyadicPart[m]{k}}}
					{\zeta_{\dyadicPart[m]{l-1}\dyadicPart[m]{l}}},\\
		T
		&=
			\sum_{k,l=\intPart{2^m s}+1}^{\intPart{2^m t}}
				g(X_{\dyadicPart[m]{k-1}})
				g(X_{\dyadicPart[m]{l-1}})
				\WienerInt{2}
					(
						\zeta_{\dyadicPart[m]{k-1}\dyadicPart[m]{k}}
						\odot
						\zeta_{\dyadicPart[m]{l-1}\dyadicPart[m]{l}}
					).
	\end{align*}
	We estimate the expectations $\expect[S]$ and $\expect[T]$.

	The expectation $|\expect[S]|$ is estimated by
	\begin{align*}
		|\expect[S]|
		&=
			\left|
				\sum_{k,l=\intPart{2^m s}+1}^{\intPart{2^m t}}
					\expect
						[
							g(X_{\dyadicPart[m]{k-1}})
							g(X_{\dyadicPart[m]{l-1}})
						]
					\innerProd[\CM]
						{\zeta_{\dyadicPart[m]{k-1}\dyadicPart[m]{k}}}
						{\zeta_{\dyadicPart[m]{l-1}\dyadicPart[m]{l}}}
			\right|\\
		&\leq
			\left(
				\sup_{0\leq t\leq 1}
					\expect[|g(X_t)|^2]
			\right)
			\sum_{k,l=\intPart{2^m s}+1}^{\intPart{2^m t}}
				|
					\innerProd[\CM]
						{\zeta_{\dyadicPart[m]{k-1}\dyadicPart[m]{k}}}
						{\zeta_{\dyadicPart[m]{l-1}\dyadicPart[m]{l}}}
				|.
	\end{align*}
	Combining the self-similarity of fBm and \pref{prop_20150518041212}, we have
	\begin{align*}
		|\expect[S]|
		&\leq
			\left(
				\sup_{0\leq t\leq 1}
					\expect[|g(X_t)|^2]
			\right)
			\cdot
			2^{-m(2\Hurst+2)}
			\cdot
			\const
			(
				\intPart{2^m t}-\intPart{2^m s}
			)\\
		&=
			\const
			\left(
				\sup_{0\leq t\leq 1}
					\expect[|g(X_t)|^2]
			\right)
			\frac{\intPart{2^m t}-\intPart{2^m s}}{2^m}
			\cdot
			2^{-m(2\Hurst+1)}.
	\end{align*}

	We evaluate the expectation $\expect[T]$.
	From \pref{prop_20150518032235}, we obtain
	\begin{align*}
		\left|
			\expect[T]
		\right|
		&\leq
			(\intPart{2^m t}-\intPart{2^m s})^2
			\cdot
			\const
			\begin{cases}
				2^{-2m(4\Hurst+2)},		&	0<\Hurst< 1/2,\\
				2^{-4m},				&	1/2\leq\Hurst<1,\\
			\end{cases}\\
		&\leq
			\const
			\frac{\intPart{2^m t}-\intPart{2^m s}}{2^m}
			\cdot
			\begin{cases}
				2^{-4m\Hurst},
				&
					0<\Hurst< 1/2,\\
				2^{-2m},
				&
					1/2\leq\Hurst<1.\\
			\end{cases}
	\end{align*}
	The proof is completed.
\end{proof}

\begin{proof}[Proof of \pref{prop_20141117062354}]
	From \pref{prop_20141104000212}, we have
	\begin{align*}
		\expect
			[
				|
					2^{mr}
					\wTRVar{}{m}(t)
					-
					2^{mr}
					\wTRVar{}{m}(s)
				|^2
			]
		&=
			2^{2mr}
			\expect[|\wTRVar{}{m}(t)-\wTRVar{}{m}(s)|^2]\\
		&\leq
			\const
			\frac{\intPart{2^m t}-\intPart{2^m s}}{2^m}
			\cdot
			\begin{cases}
				2^{-2m(2\Hurst-r)},	&	0<\Hurst<1/2,\\
				2^{-2m(1-r)},			&	1/2\leq\Hurst<1.\\
			\end{cases}
	\end{align*}
	This inequality implies convergence of in the sense of fdds
	and relative compactness. For relative compactness, see \cite[Cororally~2.2]{BurdzySwanson2010}.
	The proof is completed.
\end{proof}

\begin{proof}[Proof of \tref{thm_20141123062022}]
	The assertion follows from convergence of in the sense of fdds
	and relative compactness of $\{(B,2^{-m/2}\wHerVar{q}{m},2^{m}\wTRVar{}{m})\}_{m=1}^\infty$,
	that is, we obtain \tref{thm_20141123062022}
	from the following \lrefs{prop_20141125002248}{prop_20141125002312}.
\end{proof}

\begin{lemma}\label{prop_20141125002248}
	Let $0\leq t_1<\dots<t_d\leq 1$.
	Under the assumption of \tref{thm_20141123062022},
	we have
	\begin{multline}\label{eq_20141125004539}
		\lim_{m\to\infty}
			\left(
				B_{t_1},2^{-m/2}\wHerVar{q}{m}(t_1),2^{m}\wTRVar{}{m}(t_1),
				\dots,
				B_{t_d},2^{-m/2}\wHerVar{q}{m}(t_d),2^{m}\wTRVar{}{m}(t_d)
			\right)\\
		\begin{aligned}
			&=
				\left(
					B_{t_1},
					\sqrt{q!}
					\int_0^{t_1}
						f(X_s)\,
						dW_s,
					\frac{1}{\sqrt{12}}
					\int_0^{t_1}
						g(X_s)\,
						d\tilde{W}_s,
					\dots,
				\right.\\
			&\phantom{=}\quad\qquad\qquad\qquad
				\left.
					B_{t_d},
					\sqrt{q!}
					\int_0^{t_d}
						f(X_s)\,
						dW_s,
					\frac{1}{\sqrt{12}}
					\int_0^{t_d}
						g(X_s)\,
						d\tilde{W}_s,
				\right)
		\end{aligned}
	\end{multline}
	weakly in $(\RealNum^d)^3$,
	where
	$W$ and $\tilde{W}$ are standard Brownian motions
	and $B$, $W$ and $\tilde{W}$ are independent.
\end{lemma}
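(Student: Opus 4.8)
The plan is to reduce the weighted variations to the unweighted ones of \pref{prop_20141102222020} by freezing the weights on a coarse dyadic grid, and then to exploit that for $\Hurst=1/2$ the driver $B$ is a Brownian motion, so its increments over disjoint blocks are independent. By a linear change of variables, proving \eqref{eq_20141125004539} is equivalent to proving weak convergence of the vector of increments
\[
  \Bigl(B_{t_i}-B_{t_{i-1}},\ 2^{-m/2}\{\wHerVar{q}{m}(t_i)-\wHerVar{q}{m}(t_{i-1})\},\ 2^{m}\{\wTRVar{}{m}(t_i)-\wTRVar{}{m}(t_{i-1})\}\Bigr)_{i=1}^{d}
\]
(with $t_0:=0$) to the corresponding vector with the $\wHerVar{q}{m}$- and $\wTRVar{}{m}$-increments replaced by $\sqrt{q!}\int_{t_{i-1}}^{t_i}f(X_s)\,dW_s$ and $\frac{1}{\sqrt{12}}\int_{t_{i-1}}^{t_i}g(X_s)\,d\tilde W_s$, so I would work with these increments throughout.

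First I would fix $N\in\NaturalNum$ and, for $m>N$, replace each weight $F^{f,\mu}_{\dyadicPart[m]{k-1}\dyadicPart[m]{k}}(X)$ by $f(X_{\dyadicPart[N]{j(k)-1}})$ and each $g(X_{\dyadicPart[m]{k-1}})$ by $g(X_{\dyadicPart[N]{j(k)-1}})$, where $[\dyadicPart[N]{j(k)-1},\dyadicPart[N]{j(k)}]$ is the coarse interval containing $(\dyadicPart[m]{k-1},\dyadicPart[m]{k}]$, discarding the at most $2^N$ fine intervals that straddle a coarse grid point. After grouping by coarse block, the frozen Hermite part over $[t_{i-1},t_i]$ becomes $\sum_{j}f(X_{\dyadicPart[N]{j-1}})\{\hermiteVar{q}{m}(\dyadicPart[N]{j})-\hermiteVar{q}{m}(\dyadicPart[N]{j-1})\}$ (the sum running over coarse intervals inside $[t_{i-1},t_i]$), and similarly for the trapezoidal part with $g$ and $\TRVar{m}$. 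Since $f'$ and $g'$ are bounded, the pointwise weight error is at most $\const\,\omega_N$ with $\omega_N=\max_{j}\sup_{u\in[\dyadicPart[N]{j-1},\dyadicPart[N]{j}]}|X_u-X_{\dyadicPart[N]{j-1}}|$, and $\expect[\omega_N^{p}]^{1/p}\le\const\,2^{-N(\Hurst-\epsilon)}$ for every $p$ by the H\"older regularity of $X$ (\pref{prop_20150520004508}). I would then combine this with a second-moment bound for weighted Hermite and trapezoidal variations carrying small (in general anticipating) weights --- of the same type as \pref{prop_20141104000212}, obtained by expanding the second moment via the product formula and estimating each Wiener chaos through the Malliavin-derivative bounds for $X$ in \pref{prop_20150520004520} --- to bound $\sup_{m>N}$ of the $L^2$ distance between the actual and the frozen increment vectors by $\const\,2^{-N(\Hurst-\epsilon)}$ (the boundary intervals contributing a further $O(2^{-N/2})$, uniformly in $m$). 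This makes it enough to identify, for each fixed $N$, the weak limit of the frozen vectors as $m\to\infty$, and then let $N\to\infty$.

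For the frozen limit I would invoke \pref{prop_20141102222020} on each coarse interval (using stationarity and self-similarity of $B$, and $\sigma_{q,1/2}=\sqrt{q!}$, $\sigma_{1/2}=1/\sqrt{12}$ from \rref{rem_201708101002}), together with the independence, when $\Hurst=1/2$, of the $B$-increments over the $2^N$ disjoint coarse intervals; this yields joint weak convergence, as $m\to\infty$, of all $2^N$ blockwise triples $\bigl(B_{\dyadicPart[N]{j}}-B_{\dyadicPart[N]{j-1}},\,\hermiteVar{q}{m}(\dyadicPart[N]{j})-\hermiteVar{q}{m}(\dyadicPart[N]{j-1}),\,\TRVar{m}(\dyadicPart[N]{j})-\TRVar{m}(\dyadicPart[N]{j-1})\bigr)$ to $3\cdot 2^N$ independent Gaussians, namely independent copies across $j$ of $\bigl(B_{\dyadicPart[N]{j}}-B_{\dyadicPart[N]{j-1}},\,\sqrt{q!}(W_{\dyadicPart[N]{j}}-W_{\dyadicPart[N]{j-1}}),\,\frac{1}{\sqrt{12}}(\tilde W_{\dyadicPart[N]{j}}-\tilde W_{\dyadicPart[N]{j-1}})\bigr)$ with $B,W,\tilde W$ independent, jointly with the grid values $(X_{\dyadicPart[N]{j}})_{j}$ (the latter converging along by continuity of the solution map $B\mapsto X$, \rref{rem_201708111136}, with $X$ in the limit driven by the limiting $B$). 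By the continuous mapping theorem the frozen Hermite vector then converges, as $m\to\infty$, to $\sqrt{q!}\sum_{j}f(X_{\dyadicPart[N]{j-1}})(W_{\dyadicPart[N]{j}}-W_{\dyadicPart[N]{j-1}})$ over the blocks in $[t_{i-1},t_i]$, and the trapezoidal one to the analogue with $g$ and $\tilde W$. Letting $N\to\infty$, these coarse Riemann sums converge in $L^2$ to $\sqrt{q!}\int_{t_{i-1}}^{t_i}f(X_s)\,dW_s$ and $\frac{1}{\sqrt{12}}\int_{t_{i-1}}^{t_i}g(X_s)\,d\tilde W_s$: $X$ is adapted to the filtration of $B$, which is independent of $W$ and $\tilde W$, so $f(X_{\dyadicPart[N]{j-1}})$ and $g(X_{\dyadicPart[N]{j-1}})$ are admissible left-point integrands, and the convergence follows from continuity of $s\mapsto X_s$ and \pref{prop_20150520004508}. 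A standard triangular-array argument (first $m\to\infty$ for fixed $N$, then $N\to\infty$, using the uniform approximation bound of the previous step) then gives the claim.

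I expect the main obstacle to be the uniform-in-$m$ $L^2$ control of the weight-discretization error for the weighted Hermite variation: because $H_q$ is not centred (only its mean vanishes), one cannot merely dominate by $\sup_k|\text{weight error}|$ times a Hermite sum --- that bound diverges --- so the cancellation has to be retained by expanding the square with the product formula and estimating each chaos with the Malliavin-derivative bounds of \pref{prop_20150520004520}, exactly in the spirit of \pref{prop_20141104000212} and of the proof of \tref{thm_20141203094309}. The remaining steps (the blockwise application of \pref{prop_20141102222020}, the continuous-mapping step, and the Riemann-sum convergence) are routine.
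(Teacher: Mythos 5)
Your overall architecture coincides with the paper's: freeze the weights at the left endpoints of a coarser dyadic grid (the paper's $\lowerPart[n]{\dyadicPart[m]{k-1}}$), control the $L^2$ distance between the true and frozen variations, identify the limit of the frozen version directly from \pref{prop_20141102222020} via the continuous mapping theorem (your blockwise-independence detour is subsumed by the joint Skorokhod convergence of $(B,\hermiteVar{q}{m},\TRVar{m})$ there), and then let the coarse level tend to infinity. Your treatment of the Hermite remainder is also the paper's: the product formula plus the Malliavin bounds of \pref{prop_20150520004520} (the paper delegates this to Lemmas~22 and 23 of \cite{Naganuma2015}), and you correctly flag why a crude sup-bound on the weight error fails there.

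There is, however, one concrete gap: your proposed mechanism for the trapezoidal remainder --- ``a second-moment bound of the same type as \pref{prop_20141104000212}, obtained by expanding via the product formula and the Malliavin-derivative bounds'' --- does not produce a bound that decays in $N$ uniformly in $m$. Writing $F_k=g(X_{\dyadicPart[m]{k-1}})-g(X_{\lowerPart[n]{\dyadicPart[m]{k-1}}})$, the product formula gives $\WienerInt{1}(\zeta_k)\WienerInt{1}(\zeta_l)=\innerProd[\CM]{\zeta_k}{\zeta_l}+\WienerInt{2}(\zeta_k\odot\zeta_l)$, and in the second-chaos contribution the duality/Leibniz expansion produces the term $\expect[\innerProd[\CM^{\odot2}]{\zeta_k\odot\zeta_l}{DF_k\odot DF_l}]$, in which \emph{no undifferentiated small factor survives}: the only available estimate is $\const\|\zeta_k\|_\infty\|\zeta_l\|_\infty=\const 2^{-4m}$ for $\Hurst=1/2$, which after summing over $2^{2m}$ pairs and multiplying by the normalization $2^{2m}$ gives $O(1)$, not $o(1)$ as $N\to\infty$. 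The paper avoids this entirely by a different and essentially unavoidable argument special to $\Hurst=1/2$: since $X$ is adapted to the Brownian filtration (\pref{prop_20150520004508}) and $\WienerInt{1}(\zeta_{\dyadicPart[m]{l-1}\dyadicPart[m]{l}})$ is a centred functional of the increment of $B$ over $[\dyadicPart[m]{l-1},\dyadicPart[m]{l}]$, hence independent of $\sigmaField_{\dyadicPart[m]{l-1}}$, the summands $F_k\WienerInt{1}(\zeta_k)$ are pairwise uncorrelated; the second moment therefore reduces to the diagonal, where $\expect[F_k^4]^{1/2}\le\const 2^{-n}$ (from the $L^2$ modulus of continuity of $X$) yields $\expect[|2^m\tilde R^{(m,n)}(t)|^2]\le\const 2^{-n}$. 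You should replace the product-formula step for the trapezoidal remainder by this orthogonality argument (or else prove a quantitative continuity estimate for $u\mapsto\innerProd[\CM]{DX_u}{\zeta}$, which is not available in the paper). The rest of your plan goes through.
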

\begin{lemma}\label{prop_20141125002312}
	Under the assumption of \tref{thm_20141123062022},
	$\{(B,2^{-m/2}\wHerVar{q}{m},2^{m}\wTRVar{}{m})\}_{m=1}^\infty$
	is relative compact in the Skorokhod topology.
\end{lemma}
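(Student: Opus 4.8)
The plan is to prove tightness of the three‐component sequence by reducing to its two nontrivial coordinates and applying a fourth‐moment criterion to each. First I would note that the first coordinate is the fixed a.s.-continuous process $B$, so $\{B\}$ is trivially $C$-tight (relatively compact with limit points supported on continuous paths); since a finite family of coordinate sequences each of which is $C$-tight has $C$-tight joint (on $C([0,1];\RealNum^d)$ the Skorokhod and uniform topologies agree, and uniform tightness is stable under products), it suffices to show that $\{2^{-m/2}\wHerVar{q}{m}\}_m$ and $\{2^m\wTRVar{}{m}\}_m$ are each relatively compact in $D([0,1];\RealNum)$ with all limit points continuous. For processes constant on the dyadic intervals $[\dyadicPart[m]{k-1},\dyadicPart[m]{k})$ this follows once we verify a moment estimate of the form $\expect[|Z_m(t)-Z_m(s)|^4]\le\const(\frac{\intPart{2^m t}-\intPart{2^m s}}{2^m})^{2}$, uniformly in $m$, together with vanishing of the maximal jump; compare \lref{lem_20150519013003} and \cite[Corollary~2.2]{BurdzySwanson2010}.

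For $\{2^{-m/2}\wHerVar{q}{m}\}_m$ I would establish exactly such a bound. Using $\hermitePoly{q}(2^{m\Hurst}B_{\dyadicPart[m]{k-1}\dyadicPart[m]{k}})=2^{mq\Hurst}\WienerInt{q}(\delta_{\dyadicPart[m]{k-1}\dyadicPart[m]{k}}^{\odot q})$ and expanding the fourth power into a sum over $(k_1,k_2,k_3,k_4)$ in the relevant index range, one applies the product formula \eqref{eq_product_formula} to rewrite $\prod_{j}\WienerInt{q}(\delta_{\dyadicPart[m]{k_j-1}\dyadicPart[m]{k_j}}^{\odot q})$ as a linear combination of multiple integrals with coefficients built from inner products $\innerProd[\CM]{\delta_{\dyadicPart[m]{i-1}\dyadicPart[m]{i}}}{\delta_{\dyadicPart[m]{j-1}\dyadicPart[m]{j}}}=2^{-2m\Hurst}\rho_\Hurst(i-j)$, and then the duality relationship \eqref{eq_duality_relationship} to pair each resulting multiple integral against the weight $\prod_{j}F_{\dyadicPart[m]{k_j-1}\dyadicPart[m]{k_j}}(X)$. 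The Malliavin derivatives of the weight are bounded through the Leibniz rule and \pref{prop_20150520004520}; every surviving kernel factor contributes $\|\delta_{\dyadicPart[m]{k-1}\dyadicPart[m]{k}}\|_\infty\le\const 2^{-2m\Hurst}$ (resp. $\const 2^{-m}$ when $\Hurst\ge1/2$) by \pref{prop_20150520010300}; and the remaining sums over the index separations converge because $\sum_{l\ge1}|\rho_\Hurst(l)|^{q}<\infty$ for $q\ge2$ and $\Hurst<1-1/2q$ (and in the case $\Hurst=1/2$ of \tref{thm_20141123062022} most cross terms vanish outright since $\rho_{1/2}(l)=0$). This is the bookkeeping of \cite[Theorem~1]{NourdinNualartTudor2010} and \cite[Theorem~15]{Naganuma2015}, now carried out with the $X$-dependent weights absorbed through the duality formula. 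The maximal jump is $2^{-m/2}\max_k|F_{\dyadicPart[m]{k-1}\dyadicPart[m]{k}}(X)\hermitePoly{q}(2^{m\Hurst}B_{\dyadicPart[m]{k-1}\dyadicPart[m]{k}})|$, which on $\probSp_0$ is $O(2^{-m(1/2-q\epsilon)})\to0$ once $\epsilon<1/(2q)$.

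For $\{2^m\wTRVar{}{m}\}_m$ I would run the fourth-moment analogue of the proof of \pref{prop_20141104000212}: expand $|2^m(\wTRVar{}{m}(t)-\wTRVar{}{m}(s))|^4$ via the product formula, separate the deterministic (zeroth-order) part --- controlled by the covariance estimates \prefs{prop_20150518041212}{prop_20141122044947} and the bound on $\|\zeta_{st}\|_\infty$ from \pref{prop_20150520010300} --- from the higher-chaos part --- controlled by \pref{prop_20150518032235} and duality as above --- to obtain $\expect[|2^m(\wTRVar{}{m}(t)-\wTRVar{}{m}(s))|^4]\le\const(\frac{\intPart{2^m t}-\intPart{2^m s}}{2^m})^{2}$ for $\Hurst=1/2$, the jumps being $O(2^{-m})$-small. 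Combining the two marginal estimates with the step-process criterion then yields the lemma. The main obstacle is precisely that the weights $F_{st}(X)$ and $g(X_{\dyadicPart[m]{k-1}})$ take the relevant variables out of a single Wiener chaos, so hypercontractivity is unavailable and every estimate must instead be routed through Malliavin duality and the uniform Sobolev bounds of \pref{prop_20150520004520}; the combinatorial core is tracking which pairs among the four summation indices must be close so that the $\rho_\Hurst$- (resp. $a_{k,l}$-) decay can be summed against the growing number of index tuples.
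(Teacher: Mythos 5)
Your proposal is correct and follows essentially the same route as the paper, whose proof of this lemma is simply a deferral to \cite[Proposition~18]{Naganuma2015} with the remark that one verifies ``some kind of moment condition for relative compactness.'' The fourth-moment increment bounds you derive via the product formula \eqref{eq_product_formula}, the duality relationship \eqref{eq_duality_relationship}, \pref{prop_20150520004520} and \pref{prop_20150520010300} are precisely the content of that cited argument, so no further comparison is needed.
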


\begin{proof}[Proof of \lref{prop_20141125002248}]
	We decompose $\wHerVar{q}{m}(t)$ and $\wTRVar{}{m}(t)$ into
	$
		\wHerVar{q}{m,n}(t)
		+
		R^{(m,n)}(t)
	$
	and
	$
		\wTRVar{}{m,n}(t)
		+
		\tilde{R}^{(m,n)}(t)
	$ for $m\geq n$, respectively, where
	\begin{align*}
		\wHerVar{q}{m,n}(t)
		&=
			\sum_{k=1}^{\intPart{2^m t}}
				f(X_{\lowerPart[n]{\dyadicPart[m]{k-1}}})
				\hermitePoly{q}(2^{m\Hurst}B_{\dyadicPart[m]{k-1}\dyadicPart[m]{k}}),\\
		R^{(m,n)}(t)
		&=
			\sum_{k=1}^{\intPart{2^m t}}
				\left\{
					F_{\dyadicPart[m]{k-1}\dyadicPart[m]{k}}(X)
					-
					f(X_{\lowerPart[n]{\dyadicPart[m]{k-1}}})
				\right\}
				\hermitePoly{q}(2^{m\Hurst}B_{\dyadicPart[m]{k-1}\dyadicPart[m]{k}}),\\
		\wTRVar{}{m,n}(t)
		&=
			\sum_{k=1}^{\intPart{2^m t}}
				g(X_{\lowerPart[n]{\dyadicPart[m]{k-1}}})
				\WienerInt{1}(\zeta_{\dyadicPart[m]{k-1}\dyadicPart[m]{k}}),\\
		\tilde{R}^{(m,n)}(t)
		&=
			\sum_{k=1}^{\intPart{2^m t}}
				\left\{
					g(X_{\dyadicPart[m]{k-1}})
					-
					g(X_{\lowerPart[n]{\dyadicPart[m]{k-1}}})
				\right\}
				\WienerInt{1}(\zeta_{\dyadicPart[m]{k-1}\dyadicPart[m]{k}}).
	\end{align*}
	Here
	$
		\lowerPart[n]{t}
		=
			\sup
				\{
					\dyadicPart[n]{k};
					\dyadicPart[n]{k}\leq t,k=0,\dots,2^n-1
				\}
	$.
	We prove
	\begin{enumerate}
		\item	\label{item_1506912770}
				The sequence
				$
					\{
						\{
							(
								B_{t_\alpha},
								2^{-m/2}\wHerVar{q}{m,n}(t_\alpha),
								2^m \wTRVar{}{m,n}(t_\alpha)
							)_{\alpha=1}^d
						\}_{m=n}^\infty
					\}_{n=1}^\infty
				$
				converges to the right-hand side of \eqref{eq_20141125004539}
				as $m\to\infty$ and $n\to\infty$.
		\item	\label{item_1506912803}
				$
					\displaystyle
						{
							\lim_{n\to\infty}
							\limsup_{m\to\infty}
								\expect
									[|2^{-m/2}R^{(m,n)}(t_\alpha)|^2]
							=
								0
						}
				$
				for $\alpha=1,\dots,d$,
		\item	\label{item_1506912832}
				$
					\displaystyle
						{
							\lim_{n\to\infty}
							\limsup_{m\to\infty}
								\expect
									[|2^m\tilde{R}^{(m,n)}(t_\alpha)|^2]
							=
								0
						}
				$
				for $\alpha=1,\dots,d$.
	\end{enumerate}

	Assertion~(\ref{item_1506912770}) is a direct consequence of \pref{prop_20141102222020}.
	To show Assertion~(\ref{item_1506912803}), we use the product formula \eqref{eq_product_formula}
	and estimate the expectations. For detail, see {\cite[Lemmas~22 and 23]{Naganuma2015}}.

	In the rest of this proof we show Assertion~(\ref{item_1506912832})
	by using independent increments of the standard Brownian motion $B$.
	Set
	$
		\tilde{Y}^{(m,n)}_k
		=
			\{
				g(X_{\dyadicPart[m]{k-1}})
				-
				g(X_{\lowerPart[n]{\dyadicPart[m]{k-1}}})
			\}
			\WienerInt{1}(\zeta_{\dyadicPart[m]{k-1}\dyadicPart[m]{k}})
	$
    and
    $
        \sigmaField_{t}
        =
            \sigma(B_u;0\leq u\leq t)
    $.
	Then, for $k<l$, random variables $\tilde{Y}^{(m,n)}_k$
	and
	$
		g(X_{\dyadicPart[m]{k-1}})
		-
		g(X_{\lowerPart[n]{\dyadicPart[m]{k-1}}})
	$
	are $\sigmaField_{\dyadicPart[m]{l-1}}$-measurable.
	In addition,
	$\WienerInt{1}(\zeta_{\dyadicPart[m]{l-1}\dyadicPart[m]{l}})$
	is independent of $\sigmaField_{\dyadicPart[m]{l-1}}$.
    This implies
    $
        \expect
            [
                \WienerInt{1}(\zeta_{\dyadicPart[m]{l-1}\dyadicPart[m]{l}})
                |\sigmaField_{\dyadicPart[m]{l-1}}
            ]
        =
            \expect
                [
                    \WienerInt{1}(\zeta_{\dyadicPart[m]{l-1}\dyadicPart[m]{l}})
                ]
        =
            0
    $
    a.s.
	Hence, we have
	\begin{align*}
		\expect
			[
				\tilde{Y}^{(m,n)}_k
				\tilde{Y}^{(m,n)}_l
				|\sigmaField_{\dyadicPart[m]{l-1}}
			]
		=
			\tilde{Y}^{(m,n)}_k
            \{
				g(X_{\dyadicPart[m]{k-1}})
				-
				g(X_{\lowerPart[n]{\dyadicPart[m]{k-1}}})
			\}
			\expect
				[
					\WienerInt{1}(\zeta_{\dyadicPart[m]{l-1}\dyadicPart[m]{l}})
					|\sigmaField_{\dyadicPart[m]{l-1}}
				]
        =
            0
	\end{align*}
	a.s.\ for $k<l$. From this, we obtain
	$
		\expect
			[
				\tilde{Y}^{(m,n)}_k
				\tilde{Y}^{(m,n)}_l
			]
		=
			0
	$
	for $k\neq l$. In addition, we have
	\begin{align*}
		\expect
			[
				|\tilde{Y}^{(m,n)}_k|^2
			]
		\leq
			\expect
				[
					\{
						g(X_{\dyadicPart[m]{k-1}})
						-
						g(X_{\lowerPart[n]{\dyadicPart[m]{k-1}}})
					\}^4
				]^{1/2}
			\expect
				[
					\WienerInt{1}(\zeta_{\dyadicPart[m]{k-1}\dyadicPart[m]{k}})^4
				]^{1/2}
		\leq
			\const
			2^{-n}
			2^{-3m}
	\end{align*}
	for some constant $\const$.
	From these, we obtain
	\begin{align*}
		\expect[|2^m\tilde{R}^{(m,n)}(t)|^2]
		=
			2^{2m}
			\sum_{k=1}^{\intPart{2^m t}}
				\expect[|\tilde{Y}^{(m,n)}_k|^2]
		\leq
			2^{2m}
			\cdot
			2^m
			\cdot
			\const
			2^{-n}
			2^{-3m}
		=
			\const
			2^{-n},
	\end{align*}
	which implies the third assertion.

	The proof is completed.
\end{proof}

\begin{proof}[Proof of \lref{prop_20141125002312}]
	We can prove the assertion in the same way as {\cite[Proposition~18]{Naganuma2015}}.
	In the proof, we shall show that the processes satisfy some kind of moment condition for relative compactness.
\end{proof}

\subsection{Weighted Hermite and power variations}
\label{sec_20150225090808}
In this subsection, we prove \tref[thm_20141203094309]{thm_20141117071529}.

At the beginning, we give an estimate of
$\expect[|\wHerVar{q}{m}(t)-\wHerVar{q}{m}(s)|^2]$
\begin{proposition}\label{prop_20141117073846}
	Let $\mu$ and $\nu$ be probability measure on $[0,1]$
	and $f,g\in\SmoothFunc[poly]{2q}{\RealNum}{\RealNum}$.
	Then there exists a constant $\const$ such that
	\begin{align*}
		\left|
			\expect
				\left[
					\WienerInt{2q-2r}
						(
							\delta_{\dyadicPart[m]{k-1}\dyadicPart[m]{k}}^{\odot q-r}
							\odot
							\delta_{\dyadicPart[m]{l-1}\dyadicPart[m]{l}}^{\odot q-r}
						)
					F^{f,\mu}_{st}(X)
					F^{g,\nu}_{uv}(X)
				\right]
		\right|
		\leq
			\const
			\begin{cases}
				2^{-4m(q-r)\Hurst},&0<\Hurst<1/2,\\
				2^{-2m(q-r)},&1/2\leq\Hurst<1,\\
			\end{cases}
	\end{align*}
	for any $0\leq s<t\leq 1$, $0\leq u<v\leq 1$
	and $1\leq k,l\leq 2^m$.
\end{proposition}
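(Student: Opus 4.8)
The plan is to imitate the proof of \pref{prop_20150518032235}: rewrite the Wiener integral via the duality relationship \eqref{eq_duality_relationship}, expand the resulting Malliavin derivative by the Leibniz and Fa\`a di Bruno formulas, and estimate the pairings by \prefs{prop_20150520004520}{prop_20150520010300}. First I would dispose of the trivial case $r=q$, where $\WienerInt{0}$ is the identity on $\RealNum$ and the claim reduces to $|\expect[F^{f,\mu}_{st}(X)F^{g,\nu}_{uv}(X)]|\le\const$, which follows from H\"older's inequality and the fact that $X_s,X_t,X_u,X_v$ have moments of all orders (a consequence of \pref{prop_20150520004508}). For $r<q$ write $n=2q-2r$, $h=\delta_{\dyadicPart[m]{k-1}\dyadicPart[m]{k}}^{\odot q-r}\odot\delta_{\dyadicPart[m]{l-1}\dyadicPart[m]{l}}^{\odot q-r}\in\CM^{\odot n}$ and $G=F^{f,\mu}_{st}(X)F^{g,\nu}_{uv}(X)$. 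Since $X_s,X_t,X_u,X_v\in\SobSp{n}{\infty-}{\probSp}{\RealNum}$ by \pref{prop_20150520004520} and $f,g\in\SmoothFunc[poly]{2q}{\RealNum}{\RealNum}$, the chain rule and H\"older's inequality give $G\in\SobSp{n}{2}{\probSp}{\RealNum}$, and because $\delta^n(h)=\WienerInt{n}(h)$, \eqref{eq_duality_relationship} yields
\begin{align*}
	\expect[\WienerInt{n}(h)G]
	=
		\expect
			\left[
				\innerProdLarge[\CM^{\otimes n}]
					{D^n G}
					{\delta_{\dyadicPart[m]{k-1}\dyadicPart[m]{k}}^{\odot q-r}\odot\delta_{\dyadicPart[m]{l-1}\dyadicPart[m]{l}}^{\odot q-r}}
			\right].
\end{align*}

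Next I would expand the integrand. By the Leibniz rule $D^n G$ is a finite linear combination of symmetrized tensor products $D^{n_1}F^{f,\mu}_{st}(X)\odot D^{n_2}F^{g,\nu}_{uv}(X)$ with $n_1+n_2=n$. Writing $Y_\theta=\theta X_t+(1-\theta)X_s$ and commuting the Malliavin derivative with the integration against the probability measure $\mu$, the Fa\`a di Bruno formula expresses $D^{n_1}F^{f,\mu}_{st}(X)=\int_0^1 D^{n_1}f(Y_\theta)\,\mu(d\theta)$ as an integral of a linear combination of terms $f^{(i)}(Y_\theta)\,D^{j_1}Y_\theta\odot\cdots\odot D^{j_i}Y_\theta$ with $j_1+\cdots+j_i=n_1$, and each $D^{j_a}Y_\theta=\theta D^{j_a}X_t+(1-\theta)D^{j_a}X_s$ splits linearly; similarly for the $g$-block. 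Contracting against $\delta_1^{\odot q-r}\odot\delta_2^{\odot q-r}$ (abbreviating $\delta_1=\delta_{\dyadicPart[m]{k-1}\dyadicPart[m]{k}}$, $\delta_2=\delta_{\dyadicPart[m]{l-1}\dyadicPart[m]{l}}$) then amounts to distributing the $q-r$ copies of $\delta_1$ and the $q-r$ copies of $\delta_2$ among the various factors $D^{j_a}X_\bullet$, so that the inner product becomes a finite linear combination — with coefficients that are products $f^{(i)}(Y_\theta)g^{(i')}(Z_{\theta'})$ integrated against $\mu\otimes\nu$ — of products of pairings $\innerProdLarge[\CM^{\otimes a}]{D^a X_\bullet}{\delta_1^{\odot b}\odot\delta_2^{\odot c}}$ with $b+c=a$, the exponents $b$ summing to $q-r$ and the exponents $c$ summing to $q-r$. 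By \pref{prop_20150520004520}, each such pairing is at most $\const\|\delta_1\|_\infty^{b}\|\delta_2\|_\infty^{c}$ uniformly in $\theta,\theta'\in[0,1]$, so every term of the expansion is bounded by $\const\cdot(\text{polynomial-growth function of }X_s,X_t,X_u,X_v)\cdot\|\delta_1\|_\infty^{q-r}\|\delta_2\|_\infty^{q-r}$.

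Taking expectations and using H\"older's inequality together with $\sup_{w\in[0,1]}\expect[|X_w|^p]<\infty$ for all $p\ge1$, I obtain $|\expect[\WienerInt{n}(h)G]|\le\const\|\delta_{\dyadicPart[m]{k-1}\dyadicPart[m]{k}}\|_\infty^{q-r}\|\delta_{\dyadicPart[m]{l-1}\dyadicPart[m]{l}}\|_\infty^{q-r}$ with $\const$ independent of $m,k,l,s,t,u,v$. Finally, since $\dyadicPart[m]{k}-\dyadicPart[m]{k-1}=2^{-m}$, \pref{prop_20150520010300} gives $\|\delta_{\dyadicPart[m]{k-1}\dyadicPart[m]{k}}\|_\infty\le 2^{-2m\Hurst}$ for $0<\Hurst<1/2$ and $\le 2\Hurst\,2^{-m}$ for $1/2\le\Hurst<1$, and likewise for the $l$-block; raising to the power $q-r$ and absorbing the constant $(2\Hurst)^{2(q-r)}$ into $\const$ produces the claimed bound. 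The only genuine work is the combinatorial bookkeeping in the Leibniz and Fa\`a di Bruno expansions, i.e.\ verifying that every resulting term, after contraction, factors into pairings covered by \pref{prop_20150520004520} with the correct total multiplicities $q-r$ and $q-r$ for $\delta_1$ and $\delta_2$; once this is granted, the estimates are routine and entirely parallel to the proof of \pref{prop_20150518032235}.
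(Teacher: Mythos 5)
Your argument follows the paper's proof of \pref{prop_20141117073846} essentially verbatim: duality relationship, Leibniz expansion of $D^{2q-2r}$ of the product, then reduction to the pairing bounds from \pref{prop_20150520004520} combined with the sup-norm estimates of \pref{prop_20150520010300}. The only difference is that you spell out the Fa\`a di Bruno bookkeeping and the trivial case $r=q$, which the paper leaves implicit; the estimates and the resulting exponents are identical.
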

\begin{proof}
	From the duality relationship \eqref{eq_duality_relationship} and
	the Leibniz rule, we see
	\begin{multline*}
		\expect
			\left[
				\WienerInt{2q-2r}
					(
						\delta_{\dyadicPart[m]{k-1}\dyadicPart[m]{k}}^{\odot q-r}
						\odot
						\delta_{\dyadicPart[m]{l-1}\dyadicPart[m]{l}}^{\odot q-r}
					)
				F_{st}^{f,\mu}(X)
				F_{uv}^{g,\nu}(X)
			\right]\\
		\begin{aligned}
			&=
			\expect
				\left[
					\innerProdLarge[\CM^{\odot 2q-2r}]
						{
							\delta_{\dyadicPart[m]{k-1}\dyadicPart[m]{k}}^{\odot q-r}
							\odot
							\delta_{\dyadicPart[m]{l-1}\dyadicPart[m]{l}}^{\odot q-r}
						}
						{
							D^{2q-2r}
							\left\{
								F_{st}^{f,\mu}(X)
								F_{uv}^{g,\nu}(X)
							\right\}
						}
				\right]\\
			&=
			\sum_{a+b=2q-2r}
				\frac{(2q-2r)!}{a!b!}
				\expect
					\left[
						\innerProdLarge[\CM^{\odot 2q-2r}]
						{
							\delta_{\dyadicPart[m]{k-1}\dyadicPart[m]{k}}^{\odot q-r}
							\odot
							\delta_{\dyadicPart[m]{l-1}\dyadicPart[m]{l}}^{\odot q-r}
						}
						{
							D^a	F_{st}^{f,\mu}(X)
							\odot
							D^b	F_{uv}^{g,\nu}(X)
						}
					\right].
		\end{aligned}
	\end{multline*}
	From \pref{prop_20150520010300}, we see that
	\begin{align*}
		\expect
			[
				|
					\innerProd[\CM^{\odot a}]
						{D^a F^{f,\mu}_{st}(X)}
						{h^1\odot\cdots\odot h^a}
				|^r
			]^{1/r}
		\leq
			\const
			\|h^1\|_\infty\cdots\|h^a\|_\infty
		\leq
			\const
			\begin{cases}
				2^{-2ma\Hurst},&0<\Hurst< 1/2,\\
				(2\Hurst)^a 2^{-ma},&1/2\leq\Hurst<1,\\
			\end{cases}
	\end{align*}
	for
	$
		h^1,\dots,h^a
		\in
			\{
				\delta_{\dyadicPart[m]{k-1}\dyadicPart[m]{k}},
				\delta_{\dyadicPart[m]{l-1}\dyadicPart[m]{l}}
			\}
	$.
	Combining them, the proof is completed.
\end{proof}
\begin{proposition}\label{prop_20141203092733}
	Let $q\geq 2$.
	There exists a positive constant $\const$ such that
	\begin{align*}
		\expect
			[|\wHerVar{q}{m}(t)-\wHerVar{q}{m}(s)|^2]
		\leq
			\const
			(\intPart{2^m t}-\intPart{2^m s})
			\begin{cases}
				2^{m(1-2q\Hurst)}, & 0<\Hurst\leq 1/2q,\\
				1, & 1/2q<\Hurst<1-1/2q,\\
				m, & \Hurst=1-1/2q,\\
				2^{m\{1-2q(1-\Hurst)\}}, & 1-1/2q<\Hurst<1,\\
			\end{cases}
	\end{align*}
	for any $0\leq s<t\leq 1$.
\end{proposition}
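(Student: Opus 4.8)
The plan is to pass to the Wiener--chaos representation and expand the square with the product formula. Recall from Appendix~\ref{sec_20150623014415} that $\hermitePoly{q}(2^{m\Hurst}B_{\dyadicPart[m]{k-1}\dyadicPart[m]{k}})=2^{mq\Hurst}\WienerInt{q}(\delta_{\dyadicPart[m]{k-1}\dyadicPart[m]{k}}^{\odot q})$. Writing $\delta_k=\delta_{\dyadicPart[m]{k-1}\dyadicPart[m]{k}}$, $F_k=F_{\dyadicPart[m]{k-1}\dyadicPart[m]{k}}(X)$ and $N=\intPart{2^m t}-\intPart{2^m s}\ (\le2^m)$, and letting every sum over $k,l$ run from $\intPart{2^m s}+1$ to $\intPart{2^m t}$, we have
\[
	\wHerVar{q}{m}(t)-\wHerVar{q}{m}(s)
	=
		2^{mq\Hurst}\sum_k F_k\WienerInt{q}(\delta_k^{\odot q}),
\]
and after squaring, taking expectations and applying the product formula \eqref{eq_product_formula},
\[
	\expect[|\wHerVar{q}{m}(t)-\wHerVar{q}{m}(s)|^2]
	=
		2^{2mq\Hurst}\sum_{r=0}^q r!\binom{q}{r}^2\sum_{k,l}(\delta_k,\delta_l)_\CM^r\,\expect\bigl[F_kF_l\WienerInt{2q-2r}(\delta_k^{\odot q-r}\odot\delta_l^{\odot q-r})\bigr].
\]
By self-similarity and stationarity of increments $(\delta_k,\delta_l)_\CM=\expect[B_{\dyadicPart[m]{k-1}\dyadicPart[m]{k}}B_{\dyadicPart[m]{l-1}\dyadicPart[m]{l}}]=2^{-2m\Hurst}\rho_\Hurst(l-k)$, so the $r$-th inner sum carries the weight $2^{2m(q-r)\Hurst}\rho_\Hurst(l-k)^r$; it remains to show that each of the $q+1$ terms is bounded by $CN$ times the bracketed quantity in the statement.

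First I would dispose of the diagonal term $r=q$: since $\WienerInt{0}(\cdot)=1$ it equals $q!\sum_{k,l}\rho_\Hurst(l-k)^q\expect[F_kF_l]$, and because $f$ has polynomial growth and $\sup_{0\le t\le1}\expect[|X_t|^p]<\infty$ for every $p$ (\pref{prop_20150520004508}) we have $|\expect[F_kF_l]|\le C$, so this term is $\le CN\sum_{|j|\le N}|\rho_\Hurst(j)|^q$. Using the elementary estimate $|\rho_\Hurst(j)|\le C|j|^{2\Hurst-2}$ for $j\ge1$ (second-order Taylor expansion of $x\mapsto x^{2\Hurst}$, with $\rho_{1/2}(j)=0$ for $j\ge1$), the quantity $\sum_{|j|\le N}|\rho_\Hurst(j)|^q$ is $O(1)$, $O(m)$ or $O(N^{\,1-q(2-2\Hurst)})=O(2^{m(1-2q(1-\Hurst))})$ according as $q(2-2\Hurst)>1$, $=1$ or $<1$, i.e.\ as $\Hurst<1-\tfrac1{2q}$, $=1-\tfrac1{2q}$ or $>1-\tfrac1{2q}$. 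This accounts for the entries $1$, $m$ and $2^{m(1-2q(1-\Hurst))}$, and when $\Hurst\le\tfrac1{2q}$ we are in the first case with $CN\le CN\,2^{m(1-2q\Hurst)}$ because $1-2q\Hurst\ge0$.

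For $0\le r\le q-1$ I would invoke \pref{prop_20141117073846}, which gives $\bigl|\expect[F_kF_l\WienerInt{2q-2r}(\delta_k^{\odot q-r}\odot\delta_l^{\odot q-r})]\bigr|\le C\,2^{-4m(q-r)\Hurst}$ if $\Hurst<1/2$ and $\le C\,2^{-2m(q-r)}$ if $\Hurst\ge1/2$. Combined with $2^{2m(q-r)\Hurst}|\rho_\Hurst(l-k)|^r$ this makes the $r$-th term at most $C\,2^{-2m(q-r)\Hurst}\sum_{k,l}|\rho_\Hurst(l-k)|^r$ when $\Hurst<1/2$ and $C\,2^{-2m(q-r)(1-\Hurst)}\sum_{k,l}|\rho_\Hurst(l-k)|^r$ when $\Hurst\ge1/2$. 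For $r=0$ the double sum is $N^2\le N2^m$, producing $CN\,2^{m(1-2q\Hurst)}$ (resp.\ $CN\,2^{m(1-2q(1-\Hurst))}$), which matches or is dominated by the claimed bracket; for $1\le r\le q-1$ I would bound $\sum_{k,l}|\rho_\Hurst(l-k)|^r\le N\sum_{|j|\le N}|\rho_\Hurst(j)|^r$ by the same trichotomy and observe that the strictly decreasing factor $2^{-2m(q-r)\min(\Hurst,1-\Hurst)}$ (note $q-r\ge1$) absorbs any polynomial or logarithmic growth of the $j$-sum, so these terms are $\le CN$ when $\Hurst<1-\tfrac1{2q}$ and $\le CN\,2^{m(1-2q(1-\Hurst))}$ when $\Hurst>1-\tfrac1{2q}$ — in every regime no larger than $CN$ times the claimed bracket. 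Summing over the finitely many $r$ finishes the proof.

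The only real difficulty is bookkeeping: one must match carefully the powers of $2^m$ coming from (i) the prefactor $2^{2m(q-r)\Hurst}$, (ii) the $r$-th power of the normalized covariance $2^{-2m\Hurst}\rho_\Hurst$, (iii) the decay of $\expect[F_kF_l\WienerInt{2q-2r}(\cdot)]$ supplied by \pref{prop_20141117073846}, and (iv) the growth of $\sum_{|j|\le N}|\rho_\Hurst(j)|^r$ with $N\le2^m$. The two borderline Hurst values $\tfrac1{2q}$ and $1-\tfrac1{2q}$ are the delicate points: one has to verify that only $r=0$ realizes the exponent $2^{m(1-2q\Hurst)}$ and only $r=q$ realizes the logarithmic factor $m$, all other terms being strictly subdominant thanks to the surviving power $2^{-2m(q-r)\min(\Hurst,1-\Hurst)}$.
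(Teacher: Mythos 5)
Your proposal is correct and follows exactly the route the paper indicates: expand $|\wHerVar{q}{m}(t)-\wHerVar{q}{m}(s)|^2$ via the product formula \eqref{eq_product_formula}, control the off-diagonal chaos terms with \pref{prop_20141117073846}, and track the covariance sums $\sum_j|\rho_\Hurst(j)|^r$ through the trichotomy in $\Hurst$ (the paper outsources this bookkeeping to \cite[Proposition~21]{Naganuma2015}, which you have reproduced in full). The exponent matching in all regimes, including the borderline cases $\Hurst=1/2q$ and $\Hurst=1-1/2q$, checks out.
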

\begin{proof}
	We can prove this proposition in the same way as \cite[Proposition~21]{Naganuma2015}
	by using \pref{prop_20141117073846} instead of \cite[Proposition~19]{Naganuma2015}.
	In more detail, we use \eqref{eq_product_formula}
	to rewrite $|\wHerVar{q}{m}(t)-\wHerVar{q}{m}(s)|^2$
	by the It\^o-Wiener integrals.
	Then we see that it is expressed by the summation of the integrand in \pref{prop_20141117073846}.
	From \pref{prop_20141117073846}, we see the conclusion.
\end{proof}

We prove \tref[thm_20141203094309]{thm_20141117071529}.

\begin{proof}[Proof of \tref{thm_20141203094309}]
	Recall the identity
    $
		\xi^q
		=
			\sum_{r=0}^q
				\binom{q}{r}
				\expect[Z^{q-r}]
				\hermitePoly{r}(\xi)
    $
	for any $\xi\in\RealNum$, where $Z$ is a standard Gaussian random variable.
	Applying this identity, we see
	\begin{align*}
		2^{m(q\Hurst-1)}
		\sum_{k=1}^{\intPart{2^m\cdot}}
			F_{\dyadicPart[m]{k-1}\dyadicPart[m]{k}}(X)
			(B_{\dyadicPart[m]{k-1}\dyadicPart[m]{k}})^q
		&=
			2^{-m}
			\sum_{k=1}^{\intPart{2^m\cdot}}
				F_{\dyadicPart[m]{k-1}\dyadicPart[m]{k}}(X)
				(2^{m\Hurst}B_{\dyadicPart[m]{k-1}\dyadicPart[m]{k}})^q\\
		&=
			2^{-m}
			\sum_{r=0}^q
				\binom{q}{r}
				\expect[Z^{q-r}]
				\sum_{k=1}^{\intPart{2^m\cdot}}
					F_{\dyadicPart[m]{k-1}\dyadicPart[m]{k}}(X)
					\hermitePoly{r}(2^{m\Hurst}B_{\dyadicPart[m]{k-1}\dyadicPart[m]{k}})\\
		&=
			\expect[Z^q]
			\cdot
			2^{-m}
			\sum_{k=1}^{\intPart{2^m\cdot}}
				F_{\dyadicPart[m]{k-1}\dyadicPart[m]{k}}(X)
			+
			\sum_{r=2}^q
				\binom{q}{r}
				\expect[Z^{q-r}]
				\cdot
				2^{-m}
				\wHerVar{r}{m}.
	\end{align*}
	We prove convergence of the first and second term in the following.

	We consider the first term.
	Note
	\begin{multline*}
		2^{-m}
		\sum_{k=1}^{\intPart{2^m t}}
			F_{\dyadicPart[m]{k-1}\dyadicPart[m]{k}}(X)
		-
		\int_0^t
			f(X_s)\,
			ds\\
		\begin{aligned}
			&=
				\sum_{k=1}^{\intPart{2^m t}}
					\int_{\dyadicPart[m]{k-1}}^{\dyadicPart[m]{k}}
						F_{\dyadicPart[m]{k-1}\dyadicPart[m]{k}}(X)\,
						ds
				-
				\sum_{k=1}^{\intPart{2^m t}}
					\int_{\dyadicPart[m]{k-1}}^{\dyadicPart[m]{k}}
						f(X_s)\,
						ds
				-
				\int_{\intPart{2^m t}}^t
					f(X_s)\,
					ds\\
			&=
				\sum_{k=1}^{\intPart{2^m t}}
					\int_{\dyadicPart[m]{k-1}}^{\dyadicPart[m]{k}}
						\{
							F_{\dyadicPart[m]{k-1}\dyadicPart[m]{k}}(X)
							-
							f(X_s)
						\}\,
						ds
				-
				\int_{\intPart{2^m t}}^t
					f(X_s)\,
					ds.
		\end{aligned}
	\end{multline*}
	Since $X$ is $(\Hurst-\epsilon)$-H\"{o}lder continuous,
	we see that the absolute value of the above has an upper bound
	\begin{align*}
		\sum_{k=1}^{\intPart{2^m t}}
			\int_{\dyadicPart[m]{k-1}}^{\dyadicPart[m]{k}}
				|
					F_{\dyadicPart[m]{k-1}\dyadicPart[m]{k}}(X)
					-
					f(X_s)
				|\,
				ds
		\leq
			\sum_{k=1}^{\intPart{2^m t}}
			\int_{\dyadicPart[m]{k-1}}^{\dyadicPart[m]{k}}
				\const[X]
				2^{-m(\Hurst-\epsilon)}\,
				ds
		=
			\const[X]
			2^{-m(\Hurst-\epsilon)},
	\end{align*}
	where $\const[X]$ is a random variable.
	Hence
	\begin{align*}
		\lim_{m\to\infty}
			2^{-m}
			\sum_{k=1}^{\intPart{2^m \cdot}}
				F_{\dyadicPart[m]{k-1}\dyadicPart[m]{k}}(X)
		=
			\int_0^\cdot
				f(X_s)\,
				ds
	\end{align*}
	almost surely with respect to the uniform norm.

	We prove convergence of the process $2^{-m}\wHerVar{r}{m}$ to the process $0$ for $r=2,\dots,q$.
	It follows from \pref{prop_20141203092733} that
	\begin{align*}
		\expect[|2^{-m}\wHerVar{r}{m}(t)-2^{-m}\wHerVar{r}{m}(s)|^2]
		&\leq
			\const
			\frac{\intPart{2^m t}-\intPart{2^m s}}{2^m}
			\begin{cases}
				2^{-2rm\Hurst}, & 0<\Hurst\leq 1/2r,\\
				2^{-m}, & 1/2r<\Hurst<1-1/2r,\\
				m2^{-m}, & \Hurst=1-1/2r,\\
				2^{-2rm(1-\Hurst)}, & 1-1/2r<\Hurst<1,\\
			\end{cases}\\
		&\leq
			\const
			\left(
				\frac{\intPart{2^m t}-\intPart{2^m s}}{2^m}
			\right)^{1+\kappa}
			\begin{cases}
				2^{-\kappa m}, & 0<\Hurst\leq 1/2r,\\
				2^{-\kappa m}, & 1/2r<\Hurst<1-1/2r,\\
				m2^{-\kappa m}, & \Hurst=1-1/2r,\\
				2^{-\kappa m}, & 1-1/2r<\Hurst<1,\\
			\end{cases}
	\end{align*}
	where
	\begin{align*}
		\kappa
		=
			\begin{cases}
				r\Hurst, & 0<\Hurst\leq 1/2r,\\
				1/2, & 1/2r<\Hurst<1-1/2r,\\
				1/2, & \Hurst=1-1/2r,\\
				r(1-\Hurst), & 1-1/2r<\Hurst<1.\\
			\end{cases}
	\end{align*}
	This inequality implies convergence of $2^{-m}\wHerVar{q}{m}$ to the zero process.

	The proof is completed.
\end{proof}

\begin{proof}[Proof of \tref{thm_20141117071529}]
	The assertion is proved in the same way as \cite[Theorem~15]{Naganuma2015}
	by using \pref{prop_20141117073846} instead of \cite[Proposition~19]{Naganuma2015}.
	In this proof, we use \pref{prop_20141102222020}.
\end{proof}

\section*{Acknowledgments}
The first author's research was partially supported
by Grant-in-Aid for Scientific Research (B) No.~JP16H03938.
The second author's research was partially supported
by Grant-in-Aid for Young Scientists (B) No.~JP17K14202.

\address{
Graduate School of Mathematical Sciences,\\
The University of Tokyo,\\
Meguro-ku, Tokyo, 153-8914, Japan}
{aida@ms.u-tokyo.ac.jp}

\address{
Graduate School of Engineering Science,\\
Osaka University,\\
Toyonaka, Osaka, 560-8531, Japan}
{naganuma@sigmath.es.osaka-u.ac.jp}

\end{document}